\crefname{hypothesis}{Hypothesis}{Hypotheses}
\title{Nonlocal problems with local boundary conditions II: 
Green's identities and regularity of solutions
}
\author{James M. Scott\thanks{Department of Applied Physics and Applied Mathematics, and the Data Science Institute, Columbia University, New York, NY 10027 
  (\email{jms2555@columbia.edu}, \email{qd2125@columbia.edu}).}
\and Qiang Du\footnotemark[1]}
\begin{document}

\maketitle

\begin{abstract}
We study nonlocal integral equations on bounded domains with finite-range nonlocal interactions that are localized at the boundary.
We establish a Green's identity for the nonlocal operator that recovers the classical boundary integral, which, along with the variational analysis established previously, leads to the well-posedness of these nonlocal problems with various types of classical local boundary conditions.
We continue our analysis via boundary-localized convolutions, 
using them to analyze the Euler-Lagrange equations, which permits us to establish global regularity properties and classical Sobolev convergence to their classical local counterparts. 
\end{abstract}

\begin{keywords}
nonlocal equations, nonlocal function spaces, nonlocal boundary-value problems, fractional Sobolev spaces, Green's identity, heterogeneous localization, vanishing horizon
\end{keywords}

\begin{MSCcodes}
45K05, 35J20, 46E35
\end{MSCcodes}

\section{Introduction}

There has been much recent interest in 
nonlocal problems 
defined on a bounded domain $\Omega\subset \bbR^d$ associated with  a nonlocal integral operator $\cL$ of the form
\begin{equation}\label{eq:Intro:Operator-a}
	\cL u (\bx) = \int_{\Omega} \big( \gamma(\bx,\by) + \gamma(\by,\bx) \big) \Phi_p'( u(\bx)-u(\by) ) \, \rmd \by\,,
\end{equation}
defined for measurable functions $u : \Omega \to \bbR$, a $C^2$ and even function $\Phi_p$ indexed by an exponent $p > 1$, and an often nonnegative kernel $\gamma$. 
These operators appear widely in both analysis and applications; see related studies in 
\cite{barles2014neumann,barlow2009non,braides2022compactness,caffarelli2007extension,caffarelli2011regularity,craig2016blob,carrillo2019blob,Du12sirev,Gilboa-Osher,grinfeld2005non,lovasz2012large,MeKl04,mogilner1999non,Nochetto2015,Coifman05,shi2017,Valdinoci:2009,Kassmann,garcia2020error,bellido2021restricted,Foghem2022,Grubb,d2021connections,bellido2015hyperelasticity,mengesha2015localization,Du2022nonlocal,d2020physically,foss2022convergence,Lipton-2016,Du-NonlocalCalculus,cortazar,DyKa19,Ros16,MengeshaDuElasticity,Du-Zhou2011,Lipton-2014,Silling2010,Valdinoci-peridynamic,Burch2014exit-time,Du-Tian2020,grube2023robust,foss2016differentiability,hepp2023divergence}
and additional references in books \cite{Du2019book,bucur2016nonlocal,andreu2010nonlocal} and review articles \cite{d2022review,d2020numerical} as well as our recent work \cite{scott2023nonlocal} that the present study is based on.
Because of the symmetric form of the kernel, we can identify $\cL$ with a variational form associated with a nonlocal energy of $u$
\begin{equation}
    \int_{\Omega} \int_{\Omega} \gamma (\bx,\by) \Phi_p( u(\bx)-u(\by) ) \, \rmd \by\, \rmd \bx\,.
\end{equation}
In this work we are interested in nonlocal operators $\cL$ whose kernel is of the form
\begin{equation*}   
    \gamma(\bx,\by) =
    \frac{C}{ |\by-\bx|^{\beta} }  \frac{ \mathds{1}_{ \{ |\by-\bx| < \delta \eta(\bx) \}} }{
    (\delta \eta(\bx))^{d+p-\beta} }, \;\; \bx, \by\in\Omega,
\end{equation*}
with exponents $p > 1$, $\beta\in [0, d+p) $, a normalization constant $C>0$, a scaling parameter $\delta>0$,  and a position-dependent weight  $\eta=\eta(\bx)$ to be specified later. 
At any point $\bx\in \Omega$, the parameter $\delta$ and function $\eta=\eta(\bx)$ together determine the   nonlocal interaction neighborhood $\{\by \in \Omega : |\bx-\bx| < \delta \eta(\bx)\}$.
For such a kernel, the nonlocal integral operator, its associated nonlocal energies, and the relevant nonlocal variational problems defined on a bounded domain, mirror the classical nonlinear Laplacian,  associated nonlinear Dirichlet energies, and boundary value problems.

The motivation to study
nonlocal problems, such as $\cL u = f $ in $\Omega$ with given data $f$, together
with classical boundary conditions prescribed only on $\partial \Omega$ is described in more detail in \cite{scott2023nonlocal}. 
Here, we recall two main factors behind such a study:
1) nonlocal constraints may raise unintended concerns about the regularity of solutions \cite{Du2022nonlocal}, and 2)  in practice, developers of simulation codes for applications of nonlocal models
have ample practical reasons to keep local boundary conditions in implementation, even though a nonlocal model might be derived and/or deemed a better modeling choice in the bulk domain of interest.

An important question for implementing a nonlocal problem with local boundary conditions is if the pointwise and variational forms of the problem can be placed in natural correspondence, which prompts a search for a nonlocal analog of the Green's identity for a nonlocal operator like $\cL$ in \eqref{eq:Intro:Operator-a}.
In this second part of a series of works on the analysis of nonlocal variational problems on bounded domains with local boundary conditions imposed via heterogeneous localization, we make progress on this question.
We adopt a general heterogeneous localization strategy for $\cL$, as elucidated later in this section and \Cref{sec:GreensIdentity}. A function $q$ is introduced to control the rate of localization at the boundary, i.e., 
specify precisely
how $\eta$ vanishes at $\partial\Omega$, which helps to characterize desired
boundary conditions. We rigorously establish a nonlocal Green's identity, and use it to connect various formulations of problems associated to the nonlocal operators with local boundary conditions to their variational forms. The latter are well-posed thanks in part to the variational analysis 
carried out in \cite{scott2023nonlocal}.
The study reveals how the nonlocal Green's identity is affected by the rate that the function $\eta(\bx)$ vanishes on the boundary, in comparison with $\dist(\bx, \partial \Omega)$, which helps to ensure consistency with the classical counterpart. This finding bears significant consequences in the application of localization strategies to nonlocal modeling.

In principle, methodologies like ours can work for local boundary conditions that are either homogeneous or inhomogeneous, and are of very general forms. Yet, for illustration, we consider boundary conditions of Dirichlet, Neumann, and Robin types. 
As an example,  we consider the case of a linear nonlocal operator related to a quadratic energy with  $\Phi_2(t)= \frac{t^2}{2}$. Let $\bsnu=\bsnu(\bx)$ denote the outward unit vector normal to $\p \Omega$ at $\bx \in \partial\Omega$.
Under suitable conditions on the exponent $\beta$ and the weight $\eta$ that vanishes on $\partial\Omega$, there is a constant $A_\delta$ such that
\begin{equation*}
    \begin{split}
    \int_{\Omega} &v(\bx) 
    \cL u(\bx)
    \rmd \bx 
    = \int_{\Omega} \int_{\Omega} \gamma(\bx,\by) (u(\bx) - u(\by))(v(\bx)-v(\by)) \, \rmd \by \, \rmd \bx - A_\delta \int_{\p \Omega} \frac{\p u}{\p \bsnu } v \, \rmd \sigma\,,
    \end{split}
\end{equation*}
for any pair of functions $u,v$ in suitable function spaces;
see \Cref{sec:GreensIdentity,sec:NonlocalOperator} for the precise scope of the result.
This gives a nonlocal analogue of the classical Green's identity for the Laplacian operator $-\Delta$, i.e.
\begin{equation*}
    \int_{\Omega} v (- \Delta u) \, \rmd \bx = \int_{\Omega} \grad u \cdot \grad v \, \rmd \bx - \int_{\p \Omega} \frac{\p u}{\p \bsnu } v \, \rmd \sigma\,.
\end{equation*}

Note that the nonlocal Green's identity recovers a classical boundary term, without encompassing information
on the complement of $\Omega$.
The value of $A_\delta$ depends on the nature of $\eta(\bx)$ as $\bx$ approaches the boundary $\partial\Omega$. The nonlocal and local Green's identities have consistent boundary terms, i.e. $A_\delta = 1$ for all $\delta$, if $\eta(\bx)$ is a genuinely superlinear
function of the distance (or generalized distance) to the boundary, while $A_\delta$ tends to $1$ as $\delta$ tends to $0$, ensuring consistency in the local limit.

We recover similar results in the case of more general nonlinear partial differential operators of the form $-\div[ \cA(\grad u)]$, where  $\cA : \bbR^d \to \bbR^d$ is a monotone function with $p$-growth properties, see the precise statements below in \Cref{subsec:Intro:NonlocalOperator}.

Returning to the linear case described above, when $\beta < d$, the integral used to define $\cL$ is absolutely convergent for each $\bx \in \Omega$.
Thus, the Green's identity reveals that weak solutions to the nonlocal problems are actually strong solutions. Moreover, the boundary-localizing property of the operator leads to a global $W^{1,2}$-regularity of weak solutions. Furthermore, we show the $W^{1,2}$-weak convergence of solutions to the solution of the corresponding local equation. In addition, the nonlocal Green's identity further permits us to show the nonlocal-to-local convergence of the solutions' corresponding normal derivatives.
This program is summarized in \Cref{subsec:Intro:LinearProblem}.

\subsection{Nonlocal function spaces}\label{sec:FunctionSpaces}
Throughout the paper, we assume that for $d\geq 1$, $\Omega \subset \bbR^d$ is an open connected set (a domain) that is bounded and Lipschitz.
To describe our main findings, we recall the function space introduced in \cite{scott2023nonlocal}
\begin{equation}
    \mathfrak{W}^{\beta,p}[\delta;q](\Omega) := \{ u \in L^p(\Omega) \, :\, [u]_{ \mathfrak{W}^{\beta,p}[\delta;q](\Omega) } < \infty \}\,, 
\end{equation}
equipped with the norm determined by $
\Vnorm{u}_{\mathfrak{W}^{\beta,p}[\delta;q](\Omega)}^p := \Vnorm{u}_{L^p(\Omega)}^p + [u]_{\mathfrak{W}^{\beta,p}[\delta;q](\Omega)}^p$ and the nonlocal seminorm defined by 
\begin{equation}\label{eq:Intro:NonlocalSeminorm}
[u]_{\mathfrak{W}^{\beta,p}[\delta;q](\Omega)}^p =   \int_{\Omega} \int_{\Omega} \gamma_{\beta,p}[\delta;q](\bx,\by) |u(\by)-u(\bx)|^p \, \rmd \by \, \rmd \bx\,.
\end{equation}
Here,
$p \in [1,\infty)$ and
\begin{equation}
\label{assump:beta}
 \beta \in [0,d+p)
\tag{\ensuremath{\rmA_{\beta}}}
\end{equation}
are taken as assumptions throughout the paper unless noted otherwise. The constant
$\delta > 0$ is the \textit{bulk horizon parameter} and the kernel in \eqref{eq:Intro:NonlocalSeminorm} is defined as
\begin{equation}\label{eq:Intro:kernelgamma}
	\gamma_{\beta,p}[\delta;q](\bx,\by) := \mathds{1}_{ \{ |\by-\bx| < \delta q(\dist(\bx,\p \Omega)) \} } \frac{ C_{d,\beta,p} }{ |\bx-\by|^{\beta} } \frac{1}{ (\delta q(\dist(\bx,\p \Omega)))^{d+p-\beta} }\,.
\end{equation}
For a Lebesgue measurable set $A \subset \bbR^d$, $\mathds{1}_A$ denotes its standard characteristic function.
$C_{d,\beta,p} > 0$ is a normalization constant so that for any $\bx \in \Omega$, 
\begin{equation}\label{eq:Intro:StdKernelNormalization}
    \int_{\bbR^d} \gamma_{\beta,p}[\delta;q](\bx,\by)|\bx-\by|^p \, \rmd \by = \int_{B(0,1)} \frac{ C_{d,\beta,p} }{ |\bsxi|^{\beta-p} } \, \rmd \bsxi =  \frac{ \sqrt{\pi} \Gamma ( \frac{d+p}{2} ) }{ \Gamma(\frac{p+1}{2} ) \Gamma(\frac{d}{2}) } := \overline{C}_{d,p}\,.
\end{equation}
Here, $B(0,1)$ denotes the unit ball centered at the origin in $\mathbb{R}^d$ and $\Gamma(z)$ the Euler gamma function,
and $C_{d,\beta,p} = \overline{C}_{d,p} \frac{d+p-\beta}{\sigma(\bbS^{d-1})}$ for the unit sphere
 $\bbS^{d-1} \subset \bbR^d$ with its surface measure $\sigma$. 
We recall from \cite{scott2023nonlocal} that these constants and exponents are taken 
to assure that
$[v]_{ \mathfrak{W}^{\beta,p}[\delta;q](\Omega) } = \Vnorm{\grad v}_{L^p(\Omega)}$ for any linear function $v=v(\bx)$.

The function $q: [0,\infty) \to [0,\infty)$ is used to characterize the dependence of the localization  on the distance function. It is assumed to satisfy the following:
\begin{equation}\label{assump:NonlinearLocalization}	\begin{aligned}
		i) &\, q \in C^k([0,\infty)) \text{ for some } k \in \bbN \cup \{\infty\},\; q(0)=0 \text{ and } 0 < q(r) \leq r,\; \forall r >  0;\\
		ii) &\, 0 \leq q'(r) \leq 1,\; \forall r \geq 0\,, \text{ and for a fixed } c_q >0,\;  q'(r) > 0,\; \forall r \in (0,c_q]; \\
        iii) &\, \text{there exists } C_q \geq 1 \text{ such that } q(2r) \leq C_q \, q(r)\,\; \forall r \in (0,\infty)\,. \\
	\end{aligned}
	\tag{\ensuremath{\rmA_{q}}}
\end{equation}
Thus the function $\delta q(\dist(\bx,\p \Omega))$ used in \eqref{eq:Intro:kernelgamma} does not exceed $\delta \diam(\Omega)$ for $\bx\in \Omega$, leading to the naming of $\delta$ as the bulk horizon parameter, but the function shrinks to $0$ as $\bx\to \p\Omega$, hence leading to boundary localization. 
The behavior of $q$ will affect the form of the nonlocal Green's identity obtained. For this purpose, we refine the assumptions on $q$ in the following way:
\begin{equation}\label{assump:MomentsOfNonlinLoc}
    \begin{gathered}
        \eqref{assump:NonlinearLocalization} \text{ is satisfied for some } k \geq 2, \text{ and there exists } N \in \{ 1, 2, \ldots, k - 1 \} \\ \text{ such that }
        q(0) = q'(0) = \ldots = q^{(N-1)}(0) = 0\,, \text{ with } \\
        q^{(N)}(0) > 0\,,\; 
        0 \leq q^{(N)}(r) \leq 1 \; \forall r \in [0,\infty)\,, \text{ and }
        q^{(N)} \in C^1_b([0,\infty))\,.
    \end{gathered}
    \tag{\ensuremath{\rmA_{q,N}}}
\end{equation}
Some examples of $q$ satisfying \eqref{assump:NonlinearLocalization} with $k = \infty$ are $q(r) = r$ and $q(r) = \frac{2}{\pi} \arctan(r)$,
which also satisfy \eqref{assump:MomentsOfNonlinLoc} for $N = 1$. 
Another class of examples is $q(r) \approx \frac{1}{j!} \min \{ r^j, 1 \}$ for $j \in \bbN$, mollified in a neighborhood of $r=1$ so that $q \in C^{k}$ for any desired $k$. If $k \geq j + 1$, then these examples satisfy \eqref{assump:MomentsOfNonlinLoc} for $N = j$.
Different positive values of $\delta$ result in the same equivalent space, as recalled from \cite{scott2023nonlocal} in \Cref{thm:FxnSpaceProp} below.

Following \cite{scott2023nonlocal}, we work with 
a generalized distance function $\lambda : \overline{\Omega} \to [0,\infty)$ that allows us to treat nonlocal models with a more general smooth heterogeneous localization function $\eta[\lambda,q](\bx) := q(\lambda(\bx))$, and it is assumed to satisfy
\begin{equation}\label{assump:Localization}
	\begin{aligned}
		i) & \, \text{there exists a constant } \kappa_0 \geq 1 \text{ such that } \\
		&\quad \frac{1}{\kappa_0} \dist(\bx,\p \Omega) \leq \lambda(\bx) \leq \kappa_0 \dist(\bx,\p \Omega),\; \forall  \bx \in \overline{\Omega}\,;\\
        ii) & \, \text{there exists a constant } \kappa_1 > 0 \text{ such that } \\
        &\quad |\lambda(\bx) - \lambda(\by)| \leq \kappa_1 |\bx-\by|, \; \forall \bx,\by \in \Omega; \\
		iii) & \, \lambda \in C^0(\overline{\Omega}) \cap C^{k}(\Omega) \text{ for some } k \in \bbN_0 \cup \{\infty\}; \text{ and } \\
        iv) & \, \text{for each multi-index } \alpha \in \bbN^d_0 \text{ with } |\alpha| \leq k\,, \\
		&\quad \exists \kappa_{\alpha} > 0 \text{ such that } |D^\alpha \lambda(\bx)| \leq \kappa_{\alpha} |\dist(\bx,\p \Omega)|^{1-|\alpha|}, \;  \forall \bx \in \Omega\,.
	\end{aligned} \tag{\ensuremath{\rmA_{\lambda}}}
\end{equation}

A function $\lambda$ with $k = \infty$ and all $\kappa_\alpha$ depending only on $d$ is guaranteed to exist \cite{Stein}.
The distance function itself satisfies \eqref{assump:Localization} for $k = 0$ and $\kappa_1 = 1$.

The rescaled heterogeneous localization function is given by 
\begin{equation}\label{eq:localizationfunction}
	\eta_\delta[\lambda,q](\bx) := \delta \eta[\lambda,q](\bx) := \delta 
 q(\lambda(\bx))\,, \quad\forall \bx \in \Omega\,,
\end{equation}
where $\eta_{1}[\lambda,q] = \eta[\lambda,q]$. 
As in \cite{scott2023nonlocal}, we will write $\eta_\delta[\lambda,q]$ simply as $\eta_\delta$ (with $\eta_1 = \eta$) whenever the dependence is clear from context. 

The maximum admissible value of the bulk horizon parameter $\delta$ is chosen to depend on $\eta(\bx)$ as follows:
\begin{equation}\label{eq:HorizonThreshold}
\begin{gathered} 
\delta \in (0, \underline{\delta}_0) \;\text{ where }\;
\underline{\delta}_0 := \frac{1}{3 \max \{ 1, \kappa_1, C_q \kappa_0^{\log_2(C_q)} \} }\,.
\tag{\ensuremath{\rmA_{\delta}}}
\end{gathered}
\end{equation}
Recall from \cite{scott2023nonlocal}, \eqref{assump:NonlinearLocalization} and \eqref{assump:Localization} guarantee that, for all $\delta < \underline{\delta}_0$ and $\bx$, $\by \in \Omega$, we have $|\eta_\delta[\lambda,q](\bx)| < \frac{1}{3}$ and $|\eta_\delta[\lambda,q](\bx) - \eta_\delta[\lambda,q](\by)| < \frac{1}{3}|\bx-\by|$.
These bounds simplify many of the estimates and coordinate changes used throughout the paper.
Thus, our analysis encompasses the case that $\eta(\bx) = q(\lambda(\bx))$ is a smooth function that allows for specific forms of heterogeneous localization on the boundary $\p \Omega$, i.e., it is constant away from $\p \Omega$ and vanishes as $\bx$ approaches $\p \Omega$; see further discussion in \cite{scott2023nonlocal}.

\subsection{The Nonlocal Operator}\label{subsec:Intro:NonlocalOperator}
To define the nonlocal operator,
we introduce a nonlocal kernel $\rho: \bbR \to [0,\infty)$, a nonnegative even function satisfying the assumption
\begin{equation}\label{assump:VarProb:Kernel}
    \begin{gathered}
    \rho \in L^{\infty}(\bbR)\,, \quad 
    [-c_\rho,c_{\rho}] \subset \supp \rho \Subset (-1,1) \text{ for a fixed constant } c_{\rho} > 0\,.
    \end{gathered}
    \tag{\ensuremath{\rmA_{\rho}}}
\end{equation}
For a given exponent $p > 1$, we let $\Phi_p : \bbR \to \bbR$ be a $C^2$, even, and convex function that satisfies the following $p$-growth conditions for positive constants $c$ and $C$,
\begin{equation}\label{eq:assump:Phi}
 \begin{gathered}
    \forall \, t \in \bbR\,, \; \Phi_p(t) = \Phi_p(-t)\,,\; c|t|^p \leq \Phi_p(t) \leq C |t|^p  \,, \\  
    |\Phi_p'(t)| \leq C |t|^{p-1}\,, \; 
    \Phi_p'(t)t \geq |t|^p\,, \; \text{ and } \; c |t|^{p-2} \leq |\Phi_p''(t)| \leq C|t|^{p-2}\,.
    \end{gathered}
 \tag{\ensuremath{A_\Phi}}
\end{equation}
An example to keep in mind is $\Phi_p(t) = \frac{1}{p}|t|^p$ for $p \geq 2$.

Under the additional assumptions \eqref{assump:beta}, \eqref{assump:NonlinearLocalization}, and \eqref{eq:HorizonThreshold}, the
operators we are interested in are formally induced by the first variation of the nonlocal energy

\begin{equation}\label{eq:Intro:Varprob:Energy}
  \cE_{p,\delta}(u) := \int_{\Omega} \int_{\Omega} \frac{ \rho ( |\by-\bx|/\eta_\delta(\bx))}{ \eta_\delta(\bx)^{d+p-\beta}
  |\bx-\by|^{\beta-p}
  } \Phi_p \Big( \frac{ |u(\bx)-u(\by)|}{|\bx-\by|} \Big) \, \rmd \by \, \rmd \bx\,.
\end{equation}

The first variation of $\cE_{p,\delta}$ defines the following form
\begin{equation}
\label{eq:bilinear}
 \cB_{p,\delta}(u,v) := \int_{\Omega} \int_{\Omega}
    \frac{ \rho \left(  \frac{|\by-\bx|}{\eta_\delta(\bx) } \right) }{ \eta_\delta(\bx)^{d+p-\beta} } 
    \Phi_p' \Big( \frac{u(\bx)-u(\by)}{|\bx-\by|} \Big)
    \frac{v(\bx)-v(\by)}{|\bx-\by|^{\beta-p+1} } \, \rmd \by \, \rmd \bx\,.
\end{equation}
The nonlocal function space $\mathfrak{W}^{\beta,p}[\delta;q](\Omega)$ is the natural choice of energy space for problems associated to $\cB_{p,\delta}$, since the nonlocal seminorm remains the same under perturbations of the heterogeneous localization $\lambda$ and kernel $\rho$; moreover, the space yields a well-defined trace map $T$ on $\partial\Omega$ (see \Cref{thm:FxnSpaceProp}).

Using nonlocal integration by parts in $\cB_{p,\delta}$, we formally get the nonlinear operator
\begin{equation}\label{eq:Intro:Operator}
    \cL_{p,\delta} u(\bx) := \int_\Omega 
    \left(  
    \frac{ \rho \left(  \frac{ |\by-\bx| }{ \eta_\delta(\bx) } \right) }{ \eta_\delta(\bx)^{d+p-\beta} }
    +     \frac{ \rho \left(  \frac{ |\by-\bx| }{ \eta_\delta(\by) } \right) }{ \eta_\delta(\by)^{d+p-\beta} }
    \right)
    \frac{  \Phi_p' \left( \frac{u(\bx)-u(\by)}{|\bx-\by|} \right) }{ |\bx-\by|^{\beta-p+1} } \, \rmd \by\,. 
\end{equation}

Our primary interest in this work is to develop natural variational formulations of nonlocal boundary-value problems that mirror the classical.
As $\delta \to 0$, the variational form $\cB_{p,\delta}$ and the operator $\cL_{p,\delta}$ formally converge to local counterparts, i.e.
    \begin{equation}\label{eq:LocalizedObjectsDef}
\begin{gathered}
        \cB_{p,\delta}(u,v) \to \cB_{p,0}(u,v) := \int_{\Omega} \cA_{p}(\grad u(\bx) ) \cdot \grad v(\bx) \, \rmd \bx\,, \\
          \cL_{p,\delta} u \to \cL_{p,0}u := - \div( \cA_p(\grad u))\,,
\;
    \text{ where } \\
    \cA_{p}(\ba) := 
    \bar{\rho}_{p-\beta} \fint_{\bbS^{d-1}} 
    \Phi_p'( \ba \cdot \bsomega ) \bsomega \, \rmd \sigma(\bsomega)\,,\;
 \bar{\rho}_{p-\beta} := \int_{B(0,1)} |\bz|^{p-\beta} \rho(|\bz|) \, \rmd \bz\,.
    \end{gathered}
\end{equation}
These local objects satisfy a Green's identity, which is a ubiquitous tool in the variational analysis of classical boundary-value problems:
\begin{equation}\label{eq:GreensIdentity:Local}
    \int_{\Omega} v \cL_{p,0} u \, \rmd \bx = \cB_{p,0}(u,v) - \int_{\p \Omega} v \cA_p(\grad u) \cdot \bsnu \, \rmd \sigma\,,
\end{equation}
valid for sufficiently smooth $u$ and $v$, and Lipschitz domains $\Omega$.
Our first main result is the rigorous development of a counterpart of this identity for the nonlocal objects.

Before we state it, we note that due to the singularity on $\p \Omega$, as well as the potential singularity of the kernel on the diagonal $\bx = \by$, the operator $\cL_{p,\delta} u$ may not be a well-defined object in general, even if $u \in C^\infty$. To get around this, we define a truncated form of the operator; for $\veps > 0$ we define
\begin{equation*}
    \cL_{p,\delta}^\veps u(\bx) := \mathds{1}_{ \{ \eta(\bx) > \veps \} } \int_{\Omega} 
    \left( 
    \frac{ \rho^\veps \left( \frac{|\by-\bx|}{ \eta_\delta(\bx) } \right)  }{ \eta_\delta(\bx)^{d+p-\beta}  }  
    + \frac{ \rho^\veps \left( \frac{|\by-\bx|}{ \eta_\delta(\by) } \right)  }{ \eta_\delta(\by)^{d+p-\beta}  } 
    \right)
    \frac{ \Phi_p' \left( \frac{ u(\bx)-u(\by) }{ |\bx-\by| } \right) }{ |\bx-\by|^{\beta-p+1} }
   \, \rmd \by\,,
\end{equation*}
where $\rho^\veps(t) := \mathds{1}_{ \{ \veps \leq |t| < 1\} } \rho(t)$.
We will first understand $\cL_{p,\delta}$ via the dual action of $\cL_{p,\delta}^\veps$ in the limit $\veps \to 0$. The following is a nonlocal Green's identity, which also realizes $\cL_{p,\delta}u$ for smooth functions $u \in C^2(\overline{\Omega})$, and is our first main result. 

\begin{theorem}[Nonlocal Green's identity]\label{thm:Intro:GreensIdentity}
Assume 
$p \geq 2$,
\eqref{assump:beta}, 
\eqref{assump:NonlinearLocalization},
\eqref{assump:MomentsOfNonlinLoc},
\eqref{assump:Localization}, \eqref{assump:VarProb:Kernel}, 
\eqref{eq:assump:Phi},  
\eqref{eq:HorizonThreshold}, and 
\eqref{assump:Localization:NormalDeriv}, given in \Cref{sec:pfGreen} below.
    Let $u \in C^2(\overline{\Omega})$. Then for each $0 < \veps \ll \delta$ there exists an open set $\wt{U}_\veps \Subset \Omega$ depending only on $\Omega$ such that $\lim\limits_{\veps \to 0} |\wt{U}_\veps| = 0$
    and $\vnorm{ \mathds{1}_{\Omega \setminus \wt{U}_\veps} \cL_{p,\delta}^\veps u }_{ [\mathfrak{W}^{\beta,p}[\delta;q](\Omega)]^* } $ is bounded independent of $\veps$. Hence, the sequence
    $\mathds{1}_{\Omega \setminus \wt{U}_\veps} \cL_{p,\delta}^\veps u$ converges as $\veps \to 0$ in the weak-$*$ topology on $ [\mathfrak{W}^{\beta,p}[\delta;q](\Omega)]^* $ to a limit defined as $(\mathrm{d}\text{-}\cL)_{p,\delta} u$. Further, the action of $(\mathrm{d}\text{-}\cL)_{p,\delta}$ is given by
    \begin{equation}\label{eq:GreensIdentity}
        \begin{split}
            \vint{ (\mathrm{d}\text{-}\cL)_{p,\delta} u, v } :=& \lim\limits_{\veps \to 0} \int_{\Omega \setminus \wt{U}_\veps } \cL_{p,\delta}^\veps u(\bx) v(\bx) \, \rmd \bx \\
            =&\, \cB_{p,\delta}(u,v) - \int_{\p \Omega} BF_{p,\delta}^N(\grad u,\bsnu) v \, \rmd \sigma\,, \quad \forall v \in \mathfrak{W}^{\beta,p}[\delta;q](\Omega)\,,
        \end{split}
    \end{equation}
    The function $BF_{p,\delta}^N : \bbR^d \times \bbS^{d-1} \to \bbR$ is defined for $N>1$ and $N=1$ respectively as
    \begin{equation*}
        \begin{split}
        BF_{p,\delta}^N(\ba,\bstheta) &:= 
            \int_{B(0,1) } \frac{\bz \cdot \bstheta}{|\bz|} \frac{\rho(|\bz|)}{ |\bz|^{\beta-p} } \Phi_p' \left( \ba \cdot \frac{\bz}{|\bz|} \right)  \rmd \bz 
            = \cA_p(\ba) \cdot \bstheta\,, \quad \text{ for } N > 1\,,\\
        BF_{p,\delta}^1(\ba,\bstheta) & := \int_{B(0,1)} \ln \left( \frac{ 1+ \delta q'(0) \bstheta \cdot \bz }{ 1 - \delta q'(0) \bstheta \cdot \bz } \right)  \frac{1}{ 2 \delta q'(0)|\bz|} \frac{\rho(|\bz|)}{ |\bz|^{\beta-p} } \Phi_p' \left( \ba \cdot \frac{\bz}{|\bz|} \right)  \rmd \bz\,.
 \end{split}
    \end{equation*}
\end{theorem}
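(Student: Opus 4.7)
The plan is to start from the dual action $\int_{\Omega \setminus \wt{U}_\veps} v(\bx) \cL_{p,\delta}^\veps u(\bx) \, \rmd \bx$ and apply the standard symmetrization: swap the dummy variables $\bx \leftrightarrow \by$ in the half of $\cL_{p,\delta}^\veps$ whose kernel is centered at $\by$ (i.e., involving $\eta_\delta(\by)$), then use that $\Phi_p'$ is odd since $\Phi_p$ is even by \eqref{eq:assump:Phi}. Choosing $\wt{U}_\veps := \{\bx \in \Omega : \eta(\bx) \leq \veps\}$ (possibly enlarged by a thin buffer) absorbs the factor $\mathds{1}_{\{\eta(\bx) > \veps\}}$ present in $\cL_{p,\delta}^\veps$ and yields a decomposition
\[
\int_{\Omega \setminus \wt{U}_\veps} v \, \cL_{p,\delta}^\veps u \, \rmd \bx = \cB_{p,\delta}^\veps(u,v) - E^\veps(u,v),
\]
where $\cB_{p,\delta}^\veps$ denotes the bilinear form \eqref{eq:bilinear} with $\rho^\veps$ in place of $\rho$, and $E^\veps(u,v) := \int_{\wt{U}_\veps} v(\bx) \, \cL_{p,\delta}^\veps u(\bx) \, \rmd \bx$ is the boundary-layer remainder produced by the swap. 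Convergence of the bulk piece, $\cB_{p,\delta}^\veps(u,v) \to \cB_{p,\delta}(u,v)$ as $\veps \to 0$, is obtained by dominated convergence: since $u \in C^2(\overline \Omega)$, one has $|\Phi_p'((u(\bx) - u(\by))/|\bx-\by|)| \leq C \vnorm{\grad u}_{L^\infty}^{p-1}$, so the remaining integrand is bounded by a multiple of the kernel in \eqref{eq:Intro:NonlocalSeminorm} against $|v(\bx)-v(\by)|$, integrable by H\"{o}lder and the seminorm bound.

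The main work is the boundary-layer limit $E^\veps(u,v) \to \int_{\p \Omega} BF_{p,\delta}^N(\grad u, \bsnu) v \, \rmd \sigma$. For $\bx \in \wt{U}_\veps$ I would rescale $\by = \bx + \eta_\delta(\bx) \bz$ with $|\bz| < 1$ and Taylor-expand $u(\by) - u(\bx) = \eta_\delta(\bx) \grad u(\bx) \cdot \bz + O(\eta_\delta(\bx)^2)$; after combining the two kernel contributions and collecting powers of $\eta_\delta(\bx)$, the leading form is
\[
\cL_{p,\delta}^\veps u(\bx) \sim -\frac{1}{\eta_\delta(\bx)} \int_{B(0,1)} \frac{\rho(|\bz|)}{|\bz|^{\beta-p+1}} \Phi_p'\!\left(\grad u(\bx) \cdot \frac{\bz}{|\bz|}\right) \mathds{1}_\Omega\!\left(\bx + \eta_\delta(\bx)\bz\right) \rmd \bz.
\]
The integrand is odd in $\bz$, so the full-ball contribution vanishes and the surviving piece lives on the spherical cap $\{\bz : \bz \cdot \bsnu \gtrsim \dist(\bx,\p\Omega)/\eta_\delta(\bx)\}$. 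By \eqref{assump:MomentsOfNonlinLoc}, the ratio $\dist(\bx,\p\Omega)/\eta_\delta(\bx)$ tends to $\infty$ if $N > 1$ (the cap shrinks, and the genuine boundary contribution comes from the next-order term in the Taylor expansion together with the curvature correction of $\p \Omega$, yielding $\cA_p(\grad u(\bx_0)) \cdot \bsnu$) and tends to $1/(\delta q'(0))$ if $N = 1$ (the cap has a fixed spherical size, and integrating the restricted odd kernel explicitly in the normal direction produces the $\ln$-kernel defining $BF_{p,\delta}^1$). Reducing the $\bx$-integral over the strip $\wt{U}_\veps$ to a surface integral over $\p \Omega$ proceeds by coarea with the foliation $\{\dist(\cdot,\p\Omega) = s\}$, invoking the regularity of $\lambda$ from \eqref{assump:Localization} and the normal-derivative hypothesis \eqref{assump:Localization:NormalDeriv} to control the Jacobians and to identify $\grad \lambda/|\grad \lambda|$ with $-\bsnu$ in the limit.

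Combining the bulk and boundary-layer limits identifies the weak-$*$ limit of $\mathds{1}_{\Omega \setminus \wt{U}_\veps} \cL_{p,\delta}^\veps u$ and produces \eqref{eq:GreensIdentity}; the uniform-in-$\veps$ bound $\vnorm{\mathds{1}_{\Omega \setminus \wt{U}_\veps} \cL_{p,\delta}^\veps u}_{[\mathfrak{W}^{\beta,p}[\delta;q](\Omega)]^*} \leq C(\vnorm{u}_{C^2(\overline \Omega)})$ follows by estimating $|\cB_{p,\delta}^\veps(u,v)|$ via Hölder and the seminorm, and $|E^\veps(u,v)|$ via the trace control from \Cref{thm:FxnSpaceProp} on the surface $\p\Omega$ (with an $\veps$-uniform constant coming from the explicit asymptotic just described). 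The hard part will be the $N = 1$ asymptotic: the surviving spherical cap does not collapse as $\veps \to 0$, so one must evaluate the normal-direction integration precisely rather than by cancellation, and track that the remainder in the Taylor expansion and the geometric error from replacing $\bx$ by its nearest boundary point $\bx_0$ are genuinely subleading. The $N > 1$ case is essentially local at the boundary, but still requires justifying that the next-order expansion selects the correct normal flux $\cA_p(\grad u) \cdot \bsnu$ rather than some tangential artifact.
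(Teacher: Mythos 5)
Your high-level plan — symmetrize to produce a bilinear form plus a boundary-layer remainder, then compute the remainder's limit via blow-up coordinates near $\p\Omega$ — is the right general shape, but there are concrete gaps that stop the argument. First, the decomposition $\int_{\Omega\setminus\wt{U}_\veps} v\,\cL_{p,\delta}^\veps u\,\rmd\bx = \cB_{p,\delta}^\veps(u,v) - E^\veps(u,v)$ with $E^\veps := \int_{\wt{U}_\veps} v\,\cL_{p,\delta}^\veps u\,\rmd\bx$ is false: adding $E^\veps$ to both sides gives $\int_\Omega v\,\cL_{p,\delta}^\veps u\,\rmd\bx = \cB_{p,\delta}^\veps(u,v)$, but the left side carries the indicator $\mathds{1}_{\{\eta>\veps\}}$ inside $\cL_{p,\delta}^\veps$ while the symmetrized $\cB_{p,\delta}^\veps$ does not. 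The actual remainder after the swap must include the integral over $\{\eta(\bx)\leq\veps\}$ of the \emph{untruncated} dual of the kernel, and this is precisely the piece that produces $\int_{\p\Omega}BF_{p,\delta}^N(\grad u,\bsnu)\,v\,\rmd\sigma$ in the limit; the $E^\veps$ you defined is identically zero when $\wt{U}_\veps = \{\eta\leq\veps\}$, so your plan has literally no boundary source. The correct bookkeeping is laid out in the paper's proof of \Cref{thm:Intro:GreensIdentity}: starting from $\cB_{p,\delta}$ over $(\Omega\setminus\wt{U}_\veps)^2$, the symmetrization isolates $\int_{\Omega\setminus\wt{U}_\veps}v\,\cL_{p,\delta}^\veps u$ and the boundary contribution is the residual integral $\int_{\wt{U}_\veps}\int_{\Omega\setminus\wt{U}_\veps}\hat\rho^\veps_{\delta,\beta-p}\,\Phi_p'(\cdots)\,\frac{v(\bx)}{|\bx-\by|}$ with $\bx$ inside the thin shell and $\by$ outside.

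Second, the asymptotic you sketch for $N>1$ is wrong in a way that would give the wrong answer. Your surviving cap $\{\bz: \bz\cdot\bsnu\gtrsim \dist(\bx,\p\Omega)/\eta_\delta(\bx)\}$ has a positive, divergent threshold for $N>1$, so it is empty; you then invoke ``next-order Taylor and curvature corrections'' to rescue a boundary flux, but this is not what happens. The admissibility condition is $\bsnu\cdot\bz < (\dist(\bx,\p\Omega)-\veps)/\eta_\delta(\bx)$, whose right side is \emph{negative} for $\bx$ in the layer and tends to $0^-$ for $N>1$ as $\dist(\bx,\p\Omega)\to\veps$. The $\bx$-integral then concentrates at the edge $\dist=\veps$ of the layer, and in the limit one gets a clean half-ball cut (the paper's $\tilde I_\veps(a)\to\mathds{1}_{\{a<0\}}$ in \Cref{thm:LocGreens:1}); no curvature correction enters, and the expansion of $\Phi_p'$ only to first order suffices (the paper shows the remainder from \eqref{eq:Taylor} vanishes because $|\wt{U}_\veps|\to 0$). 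Third, the two kernel halves — $\rho^\veps_{\delta,\beta-p}(\bx,\by)$ and $\rho^\veps_{\delta,\beta-p}(\by,\bx)$ — cannot be treated symmetrically: they require different changes of variables, admit different dominating estimates (see \Cref{lma:LocGreens:FiniteInt}), and for $N=1$ contribute the distinct halves $BF_{p,\delta}^{1,+}$ and $BF_{p,\delta}^{1,-}$ (\Cref{thm:LocGreens:1} and \Cref{thm:LocGreens:2}), which only combine to the symmetric $\ln$-kernel after the change of variables $\bz\to-\bz$ in \Cref{cor:LocGreens}. Replacing your coarea foliation with a finite cover by Lipschitz graph charts and a partition of unity, as the paper does, is what makes the Jacobian control and the dominated-convergence bounds tractable.
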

The proof of \Cref{thm:Intro:GreensIdentity} is in \Cref{sec:pfGreen}.
Observe that the normal boundary flux (BF) terms in \eqref{eq:GreensIdentity:Local} and \eqref{eq:GreensIdentity} coincide when $N > 1$, i.e., 
if a perfect match of the boundary flux between local and nonlocal problems is desired, one may let $q$ be genuinely nonlinear.
Also observe that for $N = 1$,  $BF_{p,\delta}^1(\ba,\bstheta) \to \cA_p(\ba) \cdot \bstheta$ as $\delta \to 0$, hence in all situations the boundary flux in the nonlocal Green's identity is formally consistent with the local Green's identity in the vanishing horizon limit.

Now we seek to extend \eqref{eq:GreensIdentity} to more general contexts. First, there is the definition of the nonlocal operator; the formal expression \eqref{eq:Intro:Operator} may not define an absolutely convergent integral when $\beta$ is close to $d+p$. Further, due to the translation variance of the kernel as well as its singularity at the boundary, the integral may not converge in the principal value sense, even when $u \in C^{\infty}(\overline{\Omega})$.
In order to guarantee that $\cL_{p,\delta} u$ can be defined pointwise by the integral \eqref{eq:Intro:Operator} for $\beta$ small enough,
we introduce the sufficient condition
\begin{equation}\label{assump:FullLocalization:C11}
    \eta \in W^{2,\infty}(\Omega)\,.
    \tag{\ensuremath{\rmA_{\eta}}}
\end{equation}
\begin{theorem}\label{thm:Intro:PointwiseOperator}
    Take all assumptions of \Cref{thm:Intro:GreensIdentity}, and additionally assume \eqref{assump:FullLocalization:C11} and  $\beta < d$.
    Let $u \in C^2(\overline{\Omega})$ and
    $r \in [1,\infty)$. Then $\cL_{p,\delta}^\veps u(\bx) \to \cL_{p,\delta} u(\bx)$ as $\veps \to 0$
    in $L^r(\Omega)$, where $\cL_{p,\delta} u \in L^1_{loc}(\Omega)$ is defined via the absolutely convergent integral \eqref{eq:Intro:Operator} for all $\bx \in \Omega$.
    There exists a constant $C = C(\rho,\beta,p,r) > 0$ such that
    $$
    \vnorm{\cL_{p,\delta} u}_{L^r(\Omega)} \leq C \big(\vnorm{\grad u}_{L^{r(p-1)}(\Omega)}^{p-1} 
        + \vnorm{\grad^2 u}_{L^{r(p-1)}(\Omega)}^{p-1}
        \big)\,,
    $$
    and $\cL_{p,\delta} u \to \cL_{p,0} u$ as $\delta \to 0$
    strongly in $L^r(\Omega)$.
    Moreover, 
    $\cL_{p,\delta} u = (\mathrm{d}\text{-}\cL)_{p,\delta} u$ in the space $[\mathfrak{W}^{\beta,p}[\delta;q](\Omega)]^*$, i.e. the nonlocal Green's identity \eqref{eq:GreensIdentity} holds for all $u \in C^2(\overline{\Omega})$ and all $v \in \mathfrak{W}^{\beta,p}[\delta;q](\Omega)$ with $(\mathrm{d}\text{-}\cL)_{p,\delta} u$ replaced with $\cL_{p,\delta} u$. 
\end{theorem}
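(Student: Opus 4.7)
The approach is to first establish an $\bx$-pointwise, absolutely convergent formula for $\cL_{p,\delta} u(\bx)$ with a bound that is $L^r$-summable in $\bx$, then to deduce the strong $L^r$ convergence $\cL_{p,\delta}^\veps u \to \cL_{p,\delta} u$ by dominated convergence, and finally to match this strong limit against the weak-$*$ limit $(\mathrm{d}\text{-}\cL)_{p,\delta} u$ supplied by \Cref{thm:Intro:GreensIdentity}. For the pointwise step, fix $\bx \in \Omega$ and change variables $\by = \bx + \bz$. By \eqref{assump:NonlinearLocalization}, \eqref{assump:Localization}, and \eqref{eq:HorizonThreshold}, the ball $B(\bx,\eta_\delta(\bx))$ lies in $\Omega$, so the first kernel $K_1(\bx,\bx+\bz) := \rho(|\bz|/\eta_\delta(\bx))/\eta_\delta(\bx)^{d+p-\beta}$ is even in $\bz$ on its support. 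Substituting the Taylor expansion $u(\bx+\bz) - u(\bx) = \grad u(\bx) \cdot \bz + R_u(\bx,\bz)$ with $|R_u| \leq \|u\|_{C^2}|\bz|^2$ into $\Phi_p'$ and using the estimate $|\Phi_p'(a+b) - \Phi_p'(a)| \leq C|b|(|a|^{p-2} + |b|^{p-2})$ valid for $p \geq 2$ by \eqref{eq:assump:Phi}, the leading term $\Phi_p'(-\grad u(\bx)\cdot \bz/|\bz|)$ is odd in $\bz$ and integrates to zero against the even kernel, while the remainder produces an integrand of order $|\bz|^{p-\beta}/\eta_\delta(\bx)^{d+p-\beta}$ which is locally integrable in $\bz$ and contributes a uniform-in-$\bx$ amount of order $|\grad u(\bx)|^{p-2}\|u\|_{C^2}$.

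The second kernel $K_2(\bx,\bx+\bz) := \rho(|\bz|/\eta_\delta(\bx+\bz))/\eta_\delta(\bx+\bz)^{d+p-\beta}$ is not even in $\bz$, so direct odd-cancellation fails; this is the main technical obstacle. Here, I would invoke \eqref{assump:FullLocalization:C11} to Taylor-expand $\eta_\delta(\bx+\bz) = \eta_\delta(\bx) + \grad \eta_\delta(\bx) \cdot \bz + O(\|\eta\|_{W^{2,\infty}}|\bz|^2)$, and use the Lipschitz bound $|\eta_\delta(\bx) - \eta_\delta(\by)| \leq |\bx-\by|/3$ from \eqref{eq:HorizonThreshold} to conclude $\eta_\delta(\by) \sim \eta_\delta(\bx)$ throughout the support. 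Writing $K_2 = K_1 + (K_2-K_1)$, the first summand is treated by the previous paragraph, while the difference gains a factor of $|\bz|/\eta_\delta(\bx)$ from the first-order expansion of $\eta_\delta$ and of $\rho$; pairing this gain with the leading odd-in-$\bz$ piece of $\Phi_p'$ restores enough symmetry to kill one further order in $\bz$. After these cancellations, one arrives at a pointwise bound
$$|\cL_{p,\delta} u(\bx)| \leq C\big( |\grad u(\bx)|^{p-1} + |\grad^2 u(\bx)|^{p-1} \big),$$
with $C$ independent of $\bx \in \Omega$; the delicate bookkeeping is to ensure that the $\eta_\delta(\bx)^{-\gamma}$ factors appearing in intermediate estimates cancel against the $|\bz|$-gains from the Taylor remainders together with the normalization $\int K_1(\bx,\bx+\bz)|\bz|^{p-\beta}\,\rmd \bz \sim 1$, and the hypothesis $\beta < d$ enters precisely to keep the remaining singularities manageable so that no boundary-singular factor survives. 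Young's inequality applied to the mixed term $|\grad u|^{p-2}|\grad^2 u|$ then upgrades this to the stated $L^r$-estimate; since the same majorant dominates $\cL_{p,\delta}^\veps u(\bx)$ uniformly in $\veps$, dominated convergence delivers pointwise convergence $\cL_{p,\delta}^\veps u(\bx) \to \cL_{p,\delta} u(\bx)$ and then strong convergence in $L^r(\Omega)$.

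For the localization limit, rescale $\bz = \eta_\delta(\bx)\bsxi$ in the integral defining $\cL_{p,\delta} u(\bx)$: the kernel becomes independent of $\delta$ and the Taylor expansion of $u$ delivers pointwise convergence to $\cL_{p,0}u(\bx)$ as given in \eqref{eq:LocalizedObjectsDef}, which the uniform majorant above upgrades to strong $L^r$ convergence on $\Omega$. Finally, to identify $\cL_{p,\delta} u$ with $(\mathrm{d}\text{-}\cL)_{p,\delta} u$ in $[\mathfrak{W}^{\beta,p}[\delta;q](\Omega)]^*$, I pair both against an arbitrary $v \in \mathfrak{W}^{\beta,p}[\delta;q](\Omega) \hookrightarrow L^p(\Omega)$; choosing $r$ large enough that $v \in L^{r'}(\Omega)$, the $L^r$-convergence from the previous paragraph yields $\int_{\Omega \setminus \wt{U}_\veps} \cL_{p,\delta}^\veps u \cdot v \,\rmd \bx \to \int_{\Omega} \cL_{p,\delta} u \cdot v \,\rmd \bx$, with the contribution from $\wt{U}_\veps$ vanishing thanks to $|\wt{U}_\veps| \to 0$ and the uniform $L^r$-boundedness of $\cL_{p,\delta}^\veps u$. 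Comparing with the defining property of $(\mathrm{d}\text{-}\cL)_{p,\delta}$ in \Cref{thm:Intro:GreensIdentity} completes the identification and promotes the nonlocal Green's identity to a pointwise statement.
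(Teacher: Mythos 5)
Your high-level strategy mirrors the paper's — exploit oddness for the $\eta_\delta(\bx)$-based kernel, treat the $\eta_\delta(\by)$-based kernel as a correction, and upgrade pointwise bounds to $L^r$ via dominated convergence — but the treatment of the second kernel has a genuine gap that the paper's argument is specifically designed to avoid. You write $K_2 = K_1 + (K_2 - K_1)$ with Taylor expansion of $\eta_\delta(\by)$ around $\bx$, and claim that the first-order piece of $K_2 - K_1$, paired with the leading odd part of $\Phi_p'$, ``restores enough symmetry to kill one further order in $\bz$.'' This is not correct: the linear-in-$\bz$ part $\partial_\eta[\rho(|\bz|/\eta)/\eta^{d+p-\beta}]\,\grad\eta_\delta(\bx)\cdot\bz$ is \emph{odd} in $\bz$, and $\Phi_p'\!\left(-\grad u(\bx)\cdot\bz/|\bz|\right)$ is also odd, so their product is \emph{even} in $\bz$ and the resulting integral is generically nonzero. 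Scaling shows that this term, together with the contribution from the region where only one of $K_1,K_2$ is supported (the supports $\{|\bz|<\eta_\delta(\bx)\}$ and $\{|\bz|<\eta_\delta(\by)\}$ differ, which your decomposition must also track), is of size $O(\delta/\eta_\delta(\bx))$ — not uniformly bounded near $\p\Omega$. These two boundary-singular contributions must cancel to leading order, but nothing in your sketch exhibits that cancellation.

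The paper sidesteps this entirely by using the change of variables $\bsupsilon_\bx^\delta(\by) = \frac{\by - \bx}{\eta_\delta(\by)}$ from \Cref{lma:CoordChange1}. After rewriting $\rho_{\delta,\beta-p}(\by,\bx)/|\bx-\by|$ in terms of $\bsupsilon_\bx^\delta$, the factor $\eta_\delta(\by)^{-(d+1)}$ is split \emph{exactly} as
\[
\frac{\eta_\delta(\bx)}{\eta_\delta(\by)^{d+1}} = \det\grad\bsupsilon_\bx^\delta(\by) + \frac{\eta_\delta(\bx) - \eta_\delta(\by) - \grad\eta_\delta(\by)\cdot(\bx-\by)}{\eta_\delta(\by)^{d+1}}\,.
\]
The $\det\grad\bsupsilon$ piece is precisely the Jacobian needed to map to $B(0,1)$, where the integrand against $\Phi_p'$ is odd and vanishes; what survives is $\cD^1_{p,\delta}[u]$ (\Cref{lma:PointwiseOperator:Term1}), whose weight is the \emph{second-order} Taylor remainder of $\eta_\delta$ based at $\by$. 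Under \eqref{assump:FullLocalization:C11} this remainder is $O(|\bx-\by|^2)$, which kills the $1/\eta_\delta(\bx)$ singularity and gives $|\cD^1_{p,\delta}[u](\bx)| \leq C\delta|\grad u(\bx)|^{p-1}$ uniformly. Your decomposition, based at $\bx$, does not produce this second-order remainder; the linear term does not vanish and you are left to justify the implicit cancellation against the support-mismatch region. To repair your argument you would essentially have to reproduce \Cref{lma:CoordChange1} and the proof of \Cref{cor:PointwiseOperator:Summary}. The remaining parts of your sketch — the Taylor splitting of $\Phi_p'$ yielding the $\cD^2$-type term (\Cref{lma:PointwiseOperator:Term2}), the $\delta\to0$ rescaling for the localization limit, and the $L^r$-against-$L^{r'}$ pairing to identify the distributional and pointwise operators — are consistent with what the paper does and are fine in outline.
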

Each of the statements in \Cref{thm:Intro:PointwiseOperator} holds in a more general context; see \Cref{subsec:PointwiseOperator} for details.
See also this section for a list of different possible sufficient conditions on $\eta$ that guarantee \eqref{assump:FullLocalization:C11}.
We note here that the proof of the localization result simplifies and generalizes the same calculations for the fractional $p$-Laplacian carried out in \cite{ishii2010class}.

To describe a second generalization of \eqref{eq:GreensIdentity}, in the linear case of $\Phi_2(t) = \frac{t^2}{2}$ we extend the domain of the operator $\cL_{2,\delta}$ from $C^2(\overline{\Omega})$ to $W^{2,r}(\Omega)$ for any $r \in [1,\infty)$ by defining a distributional form. This definition allows the treatment of singularity on the diagonal, and also implies a Green's second identity; see \Cref{thm:WeakOperator}.

\subsection{Boundary-localized convolutions}\label{sec:LocalizedConvolution}

An essential tool in the analysis of related function spaces and minimization problems in \cite{scott2023nonlocal}, and thus the analysis of the nonlocal operators and regularity of solutions in this work, is the \textit{boundary-localized convolution} operator
\begin{equation}\label{eq:ConvolutionOperator}
	K_{\delta}u (\bx) = K_\delta[\lambda,q,\psi](\bx) := \int_{\Omega} \frac{1}{(\eta_\delta[\lambda,q](\bx))^d} \psi \left( \frac{|\by-\bx|}{ \eta_\delta[\lambda,q](\bx) } \right) u(\by) \,\rmd \by , \; \bx \in \Omega\,.
\end{equation}
Here, $\psi:\bbR \to [0,\infty)$ is a standard mollifier with $\eta_\delta[\lambda,q]$ defined as before.
Specifically, $\psi$ is assumed to be a nonnegative even function that satisfies
for some $ k \in \bbN_0 \cup \{\infty\}$
\begin{equation}\label{Assump:Kernel}
    \begin{gathered}
    \psi \in C^{k}(\bbR), \;
    [-c_\psi,c_{\psi}] \subset \supp \psi \Subset (-1,1) \text{ for fixed } c_{\psi} > 0, \; 
    \int_{\bbR^d} \psi(|\bx|) \, \rmd \bx = 1.
    \end{gathered}
    \tag{\ensuremath{\rmA_{\psi}}}
\end{equation}
The boundary-localized convolution $K_\delta[\lambda,q,\psi]$ is named as such in \cite{scott2023nonlocal} because it has all of the smoothing properties of classical convolution operators, and additionally recovers the boundary values of a function. 
Here, we also use its formal adjoint
\begin{equation}\label{eq:ConvolutionOperator:Adj}
    K_\delta^*u(\bx) = K_\delta^* [\lambda,q,\psi](\bx) := \int_{\Omega} \frac{1}{(\eta_\delta[\lambda,q](\by))^d} \psi \left( \frac{|\by-\bx|}{ \eta_\delta[\lambda,q](\by) } \right) u(\by) \,\rmd \by , \; \bx \in \Omega.
\end{equation}
The same convention used to abbreviate the heterogeneous localization $\eta_\delta[\lambda,q] = \eta_\delta$ is applied here to abbreviate 
$K_\delta[\lambda,q,\psi]$ and $K_\delta^*[\lambda,q,\psi]$
as $K_\delta$ and $K_\delta^*$ respectively.

We build on the analysis of $K_\delta$ in \cite{scott2023nonlocal}
(see a summary of the relevant properties in \Cref{thm:KnownConvResults})
to study the adjoints $K_\delta^*$, as well as the more general convolution-type operators $J_{\delta,\beta}$ introduced later in \Cref{sec:BdyLocalizedConv}.

\subsection{Nonlocal boundary-value problems}\label{sec:varprobs}
The Green's identity allows us to formally identify boundary-value problems with corresponding weak formulations. This identification can be made rigorous in some contexts by using \Cref{thm:Intro:PointwiseOperator},
but nevertheless it can additionally be shown that these weak formulations are well-posed in a more general context, without requiring the operator $\cL_{p,\delta}$ to be a defined via an integral.
Thus, we continue our discussion formally, keeping in mind the prior results. The nonlocal equation we consider has principal part $\cL_{p,\delta}$, and is defined as
\begin{equation}\label{eq:Fxnal}
    \cL_{p,\delta} u + \mu K_\delta^* [\bar{\lambda},q,\psi] \big( \ell'(K_\delta[\bar{\lambda},q,\psi] u) \big) = f\,,
\end{equation}
where $\mu \geq 0$ is a constant, and we assume that
$\psi$ satisfies \eqref{Assump:Kernel} for $k_\psi \geq 1$ and $\bar{\lambda}$ satisfies \eqref{assump:Localization}.
The latter may not necessarily be equal to the $\lambda$ used for the nonlocal functional given in 
\eqref{eq:Intro:Varprob:Energy}.
The  map $\ell \in C^1(\bbR)$ and its derivative 
$\ell'$ are assumed to satisfy continuity and growth assumptions given in \Cref{subsec:BVPReview}.
By introducing the convolution $K_\delta$, this semilinear term allows us to consider lower-order terms that, without mollification, may not be continuous in the nonlocal function space. 

We treat boundary-value problems associated to \eqref{eq:Fxnal}, using the Green's identity to place each of them into correspondence with a variational form. There exists a unique variational, or weak, solution of each equation, since the variational form can be realized as an Euler-Lagrange equation associated to a functional studied in \cite{scott2023nonlocal}. In the same way, in the vanishing horizon limit $\delta \to 0$, the $\Gamma$-convergence results of \cite{scott2023nonlocal} assure
the strong $L^p(\Omega)$-convergence of weak solutions to a weak solution of the corresponding local boundary-value problem, as summarized in \Cref{subsec:BVPReview}.

The first nonlocal problem we treat is one with an inhomogeneous Dirichlet-type constraint on $\p \Omega$, or more generally $\p \Omega_D$, a $\sigma$-measurable subset of $\p \Omega$ with a positive measure $\sigma(\p \Omega_D) > 0$.
Let $g \in W^{1-1/p,p}(\p \Omega_D)$, and define the admissible set 
$$
\mathfrak{W}^{\beta,p}_{g,\p \Omega_D}[\delta;q](\Omega) := \{ u \in \mathfrak{W}^{\beta,p}[\delta;q](\Omega) \, : \, u = g \text{ on } \p \Omega_D \text{ in the trace sense } \}\,.
$$
Given $f \in [\mathfrak{W}^{\beta,p}[\delta;q](\Omega)]^*$, we say that $u \in \mathfrak{W}^{\beta,p}_{g,\p \Omega_D}[\delta;q](\Omega)$ is a weak solution of 
\begin{equation}\label{eq:BVP:Dirichlet}
    \cL_{p,\delta} u + \mu K_\delta^*[ \ell'(K_\delta u)] = f \text{ in } \Omega\,, \qquad 
    \begin{aligned}
        Tu &= g \text{ on } \p \Omega_D\,, \\
        BF_{p,\delta}^N(\grad u,\bsnu) &= 0 \text{ on } \p \Omega \setminus \p \Omega_D\,.
    \end{aligned}
\end{equation}
if $u$ is in the admissible set $\mathfrak{W}^{\beta,p}_{g,\p \Omega_D}[\delta;q](\Omega)$ and satisfies
\begin{equation}\label{eq:BVP:Dirichlet:Weak}
\begin{gathered}
    \cB_{p,\delta}(u,v) + \mu \int_{\Omega} \ell'(K_\delta u) K_\delta v \, \rmd \bx  = \vint{f,v}\,, \quad \forall \, v \in \mathfrak{W}^{\beta,p}_{0,\p \Omega_D}[\delta;q](\Omega)\,, 
\end{gathered}
\tag{\ensuremath{\rmD_\delta}}
\end{equation}
where $\vint{\cdot,\cdot}$ denotes the natural duality pairing.
A special case is when $g \equiv 0$ on $\p \Omega_D$, for which we consider solutions in the Banach space
\begin{equation}\label{eq:HomNonlocSpDef}
	\begin{split}
		\mathfrak{W}^{\beta,p}_{0,\p \Omega_D}[\delta;q](\Omega) := \{ \text{closure of } C^{1}_c(\overline{\Omega} \setminus \p \Omega_D) \text{ with respect to } \Vnorm{\cdot}_{\mathfrak{W}^{\beta,p}[\delta;q](\Omega)} \}\,.
	\end{split}
\end{equation}
Then we say that $u$ is a weak solution in the admissible space $\mathfrak{W}^{\beta,p}_{0,\p \Omega_D}(\Omega)$
of \eqref{eq:BVP:Dirichlet} with Poisson data $f \in [\mathfrak{W}^{\beta,p}_{0,\p \Omega_D}[\delta;q](\Omega)]^*$ and $g=0$ if $u\in \mathfrak{W}^{\beta,p}_{0,\p \Omega_D}(\Omega)$  satisfies \eqref{eq:BVP:Dirichlet:Weak}.

We also treat problems with a pure Neumann boundary condition. 
Given $f \in [W^{1,p}(\Omega)]^*$ and $g \in [W^{1-1/p,p}(\p \Omega)]^*$, consider the problem
\begin{equation}\label{eq:BVP:Neumann}
    \cL_{p,\delta} u = f \text{ in } \Omega\,, \qquad BF_{p,\delta}^N(\grad u,\bsnu) = g \text{ on } \p \Omega\,.
\end{equation}
It is apparent that constants satisfy the homogeneous equation, and from the Green's identity that the compatibility condition $\vint{f,1} + \vint{g,1} = 0$ is required for the existence of a solution. Here, $\vint{g,v}$ also denotes the natural duality pairing for objects defined on $\p \Omega$.
Thus, we take the class of admissible solutions to be the nonlocal space
$$
\mathring{\mathfrak{W}}^{\beta,p}[\delta;q](\Omega) := \{ u \in \mathfrak{W}^{\beta,p}[\delta;q](\Omega) \, : \, (u)_\Omega = 0 \}\,,
$$
where $(u)_\Omega = \frac{1}{|\Omega|} \int_{\Omega} u(\bx) \, \rmd \bx = \fint_{\Omega} u(\bx) \, \rmd \bx $ denotes the integral average of $u$ over $\Omega$.
We say that $u$ is a weak solution in the admissible space $\mathring{\mathfrak{W}}^{\beta,p}[\delta;q](\Omega)$ of \eqref{eq:BVP:Neumann} if 
\begin{equation}\label{eq:BVP:Neumann:Weak}
\begin{gathered}
    \cB_{p,\delta}(u,v) = \vint{f,v} + \vint{g,v}\,, \quad \forall \, v \in \mathring{\mathfrak{W}}^{\beta,p}[\delta;q](\Omega)\,.
\end{gathered}
\tag{\ensuremath{\rmN_\delta}}
\end{equation}

The final type of nonlocal problem is one with a Robin-type constraint.
Given $b \in L^{\infty}(\p \Omega)$, $f \in [W^{1,p}(\Omega)]^*$, and $g \in [W^{1-1/p,p}(\p \Omega)]^*$, we say that $u \in \mathfrak{W}^{\beta,p}[\delta;q](\Omega)$ is a weak solution of
\begin{equation}\label{eq:BVP:Robin}
    \cL_{p,\delta} u + \mu K_\delta^*[ \ell'(K_\delta u)] = f \text{ in } \Omega\,, \qquad BF_{p,\delta}^N(\grad u,\bsnu) + b \Phi_p'(u) = g \text{ on } \p \Omega\,,
\end{equation}
if $u$ in the admissible space $\mathfrak{W}^{\beta,p}[\delta;q](\Omega)$ satisfies
\begin{equation}\label{eq:BVP:Robin:Weak}
\begin{gathered}
    \cB_{p,\delta}(u,v) + \mu \int_{\Omega} \ell'(K_\delta u) K_\delta v \, \rmd \bx + \int_{\p \Omega} b \Phi_p'(Tu) Tv \, \rmd \sigma = \vint{f,v} + \vint{g,v}\,, \\ 
    \forall \, v \in \mathfrak{W}^{\beta,p}[\delta;q](\Omega)\,.
\end{gathered}
\tag{\ensuremath{\rmR_\delta}}
\end{equation}

As $\delta \to 0$, solutions to these nonlocal problems converge to the solution of the corresponding local boundary value problem with principal part $\cB_{p,0}$. 
This is a consequence of the variational analysis performed in \cite{scott2023nonlocal}.
Note that much more general lower-order terms could be treated using the analysis of \cite{scott2023nonlocal}, but to simplify the presentation in this work we consider the simplified form above, which still captures the essence of the analysis.
For instance, some subtleties remain in the convergence results for solutions to problems with nonconvex semilinearities. We will cover such questions in a future work, and be content here with analyzing boundary-value problems for which the solution is unique, and hence is a minimizer of an energy functional.
This result is in the same spirit as the program carried out in \cite{ponce2004new, mengesha2015VariationalLimit}, in which nonlocal models are shown to be consistent with appropriate classical counterparts.

\subsection{Examples}\label{subsec:Examples}

To demonstrate the scope of our analysis, we present some examples related to $p$-Laplace-type operators.
First, we note that the identity 
\begin{equation}\label{eq:GammaSphereInt}
        \fint_{\bbS^{d-1}} |\ba \cdot \bsomega|^{p} \, \rmd \sigma(\bsomega) = \frac{1}{ \overline{C}_{d,p}} |\ba|^p\,, \quad \ba \in \bbR^d\,,
\end{equation}
can be seen by transforming the right-hand side via spherical coordinates into a beta integral.
Then, we set $\Phi_p(t) = \frac{|t|^p}{p}$
and let $\rho$ be a kernel satisfying $\bar{\rho}_{p-\beta} = \overline{C}_{d,p}$. By
performing a variation with respect to $\ba$ in  \eqref{eq:GammaSphereInt}, we get
\begin{equation}\label{eq:MonotoneFormDerivation}
    \cA_p(\ba) =  \bar{\rho}_{p-\beta} \fint_{\bbS^{d-1}} |\ba \cdot \bsomega|^{p-2}(\ba \cdot \bsomega) \bsomega \, \rmd \sigma(\bsomega) = \frac{ \bar{\rho}_{p-\beta} }{ \overline{C}_{d,p} } |\ba|^{p-2}  \ba = |\ba|^{p-2}  \ba
    \,;
\end{equation}
these are used implicitly in the following examples.

\textit{Example 1: Dirichlet constraints for a fractional $p$-Laplace equation.}
Under the assumptions of \Cref{thm:Intro:GreensIdentity}, take $p \geq 2$, 
set $\beta = d+sp$ for some $s \in (0,1)$ and assume that $N > 1$.
Then the Green's identity in this case is
\begin{equation*}
    \lim\limits_{\veps \to 0} \vint{\cL_{p,\delta}^\veps u,v} = \cB_{p,\delta}(u,v) - \int_{\p \Omega} |\grad u|^{p-2} \frac{\p u}{\p \bsnu} v \, \rmd \sigma\,.
\end{equation*}
In the context of \eqref{eq:BVP:Dirichlet:Weak}, let $\mu = 0$, $\p \Omega_D = \p \Omega$, let $g = 0$ and let $f \in [\mathfrak{W}^{d+sp,p}_{0, \p \Omega}[\delta;q] (\Omega)]^*$. 
The Dirichlet problem obtained from the Green's identity for $u \in \mathfrak{W}^{d+sp,p}_{0,\p \Omega}[\delta;q](\Omega)$ is 
\begin{equation*}
    \begin{split}
        \int_{\Omega} \int_{\Omega} \frac{ \rho_\delta(\bx,\by) }{ \eta_\delta(\bx)^{(1-s)p} } \frac{ |u(\bx)-u(\by)|^{p-2} (u(\bx)-u(\by)) (v(\bx) - v(\by)) }{ |\bx-\by|^{d+sp} } \, \rmd \by \, \rmd \bx = \vint{ f, v }\,,
    \end{split}
\end{equation*}
for all $v \in \mathfrak{W}^{d+sp,p}_{0,\p \Omega}[\delta;q](\Omega)$. Solutions $u_\delta$ to this problem converge as $\delta \to 0$ in $L^p(\Omega)$ to a weak solution $u \in W^{1,p}_{0,\p \Omega}(\Omega)$ of
\begin{equation*}
    \int_{\Omega} |\grad u|^{p-2} \grad u \cdot \grad v \, \rmd \bx = \vint{f,v}\,, \qquad \forall \, v \in W^{1,p}_{0,\p \Omega}(\Omega)\,.
\end{equation*}

\textit{Example 2: A linear Neumann problem.}
Under the assumptions of \Cref{thm:Intro:GreensIdentity}, 
we take $p = 2$, and $\beta < d$, and assume additionally that \eqref{assump:FullLocalization:C11} holds. 
We abbreviate, in this special case, the operator $\cL_{2,\delta}$ as $\cL_\delta$, i.e., for $u:\Omega\to \mathbb{R}$,
\begin{equation}\label{eq:LinearOperatorDefinition}
    \cL_{\delta} u(\bx) := \int_\Omega 
    \left(  
    \frac{ \rho \left(  \frac{ |\by-\bx| }{ \eta_\delta(\bx) } \right) }{ \eta_\delta(\bx)^{d+2-\beta} }
    +     \frac{ \rho \left(  \frac{ |\by-\bx| }{ \eta_\delta(\by) } \right) }{ \eta_\delta(\by)^{d+2-\beta} }
    \right)
    \frac{ u(\bx)-u(\by) }{ |\bx-\by|^{\beta} } \, \rmd \by\,,
\end{equation}
which is an absolutely convergent integral for $u \in C^2(\overline{\Omega})$ by \Cref{thm:Intro:PointwiseOperator}, and $\cL_\delta u \to - \frac{\bar{\rho}_{2-\beta}}{ \overline{C}_{d,2} } \Delta u = - \Delta u$ in $L^r(\Omega)$.
In this case $\cB_{2,\delta}$ becomes a bilinear form, which we also abbreviate as $\cB_{2,\delta}(u,v) = \cB_\delta(u,v)$. 
Then the Green's identity \eqref{eq:GreensIdentity} in this case is
\begin{equation}\label{eq:Intro:GreensIdentity:LinearForm}
    \int_{\Omega} \cL_{\delta }u \, v \, \rmd \bx = \cB_{\delta}(u,v) - A_\delta^N \int_{\p \Omega} \frac{\p u}{\p \bsnu} v \, \rmd \sigma\,,
\end{equation}
where the constant $A_{\delta}^N$ is defined as
    \begin{equation}
    \begin{split}
        A_\delta^N := \frac{\bar{\rho}_{2-\beta}}{ \overline{C}_{d,2} } =1 \text{ for } N > 1, \text{ and } A_{\delta}^1 := 
        \int_{B(0,1)} \frac{ \ln \left( \frac{ 1+\delta q'(0) z_d }{  1-\delta q'(0) z_d } \right)  }{ 2 \delta q'(0) |\bz| }  \frac{ \rho(|\bz|) }{ |\bz|^{\beta-2} } \frac{z_d}{|\bz|}  \, \rmd \bz \,.
    \end{split}
    \end{equation}
The constant $A_{\delta}^1$ can be obtained from $BF_{2,\delta}^1$ via a rotational change of coordinates.
Note that when $q'(0) > 0$, $A_\delta^1 > 1$ for  all $\delta>0$, but as $\delta \to 0$, 
 $A_\delta^1 \to \frac{\bar{\rho}_{2-\beta}}{\overline{C}_{d,2}} = 1$, which is consistent.

Consider \eqref{eq:BVP:Neumann:Weak} with $p=2$; let $g \in [W^{1/2,2}(\p \Omega)]^*$ and let $f \in [\mathfrak{W}^{\beta,2}[\delta;q](\Omega)]^*$. 
Then the Neumann problem we obtain from the Green's identity is
\begin{equation*}
    \begin{split}
        \int_{\Omega} \int_{\Omega} \frac{ \rho \left(  \frac{ |\by-\bx| }{ \eta_\delta(\bx) } \right) }{ \eta_\delta(\bx)^{d+2-\beta} } \frac{ (u(\bx)-u(\by)) (v(\bx) - v(\by)) }{ |\bx-\by|^\beta } \, \rmd \by \, \rmd \bx = \vint{ f, v } +  \vint{g,v}\,,
    \end{split}
\end{equation*}
for all $v \in \mathring{\mathfrak{W}}^{\beta,2}[\delta;q](\Omega)$. Solutions $u_\delta \in \mathring{\mathfrak{W}}^{\beta,2}[\delta;q](\Omega)$ to this problem converge as $\delta \to 0$ in $L^2(\Omega)$ to a function $u \in \mathring{W}^{1,2}(\Omega)$ that satisfies
\begin{equation*}
    \cB_0(u,v) := \int_{\Omega} \grad u \cdot \grad v \, \rmd \bx = \vint{f,v} + \vint{g,v}\,, \qquad \forall \, v \in \mathring{W}^{1,2}(\Omega)\,.
\end{equation*}

\subsection{Regularity and improved convergence for a linear problem}\label{subsec:Intro:LinearProblem}

In the previous example, the operator $\cL_\delta u$ converges strongly, and moreover can be separated since the diagonal singularity is integrable.
Therefore as a consequence of the Green's identity, if $u_\delta$ is a weak solution of $\cL_\delta u_\delta = f$, then it satisfies $\cL_\delta u_\delta = f$ in the strong sense, and therefore satisfies, for $K_\delta = K_\delta[\lambda,q,|\cdot|^{-\beta} \rho(\cdot)]$ and $K_\delta^* = K_\delta^*[\lambda,q,|\cdot|^{-\beta} \rho(\cdot)]$,
\begin{equation*}
    u(\bx) = \frac{ K_\delta u(\bx) + K_\delta^*(\eta_\delta^{-2} u)(\bx) + f(\bx) }{ \bar{\rho}_{-\beta} + K_\delta^*(\eta_\delta^{-2})(\bx) }\,.
\end{equation*}
The right-hand side is regularized thanks to the smoothness and weighted integrability properties of the boundary-localized convolutions, their adjoints, and related operators established in \Cref{sec:BdyLocalizedConv}, so long as the kernel $\rho$ is smooth.
In fact, for a more general class of Poisson data $f$ possibly varying in $\delta$, solutions $u_\delta$ actually belong to $W^{1,2}(\Omega)$, and converge weakly in $W^{1,2}(\Omega)$ to the solution $u$ of the local problem.

\begin{theorem}
    Take all assumptions of \Cref{thm:Intro:GreensIdentity}, with $p = 2$, $0 \leq \beta < d - 1$, and let $\cL_\delta$ be defined as in \eqref{eq:LinearOperatorDefinition}.
    Assume additionally that $k_\psi \geq 2$ in \eqref{eq:Fxnal}, and
    the kernel $\rho$ in $\cL_\delta u$ belongs to $C^1(\bbR)$ and satisfies $\bar{\rho}_{2-\beta} = \overline{C}_{d,2}$. 
    Let $f \in [W^{1,2}(\Omega)]^*$, and let $\{f_\delta\}_\delta$ be a sequence of regularizations of $f$ (made precise in \Cref{sec:Convergence} below) that converge to $f$ weakly in $[W^{1,2}(\Omega)]^*$ and satisfy certain weighted Sobolev estimates in \eqref{eq:H1Convergence:RHSEstimate}. 
    Let $\{u_\delta\}$ be the sequence of solutions in the respective admissible set to either \eqref{eq:BVP:Dirichlet:Weak}, \eqref{eq:BVP:Neumann:Weak}, or \eqref{eq:BVP:Robin:Weak} with Poisson data $f_\delta$ and boundary data $g$.
    Then there exists a constant $C$ independent of $\delta$ and the solutions $\{ u_\delta \}_\delta$ such that
    \begin{equation*}
        \Vnorm{u_\delta}_{W^{1,2}(\Omega)} \leq C\,.
    \end{equation*}
    Consequently, there exists a subsequence (not relabeled) $\{u_\delta \}_\delta$ that converges weakly in $W^{1,2}(\Omega)$ to a function $u$, and $u$ is the unique weak solution in the respective admissible set to either \eqref{eq:BVP:Dirichlet:Local}, \eqref{eq:BVP:Neumann:Local}, or \eqref{eq:BVP:Robin:Local}, with Poisson data $f$ and boundary data $g$. 
\end{theorem}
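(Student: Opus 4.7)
The plan is to combine a uniform energy bound with a bootstrap to $W^{1,2}$ via the pointwise representation of $u_\delta$ displayed just above the theorem, and then extract a weak subsequential $W^{1,2}$-limit which the variational framework of \cite{scott2023nonlocal} identifies with the local solution. First, testing the relevant weak formulation \eqref{eq:BVP:Dirichlet:Weak}, \eqref{eq:BVP:Neumann:Weak}, or \eqref{eq:BVP:Robin:Weak} with $u_\delta$ (minus an extension of $g$ in the Dirichlet case and minus the mean in the Neumann case) and invoking the coercivity of $\cB_{2,\delta}$ on the respective admissible space from \cite{scott2023nonlocal}, together with the uniform dual-space bounds on $f_\delta$ and $g$, yields
\begin{equation*}
\Vnorm{u_\delta}_{L^2(\Omega)}^2 + [u_\delta]_{\mathfrak{W}^{\beta,2}[\delta;q](\Omega)}^2 \leq C\,,
\end{equation*}
uniformly in $\delta$.

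Next, since $\beta < d-1 < d$, \Cref{thm:Intro:PointwiseOperator} applies and the Green's identity upgrades the weak equation to $\cL_\delta u_\delta = f_\delta$ almost everywhere in $\Omega$. Isolating $u_\delta$ in the pointwise identity yields the representation displayed just above the theorem; since $K_\delta^*(\eta_\delta^{-2}) \geq 0$ pointwise, the denominator is bounded below by $\bar{\rho}_{-\beta} > 0$ uniformly in $\delta$. Applying $\grad$ to both sides, every resulting term is a boundary-localized convolution, its adjoint, or a related auxiliary operator $J_{\delta,\beta}$ acting on $u_\delta$ or $f_\delta$. The smoothness and weighted Sobolev mapping estimates for these operators developed in \Cref{sec:BdyLocalizedConv} --- which rely on the hypotheses $k_\psi \geq 2$, $\rho \in C^1$, and the additional integrability afforded by $\beta < d-1$ --- together with the weighted estimate \eqref{eq:H1Convergence:RHSEstimate} on $f_\delta$, give the $\delta$-uniform bound $\Vnorm{\grad u_\delta}_{L^2(\Omega)} \leq C$. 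This is the main obstacle: the weight $\eta_\delta^{-2}$ is singular at $\p \Omega$, so the term $K_\delta^*(\eta_\delta^{-2} u_\delta)$ must be handled with care. The kernel of $K_\delta^*$ is supported where $|\by-\bx| < \eta_\delta(\by)$, and this boundary localization is precisely what tames the singularity in an integrated sense; controlling its \emph{gradient} uniformly in $\delta$ is the technical heart of the argument, and the restriction $\beta < d-1$ is dictated by the requirement that one additional derivative can be absorbed by the kernel singularity.

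Finally, Banach--Alaoglu yields a subsequence with $u_\delta \rightharpoonup u$ weakly in $W^{1,2}(\Omega)$, and Rellich--Kondrachov gives $u_\delta \to u$ strongly in $L^2(\Omega)$. Passing to the limit in the weak formulation using the variational convergence of $\cB_{2,\delta}$ to $\cB_{2,0}$ from \cite{scott2023nonlocal}, the strong $L^2$-convergence $K_\delta w \to w$ needed for the lower-order semilinear term, and the assumed weak convergence $f_\delta \rightharpoonup f$ in $[W^{1,2}(\Omega)]^*$, identifies $u$ as a weak solution of the corresponding local boundary value problem. Uniqueness of the local solution in each case (standard for the linear elliptic problems with the stated data) forces convergence of the full sequence, not merely a subsequence.
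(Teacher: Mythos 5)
Your proposal mirrors the paper's strategy closely: uniform energy bound from the well-posedness theorems, upgrade the weak equation to a pointwise identity $\cL_\delta u_\delta = G_\delta[u_\delta]$ almost everywhere, isolate $u_\delta$ as a fixed point of boundary-localized convolution-type operators, differentiate and apply the weighted mapping estimates from \Cref{sec:BdyLocalizedConv} (in particular \eqref{eq:OperatorConv:Energy}, \eqref{eq:ConvEst2:Adj:0:neg1}, \eqref{eq:ConvEst2:Adj:1:neg1}) to obtain the $\delta$-uniform $W^{1,2}$ bound, then pass to the limit via the bilinear-form convergence. Your explanation of why $\beta < d-1$ is needed --- so that one extra derivative of $J_{\delta,\beta}$ can be paid for by the kernel, i.e., $\alpha+1 < d$ with $\alpha = \beta$ in \eqref{eq:OperatorConv:Energy} --- is exactly right and matches the paper's computation.

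Two imprecisions are worth flagging, though neither changes the structure of the argument. First, you invoke \Cref{thm:Intro:PointwiseOperator} to pass from the weak to the pointwise equation, but that theorem is stated for $u \in C^2(\overline\Omega)$ and hence does not apply directly to $u_\delta$, which a priori lives only in $\mathfrak{W}^{\beta,2}[\delta;q](\Omega)$. The correct tool is \Cref{thm:PointwiseOperator:AbsConv:Energy}, which uses $\beta < d$ to define $\cL_\delta u_\delta$ as an absolutely convergent integral for $u_\delta$ in the energy space and to give the Green's identity \eqref{eq:GreensIdentity:StrongL} against $C^\infty_c(\Omega)$; the paper's \Cref{sec:Regularity} is built on this. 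Second, the limit passage $\cB_\delta(u_\delta,v) \to \cB_0(u,v)$ is \emph{not} a direct citation to \cite{scott2023nonlocal}: what is used is the split $\cB_\delta(u_\delta - u, v) \to 0$ plus $\cB_\delta(u,v) \to \cB_0(u,v)$, which are \Cref{thm:BilinearFormLocalization} and \Cref{thm:BilinearFormLocalization2} of the present paper, and the former itself rests on the nonlocal Green's identity and the $L^2$ convergence $\cL_\delta v \to -\Delta v$ for smooth $v$. With these citations corrected, your argument is the paper's argument.
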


The precise statements and their proofs are in \Cref{sec:Regularity,sec:Convergence}.
Given any $f \in [W^{1,2}(\Omega)]^*$, the existence of such a regularizing sequence $\{f_\delta\}_\delta$ is guaranteed, and is given via the adjoint localized convolution $f_\delta = K_\delta^* f$; see \Cref{sec:BdyLocalizedConv} and \Cref{thm:DataMollification}.

With this convergence result for the linear problem in hand, we turn to the same question for another quantity of interest -- the normal derivative, or boundary flux. 
We can use the nonlocal Green's identity \eqref{eq:Intro:GreensIdentity:LinearForm} to formally define a distributional normal derivative on the  domain boundary, which can be made rigorous when the data are regular enough. To illustrate, we present the following convergence theorem for solutions to the Dirichlet problem:
\begin{theorem}
    In the setting of the previous theorem, assume additionally that $f$, $f_\delta$ all belong to $L^2(\Omega)$, and that the Dirichlet data $g \in W^{3/2,2}(\p \Omega)$. Then the sequence of distributions $A_\delta^N \frac{\p u_\delta}{\p \bsnu}$, defined (formally) via 
\begin{equation*}
    \Vint{ A_\delta^N \frac{\p u_\delta}{\p \bsnu} , v } = \cB_\delta(u_{\delta},\bar{v}) - \int_{\Omega} \bar{v} \cL_\delta u_\delta \, \rmd \bx\,, \quad v \in W^{1/2,2}(\p \Omega)\,,
\end{equation*}
where $\bar{v}$ is any $W^{1,2}(\Omega)$-extension of $v$ to $\Omega$, converges to $\frac{\p u}{\p \bsnu}$ weakly in $[W^{1/2,2}(\p \Omega)]^*$.
\end{theorem}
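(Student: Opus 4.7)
The plan is to show that the linear functionals $\ell_\delta : v \mapsto \vint{A_\delta^N \frac{\partial u_\delta}{\partial \bsnu}, v}$ defined by the formula are uniformly bounded in $[W^{1/2,2}(\p\Omega)]^*$ and converge, on the dense subspace of smooth test functions, to the classical distributional normal derivative $\frac{\partial u}{\partial \bsnu}$. The latter is a well-defined element of $[W^{1/2,2}(\p\Omega)]^*$ because $-\Delta u = f \in L^2(\Omega)$ distributionally on $\Omega$.

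First I would verify that $\ell_\delta(v)$ is independent of the $W^{1,2}$-extension $\bar{v}$ of $v$. If $w \in W^{1,2}_0(\Omega)$, then $w$ lies in the admissible space $\mathfrak{W}^{\beta,2}_{0,\p\Omega}[\delta;q](\Omega)$ by the embedding results of \cite{scott2023nonlocal}, so the weak formulation satisfied by $u_\delta$ yields $\cB_\delta(u_\delta, w) = \int_\Omega w f_\delta \, \rmd \bx$; combined with $\cL_\delta u_\delta = f_\delta$ (the strong form obtained in \Cref{sec:Regularity}), this forces the two terms in the definition of $\ell_\delta$ to agree for different extensions. Uniform boundedness in $[W^{1/2,2}(\p\Omega)]^*$ then follows from a Cauchy--Schwarz bound on $\cB_\delta$ in the nonlocal seminorm, the $\delta$-independent embedding $W^{1,2}(\Omega) \hookrightarrow \mathfrak{W}^{\beta,2}[\delta;q](\Omega)$ from \cite{scott2023nonlocal}, the uniform $W^{1,2}$-bound on $u_\delta$ and $L^2$-bound on $f_\delta$ from the preceding theorem, and the existence of a bounded right inverse $W^{1/2,2}(\p\Omega) \to W^{1,2}(\Omega)$ of the trace operator.

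To identify the limit, I test against $v \in C^\infty(\p\Omega)$ with a smooth extension $\bar{v} \in C^\infty(\overline{\Omega})$. Using the symmetry of $\cB_\delta$ in the linear case $p=2$ and applying the nonlocal Green's identity \eqref{eq:Intro:GreensIdentity:LinearForm} this time to the pair $(\bar{v}, u_\delta)$ (legitimate since $\bar{v} \in C^2(\overline{\Omega})$ and $u_\delta \in \mathfrak{W}^{\beta,2}[\delta;q]$), I rewrite
\begin{equation*}
    \ell_\delta(v) = \int_\Omega u_\delta \, \cL_\delta \bar{v} \, \rmd\bx + A_\delta^N \int_{\p\Omega} T u_\delta \, \frac{\partial \bar{v}}{\partial \bsnu} \, \rmd\sigma - \int_\Omega \bar{v} f_\delta \, \rmd\bx.
\end{equation*}
Now pass $\delta \to 0$: by \Cref{thm:Intro:PointwiseOperator}, $\cL_\delta \bar{v} \to -\Delta \bar{v}$ strongly in $L^2(\Omega)$; Rellich compactness applied to the uniform $W^{1,2}$-bound gives $u_\delta \to u$ strongly in $L^2(\Omega)$ and $Tu_\delta \to Tu$ strongly in $L^2(\p\Omega)$; the constants $A_\delta^N$ tend to $1$; and the assumed weak convergence $f_\delta \rightharpoonup f$ in $[W^{1,2}(\Omega)]^*$, tested against the fixed $\bar{v} \in W^{1,2}(\Omega)$, gives $\int_\Omega \bar{v} f_\delta \to \int_\Omega \bar{v} f$. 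Therefore
\begin{equation*}
    \ell_\delta(v) \to -\int_\Omega u \Delta \bar{v} \, \rmd \bx + \int_{\p\Omega} Tu \, \frac{\partial \bar{v}}{\partial \bsnu} \, \rmd\sigma - \int_\Omega \bar{v} f \, \rmd\bx = \int_\Omega \grad u \cdot \grad \bar{v} \, \rmd\bx - \int_\Omega \bar{v} f \, \rmd\bx,
\end{equation*}
the last equality being the classical Green's identity for $\bar{v} \in C^2(\overline{\Omega})$ and $u \in W^{1,2}(\Omega)$. The right-hand side is exactly $\vint{\frac{\partial u}{\partial \bsnu}, v}$ by the standard distributional definition.

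Combining uniform boundedness with pointwise convergence on a dense subspace gives the claimed weak-$*$ convergence. The main technical obstacle is that under only weak $W^{1,2}$-convergence of $u_\delta$, convergence of the unsymmetrized quantity $\cB_\delta(u_\delta, \bar{v})$ to $\int_\Omega \grad u \cdot \grad \bar{v}$ is not immediate; the key move is to exploit the linear-case symmetry of $\cB_\delta$ together with the nonlocal Green's identity applied to the smooth extension $\bar{v}$, which transfers all derivatives onto $\bar{v}$ and reduces the question to strong convergence of $u_\delta$ in $L^2(\Omega)$ and of $Tu_\delta$ in $L^2(\p\Omega)$ -- both of which follow from Rellich--Kondrachov and the uniform $W^{1,2}$-bound in the preceding theorem.
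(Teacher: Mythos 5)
Your approach is correct in its essentials and follows a genuinely different route than the paper's, but it has a gap: you have implicitly set $\mu = 0$, dropping the semilinear term $\mu K_\delta^*[\ell'(K_\delta u_\delta)]$.

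On the difference in route. The paper's proof establishes $u \in W^{2,2}(\Omega)$ by elliptic regularity (this is what $g \in W^{3/2,2}(\p\Omega)$ is used for), then applies the nonlocal Green's identity to $u$ itself via the weak operator $(\mathrm{w}\text{-}\cL)_\delta u$ of \Cref{thm:WeakOperator}, decomposes $\langle Z_\delta - \frac{\p u}{\p\bsnu}, v\rangle$ into five terms, and kills them one at a time using \Cref{thm:BilinearFormLocalization} (for $\cB_\delta(u_\delta - u,\bar v)$), \Cref{thm:BilinearFormLocalization2} (the Ponce-type limit $\cB_\delta(u,v)\to\cB_0(u,v)$), the strong $L^2$ convergence of the Poisson data, and the convergence $BF^N_{2,\delta}(\grad u,\bsnu)\to\frac{\p u}{\p \bsnu}$. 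You instead apply the nonlocal Green's identity \emph{to the smooth test extension} $\bar v \in C^2(\overline\Omega)$ (legitimate by \Cref{cor:PointwiseOperator:AbsConv} since $u_\delta\in\mathfrak{W}^{\beta,2}[\delta;q](\Omega)$), exploiting the symmetry $\cB_\delta(u_\delta,\bar v)=\cB_\delta(\bar v,u_\delta)$, which moves all $\delta$-dependent differentiation off of $u_\delta$ and leaves only the quantities $u_\delta$ (strongly convergent in $L^2$ by Rellich), $Tu_\delta$ (strongly convergent in $L^2(\p\Omega)$), $\cL_\delta\bar v\to-\Delta\bar v$ (from \Cref{thm:Intro:PointwiseOperator}), $A_\delta^N\to 1$, and $\langle f_\delta,\bar v\rangle\to\langle f,\bar v\rangle$. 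This is cleaner: it bypasses both the $W^{2,2}$ regularity of $u$ and the Ponce-type convergence \Cref{thm:BilinearFormLocalization2}, and it avoids introducing the weak operator $(\mathrm{w}\text{-}\cL)_\delta$ altogether. In effect you are applying directly the adjoint-mollification idea that the paper packages into \Cref{thm:BilinearFormLocalization}.

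On the gap. The ``previous theorem'' includes the semilinear term, i.e. the strong form of the equation is $\cL_\delta u_\delta + \mu K_\delta^*\!\big[\ell'(K_\delta u_\delta)\big] = f_\delta$, not $\cL_\delta u_\delta = f_\delta$. This affects three places in your argument. First, extension-independence: for $w \in W^{1,2}_0(\Omega)$ the weak formulation gives $\cB_\delta(u_\delta,w) + \mu\int_\Omega \ell'(K_\delta u_\delta) K_\delta w\,\rmd\bx = \int_\Omega w f_\delta\,\rmd\bx$, and the cancellation you need still goes through after substituting the correct strong form. Second, your rewritten formula acquires the extra term $+\mu\int_\Omega K_\delta\bar v\,\ell'(K_\delta u_\delta)\,\rmd\bx$, which converges to $\mu\int_\Omega \bar v\,\ell'(u)\,\rmd\bx$ by Rellich compactness of $W^{1,2}(\Omega)\hookrightarrow L^m(\Omega)$, the growth estimate \eqref{lma:LowerOrderTerm}, and the continuity estimates \eqref{eq:ConvEst:Lp} and \eqref{eq:ConvEst:Adj:Lp}. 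Third, the target distributional normal derivative is $\langle\frac{\p u}{\p\bsnu},v\rangle = \int_\Omega\grad u\cdot\grad\bar v\,\rmd\bx + \mu\int_\Omega\ell'(u)\bar v\,\rmd\bx - \int_\Omega \bar v f\,\rmd\bx$, consistent with the local Euler--Lagrange equation \eqref{eq:BVP:Dirichlet:Local}; the extra $\mu$-term in your limit is precisely what is needed to match. With these corrections your argument is complete.
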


The precise convergence statements for both Dirichlet and Robin problems are in \Cref{sec:ConvOfNormals}.
Note that our analysis in this work applies to an array of models associated with nonlinear nonlocal equations in the interior of $\Omega$, see more discussions in \cite{scott2023nonlocal}.

Let us remark on a few generalizations. First, all of the results in this work can be obtained for the operator 
$\cL_{p,\delta} + c I$ in place of $\cL_{p,\delta}$, where $c > 0$ is a fixed constant and $I$ is the identity map; the proofs can be modified in a straightforward way.
Second,
in the case $1 < p < 2$ many of the results in this work can be obtained -- in some form -- for the case of the nonlinearity $\Phi_p(t) = \frac{1}{p} |t|^p$. However, due to the degeneracy of $|t|^{p-2}$, often the additional assumption $\grad u \neq {\bf 0}$ is required. 
We will not go into further detail here, but only say that the necessary modifications for results such as \Cref{thm:Intro:GreensIdentity} are similar to analogous modifications in \cite{ishii2010class}.

This paper is organized as follows. The next two subsections contain results obtained directly from \cite{scott2023nonlocal} that we will need; this consists of the relevant properties of the nonlocal function space, and the well-posedness and nonlocal-to-local convergence of the boundary-value problems. \Cref{sec:buildingblockest} contains some estimates of quantities involving the heterogeneous localization that we reference throughout the paper.
The nonlocal Green's identity is proved in \Cref{sec:GreensIdentity}, and its generalizations to other classes of functions and other notions of the nonlocal operator are investigated in \Cref{sec:NonlocalOperator}.
Properties of the boundary-localized convolution and its adjoint are collected in \Cref{sec:BdyLocalizedConv}.
The regularity result for the linear problem is studied in \Cref{sec:Regularity}. \Cref{sec:Convergence} contains the proof of Sobolev convergence of the weak solutions to the solution of the corresponding local problem, and the analogous proofs of convergence for their boundary normal derivatives are in \Cref{sec:ConvOfNormals}.

\section{Background on the nonlocal problems}

\subsection{Properties of the nonlocal function space}
In this section, we present a few important properties on the nonlocal function space $\mathfrak{W}^{\beta,p}[\delta;q](\Omega)$, such as the density of smooth functions and trace theorems.
The nonlocal function space was studied in \cite{scott2023nonlocal}, in which properties used for studying associated variational problems were shown. 
To continue the study, we summarize the known properties needed here.

\begin{theorem}[Properties of the nonlocal function space, \cite{scott2023nonlocal}]\label{thm:FxnSpaceProp}
Let $p\in [1,\infty)$, and assume 
\eqref{assump:beta}, 
\eqref{assump:NonlinearLocalization}, and \eqref{eq:HorizonThreshold}. Then the following hold:
\begin{enumerate}
    \item[1)] For any $u \in W^{1,p}(\Omega)$,
    \begin{equation}\label{eq:Embedding}
        [u]_{\mathfrak{W}^{\beta,p}[\delta;q](\Omega)} \leq \frac{1}{(1-\delta)^{1/p}} [u]_{W^{1,p}(\Omega)}\,.
    \end{equation}
    \item[2)] $C^{k}(\overline{\Omega})$ is dense in $\mathfrak{W}^{\beta,p}[\delta;q](\Omega)$ for any $k \leq k_q$.

    \item[3)] Let $p \in (1,\infty)$, and let $T$ denote the trace operator, i.e. for $u \in C^{1}(\overline{\Omega})$,
	  $ T u = u \big|_{\p \Omega}$.
	Then the trace operator extends to a bounded linear operator $T : \mathfrak{W}^{\beta,p}[\delta;q](\Omega) \to W^{1-1/p,p}(\p \Omega)$. That is,
    there exists $C = C(d,p,\beta,q,\Omega)$ such that
	$$
	\Vnorm{Tu}_{W^{1-1/p,p}(\p \Omega)} \leq C \Vnorm{u}_{\mathfrak{W}^{\beta,p}[\delta;q](\Omega)} ,\qquad\forall u \in \mathfrak{W}^{\beta,p}[\delta;q](\Omega)\,.
	$$

    \item[4)] For $1 < p < \infty$, a function $u$ belongs to $\mathfrak{W}^{\beta,p}_{0, \p \Omega_D}[\delta;q](\Omega)$ if and only if $u \in \mathfrak{W}^{\beta,p}[\delta;q](\Omega)$ and $T u = 0$ on $\p \Omega_D$.

    \item[5)] For constants $0 < \delta_1 \leq \delta_2 < \underline{\delta}_0$,
	\begin{equation*}
 \begin{aligned}
	&	\left( \frac{1-\delta_2}{2(1+\delta_2)} \right)^{\frac{d+p-\beta}{p}}[u]_{\mathfrak{W}^{\beta,p}[\delta_2;q](\Omega)} 
    \leq [u]_{\mathfrak{W}^{\beta,p}[\delta_1;q](\Omega)}\\
    &\qquad \leq \left( \frac{\delta_2}{\delta_1} \right)^{1+(d-\beta)/p} [u]_{\mathfrak{W}^{\beta,p}[\delta_2;q](\Omega)}, \quad
    \forall 
u \in \mathfrak{W}^{\beta,p}[\delta_2;q](\Omega)\,.
     \end{aligned}
	\end{equation*}

    \item[6)] For $\rho$ satisfying \eqref{assump:VarProb:Kernel} and $\lambda$ satisfying \eqref{assump:Localization}, define the seminorm
    \begin{equation*}
        [u]_{\mathfrak{V}^{\beta,p}[\delta;q;\rho,\lambda](\Omega)}^p := \int_{\Omega} \int_{\Omega} \gamma_{\beta,p}[\delta;q;\rho,\lambda](\bx,\by) |u(\by)-u(\bx)|^p \, \rmd \by \, \rmd \bx\,,
    \end{equation*}
    where $\gamma_{\beta,p}[\delta;q;\rho,\lambda](\bx,\by)$ is a kernel defined as
    \begin{equation*}
        \gamma_{\beta,p}[\delta;q;\rho,\lambda](\bx,\by) := \frac{ \overline{C}_{d,p} }{ \bar{\rho}_{p-\beta} } \frac{ \rho( |\bx-\by| / \eta_\delta(\bx) )}{ |\bx-\by|^{\beta} \eta_\delta(\bx)^{d+p-\beta} }\,.
    \end{equation*}
   Then there exist positive constants $c$ and $C$ depending only on $d$, $\beta$, $p$, $\rho$, $q$, and $\kappa_0$ such that for any $u \in \mathfrak{W}^{\beta,p}[\delta;q](\Omega)$,
    \begin{equation}\label{thm:EnergySpaceIndepOfKernel}
        c [u]_{\mathfrak{W}^{\beta,p}[\delta;q](\Omega)} \leq
        [u]_{\mathfrak{V}^{\beta,p}[\delta;q;\rho,\lambda](\Omega)} \leq C  [u]_{\mathfrak{W}^{\beta,p}[\delta;q](\Omega)}\,.
    \end{equation}

\end{enumerate}
\end{theorem}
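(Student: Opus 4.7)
Since this statement summarizes results from \cite{scott2023nonlocal}, in the present paper its proof reduces to a citation; here I sketch how I would establish each item from first principles, grouped by technique.

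The kernel-manipulation items 1), 5), 6) proceed by direct computation. For item 1), write $|u(\bx)-u(\by)|^p \leq |\bx-\by|^p \int_0^1 |\grad u(\bx+t(\by-\bx))|^p \, \rmd t$ via Jensen's inequality, substitute into the definition of $[u]_{\mathfrak{W}^{\beta,p}[\delta;q](\Omega)}^p$, apply Fubini in the parameter $t$, and for each $t$ change variables $\bz = \bx+t(\by-\bx)$ in the $\by$-integral. The kernel's normalization \eqref{eq:Intro:StdKernelNormalization} then collapses the inner integral to a pointwise bound by $|\grad u(\bz)|^p$; the $(1-\delta)^{-1/p}$ factor tracks the horizon-dependent Jacobian, which is controlled via the Lipschitz-type bound $|\eta_\delta(\bx)-\eta_\delta(\by)|<\tfrac{1}{3}|\bx-\by|$ furnished by \eqref{eq:HorizonThreshold}, ensuring that the line segments remain inside $\Omega$. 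Item 5) compares two cutoff scales: the upper bound is a direct inclusion of supports followed by a rescaling of the kernel, while the matching lower bound traverses a $\delta_2$-neighborhood by an $O(1)$-length chain of overlapping $\delta_1$-jumps, with the doubling condition iii) of \eqref{assump:NonlinearLocalization} giving uniform control of $\eta$ along the chain. Item 6) follows from a two-sided pointwise comparison of the weighted kernel $\gamma_{\beta,p}[\delta;q;\rho,\lambda]$ with the indicator kernel $\gamma_{\beta,p}[\delta;q]$, using that $\rho$ is bounded above and bounded below by a positive constant on the core $[-c_\rho,c_\rho]$ of its support.

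For items 2), 3), 4), the boundary-localized convolution $K_\delta$ defined in \eqref{eq:ConvolutionOperator} is the workhorse. For item 2), I would show $K_\veps u \to u$ in $\mathfrak{W}^{\beta,p}[\delta;q](\Omega)$ as $\veps \to 0$, combining standard mollifier estimates in the interior with the trace-preservation property of $K_\veps$; since $K_\veps u \in C^k$ for $k \leq k_q$, this gives density. For item 3), localize via Lipschitz charts that flatten $\p \Omega$ and compare with the Besov trace seminorm $[Tu]_{W^{1-1/p,p}(\p \Omega)}$ through the equivalence $\eta_\delta(\bx) \sim \dist(\bx,\p \Omega)$, which places the nonlocal kernel at precisely the scale probed by the trace norm. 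Item 4) follows by combining item 3) with density of $C^1_c(\overline{\Omega}\setminus\p\Omega_D)$ in the zero-trace subspace; the direction $Tu = 0 \Rightarrow u \in \mathfrak{W}^{\beta,p}_{0,\p \Omega_D}[\delta;q](\Omega)$ is obtained by a boundary cutoff $\chi_\veps$ that vanishes near $\p \Omega_D$, with the truncation error controlled by a Hardy-type inequality for trace-vanishing functions.

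The principal obstacle is item 3). Unlike the classical Sobolev setting, the seminorm pairs $\bx$ with $\by$ only at scales smaller than $\delta \eta_\delta(\bx)$, which vanishes at $\p \Omega$; one must extract a finite Besov seminorm on the boundary from an integrand whose integration scale also degenerates there. The coincidence that makes the inequality work is that, after flattening $\p \Omega$ locally, the measure $|\bx-\by|^{-\beta} \eta_\delta(\bx)^{-d-p+\beta} \, \rmd \by \, \rmd \bx$ restricted to $|\bx-\by| < \delta \eta_\delta(\bx)$ exhibits the same homogeneity in the boundary distance as the kernel of the $W^{1-1/p,p}$ trace seminorm; quantifying this equivalence uniformly over a finite atlas yields the trace bound with a constant depending only on the Lipschitz character of $\Omega$ and the data $d, p, \beta, q$.
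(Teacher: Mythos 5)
The theorem carries no proof in this paper; it is a cited compilation from \cite{scott2023nonlocal}, and you correctly open by noting that the in-paper justification is simply the citation. Everything after that first sentence is therefore an extra sketch rather than a reconstruction of the authors' argument, and on its own terms it has a couple of genuine soft spots.

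For item 2), writing ``since $K_\veps u \in C^k$ for $k \le k_q$, this gives density'' quietly upgrades $C^k(\Omega)$ to $C^k(\overline{\Omega})$. The paper's own summary (Theorem~\ref{thm:KnownConvResults}) only asserts $K_\delta u \in C^k(\Omega)$; continuity up to $\p\Omega$ is recorded separately and only in $C^0$. Because the convolution scale $\eta_\delta(\bx)$ degenerates at $\p\Omega$, one cannot expect $K_\veps u$ to have bounded derivatives up to the boundary for general $u$. A route that avoids this is two-step: use $K_\veps$ together with \eqref{eq:Intro:ConvEst:Deriv} to show $W^{1,p}(\Omega)$ is dense in $\mathfrak{W}^{\beta,p}[\delta;q](\Omega)$, then invoke the classical density of $C^k(\overline{\Omega})$ in $W^{1,p}(\Omega)$ together with \eqref{eq:Embedding}. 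For item 6), a purely pointwise two-sided comparison of the kernels does not close the argument, because (i) $\rho$ is bounded below only on the core $[-c_\rho,c_\rho]$ with $c_\rho<1$, so the effective horizon shrinks, and (ii) the generalized distance $\lambda$ is only comparable to $d_{\p\Omega}$ up to the factor $\kappa_0$, so $\eta_\delta[\lambda,q]$ and $\delta q(d_{\p\Omega})$ are equivalent only after a change of scale controlled via the doubling condition. Both mismatches push you to a comparison of $\mathfrak{W}^{\beta,p}$-seminorms at \emph{different} $\delta$'s, i.e., you must invoke item 5) explicitly; your sketch reads as if item 6) is independent of item 5). Finally, for item 5), the lower bound constant $\left(\frac{1-\delta_2}{2(1+\delta_2)}\right)^{(d+p-\beta)/p}$ is independent of $\delta_1$, which is stronger than what a generic chaining argument would yield term by term, and the factor of $2$ suggests a one-step midpoint split rather than an $O(1/\delta_1)$-length chain; the sketch is in the right spirit but the constant tracking would need more care. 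Items 1), 3), and 4) are plausible as outlined.
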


The trace theorems ensure that proper local boundary conditions can be imposed for the associated nonlocal problems.

\subsection{Weak formulation of boundary-value problems: Well-posedness and local limits}\label{subsec:BVPReview}
Let us begin the study of boundary-value problems
associated with \eqref{eq:Fxnal}
with some discussions on the semilinear term. 
The function $\ell \in C^1(\bbR)$ is assumed to be a nonnegative convex function, and acts as 
a lower-order term.
For $p \in [1,\infty)$ and the exponent $p^*$ defined as
\begin{equation}\label{eq:SobolevExponent}
    p^* :=
    \begin{cases}
        \frac{dp}{d-p}\,, &\text{ if } p < d\,, \\
        \text{any exponent } < \infty\,, &\text{ if } p \geq d\,,
    \end{cases}
\end{equation}
we assume that for some $m \in (1,p^*)$, $\ell$ satisfies,
for some constants $c >0 $ and $C >0$,
\begin{equation}\label{eq:LowerOrderTerm}
\begin{gathered}
    c|t|^m \leq \ell(t) \leq C |t|^m \,, \, \forall t \in \bbR\,, \\
    c|t|^{m} \leq \ell'(t) t\,, \text{ and } |\ell'(t)| \leq C|t|^{m-1}\,, \\ 
    |\ell'(t) - \ell'(\tau)| \leq
    \begin{cases}
        C (|t|^{m-2} + |\tau|^{m-2}) |t-\tau|, & \text{ if } m \geq 2\,, \\
            C |t-\tau|^{m-1}\,, & \text{ if } 1 < m \leq 2\,,
    \end{cases}
    \quad  \forall t, \tau \in \bbR\,.
\end{gathered}
\end{equation}
The functional $\int_{\Omega} \ell(K_\delta u)$ is well-defined for $u \in \mathfrak{W}^{\beta,p}[\delta;q](\Omega)$ thanks to the estimate \eqref{eq:Intro:ConvEst:Deriv} below, along with the classical Sobolev embedding.

We note that the results of this subsection are under the assumptions 
\eqref{assump:beta}, \eqref{assump:Localization}, \eqref{assump:NonlinearLocalization}, \eqref{assump:VarProb:Kernel}, \eqref{eq:assump:Phi}, 
and \eqref{eq:HorizonThreshold}.
Note that the assumptions \eqref{assump:Localization:NormalDeriv} and \eqref{assump:MomentsOfNonlinLoc} were taken in order to apply Green's identity and obtain the weak formulations; the assumptions are not required for their well-posedeness.

For the Dirichlet problem \eqref{eq:BVP:Dirichlet:Weak}, we have the following:
\begin{theorem}\label{thm:WellPosedness:Dirichlet}
    Given $f \in [\mathfrak{W}^{\beta,p}[\delta;q](\Omega)]^*$ and $g\in
    W^{1-1/p,p}(\p \Omega_D)$, there exists a unique solution $u_\delta\in \mathfrak{W}^{\beta,p}_{g,\p \Omega_D}[\delta;q](\Omega)$ of \eqref{eq:BVP:Dirichlet:Weak} that satisfies 
    \begin{equation}\label{eq:EnergyEstimate:Dirichlet}
        \Vnorm{u_\delta}_{\mathfrak{W}^{\beta,p}[\delta;q](\Omega)}^{p-1}
        \leq C \big( \vnorm{f}_{[\mathfrak{W}^{\beta,p}[\delta;q](\Omega)]^*} + \Vnorm{g}_{W^{1-1/p,p}(\p \Omega_D)} \big)\,.
    \end{equation}
    Moreover, under the further assumption that $\rho$ is nonincreasing on $[0,\infty)$ and $f\in [W^{1,p}(\Omega)]^*$, for a sequence $\delta \to 0$ let $u_\delta \in \mathfrak{W}^{\beta,p}_{g, \p \Omega_D}[\delta;q](\Omega)$ be a sequence in the admissible set satisfying \eqref{eq:BVP:Dirichlet:Weak} with Poisson data $K_\delta^* f$ and boundary data $g$. Then $\{u_\delta\}_\delta$ is precompact in the strong topology on $L^p(\Omega)$. Furthermore, any limit point $u$ satisfies $u \in W^{1,p}_{g,\p \Omega_D}(\Omega)$, where 
    $W^{1,p}_{g, \p \Omega_D}(\Omega) := \{ v \in W^{1,p}(\Omega) \, : \, Tv = g \text{ on } \p \Omega_D \},
    $
    and $u$ satisfies
    \begin{equation}\label{eq:BVP:Dirichlet:Local}
        \cB_{p,0}(u,v) + \mu \int_{\Omega} \ell'(u) v \, \rmd \bx = \vint{f,v}\,, \forall v \, \in W^{1,p}_{0,\p \Omega_D}(\Omega)\,, \text{ with } Tu = g \text{ on } \p \Omega_D\,.
    \tag{\ensuremath{\rmD_0}}
    \end{equation}
    In the case that $g \equiv 0$, the same well-posedness result, energy estimate, and convergence result holds in the case $f \in [W^{1,p}_{0,\p \Omega_D}(\Omega)]^*$, with solutions $u_\delta \in \mathfrak{W}^{\beta,p}_{0,\p \Omega_D}[\delta;q](\Omega)$, and any limit point $u$ belongs to the Banach space $W^{1,p}_{0,\p \Omega_D}(\Omega)$, which denotes the closure of $C^1_c(\overline{\Omega}\setminus \p \Omega_D)$ with respect to $\Vnorm{\cdot}_{W^{1,p}(\Omega)}$.
\end{theorem}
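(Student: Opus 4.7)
The plan is to recognize \eqref{eq:BVP:Dirichlet:Weak} as the Euler--Lagrange equation of the strictly convex functional
\begin{equation*}
\cF_\delta(u) := \cE_{p,\delta}(u) + \mu \int_\Omega \ell(K_\delta u) \, \rmd \bx - \vint{f, u}
\end{equation*}
minimized over the convex set $\mathfrak{W}^{\beta,p}_{g,\p \Omega_D}[\delta;q](\Omega)$. Existence and uniqueness of a minimizer, and hence of a weak solution, then follows from the direct method of the calculus of variations: strict convexity comes from the convexity of $\Phi_p$ and $\ell$ together with the linearity of $K_\delta$, while coercivity follows by combining the kernel-independence result \eqref{thm:EnergySpaceIndepOfKernel} (which makes $\cE_{p,\delta}(u)$ comparable to $[u]_{\mathfrak{W}^{\beta,p}[\delta;q](\Omega)}^p$) with the nonlocal Poincaré-type inequality and trace theorem from \cite{scott2023nonlocal} summarized in \Cref{thm:FxnSpaceProp}(3)--(4), which upgrade control of the seminorm plus the boundary trace into control of the full $\mathfrak{W}^{\beta,p}[\delta;q](\Omega)$-norm. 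The Sobolev-type estimate for $K_\delta$ via \eqref{eq:Intro:ConvEst:Deriv} guarantees that the semilinear term is continuous and well defined on the admissible class.

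For the energy bound \eqref{eq:EnergyEstimate:Dirichlet}, I would lift $g$ to a function $G \in W^{1,p}(\Omega)$ with $\Vnorm{G}_{W^{1,p}(\Omega)} \le C \Vnorm{g}_{W^{1-1/p,p}(\p\Omega_D)}$ and use \eqref{eq:Embedding} so that $G \in \mathfrak{W}^{\beta,p}[\delta;q](\Omega)$ with uniform (in $\delta$) control; then $v := u_\delta - G \in \mathfrak{W}^{\beta,p}_{0,\p\Omega_D}[\delta;q](\Omega)$ is admissible as a test function, and the growth bounds in \eqref{eq:assump:Phi} and \eqref{eq:LowerOrderTerm}, together with Hölder/Young and the Poincaré inequality, give the stated estimate after absorbing terms.

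The local limit assertion is the one where the work of \cite{scott2023nonlocal} does the heavy lifting. The plan is to invoke the $\Gamma$-convergence of $\cF_\delta$ (with Poisson data $K_\delta^* f$) to the local functional whose Euler--Lagrange equation is \eqref{eq:BVP:Dirichlet:Local}, together with the compactness theorem of \cite{scott2023nonlocal} asserting that a family uniformly bounded in $\mathfrak{W}^{\beta,p}[\delta;q](\Omega)$ is relatively compact in $L^p(\Omega)$ with every cluster point lying in $W^{1,p}(\Omega)$; the monotonicity hypothesis on $\rho$ is used precisely to deploy a Bourgain--Brezis--Mironescu-type argument adapted to heterogeneous localization. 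The boundary trace is preserved in the limit thanks to the uniform boundedness of the trace operator in \Cref{thm:FxnSpaceProp}(3); the data are transferred via the duality $\vint{K_\delta^* f, u_\delta} = \vint{f, K_\delta u_\delta}$, since $K_\delta u_\delta$ converges strongly in $L^p$ (and weakly in $W^{1,p}$) to the limit $u$, which suffices to pass to the limit in $\vint{f,\cdot}$ for $f \in [W^{1,p}(\Omega)]^*$. The homogeneous case is an immediate specialization.

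The main obstacle is the nonlocal-to-local compactness and identification of the limit equation: one must transfer uniform energy bounds in the $\delta$-dependent nonlocal norm into classical Sobolev information, pass the nonlinear principal part $\cB_{p,\delta}$ to the limit (which requires more than weak convergence owing to the $p$-nonlinearity and therefore uses strong $L^p$-compactness plus a Minty-type monotonicity trick), and simultaneously handle the mollified semilinear term $\ell'(K_\delta u_\delta) K_\delta v$. All three ingredients are provided by the variational framework of \cite{scott2023nonlocal}, and the proof is then a matter of combining them with the Green's identity of \Cref{thm:Intro:GreensIdentity} for the identification of the PDE associated to the limiting minimization.
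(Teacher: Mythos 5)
Your proposal is correct and follows essentially the same route as the paper, which defers to the energy-minimization framework of \cite{scott2023nonlocal} for the direct method, coercivity, $\Gamma$-convergence, and compactness, and derives the a-priori estimate \eqref{eq:EnergyEstimate:Dirichlet} by testing the weak form after lifting the boundary data. One small point worth correcting: you invoke the nonlocal Green's identity of \Cref{thm:Intro:GreensIdentity} and a ``Minty-type monotonicity trick'' to identify the limit equation, but neither is needed here. The paper explicitly notes (in \Cref{subsec:BVPReview}) that the Green's-identity hypotheses \eqref{assump:Localization:NormalDeriv} and \eqref{assump:MomentsOfNonlinLoc} are \emph{not} required for well-posedness or the local limit; those assumptions only serve to identify the variational problem with the pointwise boundary-value problem. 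The $\Gamma$-convergence of $\cF_\delta$ to the local convex functional already gives convergence of minimizers, and \eqref{eq:BVP:Dirichlet:Local} is simply the Euler--Lagrange equation of the limiting convex functional, so no monotonicity argument or distributional Green's identity is needed at that stage.
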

\begin{proof}
    The well-posedness results and the limit behavior have been shown in \cite{scott2023nonlocal} using the energy minimization formulation. The estimate \eqref{eq:EnergyEstimate:Dirichlet} follows easily from the weak formulation,  assumptions on the semilinear term, and the coercivity of the form established in \cite{scott2023nonlocal}.
\end{proof}

Likewise, we have similar results on the Neumann and Robin problems. The results are stated with proofs skipped.

\begin{theorem}\label{thm:WellPosedness:Neumann}
  For $
    f \in [\mathfrak{W}^{\beta,p}[\delta;q](\Omega)]^*$ and $g\in
    [W^{1-1/p,p}(\p \Omega)]^*$ with
 $\vint{f,1} + \vint{g,1} = 0$,
    there exists a unique solution $u_\delta \in \mathring{\mathfrak{W}}^{\beta,p}[\delta;q](\Omega)$ of \eqref{eq:BVP:Neumann:Weak} that satisfies
    \begin{equation}\label{eq:EnergyEstimate:Neumann}
        \Vnorm{u_\delta}_{\mathfrak{W}^{\beta,p}[\delta;q](\Omega)}^{p-1} \leq C \big( \vnorm{f}_{[\mathfrak{W}^{\beta,p}[\delta;q](\Omega)]^*} + \Vnorm{g}_{[W^{1-1/p,p}(\partial \Omega)]^*} \big)\,.
    \end{equation}
    Moreover, under the further assumption that $\rho$ is nonincreasing on $[0,\infty)$ and $f \in [W^{1,p}(\Omega)]^*$, 
    for a sequence $\delta\to 0$ 
    let $u_\delta \in \mathring{\mathfrak{W}}^{\beta,p}[\delta;q](\Omega)$ be a sequence satisfying \eqref{eq:BVP:Neumann:Weak} with Poisson data $K_\delta^* f$ and boundary data $g\in
    [W^{1-1/p,p}(\p \Omega)]^*$. Then $\{u_\delta\}_\delta$ is precompact in the strong topology on $L^p(\Omega)$. Furthermore, any limit point $u$ satisfies $u \in \mathring{W}^{1,p}(\Omega)$, where $\mathring{W}^{1,p}(\Omega) := \{ v \in W^{1,p}(\Omega) \, : \, (v)_\Omega = 0 \}$,
    and $u$ satisfies
    \begin{equation}\label{eq:BVP:Neumann:Local}
        \cB_{p,0}(u,v) = \vint{f,v} + \vint{g,v}\,, \quad \forall v \, \in \mathring{W}^{1,p}(\Omega)\,.
    \tag{\ensuremath{\rmN_0}}
    \end{equation}
\end{theorem}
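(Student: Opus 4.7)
The plan is to parallel the Dirichlet argument of \Cref{thm:WellPosedness:Dirichlet}, with the principal modification being to factor out the nullspace of $\cB_{p,\delta}$ (the constants) by working on the mean-zero space $\mathring{\mathfrak{W}}^{\beta,p}[\delta;q](\Omega)$. For well-posedness, I would apply the direct method of the calculus of variations to the energy
\begin{equation*}
\mathcal{J}_\delta(u) := \cE_{p,\delta}(u) - \vint{f,u} - \vint{g,Tu}
\end{equation*}
on $\mathring{\mathfrak{W}}^{\beta,p}[\delta;q](\Omega)$. Convexity and weak lower semicontinuity follow from the convexity of $\Phi_p$ in \eqref{eq:assump:Phi} together with Fatou's lemma. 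Coercivity follows by combining the bound $\cE_{p,\delta}(u) \geq c [u]^p_{\mathfrak{W}^{\beta,p}[\delta;q](\Omega)}$, a nonlocal Poincar\'e inequality on the mean-zero space (available from \cite{scott2023nonlocal}) which promotes the seminorm to the full norm, the trace theorem of \Cref{thm:FxnSpaceProp}(3) to control $\vint{g,Tu}$, and Young's inequality; the compatibility condition $\vint{f,1}+\vint{g,1}=0$ is what makes $\mathcal{J}_\delta$ well-defined modulo constants. Strict convexity of $\Phi_p$ yields uniqueness, and the Euler--Lagrange equation of $\mathcal{J}_\delta$ is precisely \eqref{eq:BVP:Neumann:Weak}.

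The energy estimate \eqref{eq:EnergyEstimate:Neumann} is obtained by testing \eqref{eq:BVP:Neumann:Weak} with $v = u_\delta$: using $\Phi_p'(t)t \geq |t|^p$ from \eqref{eq:assump:Phi} together with the kernel equivalence \eqref{thm:EnergySpaceIndepOfKernel} of \Cref{thm:FxnSpaceProp}(6) gives $c [u_\delta]^p_{\mathfrak{W}^{\beta,p}[\delta;q](\Omega)} \leq \cB_{p,\delta}(u_\delta, u_\delta)$, and the right-hand side is bounded via the trace theorem and absorbed using Young's inequality and Poincar\'e. For the vanishing-horizon limit, the crucial technical input is that $K_\delta^* : [W^{1,p}(\Omega)]^* \to [\mathfrak{W}^{\beta,p}[\delta;q](\Omega)]^*$ is uniformly bounded in $\delta$ (dual to the mapping properties of $K_\delta$ summarized in \Cref{sec:BdyLocalizedConv}), so the estimate applied with Poisson data $K_\delta^* f$ remains $\delta$-uniform. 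The compactness and $\Gamma$-convergence results from \cite{scott2023nonlocal} then yield, along a subsequence, $u_\delta \to u$ strongly in $L^p(\Omega)$ with $u \in \mathring{W}^{1,p}(\Omega)$ and $\Vnorm{\grad u}_{L^p(\Omega)} \leq \liminf_{\delta \to 0} [u_\delta]_{\mathfrak{W}^{\beta,p}[\delta;q](\Omega)}$.

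To pass to the limit in \eqref{eq:BVP:Neumann:Weak}, fix $v \in C^1(\overline{\Omega}) \cap \mathring{W}^{1,p}(\Omega)$. The $\Gamma$-convergence / Mosco-type results of \cite{scott2023nonlocal} give $\cB_{p,\delta}(u_\delta, v) \to \cB_{p,0}(u,v)$, while the data term satisfies $\vint{K_\delta^* f, v} = \vint{f, K_\delta v} \to \vint{f,v}$ by the regularization property $K_\delta v \to v$ in $W^{1,p}(\Omega)$. Uniqueness of the local Neumann problem \eqref{eq:BVP:Neumann:Local} then forces the entire sequence to converge. I expect the hard part to be the passage to the limit in the nonlinear principal term: strong $L^p$ convergence of $u_\delta$ is not enough, and one needs a Minty-type monotonicity argument applied to the $p$-form, together with a careful identification of the weak limit of the rescaled difference quotient $(u_\delta(\bx)-u_\delta(\by))/|\bx-\by|$ tested against the localized kernel --- exactly the convergence encoded in the $\Gamma$-convergence framework of \cite{scott2023nonlocal}. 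One must also verify that $K_\delta^* f$ preserves the compatibility condition, which follows from $K_\delta 1 \equiv 1$ on $\Omega$, itself a consequence of the horizon threshold \eqref{eq:HorizonThreshold} forcing the mollifier support to lie inside $\Omega$.
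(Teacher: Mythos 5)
Your proposal reconstructs essentially the same argument the paper invokes: the paper explicitly skips the Neumann proof and defers (via the preceding remark and the Dirichlet proof) to the energy-minimization formulation and $\Gamma$-convergence framework established in \cite{scott2023nonlocal}, which is exactly the direct-method-plus-$\Gamma$-convergence route you outline, including the role of the mean-zero constraint, the nonlocal Poincar\'e inequality, the uniform mapping bound for $K_\delta^*$, and the check that $K_\delta 1 \equiv 1$ preserves the compatibility condition. One small redundancy worth flagging: once you have equi-coercivity and $\Gamma$-convergence of the energies $\mathcal{J}_\delta$, the fundamental theorem of $\Gamma$-convergence already delivers that every $L^p$-cluster point of the minimizers $u_\delta$ minimizes the limit energy, so there is no need to pass to the limit in the Euler--Lagrange equations and hence no need for the Minty-type monotonicity argument you mention --- that argument is a valid alternative route, but the $\Gamma$-convergence approach sidesteps it entirely.
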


\begin{theorem}\label{thm:WellPosedness:Robin}
Assume that at least one of the following hold
for \eqref{eq:BVP:Robin:Weak}:
\begin{enumerate}
    \item[1)] $\mu > 0$,
    \item[2)] there exists a $\sigma$-measurable set $\p \Omega_R$ of $\p \Omega$ satisfying $\sigma(\p \Omega_R) > 0$ and $b(\bx) \geq b_0 > 0$ for a constant $b_0$ and $\sigma$-almost every $\bx \in \p \Omega_R$.
\end{enumerate}
Then, given $
    f \in [\mathfrak{W}^{\beta,p}[\delta;q](\Omega)]^*$ and $g \in [W^{1-1/p,p}(\p \Omega)]^*$,
    there exists a unique solution $u_\delta\in \mathfrak{W}^{\beta,p}[\delta;q](\Omega)$
to \eqref{eq:BVP:Robin:Weak} that satisfies
\begin{equation}\label{eq:EnergyEstimate:Robin}
        \Vnorm{u_\delta}_{\mathfrak{W}^{\beta,p}[\delta;q](\Omega)}^{p-1}
        \leq C \big( \vnorm{f}_{[\mathfrak{W}^{\beta,p}[\delta;q](\Omega)]^*} + \Vnorm{g}_{[W^{1-1/p,p}(\p \Omega)]^*} \big)\,.
    \end{equation}
Moreover, under the further assumption that $\rho$ is nonincreasing on $[0,\infty)$ and $f \in [W^{1,p}(\Omega)]^*$, for a sequence $\delta\to 0$, let $u_\delta \in \mathfrak{W}^{\beta,p}[\delta;q](\Omega)$ be a sequence satisfying \eqref{eq:BVP:Robin:Weak} with Poisson data $K_\delta^* f$ and boundary data $g$. Then $\{u_\delta\}_\delta$ is precompact in the strong topology on $L^p(\Omega)$. Furthermore, any limit point $u$ satisfies $u \in W^{1,p}(\Omega)$, and $u$ satisfies
    \begin{equation}\label{eq:BVP:Robin:Local}
        \cB_{p,0}(u,v) + \mu \int_{\Omega} \ell'(u) v \, \rmd \bx + \int_{\p \Omega} b \Phi_p'(Tu) Tv \, \rmd \sigma = \vint{f,v} + \vint{g,v}, \;\; \forall v \in W^{1,p}(\Omega).
        \tag{\ensuremath{\rmR_0}}
    \end{equation}
\end{theorem}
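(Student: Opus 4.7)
The strategy is to realize \eqref{eq:BVP:Robin:Weak} as the Euler--Lagrange equation of the convex functional
\begin{equation*}
\mathcal{F}_{p,\delta}(v) := \cE_{p,\delta}(v) + \mu \int_{\Omega} \ell(K_\delta v) \, \rmd \bx + \int_{\p \Omega} b\, \Phi_p(Tv) \, \rmd \sigma - \vint{f, v} - \vint{g, Tv},
\end{equation*}
defined on the whole Banach space $\mathfrak{W}^{\beta,p}[\delta;q](\Omega)$, and apply the direct method in the same way as for \eqref{eq:BVP:Dirichlet:Weak} in \cite{scott2023nonlocal}. Strict convexity and weak lower semicontinuity of $\mathcal{F}_{p,\delta}$ follow from the $p$-growth and strict convexity of $\Phi_p$ in \eqref{eq:assump:Phi}, the convexity of $\ell$ combined with the continuity of $K_\delta$ from $L^p(\Omega)$ into $L^m(\Omega)$ for $m < p^*$ (via the boundary-localized convolution properties recalled in \Cref{sec:LocalizedConvolution}), and the bounded trace operator $T\colon \mathfrak{W}^{\beta,p}[\delta;q](\Omega) \to W^{1-1/p,p}(\p \Omega)$ from \Cref{thm:FxnSpaceProp}; strict convexity also yields uniqueness of the minimizer.

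The main obstacle is coercivity on the full space $\mathfrak{W}^{\beta,p}[\delta;q](\Omega)$, since constants belong to the kernel of $\cE_{p,\delta}$ and an extra mechanism is needed to control the $L^p(\Omega)$ norm. This is exactly where the dichotomy in the hypothesis enters. In Case~(1), $\mu > 0$ combined with $\ell(t) \geq c|t|^m$ from \eqref{eq:LowerOrderTerm} and the uniform-in-$\delta$ lower bound on $K_\delta$ acting on constants from \cite{scott2023nonlocal} gives
\begin{equation*}
\mathcal{F}_{p,\delta}(v) \geq c_1 [v]_{\mathfrak{W}^{\beta,p}[\delta;q](\Omega)}^p + c_2 \Vnorm{v}_{L^m(\Omega)}^m - C\big( \vnorm{f}_* + \Vnorm{g}_* \big) \Vnorm{v}_{\mathfrak{W}^{\beta,p}[\delta;q](\Omega)},
\end{equation*}
which together with the nonlocal Poincar\'e-type inequality in \cite{scott2023nonlocal} (splitting $v$ into mean plus oscillation) yields full coercivity. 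In Case~(2), the estimate $\int_{\p\Omega} b\,\Phi_p(Tv) \geq b_0 c \int_{\p \Omega_R}|Tv|^p \, \rmd \sigma$ combines with a nonlocal Poincar\'e--trace inequality (obtained from the classical Poincar\'e--trace inequality applied to the smooth approximations of $v$ guaranteed by \Cref{thm:FxnSpaceProp} and the seminorm comparison \eqref{eq:Embedding}) to control the constants. The energy estimate \eqref{eq:EnergyEstimate:Robin} then follows by testing \eqref{eq:BVP:Robin:Weak} with $v = u_\delta$, invoking coercivity, and applying Young's inequality on the right-hand side.

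For the $\delta \to 0$ convergence, the bound \eqref{eq:EnergyEstimate:Robin}, the compact embedding of $\mathfrak{W}^{\beta,p}[\delta;q](\Omega)$ into $L^p(\Omega)$, and the $\Gamma$-convergence framework of \cite{scott2023nonlocal} produce a subsequence $u_\delta \to u$ strongly in $L^p(\Omega)$ with limit $u \in W^{1,p}(\Omega)$; the mollification $K_\delta^* f$ of the Poisson data satisfies $K_\delta^* f \to f$ in $[W^{1,p}(\Omega)]^*$ just as in the Dirichlet case. Passing to the limit in \eqref{eq:BVP:Robin:Weak} on a fixed test function $v \in C^1(\overline{\Omega})$ proceeds term by term: the principal part converges by the variational $\Gamma$-limit of $\cB_{p,\delta}$ to $\cB_{p,0}$; the semilinear term converges from $K_\delta v \to v$ strongly in $L^p(\Omega)$ combined with the continuity of $\ell'$ in \eqref{eq:LowerOrderTerm}; the boundary term uses the uniform trace bound of \Cref{thm:FxnSpaceProp} to extract $Tu_\delta \rightharpoonup Tu$ in $W^{1-1/p,p}(\p \Omega)$ and hence strong convergence in $L^p(\p \Omega)$, which together with the growth of $\Phi_p'$ from \eqref{eq:assump:Phi} allows us to pass to the limit in $\int_{\p \Omega} b \Phi_p'(Tu_\delta) Tv \, \rmd \sigma$. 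Uniqueness of solutions to \eqref{eq:BVP:Robin:Local} under the same dichotomy upgrades subsequential convergence to convergence of the whole sequence.
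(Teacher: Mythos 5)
Your overall strategy --- realizing \eqref{eq:BVP:Robin:Weak} as the Euler--Lagrange equation of a strictly convex functional $\mathcal{F}_{p,\delta}$ on $\mathfrak{W}^{\beta,p}[\delta;q](\Omega)$, applying the direct method, and then using $\Gamma$-convergence and compactness for the $\delta \to 0$ limit --- is precisely the route the paper intends: the paper's proof of this theorem is ``skipped'' because it is declared analogous to \Cref{thm:WellPosedness:Dirichlet}, which in turn defers to the energy-minimization framework of \cite{scott2023nonlocal}. Your proposal is a fleshed-out version of the same argument, and the structure (coercivity dichotomy, uniqueness by strict convexity, energy estimate by testing with $u_\delta$, term-by-term limit passage using \Cref{thm:FxnSpaceProp}, compact fractional Sobolev embeddings on $\p\Omega$, and the convolution estimates) is all sound.

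There is, however, one reversed step that would fail as written. For coercivity in Case~(2) you claim a nonlocal Poincar\'e--trace inequality is ``obtained from the classical Poincar\'e--trace inequality applied to the smooth approximations of $v$ \ldots and the seminorm comparison \eqref{eq:Embedding}.'' But \eqref{eq:Embedding} states $[u]_{\mathfrak{W}^{\beta,p}[\delta;q](\Omega)} \leq C [u]_{W^{1,p}(\Omega)}$, i.e., it bounds the \emph{nonlocal} seminorm by the \emph{classical} one. To lower-bound $\mathcal{F}_{p,\delta}$ you need the opposite direction, which that estimate cannot provide (and which is false pointwise --- highly oscillatory smooth functions have small nonlocal seminorm but large $W^{1,p}$ seminorm). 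The correct mechanism uses the boundary-localized convolution: by item~6 of \Cref{thm:KnownConvResults} one has $T K_\delta u = Tu$, and by \eqref{eq:Intro:ConvEst:Deriv} one has $\Vnorm{\grad K_\delta u}_{L^p(\Omega)} \leq C\,[u]_{\mathfrak{W}^{\beta,p}[\delta;q](\Omega)}$, so apply the \emph{classical} Poincar\'e--trace inequality to $K_\delta u$ to control $\Vnorm{K_\delta u}_{L^p(\Omega)}$ by $[u]_{\mathfrak{W}^{\beta,p}[\delta;q](\Omega)} + \Vnorm{Tu}_{L^p(\p\Omega_R)}$, and then estimate $\Vnorm{u - K_\delta u}_{L^p(\Omega)}$ directly in terms of the nonlocal seminorm. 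Equivalently, split $u$ into mean plus mean-zero oscillation, control the oscillation by the nonlocal Poincar\'e inequality of \cite{scott2023nonlocal}, and control the mean using the boundary term together with the trace bound of \Cref{thm:FxnSpaceProp}. The same remark applies to your Case~(1) display: $\Vnorm{K_\delta v}_{L^m(\Omega)}^m$ does not bound $\Vnorm{v}_{L^m(\Omega)}^m$ directly, and the mean/oscillation split (which you do mention) is the mechanism that makes that step legitimate.
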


\section{Properties of heterogeneous localization functions and the associated kernels}\label{sec:buildingblockest}

We now present some properties related to the function $\eta$ and various kernels used in this work.
All the discussions are 
under the assumptions
\eqref{assump:NonlinearLocalization}, \eqref{assump:Localization},
\eqref{Assump:Kernel}, and \eqref{eq:HorizonThreshold}.
Additional assumptions on $\psi$ are made for some of the results presented in 
\Cref{sec:AuxMolls}.

\subsection{Spatial variations of the heterogeneous localization function}\label{sec:LocalizationFunc}
For ease of access, we record the following comparisons of 
the heterogeneous localization function $\eta_\delta$ that are frequently referred to in later discussions. The first two lemmas were proved in \cite{scott2023nonlocal}, and we write them here for ease of reference.

\begin{lemma}[\cite{scott2023nonlocal}]\label{lma:ComparabilityOfXandY}
For all $\bx, \by \in \Omega$,
\begin{eqnarray}\label{eq:ComparabilityOfDistanceFxn1}
	&	(1- \kappa_1 \delta ) \eta_\delta(\bx) \leq \eta_\delta(\by) \leq (1+\kappa_1 \delta ) \eta_\delta(\bx), \; \text{ if } |\bx-\by| \leq \eta_\delta(\bx),\\
\label{eq:ComparabilityOfDistanceFxn2}
&		(1-\kappa_1 \delta ) \eta_\delta(\by) \leq \eta_\delta(\bx) \leq (1+\kappa_1 \delta ) \eta_\delta(\by), \;  \text{ if } |\bx-\by| \leq \eta_\delta(\by)\,.
	\end{eqnarray}
\end{lemma}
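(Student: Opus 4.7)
The plan is to show that $\eta_\delta$ is Lipschitz with constant $\delta \kappa_1$, and then apply this bound to translate the hypothesis $|\bx - \by| \leq \eta_\delta(\bx)$ (resp.\ $\leq \eta_\delta(\by)$) into a control on $|\eta_\delta(\bx) - \eta_\delta(\by)|$ in terms of $\eta_\delta(\bx)$ (resp.\ $\eta_\delta(\by)$). This is purely a consequence of the chain rule / composition of Lipschitz maps and the uniform bound $q'(r) \leq 1$, together with the Lipschitz bound $\kappa_1$ on $\lambda$.

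First I would observe that from \eqref{assump:NonlinearLocalization}(ii), since $0 \le q'(r) \le 1$ for all $r \ge 0$, the mean value theorem implies $|q(a)-q(b)| \le |a-b|$ for all $a,b \in [0,\infty)$. Composing this with the Lipschitz estimate $|\lambda(\bx)-\lambda(\by)| \le \kappa_1 |\bx-\by|$ from \eqref{assump:Localization}(ii) yields
\begin{equation*}
  |\eta_\delta(\bx) - \eta_\delta(\by)|
  = \delta\, |q(\lambda(\bx)) - q(\lambda(\by))|
  \le \delta\, |\lambda(\bx) - \lambda(\by)|
  \le \delta\, \kappa_1 |\bx - \by|,
  \quad \forall\, \bx, \by \in \overline{\Omega}.
\end{equation*}

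Then, for the first inequality \eqref{eq:ComparabilityOfDistanceFxn1}, I would simply substitute the hypothesis $|\bx-\by| \le \eta_\delta(\bx)$ into the right-hand side above, obtaining $|\eta_\delta(\bx)-\eta_\delta(\by)| \le \delta \kappa_1 \eta_\delta(\bx)$, which rearranges to the claimed two-sided bound. For \eqref{eq:ComparabilityOfDistanceFxn2}, the same substitution with $\eta_\delta(\by)$ in place of $\eta_\delta(\bx)$ (using the symmetry of the Lipschitz estimate) gives the mirror statement.

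There is essentially no obstacle here; the only subtlety is ensuring that the composed Lipschitz constant is indeed $\delta \kappa_1$ (not $\delta$ times something larger), which is immediate because $q$ is $1$-Lipschitz under \eqref{assump:NonlinearLocalization}(ii). The assumption \eqref{eq:HorizonThreshold} is not strictly needed for this particular statement, but it is what later guarantees $\delta \kappa_1 < 1/3$ so that the constants $(1 \pm \kappa_1 \delta)$ are strictly positive and uniformly bounded away from $0$ and $\infty$, which is how this lemma is used downstream.
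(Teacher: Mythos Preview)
Your proposal is correct and is precisely the intended argument: the paper does not reprove this lemma but cites \cite{scott2023nonlocal}, where the proof proceeds exactly as you describe, by observing that $\eta_\delta = \delta\, q \circ \lambda$ is $\delta\kappa_1$-Lipschitz (since $q$ is $1$-Lipschitz by \eqref{assump:NonlinearLocalization}(ii) and $\lambda$ is $\kappa_1$-Lipschitz by \eqref{assump:Localization}(ii)) and then substituting the hypothesis on $|\bx-\by|$. Your remark about \eqref{eq:HorizonThreshold} being unnecessary for the statement itself but relevant for its downstream use is also accurate.
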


The next lemma is used later to facilitate a change of coordinates.

\begin{lemma}[\cite{scott2023nonlocal}]\label{lma:CoordChange2}
For a fixed $\bz \in B(0,1)$, 
    define the function $\bszeta_\bz^\veps : \Omega \to \bbR^d$ by 
    \begin{equation*}
    \bszeta_\bz^\veps(\bx) := \bx + \eta_\veps(\bx) \bz\,, \quad
    \forall \veps \in (0,\underline{\delta}_0)\,.
    \end{equation*}
    Then for all $\bx$, $\by \in \Omega$, for all $\delta \in (0,\underline{\delta}_0)$, and for all $\veps \in (0,\underline{\delta}_0)$,
    we have 
    \begin{equation}\label{eq:bszetaproperties}
        \begin{gathered}
            \det \grad \bszeta_\bz^\veps(\bx) = 1 + \grad \eta_\veps(\bx) \cdot \bz > 1-\kappa_1 \veps > \frac{2}{3}\,, \\
            \bszeta_\bz^\veps(\bx) \in \Omega \text{ and } 0 < (1- \kappa_1 \veps) \eta(\bx) \leq \eta(\bszeta_\bz^\veps(\bx)) \leq (1+\kappa_1\veps) \eta(\bx) \,, \\
            0 < (1- \kappa_1 \veps) |\bx-\by| \leq | \bszeta_\bz^\veps(\bx) - \bszeta_\bz^\veps(\by)| \leq (1+\kappa_1 \veps) |\bx-\by|\,, \text{ and } \\
            |\bx-\by| \leq \eta_\delta(\bx) \quad \Rightarrow \quad |\bszeta_{\bz}^{\veps}(\bx) - \bszeta_{\bz}^{\veps}(\by)| \leq \frac{1+ \kappa_1 \veps}{1-\kappa_1 \veps} \eta_\delta( \bszeta_{\bz}^{\veps}(\bx) )\,.
        \end{gathered}
    \end{equation} 
\end{lemma}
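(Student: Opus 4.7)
The plan is to treat each of the four properties listed in \eqref{eq:bszetaproperties} in order, exploiting the fact that $\eta_\veps = \veps q(\lambda(\cdot))$ inherits a $(\kappa_1\veps)$-Lipschitz constant from $|q'|\le 1$ and $|\grad\lambda|\le\kappa_1$, together with the quantitative smallness $\kappa_1\veps < 1/3$ and $\kappa_0\veps < 1/3$ guaranteed by \eqref{eq:HorizonThreshold}. All four properties are essentially consequences of viewing $\bszeta_\bz^\veps$ as a small rank-one perturbation of the identity and propagating this smallness through chain-rule estimates on $\eta$ and $\lambda$.

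First I would compute $\grad \bszeta_\bz^\veps(\bx) = I + \bz \otimes \grad \eta_\veps(\bx)$ and apply the matrix determinant lemma $\det(I + \bu\otimes\bv) = 1 + \bu\cdot\bv$ to get $\det \grad \bszeta_\bz^\veps(\bx) = 1 + \grad \eta_\veps(\bx) \cdot \bz$. The Lipschitz estimate $|\grad \eta_\veps| \le \kappa_1\veps$ together with $|\bz| < 1$ yields $|\grad \eta_\veps(\bx)\cdot\bz| < \kappa_1\veps$, and then the threshold \eqref{eq:HorizonThreshold} forces $1-\kappa_1\veps > 2/3$.

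Next, for the containment $\bszeta_\bz^\veps(\bx) \in \Omega$ and the comparability of $\eta$, I would observe $|\bszeta_\bz^\veps(\bx) - \bx| \le \eta_\veps(\bx) \le \veps \kappa_0\dist(\bx,\p\Omega) < \dist(\bx,\p\Omega)$ using $q(r)\le r$ and $\lambda \le \kappa_0\dist(\cdot,\p\Omega)$, so the straight segment from $\bx$ to $\bszeta_\bz^\veps(\bx)$ stays inside $\Omega$. For the two-sided bound on $\eta(\bszeta_\bz^\veps(\bx))$, I would apply $\kappa_1$-Lipschitz continuity of $\lambda$ followed by $1$-Lipschitz continuity of $q$:
\begin{equation*}
|\eta(\bszeta_\bz^\veps(\bx)) - \eta(\bx)| \le |\lambda(\bszeta_\bz^\veps(\bx))-\lambda(\bx)| \le \kappa_1 \eta_\veps(\bx) = \kappa_1\veps\,\eta(\bx)\,,
\end{equation*}
which is essentially \Cref{lma:ComparabilityOfXandY} applied with $\by = \bszeta_\bz^\veps(\bx)$ (and is legitimate because $|\by-\bx|\le \eta_\veps(\bx) \le \eta(\bx)$).

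For the bi-Lipschitz estimate on the map itself, I would write
\begin{equation*}
\bszeta_\bz^\veps(\bx) - \bszeta_\bz^\veps(\by) = (\bx - \by) + \big(\eta_\veps(\bx) - \eta_\veps(\by)\big)\bz\,,
\end{equation*}
and bound the second term by $\kappa_1\veps|\bx-\by|\cdot|\bz| \le \kappa_1\veps|\bx-\by|$, so that triangle and reverse triangle inequality together give the sandwich $(1-\kappa_1\veps)|\bx-\by| \le |\bszeta_\bz^\veps(\bx) - \bszeta_\bz^\veps(\by)| \le (1+\kappa_1\veps)|\bx-\by|$. The final implication then follows by chaining this upper bound with the already-established lower bound $\eta(\bx) \le (1-\kappa_1\veps)^{-1}\eta(\bszeta_\bz^\veps(\bx))$ from the previous step.

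I do not anticipate a serious obstacle; this is a quantitative compilation of smallness estimates. The only place that requires modest care is ensuring that the constants $\kappa_1\veps$, $\kappa_0\veps$, and $C_q\kappa_0^{\log_2 C_q}\veps$ are all dominated by the threshold $\underline{\delta}_0$ so that all the inequalities close up with the stated margins; this is exactly why $\underline{\delta}_0$ is defined as a minimum in \eqref{eq:HorizonThreshold}, and it guarantees that each intermediate step leaves room for the next.
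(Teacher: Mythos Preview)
The paper does not supply its own proof of this lemma; it is quoted from \cite{scott2023nonlocal} as background (see the sentence preceding \Cref{lma:ComparabilityOfXandY}). Your argument therefore stands on its own, and it is essentially correct: the determinant via the matrix-determinant lemma, the comparability of $\eta$ via the $1$-Lipschitz bound on $q$ and the $\kappa_1$-Lipschitz bound on $\lambda$, the bi-Lipschitz estimate on $\bszeta_\bz^\veps$ via the triangle inequality, and the final implication by chaining the previous two, are all the right moves.

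One small correction. Your claim that $\kappa_0\veps<1/3$ follows from \eqref{eq:HorizonThreshold} is not justified: the threshold $\underline{\delta}_0$ controls $\kappa_1\veps$ and $C_q\kappa_0^{\log_2 C_q}\veps$, but $\kappa_0$ by itself need not be dominated by the maximum in \eqref{eq:HorizonThreshold} (take $\kappa_0$ large and $C_q$ barely above $1$). The containment $\bszeta_\bz^\veps(\bx)\in\Omega$ should instead go through the doubling hypothesis on $q$: since $\lambda(\bx)\le\kappa_0\,d_{\p\Omega}(\bx)\le 2^{\lceil\log_2\kappa_0\rceil}d_{\p\Omega}(\bx)$ and $q$ is nondecreasing, iterated doubling gives
\[
q(\lambda(\bx))\;\le\;C_q^{\lceil\log_2\kappa_0\rceil}\,q(d_{\p\Omega}(\bx))\;\le\;C_q\,\kappa_0^{\log_2 C_q}\,d_{\p\Omega}(\bx),
\]
whence $\eta_\veps(\bx)<\tfrac13\,d_{\p\Omega}(\bx)$ by \eqref{eq:HorizonThreshold}, and the ball $B(\bx,\eta_\veps(\bx))$ sits inside $\Omega$. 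You invoke exactly this constant $C_q\kappa_0^{\log_2 C_q}\veps$ in your closing paragraph, so the repair is just to route the containment step through it rather than through the unsupported inequality $\kappa_0\veps<1$.
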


\begin{lemma}\label{lma:CoordChange1}
    Fix $\bx \in \Omega$, and define the function $\bsupsilon_\bx^\delta : \Omega \to \bbR^d$ by 
    \begin{equation*}
        \bsupsilon_\bx^\delta(\by) := \frac{\by-\bx}{\eta_\delta(\by)}\,.
    \end{equation*}
    Then $\bsupsilon_\bx$ is injective on the set $B_{\bx}^\delta := \{ \by \in \bbR^d \, : \, |\bsupsilon_\bx^\delta(\by)| \leq 1 \} \subset \Omega$, and satisfies   
    \begin{equation}\label{eq:CoordChange1:Det}
        0 < \frac{1-\kappa_1 \delta}{ \eta_\delta(\by)^d } \leq \det \grad \bsupsilon_\bx^\delta(\by) = \frac{\eta_\delta(\by) + \grad \eta_\delta(\by) \cdot (\bx-\by)}{\eta_\delta (\by)^{d+1}} 
        \leq \frac{1 + \kappa_1 \delta}{ \eta_\delta(\by)^d }
    \end{equation}
    for all $\by \in B_{\bx}^\delta$. 
    That is, $\bsupsilon_{\bx}^\delta(\by)$ is globally invertible on $B_{\bx}^\delta$.
\end{lemma}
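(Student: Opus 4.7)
The plan is to address the three assertions of the lemma separately: the closed-form expression for the Jacobian determinant, its two-sided bound on $B_\bx^\delta$, and global injectivity. The argument should be essentially computational, with the contractive property of $\eta_\delta$ guaranteed by \eqref{eq:HorizonThreshold} doing all the work for injectivity.

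First, I would differentiate $\bsupsilon_\bx^\delta(\by) = (\by - \bx)/\eta_\delta(\by)$ componentwise to get
\begin{equation*}
    \grad \bsupsilon_\bx^\delta(\by) = \frac{1}{\eta_\delta(\by)} I_d - \frac{1}{\eta_\delta(\by)^2} (\by - \bx) \otimes \grad \eta_\delta(\by),
\end{equation*}
a rank-one perturbation of a scalar matrix. Applying the matrix determinant lemma in the form $\det(\alpha I_d + \ba \otimes \bb) = \alpha^{d-1}(\alpha + \ba \cdot \bb)$ with $\alpha = 1/\eta_\delta(\by)$, $\ba = -(\by-\bx)/\eta_\delta(\by)^2$, and $\bb = \grad \eta_\delta(\by)$ then directly yields the asserted expression for $\det \grad \bsupsilon_\bx^\delta(\by)$.

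For the two-sided bound, I would use the chain rule to estimate $|\grad \eta_\delta(\by)| = \delta |q'(\lambda(\by))||\grad \lambda(\by)| \leq \kappa_1 \delta$ via \eqref{assump:NonlinearLocalization}$(ii)$ and \eqref{assump:Localization}$(ii)$. On $B_\bx^\delta$ we have $|\by - \bx| \leq \eta_\delta(\by)$, so the Cauchy--Schwarz inequality gives $|\grad \eta_\delta(\by) \cdot (\bx - \by)| \leq \kappa_1 \delta \eta_\delta(\by)$. Inserting this bound into the numerator of the formula for the determinant produces both inequalities simultaneously, and positivity of the lower bound follows from the standing assumption \eqref{eq:HorizonThreshold}, which forces $\kappa_1 \delta < 1/3$.

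Global injectivity is the only step requiring a slight rearrangement. Given $\by_1, \by_2 \in B_\bx^\delta$ with $\bsupsilon_\bx^\delta(\by_1) = \bsupsilon_\bx^\delta(\by_2)$, cross-multiplying and adding and subtracting $\by_1 \eta_\delta(\by_1)$ on both sides would put the identity in the form
\begin{equation*}
    \eta_\delta(\by_1)(\by_1 - \by_2) = (\bx - \by_1)\bigl(\eta_\delta(\by_2) - \eta_\delta(\by_1)\bigr).
\end{equation*}
Taking norms and using both $|\bx - \by_1| \leq \eta_\delta(\by_1)$ and the $\kappa_1\delta$-Lipschitz estimate on $\eta_\delta$ gives $\eta_\delta(\by_1)|\by_1 - \by_2| \leq \kappa_1 \delta \eta_\delta(\by_1)|\by_1 - \by_2|$, which forces $\by_1 = \by_2$ since $\kappa_1 \delta < 1$ and $\eta_\delta(\by_1) > 0$. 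The only semblance of an obstacle is the conceptual point that the contractive threshold \eqref{eq:HorizonThreshold} obviates any need for topological machinery (such as Hadamard's invertibility theorem or a covering argument) to promote local invertibility to global invertibility.
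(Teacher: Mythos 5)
Your computation of the Jacobian via the matrix determinant lemma, the two-sided bound via $|\grad\eta_\delta|\leq\kappa_1\delta$ and Cauchy--Schwarz, and the injectivity argument are all correct and amount to essentially the same algebra as the paper's. For injectivity the paper expands $|\by_1-\by_2|^2$ directly, while you cross-multiply; both rearrangements reduce to $|\by_1-\by_2|=|\bsupsilon_\bx^\delta(\by_1)|\,|\eta_\delta(\by_1)-\eta_\delta(\by_2)|\leq\kappa_1\delta\,|\by_1-\by_2|$, so the two routes are equivalent.

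The one omission is the inclusion $B_\bx^\delta\subset\Omega$, which is part of the lemma's statement and which the paper establishes first: by \eqref{eq:ComparabilityOfDistanceFxn2}, any $\by\in B_\bx^\delta$ satisfies $|\by-\bx|\leq\eta_\delta(\by)\leq\frac{\delta}{1-\kappa_1\delta}\eta(\bx)<\frac{1}{2}\eta(\bx)$, so $\by$ stays in $\Omega$. This is not purely cosmetic: your final injectivity step cancels $\eta_\delta(\by_1)$ from both sides, and strict positivity of $\eta_\delta(\by_1)$ is guaranteed only once $\by_1$ is known to lie in the open set $\Omega$ (so that $\lambda(\by_1)>0$ and hence $q(\lambda(\by_1))>0$). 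You should record this inclusion before the cross-multiplication step.
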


\begin{proof}
    First, by \eqref{eq:ComparabilityOfDistanceFxn2} we have 
    $
    |\by-\bx| \leq \eta_\delta(\by) \leq \frac{\delta}{ 1-\kappa_1 \delta } \eta (\bx) < \frac{1}{2} \eta(\bx)
    $,
    for all $\by \in B_\bx^\delta$, so $B_\bx^\delta \subset \Omega$.
    Second, if $\bsupsilon_{\bx}^\delta(\by_1) = \bsupsilon_{\bx}^\delta(\by_2)$ for $\by_1$, $\by_2 \in B_{\bx}^\delta$, then 
    \begin{equation*}
    \begin{split}
        |\by_1 -\by_2|^2 &= |\by_1 - \bx|^2 + |\by_2 - \bx|^2 - 2 \Vint{ \by_1 - \bx, \by_2 - \bx } \\
        &= |\bsupsilon_{\bx}^\delta(\by_1)|^2 \eta_\delta(\by_1)^2 +  |\bsupsilon_{\bx}^\delta(\by_2)|^2 \eta_\delta(\by_2)^2 -  2 \vint{ \bsupsilon_\bx^\delta(\by_1) \cdot \bsupsilon_\bx^\delta(\by_2) } \eta_\delta(\by_1) \eta_\delta(\by_2) \\
        &=  |\bsupsilon_\bx^\delta(\by_1)|^2 ( \eta_\delta(\by_1)^2 + \eta_\delta(\by_2)^2 - 2 \eta_\delta(\by_1) \eta_\delta(\by_2) ) \\
        &\leq |\eta_\delta(\by_1) - \eta_\delta(\by_2)|^2 \leq \frac{1}{9}|\by_1 - \by_2|^2\,,
    \end{split}
    \end{equation*}
    implying that $\by_1 = \by_2$. Third, on $B_\bx^\delta$ we have $|\grad \eta_\delta(\by) \cdot (\frac{\bx-\by}{\eta_\delta(\by)})| \leq \kappa_1 \delta$, so therefore
    \begin{equation*}
        1 - \kappa_1 \delta \leq \frac{ \eta_\delta(\by) + \grad \eta_\delta(\by) \cdot (\bx-\by) }{ \eta_\delta(\by) } \leq 1 + \kappa_1 \delta\,,
    \end{equation*}
    and the bounds on $\det \grad \bsupsilon_\bx^\delta(\by)$ follow.
\end{proof}

\subsection{Mollifier kernels}\label{sec:AuxMolls}
For any  function $\psi: [0,\infty) \to \bbR$, 
and $\alpha\in (-\infty,d)$, we introduce a \textit{boundary-localizing} mollifier corresponding to a standard mollifier $\psi$ described in \eqref{Assump:Kernel}, for any $\bx,\by\in \Omega$,
\begin{equation}\label{eq:KernelWeightedDef}
    \psi_{\delta,\alpha}[\lambda,q](\bx,\by) = \left( \frac{\psi(\cdot)}{ |\cdot|^\alpha } \right)_{\delta}[\lambda,q](\bx,\by) := \frac{1}{\eta_\delta[\lambda,q](\bx)^{d}} \frac{ \psi \left( \frac{|\by-\bx|}{\eta_\delta[\lambda,q](\bx)} \right) }{ \left( \frac{|\by-\bx|}{\eta_\delta[\lambda,q](\bx)} \right)^{\alpha} } \,,
\end{equation}
which extends the following special case defined in \cite{scott2023nonlocal}
\begin{equation}\label{eq:OperatorKernelDef}
	\begin{split}
		\psi_{\delta}[\lambda,q](\bx,\by) &:= 
  \psi_{\delta,0}[\lambda,q](\bx,\by)=
  \frac{1}{\eta_\delta[\lambda,q](\bx)^{d}} {\psi} \left( \frac{|\by-\bx|}{\eta_\delta[\lambda,q](\bx)} \right) \,.
	\end{split}
\end{equation}
As in \cite{scott2023nonlocal}, we recall that for $\delta<\underline{\delta}_0$, 
\[
\int_{\Omega} \psi_{\delta}[\lambda,q](\bx,\by) \, \rmd \by =
\int_{\Omega} \psi_{\delta,\alpha}[\lambda,q](\bx,\by) \, \rmd \by =1, \; \forall \bx\in\Omega.
\]
Meanwhile, we let
\begin{equation}
    \Psi_{\delta,\alpha}[\lambda,q,\psi](\bx):= \int_{\Omega} \psi_{\delta,\alpha}[\lambda,q](\by,\bx) \, \rmd \by\,,
    \;\;
        \Psi_{\delta}[\lambda,q,\psi](\bx):= \Psi_{\delta,0}[\lambda,q,\psi](\bx),\;
\end{equation}
together with abbreviation conventions
\begin{equation}
    \label{eq:abbrev}
    \psi_{\delta}[\lambda,q] = \psi_{\delta},\; \psi_{\delta,\alpha}[\lambda,q] = \psi_{\delta,\alpha},\;
\Psi_{\delta}[\lambda,q,\psi]=\Psi_{\delta},
\,\text{ and }\, \Psi_{\delta,\alpha}[\lambda,q,\psi]=\Psi_{\delta,\alpha}.
\end{equation}
We note that, when the context is clear, such abbreviations are adopted in this paper for other notations where $\psi$ and $\Psi$ are replaced by other functions.
We present some properties of $\Psi_{\delta,\alpha}$, similar to those derived for 
$\Psi_\delta$ in \cite{scott2023nonlocal}.

\begin{lemma}\label{lma:KernelIntegral}
    Let $\psi$ be a nonnegative even function in $L^\infty(\bbR)$ with nonempty support contained in $[-1,1]$. Then 
\begin{equation}\label{eq:KernelIntFunction:Bounds}
        \frac{1}{1+\kappa_1 \delta} \int_{B(0,1)} \frac{\psi(|\bz|)}{|\bz|^\alpha} \, \rmd \bz 
        \leq \Psi_{\delta,\alpha}(\bx) 
        \leq 
        \frac{1}{1-\kappa_1 \delta} \int_{B(0,1)} \frac{\psi(|\bz|)}{|\bz|^\alpha} \, \rmd \bz \,, \quad \forall \bx \in \Omega.
\end{equation}
\end{lemma}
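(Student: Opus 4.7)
The plan is to evaluate $\Psi_{\delta,\alpha}(\bx)$ exactly via the change of variables $\bz = \bsupsilon_\bx^\delta(\by) := (\by-\bx)/\eta_\delta(\by)$ furnished by \Cref{lma:CoordChange1}, and then bound the resulting Jacobian factor uniformly in $\bz$.

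First, because $\psi$ is supported in $[-1,1]$, the integrand of $\Psi_{\delta,\alpha}(\bx) = \int_\Omega \psi_{\delta,\alpha}(\by,\bx)\,\rmd\by$ vanishes outside $B_\bx^\delta = \{\by \in \bbR^d : |\bsupsilon_\bx^\delta(\by)| \leq 1\}$, which by \Cref{lma:CoordChange1} is contained in $\Omega$.  I next argue $\bsupsilon_\bx^\delta$ is a bijection $B_\bx^\delta \to \overline{B(0,1)}$: injectivity is exactly the content of \Cref{lma:CoordChange1}, and surjectivity follows from a contraction-mapping argument applied to $\by \mapsto \bx + \eta_\delta(\by)\bz$ on the closed ball $\overline{B(\bx, \eta_\delta(\bx)/(1-\kappa_1\delta))}$.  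The Lipschitz bound $|\eta_\delta(\bx)-\eta_\delta(\by)| \leq \kappa_1 \delta|\bx-\by|$ (together with the choice of $\underline{\delta}_0$ in \eqref{eq:HorizonThreshold}) simultaneously guarantees self-mapping of this ball and gives a contraction factor $\kappa_1\delta < 1/3$, producing a unique fixed point that provides the preimage of $\bz$.

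Second, I perform the substitution.  Using the identity $\bx-\by = -\eta_\delta(\by)\bz$, formula \eqref{eq:CoordChange1:Det} for the Jacobian simplifies to
\begin{equation*}
\det\grad \bsupsilon_\bx^\delta(\by) = \frac{1 - \grad\eta_\delta(\by)\cdot\bz}{\eta_\delta(\by)^d}\,.
\end{equation*}
Substituting into the definition of $\psi_{\delta,\alpha}$ causes the $\eta_\delta(\by)^d$ prefactor to cancel, and I arrive at
\begin{equation*}
\Psi_{\delta,\alpha}(\bx) = \int_{B(0,1)} \frac{\psi(|\bz|)}{|\bz|^\alpha}\,\frac{\rmd\bz}{1 - \grad\eta_\delta(\by(\bz))\cdot\bz}\,,
\end{equation*}
where $\by(\bz)$ denotes the inverse of $\bsupsilon_\bx^\delta$.

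Finally, since $\eta_\delta = \delta\,q(\lambda)$ with $|q'|\leq 1$ and $|\grad\lambda|\leq \kappa_1$, we have $|\grad\eta_\delta|\leq \kappa_1\delta$ a.e.; combined with $|\bz| \leq 1$, the denominator lies in the interval $[1-\kappa_1\delta,\, 1+\kappa_1\delta]$ pointwise.  Integrating against $\psi(|\bz|)/|\bz|^\alpha$ yields the two claimed bounds.  The only nontrivial step is the surjectivity of $\bsupsilon_\bx^\delta$ onto $\overline{B(0,1)}$; once that is secured, the rest is a one-line Jacobian computation followed by a uniform pinching estimate.
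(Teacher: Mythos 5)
Your proof is correct and uses the same change of variables $\bz = \bsupsilon_\bx^\delta(\by)$ and Jacobian estimate from \Cref{lma:CoordChange1} as the paper; reordering the steps (exact substitution first, then a uniform pinch of the Jacobian factor using $|\grad\eta_\delta|\leq\kappa_1\delta$) is algebraically equivalent to the paper's device of inserting \eqref{eq:CoordChange1:Det} into the integrand before substituting. A nice extra is that you explicitly establish surjectivity of $\bsupsilon_\bx^\delta$ onto $\overline{B(0,1)}$ via the contraction mapping $\by \mapsto \bx + \eta_\delta(\by)\bz$, which is precisely what licenses the identification of the image with $B(0,1)$ needed for the lower bound; the paper leans on the ``globally invertible'' conclusion of \Cref{lma:CoordChange1}, whose stated proof only verifies injectivity and the Jacobian bounds, so your fixed-point argument usefully closes that gap.
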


\begin{proof}
    We use the bound \eqref{eq:CoordChange1:Det} and \Cref{lma:CoordChange1} 
    to get
    \begin{equation*}
        \Psi_{\delta,\alpha}(\bx) \leq \int_{ \{ |\bsupsilon_\bx^\delta(\by)| \leq 1 \} } \frac{\psi( \bsupsilon_\bx^\delta(\by) ) }{ |\bsupsilon_\bx^\delta(\by)|^\alpha } \frac{ \det \grad \bsupsilon_{\bx}^\delta(\by) }{ 1- \kappa_1 \delta } \, \rmd \by = \frac{1}{1-\kappa_1 \delta} \int_{B(0,1)} \frac{\psi(|\bz|)}{|\bz|^\alpha} \, \rmd \bz
    \end{equation*}
    and
    \begin{equation*}
        \Psi_{\delta,\alpha}(\bx) \geq \int_{ \{ |\bsupsilon_\bx^\delta(\by)| \leq 1 \} } \frac{\psi( \bsupsilon_\bx^\delta(\by) ) }{ |\bsupsilon_\bx^\delta(\by)|^\alpha } \frac{ \det \grad \bsupsilon_{\bx}^\delta(\by) }{ 1+ \kappa_1 \delta } \, \rmd \by = \frac{1}{1+\kappa_1 \delta} \int_{B(0,1)} \frac{\psi(|\bz|)}{|\bz|^\alpha} \, \rmd \bz\,.
    \end{equation*}
\end{proof}

We now turn to the derivatives of $\psi_\delta$.
It is clear that $\psi_\delta
\in C^k(\Omega \times \Omega)$ whenever \eqref{assump:Localization}, \eqref{assump:NonlinearLocalization}, and \eqref{Assump:Kernel} are satisfied for the same $k \in \bbN \cup \{\infty\}$. We record several estimates on the derivative of the kernel that we will use.
\begin{theorem}[\cite{scott2023nonlocal}]\label{thm:gradientpsi}
Let $\psi$  satisfy \eqref{Assump:Kernel} for some $k \geq 1$.
Then there exists $C = C(d,\psi)$ such that
    \begin{equation}\label{eq:KernelDerivativeEstimate}
		|\grad _{\bx} \psi_{\delta}(\bx,\by)| \leq \frac{C}{\eta_\delta(\bx) } \Big( \psi_{\delta}(\bx,\by) 
		+ (|\psi'|)_{\delta}(\bx,\by) \Big)\,,  \quad \forall \bx, \by \in \Omega.
	\end{equation}
Moreover, for any $\alpha \in \bbR$ there exists $C = C(d,\psi,\kappa_1,\alpha)$ such that\begin{equation}\label{eq:KernelIntegralDerivativeEstimate}
		\int_{\Omega} |\eta_\delta(\by)|^{\alpha} |\grad_{\bx} \psi_{\delta}(\bx,\by)| \, \rmd \by
		\leq \frac{C}{\eta_\delta(\bx)^{1-\alpha}}\,, \quad \forall \, \bx \in \Omega.
	\end{equation}
\end{theorem}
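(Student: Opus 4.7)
The plan is to prove \eqref{eq:KernelDerivativeEstimate} by direct differentiation and then deduce \eqref{eq:KernelIntegralDerivativeEstimate} from it. For the first inequality, I would write $\psi_\delta(\bx,\by) = \eta_\delta(\bx)^{-d} \psi(|\by-\bx|/\eta_\delta(\bx))$ and apply the product and chain rules in $\bx$. Two contributions arise: one from differentiating the prefactor $\eta_\delta(\bx)^{-d}$, which is proportional to $|\grad \eta_\delta(\bx)|\,\eta_\delta(\bx)^{-1}\,\psi_\delta(\bx,\by)$; and one from differentiating the argument of $\psi$, which is $\psi'(|\by-\bx|/\eta_\delta(\bx))$ times $\grad_\bx \left[ (\by-\bx)/\eta_\delta(\bx) \right]$, whose norm is bounded by $\eta_\delta(\bx)^{-1} + |\by-\bx|\,|\grad\eta_\delta(\bx)|/\eta_\delta(\bx)^2$. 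Since $\grad \eta_\delta(\bx) = \delta q'(\lambda(\bx))\grad \lambda(\bx)$ with $|q'|\leq 1$ by \eqref{assump:NonlinearLocalization} and $|\grad\lambda|\leq \kappa_1$ by \eqref{assump:Localization}, one has $|\grad \eta_\delta(\bx)|\leq \kappa_1\delta$; moreover $|\by-\bx|/\eta_\delta(\bx)\leq 1$ on the support of $\psi_\delta(\bx,\cdot)$ (and likewise for $(|\psi'|)_\delta(\bx,\cdot)$). Collecting these yields \eqref{eq:KernelDerivativeEstimate} with $C = C(d,\psi)$, the $\kappa_1\delta$ factors being absorbed since $\delta<\underline{\delta}_0$ is bounded by \eqref{eq:HorizonThreshold}.

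For the integral estimate \eqref{eq:KernelIntegralDerivativeEstimate}, I would apply \eqref{eq:KernelDerivativeEstimate} to reduce matters to controlling $\int_\Omega |\eta_\delta(\by)|^\alpha \psi_\delta(\bx,\by)\,\rmd\by$ and $\int_\Omega |\eta_\delta(\by)|^\alpha (|\psi'|)_\delta(\bx,\by)\,\rmd\by$. The crucial observation is that on the support of either integrand, $|\by-\bx|\leq \eta_\delta(\bx)$, so by \Cref{lma:ComparabilityOfXandY} the quantity $\eta_\delta(\by)$ is comparable to $\eta_\delta(\bx)$ up to multiplicative constants depending only on $\kappa_1$. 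Hence $|\eta_\delta(\by)|^\alpha\leq C(\kappa_1,\alpha)\,\eta_\delta(\bx)^\alpha$ may be extracted from the integral. The remaining $L^1$-masses are uniformly bounded: $\int_\Omega \psi_\delta(\bx,\by)\,\rmd\by = 1$ as recalled immediately after \eqref{eq:OperatorKernelDef}, and a change of variables $\bz = (\by-\bx)/\eta_\delta(\bx)$ together with the Jacobian bound from \Cref{lma:CoordChange1} gives $\int_\Omega (|\psi'|)_\delta(\bx,\by)\,\rmd\by \leq C(d) \int_{B(0,1)} |\psi'(|\bz|)|\,\rmd\bz \leq C(d,\psi)$. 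Combining these produces the required $\eta_\delta(\bx)^{\alpha-1}$ bound.

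The proof is essentially mechanical, so I do not expect a deep obstacle. The only subtlety is to ensure that the support of $\psi_\delta(\bx,\cdot)$ is contained in $\Omega$ so that the above change-of-variables argument closes cleanly and the normalization $\int_\Omega \psi_\delta(\bx,\by)\,\rmd\by = 1$ is available; this is precisely what \eqref{eq:HorizonThreshold} together with \eqref{assump:Localization}(i) and \eqref{assump:NonlinearLocalization}(i) is designed to guarantee, through the inclusion $B(\bx,\eta_\delta(\bx))\Subset\Omega$ uniformly in $\bx\in\Omega$ noted in the proof of \Cref{lma:CoordChange1}.
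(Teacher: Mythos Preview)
Your approach is correct and is the natural direct computation one would expect; note that the paper itself does not supply a proof but cites this result from \cite{scott2023nonlocal}. One small correction: the change of variables $\bz = (\by-\bx)/\eta_\delta(\bx)$ you invoke for the $(|\psi'|)_\delta$ integral is a pure dilation-translation with Jacobian exactly $\eta_\delta(\bx)^d$, so no appeal to \Cref{lma:CoordChange1} is needed (that lemma handles the different map $\bsupsilon_\bx^\delta(\by) = (\by-\bx)/\eta_\delta(\by)$ with $\by$ in the denominator); the inclusion $B(\bx,\eta_\delta(\bx))\subset\Omega$ is already implicit in the normalization $\int_\Omega \psi_\delta(\bx,\by)\,\rmd\by = 1$ recalled after \eqref{eq:OperatorKernelDef}.
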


For the study of the adjoint operator $K_\delta^*$, we also record corresponding estimates on its kernel.
\begin{theorem}
Let $\psi$  satisfy \eqref{Assump:Kernel} for some $k \geq 1$. Then 
\begin{equation}\label{eq:KernelDerivativeEstimate:Adj}
		|\grad _{\bx} \psi_{\delta}(\by,\bx)| \leq \frac{1}{\eta_\delta(\by) }  (|\psi'|)_{\delta}(\by,\bx)\,, \quad \forall \, \bx, \by \in \Omega\,.
	\end{equation}
\end{theorem}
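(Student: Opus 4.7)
The key observation is that in $\psi_\delta(\by,\bx) = \eta_\delta(\by)^{-d}\psi(|\bx-\by|/\eta_\delta(\by))$, the heterogeneous localization $\eta_\delta(\by)$ does \emph{not} depend on the differentiation variable $\bx$. This is the essential structural difference from the forward kernel $\psi_\delta(\bx,\by)$ treated in \Cref{thm:gradientpsi}, where the two occurrences of $\eta_\delta(\bx)$ each produced a contribution via the chain rule, yielding the two-term bound involving $\psi_\delta$ plus $(|\psi'|)_\delta$.

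My plan is therefore the following. First, write
\begin{equation*}
\psi_\delta(\by,\bx) = \frac{1}{\eta_\delta(\by)^d}\, \psi\!\left(\frac{|\bx-\by|}{\eta_\delta(\by)}\right),
\end{equation*}
treating $\by$ as a parameter. Since $\eta_\delta(\by)$ is a constant with respect to $\bx$, only $\psi$ and the argument $|\bx-\by|/\eta_\delta(\by)$ are differentiated. A direct chain rule computation (valid for $\bx \neq \by$) gives
\begin{equation*}
\grad_\bx \psi_\delta(\by,\bx) = \frac{1}{\eta_\delta(\by)^{d+1}}\, \psi'\!\left(\frac{|\bx-\by|}{\eta_\delta(\by)}\right) \frac{\bx-\by}{|\bx-\by|}\,.
\end{equation*}
Taking the Euclidean norm of the vector factor $(\bx-\by)/|\bx-\by|$, which equals $1$, we obtain
\begin{equation*}
|\grad_\bx \psi_\delta(\by,\bx)| \leq \frac{1}{\eta_\delta(\by)}\cdot \frac{1}{\eta_\delta(\by)^d}\left|\psi'\!\left(\frac{|\bx-\by|}{\eta_\delta(\by)}\right)\right| = \frac{1}{\eta_\delta(\by)}\,(|\psi'|)_\delta(\by,\bx),
\end{equation*}
invoking the abbreviation convention \eqref{eq:abbrev} applied to $|\psi'|$ in place of $\psi$.

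The only minor subtlety is the behavior at $\bx = \by$: the formula for $\grad_\bx \psi_\delta(\by,\bx)$ has an apparent singularity from $(\bx-\by)/|\bx-\by|$, but since \eqref{Assump:Kernel} with $k \geq 1$ forces $\psi'(0)=0$ (as $\psi$ is even and $C^1$), one checks that $\grad_\bx\psi_\delta(\by,\bx)\to 0$ as $\bx\to\by$, so the stated pointwise bound extends continuously to all $\bx,\by \in \Omega$. No further compensating estimates — such as the integral bound \eqref{eq:KernelIntegralDerivativeEstimate} that was needed for the forward kernel to absorb the extra $\psi_\delta$ term produced by differentiating $\eta_\delta(\bx)$ — are required here, because the extra term is simply absent. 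Hence the proof is essentially a one-line chain-rule calculation and there is no serious obstacle.
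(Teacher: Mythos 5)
Your proof is correct and matches the paper's in all essentials: both rely on the single chain-rule observation that $\eta_\delta(\by)$ is independent of the differentiation variable $\bx$, producing the one-term bound $\grad_\bx\psi_\delta(\by,\bx)=\frac{1}{\eta_\delta(\by)}\,\psi'_\delta(\by,\bx)\,\frac{\bx-\by}{|\bx-\by|}$ and hence the estimate. Your extra remark about continuity at $\bx=\by$ (using $\psi'(0)=0$ since $\psi$ is even and $C^1$) is a sound and slightly more careful treatment of a point the paper leaves implicit, but it does not change the route.
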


\begin{proof} 
 By direct computation,
$\grad_{\bx} {\psi}_{\delta}(\by,\bx) = \psi'_\delta(\by,\bx) \frac{\bx-\by}{|\bx-\by|} \frac{1}{\eta_\delta(\by)}$.
Thus, using the support of $\psi$, we see the result.
\end{proof}
\begin{corollary}
    Let $\psi$ satisfy \eqref{Assump:Kernel} for some $k \geq 1$, and let $\alpha \in \bbR$. Then there exists $C = C(d,\psi,\kappa_1,\alpha)$ such that
\begin{equation}\label{eq:KernelIntegralDerivativeEstimate:Adj}
		\int_{\Omega} |\eta_\delta(\by)|^{\alpha} |\grad_{\bx} \psi_{\delta}(\by,\bx)| \, \rmd \by
		\leq \frac{C}{\eta_\delta(\bx)^{1-\alpha}}\,, \quad \forall \, \bx \in \Omega.
	\end{equation}
\end{corollary}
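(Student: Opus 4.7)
The plan is to chain together the pointwise derivative bound \eqref{eq:KernelDerivativeEstimate:Adj}, the compact support of $\psi$, the pointwise comparability in \Cref{lma:ComparabilityOfXandY}, and the integral identity \Cref{lma:KernelIntegral}, all of which have just been established. The structure mirrors the proof of \eqref{eq:KernelIntegralDerivativeEstimate} in \cite{scott2023nonlocal}, but is actually easier because the adjoint estimate \eqref{eq:KernelDerivativeEstimate:Adj} has only a single term on the right-hand side.

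First, I would apply \eqref{eq:KernelDerivativeEstimate:Adj} directly to get
$
|\eta_\delta(\by)|^\alpha |\grad_\bx \psi_\delta(\by,\bx)| \leq |\eta_\delta(\by)|^{\alpha-1} (|\psi'|)_\delta(\by,\bx)
$
for all $\bx,\by \in \Omega$. Next, I would exploit the fact that $\supp \psi' \subset [-1,1]$ implies the integrand is supported in the set $\{\by \in \Omega : |\bx-\by| \leq \eta_\delta(\by)\}$. On this set, \eqref{eq:ComparabilityOfDistanceFxn2} gives
$
(1-\kappa_1\delta)\eta_\delta(\by) \leq \eta_\delta(\bx) \leq (1+\kappa_1\delta)\eta_\delta(\by),
$
and since \eqref{eq:HorizonThreshold} guarantees $\kappa_1 \delta < 1/3$, the quantities $\eta_\delta(\by)$ and $\eta_\delta(\bx)$ are comparable up to a constant depending only on $\kappa_1$. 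Consequently $\eta_\delta(\by)^{\alpha-1} \leq C(\kappa_1,\alpha)\,\eta_\delta(\bx)^{\alpha-1}$ on the effective support.

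With that pointwise replacement performed, the integral reduces to
$$
\int_{\Omega} |\eta_\delta(\by)|^{\alpha}|\grad_\bx \psi_\delta(\by,\bx)|\,\rmd\by \leq C(\kappa_1,\alpha)\,\eta_\delta(\bx)^{\alpha-1} \int_{\Omega} (|\psi'|)_\delta(\by,\bx)\,\rmd\by.
$$
The remaining integral is exactly $\Psi_{\delta,0}[\lambda,q,|\psi'|](\bx)$ in the notation \eqref{eq:abbrev}, and because $|\psi'|$ is a nonnegative even function in $L^\infty(\bbR)$ with $\supp |\psi'| \subset [-1,1]$ (using $\psi \in C^k$ with $k \geq 1$), \Cref{lma:KernelIntegral} applies with $\alpha = 0$ and the function $|\psi'|$ in place of $\psi$, giving an upper bound of $(1-\kappa_1\delta)^{-1}\int_{B(0,1)} |\psi'|(|\bz|)\,\rmd\bz \leq C(d,\psi)$. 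Combining these two ingredients yields the claimed estimate with $C = C(d,\psi,\kappa_1,\alpha)$.

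There is no real obstacle here: the only point requiring any care is noticing that the natural support condition $|\bx-\by| \leq \eta_\delta(\by)$ (rather than $\leq \eta_\delta(\bx)$) forces one to use \eqref{eq:ComparabilityOfDistanceFxn2} instead of \eqref{eq:ComparabilityOfDistanceFxn1}, which is precisely the direction stated in \Cref{lma:ComparabilityOfXandY}. The absence of a second (undifferentiated) term in \eqref{eq:KernelDerivativeEstimate:Adj}, in contrast to \eqref{eq:KernelDerivativeEstimate}, means no additional estimate is needed to absorb a $\psi_\delta$-term.
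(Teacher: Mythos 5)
Your proof is correct and follows the same route the paper takes: apply the pointwise adjoint estimate \eqref{eq:KernelDerivativeEstimate:Adj}, then use the comparability \eqref{eq:ComparabilityOfDistanceFxn2} on the effective support $\{|\bx-\by|\leq\eta_\delta(\by)\}$ to convert the $\eta_\delta(\by)^{\alpha-1}$ weight into $\eta_\delta(\bx)^{\alpha-1}$, and finish by bounding the remaining kernel integral. The paper's proof is stated in one line (``apply \eqref{eq:KernelDerivativeEstimate:Adj}, then use \eqref{eq:ComparabilityOfDistanceFxn2}''); you have simply filled in the implicit final step — the integral bound via \Cref{lma:KernelIntegral} applied to $|\psi'|$ — and correctly noted that $|\psi'|$ is even, bounded, and supported in $[-1,1]$, so that lemma applies.
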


\begin{proof}
    We first apply  \eqref{eq:KernelDerivativeEstimate:Adj} and then use  \eqref{eq:ComparabilityOfDistanceFxn2}. 
\end{proof}
Last, the following hat notation is used whenever we symmetrize any kernel function of two arguments:
\begin{equation}\label{eq:SymmetricKerneldDef}
    \hat{\gamma}(\bx,\by) = \gamma(\bx,\by) + \gamma(\by,\bx)\,,
\end{equation}
for example, $\hat{\psi}_\delta(\bx,\by) = \psi_\delta(\bx,\by) + \psi_\delta(\by,\bx)$.

\section{Nonlocal Green's Identity}\label{sec:GreensIdentity}

The proof of the nonlocal Green's identity hinges on precise computations near $\p \Omega$. These are proved in the next subsection, and so here we collect first the notation and known results we will need.

To this end, we denote $(d-1)$-dimensional Hausdorff measure as $\scH^{d-1}$; see for instance \cite{E15} for the definition. We denote the $\ell^\infty$-norm on $\bbR^{d-1}$ by $\vnorm{\bx}_\infty = \vnorm{\bx} = \sup \{ |x_i| \, : \, 1 \leq i \leq d-1 \}$. Here and in the rest of the paper, we denote $\dist(\bx,\p \Omega) = d_{\p \Omega}(\bx)$.

The following lemma is comprised of consequences of the main results of \cite{shapiro1987differentiability}, adapted for our purposes.

\begin{lemma}\label{lma:DiffOfDistatBdy}
For a  Lipschitz map $\varphi : \bbR^{d-1} \to \bbR$, let $\Omega_\phi \subset \bbR^d$ be a special Lipschitz domain    given by
    \begin{equation*}
        \Omega_\phi := \{ (\bx',x_d) \, : \, x_d > \varphi(\bx')\,, \bx' \in \bbR^{d-1} \}
    \end{equation*}
    with boundary $\p \Omega_\phi = \{ (\bx',\varphi(\bx')) \, : \, \bx' \in \bbR^{d-1} \}$.
    Then there exists a set $N_0 \subset \bbR^{d-1}$ such that $\scH^{d-1}(N_0) = 0$ and for all $\bx' \in \bbR^{d-1} \setminus N_0$ the following holds:

    \begin{enumerate}
        \item[i)] $\grad \varphi(\bx')$ exists (by Rademacher's theorem).
        \item[ii)] Define $\bx = (\bx',\varphi(\bx')) \in \p \Omega_\phi$. Then the
        outward
        unit normal vector $\bsnu(\bx)$ exists and is given by
        \begin{equation}\label{eq:NormalVectorDef}
            \bsnu(\bx) = \frac{(\grad \varphi(\bx'),-1)}{ \sqrt{1+ |\grad \varphi(\bx')|^2 } }\,.
        \end{equation}
        \item[iii)] Define the hyperplane $\Vint{\bsnu(\bx)}^\perp := \{ \by \in \bbR^d \, : \, \by \cdot \bsnu(\bx) = 0 \}$. For any $\bz \in \bbR^d$, the $\bz$-directional derivative of $d_{\p \Omega_\phi}$ exists at $\bx = (\bx',\varphi(\bx'))$ and is given by 
        \begin{equation*}
            \frac{ \p d_{\p \Omega_\phi} }{ \p \bz }(\bx) := \lim\limits_{t \to 0^+} \frac{d_{\p \Omega_\phi}(\bx+t\bz) }{t} = \dist(\bz, \Vint{\bsnu(\bx)}^\perp ) = - \bsnu(\bx) \cdot \bz \,.  
        \end{equation*}
        \item[iv)]
        Define for any $\bx \in \bbR^d$ a nearest point projection operator $\bsxi(\bx)$ as a point on $\p \Omega_\phi$ such that $|\bx - \bsxi(\bx)| = \dist(\bx,\p \Omega_\phi)$.
        (Note that the definition of $\bsxi$ is not unique).
        
        For any $\bx = (\bx',\varphi(\bx')) \in \p \Omega_\phi$, and for any $\bz \in \bbR^d$, let $\bar{\bz}$ be the unique point in $\vint{\bsnu(\bx)}^\perp$ that satisfies $|\bz-\bar{\bz}| = \dist(\bz,\vint{\bsnu(\bx)}^\perp)$. Then 
        \begin{equation*}
            \lim\limits_{t \to 0^+} \frac{|\bsxi(\bx+t \bz) - (\bx + t \bar{\bz})| }{ t } = 0\,.
        \end{equation*}
    \end{enumerate}
\end{lemma}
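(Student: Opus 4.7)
The plan is to deduce the four statements by combining Rademacher's theorem with Shapiro's analysis of the distance function to a Lipschitz hypersurface, using the graph structure of $\partial \Omega_\varphi$. Parts (i) and (ii) are standard: Rademacher's theorem applied to the Lipschitz function $\varphi$ yields a set $N_0 \subset \mathbb{R}^{d-1}$ with $\mathcal{H}^{d-1}(N_0) = 0$ on whose complement $\nabla \varphi$ exists, and on this set the tangent vectors to the graph parametrization $\Phi(\by') = (\by',\varphi(\by'))$ are $\partial_i \Phi = (\be_i, \partial_i \varphi(\bx'))$; the unique unit vector orthogonal to all of these with negative $d$th component (pointing out of the supergraph) is precisely \eqref{eq:NormalVectorDef}.

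The substantive claims (iii) and (iv) rest on approximating $\partial \Omega_\varphi$ near $\bx$ by its tangent hyperplane $H_\bx := \bx + \vint{\bsnu(\bx)}^\perp$. Differentiability of $\varphi$ at $\bx'$ gives the first-order expansion
\begin{equation*}
    \varphi(\by') - \varphi(\bx') - \nabla \varphi(\bx') \cdot (\by' - \bx') = o(|\by' - \bx'|)\,,
\end{equation*}
so $\partial \Omega_\varphi$ lies within vertical distance $o(|\by' - \bx'|)$ of $H_\bx$ in a neighborhood of $\bx$. For (iii), I would use this to show $|d_{\partial \Omega_\varphi}(\bx + t\bz) - d_{H_\bx}(\bx + t\bz)| = o(t)$, which combined with the exact formula $d_{H_\bx}(\bx+t\bz) = |t \bz \cdot \bsnu(\bx)| = \dist(t\bz, \vint{\bsnu(\bx)}^\perp)$ for distance to an affine hyperplane yields the claim after dividing by $t$.

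For (iv), write any nearest point as $\bsxi(\bx + t\bz) = (\bx' + \bsgamma_t, \varphi(\bx' + \bsgamma_t))$ for some $\bsgamma_t \in \mathbb{R}^{d-1}$. Using the first-order expansion of $\varphi$ above,
\begin{equation*}
    |\bx + t\bz - \bsxi(\bx+t\bz)|^2 = |t\bz' - \bsgamma_t|^2 + |t z_d - \nabla \varphi(\bx') \cdot \bsgamma_t|^2 + o(|\bsgamma_t|^2 + t^2)\,.
\end{equation*}
The leading quadratic in $\bsgamma$ is strictly convex with unique minimizer $t \bar{\bz}'$, corresponding to the point $\bx + t\bar{\bz} \in H_\bx$. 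Stability of minimizers under small perturbations of strictly convex functions then forces $\bsgamma_t = t\bar{\bz}' + o(t)$, which (using $\bar{z}_d = \nabla \varphi(\bx') \cdot \bar{\bz}'$ since $\bar{\bz} \in \vint{\bsnu(\bx)}^\perp$) translates exactly into the conclusion of (iv).

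The main obstacle is that the $o$-terms from differentiability of $\varphi$ are only pointwise in $\by'$, while the argument requires controlling them along the (a priori unknown) minimizing sequence $\bsgamma_t$. This is resolved by an a priori bound: the Lipschitz property of $\varphi$ together with $|\bx + t\bz - \bsxi(\bx+t\bz)| \leq |\bx + t\bz - \bx| = t|\bz|$ forces $|\bsgamma_t| \leq C(1+\mathrm{Lip}(\varphi)) t |\bz|$, so the remainder is genuinely $o(t^2)$ and the quadratic comparison can be made rigorous. This is precisely the content of the estimates established in Shapiro's work, which we invoke to obtain the full-measure exceptional set $N_0$ on whose complement both (iii) and (iv) hold.
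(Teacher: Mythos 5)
Your proof is correct. The paper does not actually give a proof of this lemma; the text immediately before the statement says it ``is comprised of consequences of the main results of \cite{shapiro1987differentiability}, adapted for our purposes,'' and no proof appears. Your blind reconstruction is therefore the natural self-contained argument one would supply: Rademacher for parts (i) and (ii), tangent-plane approximation for (iii), and a strong-convexity comparison of the squared-distance functional for (iv). The key technical obstacle you identify --- that the remainder in the first-order expansion of $\varphi$ at $\bx'$ is only pointwise, while the argument needs uniform control along the unknown minimizer $\bsgamma_t$ --- is exactly the right concern, and your resolution via the a priori bound $|\bsgamma_t| \le 2t|\bz|$ (obtained from $|\bsxi(\bx+t\bz) - (\bx+t\bz)| \le t|\bz|$, which uses only that $\bx \in \p\Omega_\varphi$ and not the Lipschitz constant as you state) closes the gap: for $|\bsgamma| \le 2t|\bz|$ the remainder is uniformly $o(t)$, hence the quadratic error is $o(t^2)$, and since the Hessian of the model quadratic $Q(\bsgamma) = |t\bz'-\bsgamma|^2 + (tz_d - \grad\varphi(\bx')\cdot\bsgamma)^2$ is $2(I + \grad\varphi(\bx')\otimes\grad\varphi(\bx')) \ge 2I$, the comparison $Q(\bsgamma_t) \le Q(\bsgamma^*) + o(t^2)$ yields $|\bsgamma_t - \bsgamma^*| = o(t)$.

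One small discrepancy worth noting: your argument for (iii) correctly produces $\lim_{t\to 0^+} d_{\p\Omega_\varphi}(\bx+t\bz)/t = |\bsnu(\bx)\cdot\bz| = \dist(\bz,\vint{\bsnu(\bx)}^\perp)$, whereas the statement writes the final equality as $-\bsnu(\bx)\cdot\bz$. These agree only when $\bsnu(\bx)\cdot\bz \le 0$, i.e.\ when $\bz$ points into $\Omega_\varphi$. In the paper's application (the Green's-identity computation with $\bz = \be_d$, pointing into the supergraph) this restriction holds, and the same implicit convention appears in assumption \eqref{assump:Localization:NormalDeriv}. So this is a convention issue in the statement, not a gap in your argument; your form is the correct unrestricted one.
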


For this section, in addition to \eqref{assump:beta}, \eqref{assump:Localization}, \eqref{assump:NonlinearLocalization}, \eqref{assump:VarProb:Kernel}, \eqref{eq:assump:Phi}, 
and \eqref{eq:HorizonThreshold}, we assume that
\begin{equation}\label{assump:Localization:NormalDeriv}
    \begin{gathered}
    \text{there exists a set } N_0 \subset \p \Omega \text{ such that } \scH^{d-1}(N_0) = 0 \text{ and for all } \\ 
    \bx \in \p \Omega \setminus N_0 \text{ the following holds: }
    \text{For any } \bz \in \bbR^d\,, \\
    \text{ the } \bz\text{-directional derivative of } \lambda \text{ exists at } 
    \bx \text{ with } \\
    \frac{\p \lambda}{\p \bz}(\bx) = \lim\limits_{t \to 0} \frac{ \lambda(\bx + t \bz) }{ t } = - \bsnu(\bx) \cdot \bz\,.
    \end{gathered}
    \tag{\ensuremath{\rmA_{\lambda,nor}}}
\end{equation}
By \Cref{lma:DiffOfDistatBdy}, if $\lambda = d_{\p \Omega}$ then \eqref{assump:Localization:NormalDeriv} is satisfied. In fact, any Lipschitz domain $\Omega$ has a generalized distance function $\lambda$ that satisfies both \eqref{assump:Localization} and \eqref{assump:Localization:NormalDeriv}, with $k_\lambda = \infty$, see \Cref{lma:BdyLocalizedDistFxn} below.

For $\veps > 0$, we define the sets
\begin{equation*}
    \Omega_{\veps;\lambda,q} := \{ \bx \in \Omega \, : \, q(\lambda(\bx)) < \veps \} \,, \quad \text{ and } \quad \Omega^{\veps;\lambda,q} := \{ \bx \in \Omega \, : \, q(\lambda(\bx)) \geq \veps \}\,,
\end{equation*}
and recall the truncated operator defined in the introduction as
\begin{equation*}
    \cL_{p,\delta}^\veps u(\bx) = \mathds{1}_{\Omega^{\veps;\lambda,q}}(\bx)
    \int_{\Omega}
    \frac{\hat{\rho}_{\delta,\beta-p}^\veps[\lambda,q](\bx,\by) }{ |\bx-\by| }
    \Phi_p' \left( \frac{ u(\bx)-u(\by) }{ |\bx-\by| } \right)
   \, \rmd \by\,,
\end{equation*}
where the kernel  $\rho_{\delta,\beta-p}$
is defined using the notation of \eqref{eq:KernelWeightedDef} (with $\rho$ in place of $\psi$), and $\hat{\rho}_{\delta,\beta-p}^\veps$ is defined  from $\rho_{\delta,\beta-p}^\veps$
as in \eqref{eq:SymmetricKerneldDef}. 
It is clear from the form of the truncations that $\cL_{p,\delta}^\veps u \in L^1(\Omega)$ whenever $u \in L^1_{loc}(\Omega)$.

\subsection{Local boundary computations}

\begin{theorem}\label{thm:LocGreens:1}
For $\varrho > 0$, define the open cubes $Q = (-\varrho,\varrho)^{d-1} \subset \bbR^{d-1}$ and $2Q =(-2\varrho,2\varrho)^{d-1}$.
Let $\varphi : \bbR^{d-1} \to [0,\infty)$ be a Lipschitz function with Lipschitz constant $M$. For $\veps \ll \min\{\delta, \varrho \}$, define respectively the sets
    \begin{equation*}
    \begin{aligned}
   &     U_\veps := \{ (\bx',x_d) \in \bbR^d \, : \, \bx' \in Q \,, \varphi(\bx') < x_d < \varphi(\bx') + \veps \},\\
  & U^\veps := \{ (\bx',x_d) \in \bbR^d \, : \, \bx' \in 2Q \,, x_d > \varphi(\bx') + \veps \}\,.
  \end{aligned}
    \end{equation*}
Assume that $q$ satisfies \eqref{assump:MomentsOfNonlinLoc} for some $N \in \bbN$. 
Then for any $u,v \in C^2_b(\bbR^d)$
    \begin{equation}\label{eq:LocGreens:1}
        \begin{split}
        \lim\limits_{\veps \to 0} 
        &\int_{U_\veps} \int_{U^\veps} \rho_{\delta,\beta-p}^\veps(\bx,\by) \Phi_p' \left( \grad u(\bx) \cdot \frac{\bx-\by}{|\bx-\by|} \right)  \frac{ v(\bx) }{|\bx-\by|} \, \rmd \by  \, \rmd \bx \\
        &= \int_{Q} BF_{p,\delta}^{N,+}( \grad u(\bx',\varphi(\bx')),\bsnu(\bx')) v(\bx,\varphi(\bx')) \sqrt{1+ |\grad \varphi(\bx')|^2} \, \rmd \bx'\,,
        \end{split}
    \end{equation}
    where $\bsnu(\bx')$ is defined via \eqref{eq:NormalVectorDef},
    $BF_{p,\delta}^{N,+} : \bbR^d \times \bbS^{d-1} \to \bbR$ is defined as $BF_{p,\delta}^{N,+}(\ba,\bstheta) = \frac{1}{2} BF_{p,\delta}^N(\ba,\bstheta)$ for $N > 1$, while
    \begin{equation}
    \begin{split}
        BF_{p,\delta}^{1,+}(\ba,\bstheta) &:= 
            \int_{B(0,1)\cap  \{ \bstheta \cdot \bz > 0 \} }
            \frac{ \ln(1+\delta q'(0) (\bstheta \cdot \bz)) }{ \delta q'(0) |\bz| } \frac{ \rho(|\bz|) }{ |\bz|^{\beta-p} } \Phi_p' \left( \ba \cdot \frac{\bz}{|\bz|} \right) \, \rmd \bz \,.
    \end{split}
    \end{equation}
\end{theorem}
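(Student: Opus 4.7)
The plan is to reduce the double integral to a boundary integral by unfolding the small-scale structure of $\eta_\delta$ near $\p \Omega$. First, for fixed $\bx \in U_\veps$, I apply the change of variables $\bz = (\by-\bx)/\eta_\delta(\bx)$ in the inner $\by$-integral. Since $u \in C^2_b(\bbR^d)$, we have $(u(\bx)-u(\by))/|\bx-\by| = -\grad u(\bx) \cdot \bz/|\bz| + O(\eta_\delta(\bx))$, and since $\rmd \by = \eta_\delta(\bx)^d \rmd \bz$, the inner integral becomes, to leading order,
\begin{equation*}
    I_\veps(\bx) = \frac{v(\bx)}{\eta_\delta(\bx)} \int_{S_\veps(\bx)} \frac{\rho^\veps(|\bz|)}{|\bz|^{\beta-p+1}} \Phi_p'\!\left(-\grad u(\bx) \cdot \frac{\bz}{|\bz|}\right) \rmd \bz\,,
\end{equation*}
where $S_\veps(\bx) := \{\bz \in B(0,1) : \bx + \eta_\delta(\bx) \bz \in U^\veps\}$.

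Next I parametrize $U_\veps$ by $\bx = (\bx', \varphi(\bx') + t)$ with $\bx' \in Q$ and $t \in (0,\veps)$, so that $\rmd \bx = \rmd \bx' \rmd t$. By \Cref{lma:DiffOfDistatBdy} combined with \eqref{assump:Localization:NormalDeriv}, at $\scH^{d-1}$-a.e.\ $\bx' \in Q$ the gradient $\grad\varphi(\bx')$ exists, $\bsnu_0 := \bsnu(\bx',\varphi(\bx'))$ is given by \eqref{eq:NormalVectorDef}, and the vertical directional derivative of $\lambda$ at $\bx_0 := (\bx',\varphi(\bx'))$ equals $1/\sqrt{1+|\grad\varphi(\bx')|^2}$. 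Consequently $\lambda(\bx',\varphi(\bx')+t) = t/\sqrt{1+|\grad\varphi|^2} + o(t)$, so by \eqref{assump:MomentsOfNonlinLoc},
\begin{equation*}
    \eta_\delta(\bx) = c_N t^N + o(t^N)\,, \qquad c_N := \frac{\delta q^{(N)}(0)}{N!\,(1+|\grad\varphi(\bx')|^2)^{N/2}}\,.
\end{equation*}
Using the same expansion to linearize $\varphi(\bx'+\eta_\delta\bz')-\varphi(\bx')$, the defining constraint of $S_\veps(\bx)$ reduces in the limit to $\eta_\delta(\bx)(-\bsnu_0 \cdot \bz)\sqrt{1+|\grad\varphi|^2} > \veps - t$.

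The next step is a case-dependent rescaling of $t$. For $N > 1$, setting $t = \veps(1 - \veps^{N-1}\tau)$ gives $|\rmd t|/\eta_\delta(\bx) \to \rmd\tau/c_N$ and reduces the constraint to $\tau < c_N(-\bsnu_0 \cdot \bz)\sqrt{1+|\grad\varphi|^2}$. For $N = 1$, setting $s = t/\veps$ gives $\rmd t/\eta_\delta(\bx) \to \sqrt{1+|\grad\varphi|^2}/(\delta q'(0) s)\,\rmd s$ with constraint $s > s_*(\bz) := 1/(1+\delta q'(0)(-\bsnu_0 \cdot \bz))$. In both cases the powers of $\veps$ cancel, and dominated convergence yields a well-defined $\veps \to 0$ limit. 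I then swap the scale variable with $\bz$ and integrate out the scale: for $N > 1$ the $\tau$-integral combined with $1/c_N$ contributes the factor $(-\bsnu_0 \cdot \bz)\sqrt{1+|\grad\varphi|^2}$; for $N = 1$ the $s$-integral yields $\int_{s_*}^1 \rmd s/s = \ln(1+\delta q'(0)(-\bsnu_0 \cdot \bz))$, combined with the $\sqrt{1+|\grad\varphi|^2}/(\delta q'(0))$ prefactor to produce the logarithmic kernel in $BF_{p,\delta}^{1,+}$. Substituting $\bz \to -\bz$ and invoking the evenness of $\rho$ and oddness of $\Phi_p'$ flips the signs on the $\bsnu_0 \cdot \bz$ and $\grad u \cdot \bz$ factors to match the stated formulas for $BF_{p,\delta}^{N,+}(\grad u(\bx_0),\bsnu_0)$, with $\sqrt{1+|\grad\varphi|^2}$ playing the role of the surface measure on $\p\Omega_\phi$ over $Q$.

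The main technical obstacle is making the dominated convergence argument rigorous under only Lipschitz regularity of $\varphi$. Since the Taylor expansions above hold only $\scH^{d-1}$-almost everywhere in $\bx'$, a dominating function must be built from the uniform Lipschitz bound $M$ of $\varphi$ and the Lipschitz bound $\kappa_1$ of $\lambda$ from \eqref{assump:Localization}. One must also carefully track the interaction between $S_\veps(\bx)$, which degenerates differently for different $N$, and the weight $|\bz|^{p-\beta-1}$: the truncation $\rho^\veps$ enforces $|\bz| \geq \veps$, and moreover when $t \ll \veps$ the constraint $S_\veps(\bx)$ itself forces $|\bz|$ to be bounded below by a positive quantity depending on $(\veps-t)/\eta_\delta(\bx)$, automatically controlling the near-origin region. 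Once these uniform bounds are established, the identification of the rescaled limits with $BF_{p,\delta}^{N,+}$ reduces to a direct calculation.
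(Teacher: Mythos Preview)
Your proposal is correct and follows essentially the same approach as the paper: the same change of variables $\bz = (\by-\bx)/\eta_\delta(\bx)$, the same vertical parametrization of $U_\veps$, the same expansion $\eta_\delta \sim c_N t^N$ via \eqref{assump:Localization:NormalDeriv} and \eqref{assump:MomentsOfNonlinLoc}, and the same final substitution $\bz \to -\bz$. The only cosmetic difference is that for $N>1$ you perform the rescaling $t = \veps(1-\veps^{N-1}\tau)$ in one step, whereas the paper first sets $x_d = t/\veps$ and later substitutes $y = (x_d-1)/(a\veps^{N-1})$; the paper also spells out in detail the uniform Lipschitz bounds (your ``main technical obstacle'') needed for dominated convergence, which you correctly identify but only sketch.
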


\begin{proof}
    First we prove \eqref{eq:LocGreens:1} under the assumption that $N=1$, i.e. $q'(0) > 0$. Noting that $\supp \rho \subset B(0,1)$, we have by the change of variables $\bz = \frac{\by-\bx}{\eta_\delta(\bx)}$ in the $\by$-integral, and then $\bar{x}_d = \frac{x_d - \varphi(\bx')}{\veps}$ in the $x_d$-integral, 
\begin{equation}\label{eq:LocGreens:PfNeg1}
        \begin{split}
         I_\veps &:= \int_{U_\veps} \int_{U^\veps} \rho_{\delta,\beta-p}^\veps(\bx,\by) \Phi_p' \left( \grad u(\bx) \cdot \frac{\bx-\by}{|\bx-\by|} \right)  \frac{ v(\bx) }{|\bx-\by|} \, \rmd \by  \, \rmd \bx  \\
            &= \int_Q \int_0^1 \int_{U^\veps_*(\bx)} \frac{-\veps \mathds{1}_{ \{ \veps < |\bz| < 1 \} } }{\eta_\delta(\bx',\varphi(\bx') + \veps x_d) |\bz| } \frac{ \rho(|\bz|) }{ |\bz|^{\beta-p} } \\
            &\qquad \Phi_p'
            \left( \grad u(\bx',\varphi(\bx')+\veps x_d) \cdot \frac{ \bz }{|\bz| } \right)
            v(\bx',\varphi(\bx')+\veps x_d) \, \rmd \bz \, \rmd x_d \, \rmd \bx'\,,
        \end{split}
    \end{equation}
    where we still write $\bar{x}_d$ as $x_d$, and where 
    \begin{equation*}
        U^\veps_*(\bx) := \left\{
        \bz =(\bz',z_d)  \in \bbR^d \, : \,
        \begin{gathered}
        \vnorm{ \bx'+\eta_\delta(\bx',\varphi(\bx')+\veps x_d)\bz' }_\infty < 2 \varrho \,,\qquad \\
       \;\; \veps x_d + \varphi(\bx') + \eta_\delta(\bx',\varphi(\bx')+ \veps x_d)z_d >\quad\\
        \quad\qquad\varphi(\bx'+\eta_\delta(\bx',\varphi(\bx')+\veps x_d)\bz') + \veps.
        \end{gathered}
        \right\}\,.
    \end{equation*}
    On one hand, clearly
        $\lambda(\bx',\varphi(\bx') +\veps x_d) \leq \kappa_0 \veps x_d$,
    so we have
    \begin{equation}\label{eq:LocGreens:Pf0}
        \eta_\delta(\bx',\varphi(\bx') +\veps x_d) \leq \kappa_0 \delta \veps x_d\,.
    \end{equation}
    On the other hand, letting $\bsxi$ denote the nearest point projection map, we have $\bsxi = (\by',\varphi(\by'))$ for some $\by' \in \p \Omega$, so
    \begin{equation}\label{eq:ComparisonOfGraphDistance}
    \begin{aligned}
            \veps x_d &= |(\bx',\varphi(\bx') +\veps x_d) - (\bx',\varphi(\bx'))|
            \leq |\bsxi(\bx',\varphi(\bx') +\veps x_d) - (\bx',\varphi(\bx'))|\\
            &\qquad         
            + |(\bx',\varphi(\bx') +\veps x_d) - \bsxi(\bx',\varphi(\bx') +\veps x_d)|\\
            &\qquad  
  \leq 
  \sqrt{1+M^2} |\by'-\bx'| + d_{\p \Omega}(\bx',\varphi(\bx')+\veps x_d)  \\
            &\leq (1+ \sqrt{1 + M^2}) d_{\p \Omega}(\bx',\varphi(\bx') +\veps x_d) \leq \frac{2(1+M)}{\kappa_0} \lambda(\bx',\varphi(\bx')+\veps x_d)\,,
        \end{aligned}
    \end{equation}
    and therefore
    \begin{equation*}
    \begin{split}
        \delta q'(0) \veps x_d & \leq \frac{2 (1+M)}{\kappa_0} ( \eta_\delta(\bx',\varphi(\bx') +\veps x_d) + C \delta \lambda(\bx',\varphi(\bx') +\veps x_d)^{2}) \\
        &\leq R_1 \eta_\delta(\bx',\varphi(\bx') +\veps x_d) + \delta R_2 (\veps x_d)^{2} \,,
    \end{split}
    \end{equation*}
    for positive constants $R_1$ and $R_2$ depending only on $q$, $M$, and $\kappa_0$, and in particular independent of $\bx$. So there exists $\veps_0 > 0$ sufficiently small (depending only on $q$, $\varphi$, $\lambda$, and $\delta_0$, but not on $\bx$ or $\bz$) such that for all $\veps < \veps_0$ and for all $x_d \in (0,1)$
    \begin{equation}\label{eq:LocGreens:Pf1}
        0 < R \veps x_d := \frac{\delta (q'(0) - R_2 \veps_0)}{R_1} \veps x_d \leq \eta_\delta(\bx',\varphi(\bx') +\veps x_d)\,.
    \end{equation}
    Now, we see using \eqref{eq:LocGreens:Pf0} and the inequality
    \begin{equation*}
        \begin{split}
      \varphi(\bx'+\eta_\delta(\bx',\varphi(\bx')+\veps x_d)\bz') - \varphi(\bx') - \eta_\delta(\bx',\varphi(\bx')+\veps x_d)z_d >\qquad\quad\\
      \qquad\qquad - (1+M)|\bz| \eta_\delta(\bx',\varphi(\bx')+\veps x_d)      
        \end{split}
    \end{equation*}
    that $U^\veps_*(\bx)$ is contained in the set
        $\{ \bz \in \bbR^d \, : \, x_d (1 + \delta \kappa_0 (1+M) |\bz| ) > 1 \}$.
    Hence, by \eqref{eq:LocGreens:Pf1} 
    we have for all $\veps < \veps_0$
    \begin{equation*}
        \begin{split}
  &      \int_0^1 \frac{\veps \mathds{1}_{U^\veps_*(\bx)} \cdot \mathds{1}_{ \{ \veps < |\bz| < 1\} } }{ \eta_\delta(\bx',\varphi(\bx') + \veps x_d) |\bz| } \, \rmd x_d
            \leq \int_{ \frac{1}{1+\delta \kappa_0 (1+M) |\bz| } }^1 \frac{ \mathds{1}_{ \{ \veps < |\bz| < 1\} } }{ R x_d |\bz| } \, \rmd x_d \\
            &\qquad\qquad = \frac{ \mathds{1}_{ \{ \veps < |\bz| < 1\} } }{ R |\bz| } \ln( 1 + \delta \kappa_0 (1+M) |\bz| ) \leq \frac{1}{R \kappa_0 (1+M)\delta}\,.
        \end{split}
    \end{equation*}
    Since the right-hand side is finite, it is now clear that the dominated convergence theorem (as well as Fubini's theorem) can be applied to the right-hand side of \eqref{eq:LocGreens:PfNeg1} thanks to the differentiability of $\grad u$ and $v$: 
    \begin{equation}\label{eq:LocGreens:Pf2}
    \begin{split}
        \lim\limits_{\veps \to 0} I_\veps
    & =\int_Q  \int_{B(0,1)} \int_\bbR \lim\limits_{\veps \to 0} \left[ \frac{-\veps \mathds{1}_{ \{ |\bz| > \veps \} } \mathds{1}_{\{ 0 < x_d < 1 \}} \cdot  \mathds{1}_{U^\veps_*(\bx)} }{\eta_\delta(\bx',\varphi(\bx') + \veps x_d) |\bz| } \right] \frac{ \rho(|\bz|) }{ |\bz|^{\beta-p} } \\
            &\qquad \qquad \Phi_p'
            \left( \grad u(\bx',\varphi(\bx')) \cdot \frac{ \bz }{|\bz| } \right)
            v(\bx',\varphi(\bx')) \, \rmd x_d \, \rmd \bz \, \rmd \bx' \\
    \end{split}
    \end{equation}
    To compute this limit we use \Cref{lma:DiffOfDistatBdy}.
    Writing $(\bx',\varphi(\bx')+\veps x_d) = (\bx',\varphi(\bx')) + \veps x_d \be_d$, we have from \eqref{assump:Localization:NormalDeriv},
    item ii), item iii) 
    and item iv) that
    \begin{equation*}
    \begin{split}
        \lambda(\bx',\varphi(\bx')+\veps x_d) 
        &= \lambda((\bx',\varphi(\bx')) + \veps x_d \be_d) 
        = \frac{\veps x_d}{\sqrt{1+|\grad \varphi(\bx')|^2}} + o(\veps) \,,
    \end{split}
    \end{equation*}
    for $\scH^{d-1}$-almost every $\bx' \in Q$. Thus
    \begin{equation}\label{eq:LocGreens:Pf3}
        \eta(\bx',\varphi(\bx')+\veps x_d) =  \frac{\veps q'(0) x_d}{\sqrt{1+|\grad \varphi(\bx')|^2}} + o(\veps)\,.
    \end{equation}
    Thanks to this identity and Rademacher's theorem, the second relation describing $U^\veps_*(\bx)$ can be written as
    \begin{equation*}
        \veps x_d - \veps > \grad \varphi(\bx') \cdot \frac{\veps \delta q'(0) x_d \bz'}{\sqrt{1+|\grad \varphi(\bx')|^2}} - \frac{\veps \delta q'(0) x_d}{\sqrt{1+|\grad \varphi(\bx')|^2}} z_d + o(\veps)\,,
    \end{equation*}
    which is 
    \begin{equation*}
        x_d > \frac{1}{1 - \delta q'(0) \bsnu(\bx') \cdot \bz } + o(1)\,,
    \end{equation*}
    for almost every $\bx' \in Q$. Note that the first term on the right-hand side of the inequality is well-defined since $ \delta q'(0) \bsnu(\bx') \cdot \bz \in [-\frac{1}{3},\frac{1}{3}]$.
    Therefore we finally have
    \begin{equation*}
        \begin{split}
            \lim\limits_{\veps \to 0} I_\veps  &=\int_Q  \int_{B(0,1)} \int_\bbR \lim\limits_{\veps \to 0} \left[ \frac{-\veps \mathds{1}_{ \{ |\bz| > \veps \} } \mathds{1}_{\{ 0 < x_d < 1 \}} \cdot  \mathds{1}_{U^\veps_*(\bx)} }{\eta_\delta(\bx',\varphi(\bx') + \veps x_d) |\bz| } \right] \frac{ \rho(|\bz|) }{ |\bz|^{\beta-p} } \\
            &\quad \qquad \Phi_p'
            \left( \grad u(\bx',\varphi(\bx')) \cdot \frac{ \bz }{|\bz| } \right)
            v(\bx',\varphi(\bx')) \, \rmd x_d \, \rmd \bz \, \rmd \bx' \\
        &= -\int_Q \int_{B(0,1)}
        \left(\int_{ \min \{1, \frac{1}{1-\delta q'(0) \bsnu(\bx') \cdot \bz } \} }^1   \frac{1}{x_d} \rmd x_d \,\right)
        \frac{\sqrt{1+|\grad \varphi(\bx')|^2}}{ \delta q'(0) |\bz| } \frac{\rho(|\bz|)}{|\bz|^{\beta-p}}  \\
            &\quad \qquad \Phi_p'
            \left( \grad u(\bx',\varphi(\bx')) \cdot \frac{ \bz }{|\bz| } \right)
            v(\bx',\varphi(\bx')) \,  \rmd \bz \, \rmd \bx' \,.
        \end{split}
    \end{equation*}
   Since  
 \begin{equation}
\label{eq:LogIntegral:1}
   \int_{\frac{1}{1-a}}^1 \frac{1}{x} \, \rmd x = \ln(1-a) \quad\text{ for }
        a \in (-1/3,1/3),
    \end{equation}
by the change of variables $\bz \to -\bz$, 
    we obtain \eqref{eq:LocGreens:1} in the case $q'(0) > 0$.
    
    The proof of \eqref{eq:LocGreens:1} in the case $N > 1$ 
    follows a similar line of reasoning. Analogously to \eqref{eq:LocGreens:Pf0} and \eqref{eq:LocGreens:Pf1}
    \begin{equation}\label{eq:LocGreens:nl:Pf0}
        \eta_\delta(\bx',\varphi(\bx') +\veps x_d) \leq \frac{\delta }{N!} (\kappa_0 \veps x_d)^N
    \end{equation}
    and
    \begin{equation*}
    \begin{split}
        \delta \frac{q^{(N)}(0)}{N!} (\veps x_d)^N & \leq R_1  \eta_\delta(\bx',\varphi(\bx') +\veps x_d) + \delta R_2 \lambda(\bx',\varphi(\bx') +\veps x_d)^{N+1} \\
        &\leq R_1 \eta_\delta(\bx',\varphi(\bx') +\veps x_d) + \delta R_2 (\veps x_d)^{N+1}\,,
    \end{split}
    \end{equation*}
    for positive constants $R_1$ and $R_2$ depending only on $q$, $\kappa_0$, $\varphi$, and in particular independent of $\bx$. 
    So there exists $\veps_0 > 0$ sufficiently small (depending only on $q$, $\kappa_0$, $\varphi$ and $\delta_0$, but not on $\bx$ or $\bz$) such that for all $\veps < \veps_0$ and for all $x_d \in (0,1)$
    \begin{equation}\label{eq:LocGreens:nl:Pf1}
        0 < R (\veps x_d)^N := \frac{\delta (\frac{q^{(N)}(0)}{N!} - R_2 \veps_0)}{R_1} (\veps x_d)^N \leq \eta_\delta(\bx',\varphi(\bx') +\veps x_d)\,.
    \end{equation}
    In order to apply the dominated convergence theorem on the $\bx'$- and $\bz$-integrals in \eqref{eq:LocGreens:PfNeg1} in this case, it suffices to check that 
    for all $\veps < \veps_0$
    \begin{equation}\label{eq:LocGreens:nl:Pf2}
        \begin{split}
        \int_0^1 & \frac{\veps \mathds{1}_{U^\veps_*(\bx)} \cdot \mathds{1}_{ \{ \veps < |\bz| < 1\} } }{\eta_\delta(\bx',\varphi(\bx') + \veps x_d) |\bz|} \, \rmd x_d
            \leq C\,,
        \end{split}
    \end{equation}
    where $C$ is independent of $\veps$, $\bx'$ and $\bz$.
    Indeed, by \eqref{eq:LocGreens:nl:Pf0} and \eqref{eq:LocGreens:nl:Pf1}
    \begin{equation*}
        \begin{split}
        \int_0^1 & \frac{\veps \mathds{1}_{U^\veps_*(\bx)} \cdot \mathds{1}_{ \{\veps < |\bz| < 1\} } }{\eta_\delta(\bx',\varphi(\bx') + \veps x_d) |\bz|} \, \rmd x_d \\
        &\leq \int_{ \{ \veps x_d - \veps > - \frac{ \delta \kappa_0^N}{N!}(1+M) |\bz| (\veps x_d)^N \} \cap \{x_d < 1 \} } \frac{ \mathds{1}_{ \{ \veps <  |\bz| < 1\} }}{ R \veps^{N-1} x_d^N |\bz| } \, \rmd x_d \\
        &:= \frac{\mathds{1}_{ \{ \veps <  |\bz| < 1\} } }{R \veps^{N-1} |\bz| } \int_{ \{ A |\bz| \veps^{N-1} x_d^N +  x_d > 1 \} \cap \{ x_d < 1 \}  } \frac{ 1 }{ x_d^N } \, \rmd x_d\\ 
        &    =  \frac{\mathds{1}_{ \{ \veps < |\bz| < 1\} } }{R \veps^{N-1} |\bz| } \int_{ \{ 0 < y < A |\bz|  \veps^{N-1} (1-y)^N \}  } \frac{ 1 }{ (1-y)^N } \, \rmd y\,
  \end{split}
    \end{equation*}
where a change of variable  $y = 1-x_d$ is used in the last step.
    The polynomial $P(y) = A|\bz| \veps^{N-1}(1-y)^N - y$ is monotone, with $P(0) > 0$ and $P( A |\bz| \veps^{N-1}) < 0$ for all $\veps > 0$ sufficiently small. Thus the set $\{ y : 0 < y <  A |\bz| \veps^{N-1}(1-y)^N \}$ is contained in the interval $(0,A |\bz|  \veps^{N-1})$ and so the integral is bounded from above by
    \begin{equation*}
        \frac{ A \mathds{1}_{ \{ |\bz| < 1\} } }{ R (1- A \veps_0)^N } < \infty\,,
    \end{equation*}
    choosing $\veps_0 > 0$ smaller if necessary. Thus \eqref{eq:LocGreens:nl:Pf2} is established. After Taylor expansion of $\grad u$ and $v$, we can use similar estimates to establish that the limit of the right-hand side of \eqref{eq:LocGreens:PfNeg1} is unchanged if we replace the arguments of $\grad u$ and $v$ with $(\bx',\varphi(\bx'))$.
    Hence, we have
    \begin{equation}\label{eq:LocGreens:nl:Pf3}
    \begin{split}
        \lim\limits_{\veps \to 0} 
        I_\veps
        &=\int_Q  \int_{B(0,1)} \lim\limits_{\veps \to 0} \left[ \int_0^1 \frac{-\veps \mathds{1}_{ \{ |\bz| > \veps \} } \mathds{1}_{U^\veps_*(\bx)} }{\eta_\delta(\bx',\varphi(\bx') + \veps x_d) |\bz| } \, \rmd x_d \right] \frac{ \rho(|\bz|) }{ |\bz|^{\beta-p} } \\
            &\quad \qquad \Phi_p'
            \left( \grad u(\bx',\varphi(\bx')) \cdot \frac{ \bz }{|\bz| } \right)
            v(\bx',\varphi(\bx')) \, \rmd \bz \, \rmd \bx'\,.
    \end{split}
    \end{equation}
    We again use \Cref{lma:DiffOfDistatBdy} to get
  $      \lambda(\bx',\varphi(\bx')+\veps x_d) 
        = \frac{\veps x_d}{\sqrt{1+|\grad \varphi(\bx')|^2}} + o(\veps x_d)$
    for almost every $\bx' \in Q$. Thus
    \begin{equation}\label{eq:LocGreens:nl:Pf4}
        \eta(\bx',\varphi(\bx')+\veps x_d) =  \frac{ q^{(N)}(0) }{N! (1+|\grad \varphi(\bx')|^2)^{N/2}} (\veps x_d)^N + o((\veps x_d)^N)\,.
    \end{equation}
    Thanks to this identity and Rademacher's theorem, the second relation describing $U^\veps_*(\bx)$ can be written as
    \begin{equation*}
        \veps x_d - \veps > \frac{ q^{(N)}(0) }{N!} \frac{\delta( \bsnu(\bx') \cdot \bz) }{ (1+|\grad \varphi(\bx')|^2)^{\frac{N-1}{2}} } (\veps x_d)^N + o((\veps x_d)^N)
    \end{equation*}
    for almost every $\bx' \in Q$. Defining $a := \frac{ q^{(N)}(0) }{N!} \frac{\delta( \bsnu(\bx') \cdot \bz) }{ (1+|\grad \varphi(\bx')|^2)^{\frac{N-1}{2}} } \in (-\frac{1}{3N!}, \frac{1}{3N!})$,
    we have
    \begin{equation}\label{eq:LocGreens:nl:Pf5}
        \begin{split}
            \int_0^1 \frac{-\veps \mathds{1}_{U^\veps_*(\bx)} \mathds{1}_{ \{ |\bz| > \veps \} } }{\eta_\delta(\bx',\varphi(\bx') + \veps x_d)} \, \rmd x_d
            = \mathds{1}_{ \{ |\bz| > \veps \} } \frac{ (\bsnu(\bx') \cdot \bz) }{|\bz|} \sqrt{ 1+|\grad \varphi(\bx')|^2 } \; \tilde{I}_\veps(a)\,,
        \end{split}
    \end{equation}
    where 
    \begin{equation}
        \tilde{I}_\veps(a) := \frac{1}{-a \veps^{N-1}}
                \int_{ \{ 1 + a \veps^{N-1} x_d^N +o( \veps^{N-1} x_d^{N} ) < x_d < 1 \} } \frac{1}{ x_d^N + o( \veps x_d^N) } \, \rmd x_d\,.
    \end{equation}
    To find $\lim\limits_{\veps \to 0} \tilde{I}_\veps(a)$, we note that the limit will be nonzero only if $a < 0$, and then perform the change of variables $y = \frac{x_d - 1}{ a \veps^{N-1} }$ to obtain
    \begin{equation*}
        \begin{split}
        \lim\limits_{\veps \to 0} \tilde{I}_\veps(a)
        = \mathds{1}_{ \{ a < 0 \} } \int_0^1  
        \frac{ \mathds{1}_{ \{ 0 < y <  (1 + a \veps^{N-1} y)^N + o( (1 + a \veps^{N-1}y)^N ) \} } }
        { (1+a \veps^{N-1} y)^N + o( \veps (1 + a \veps^{N-1} y)^N) } 
        \, \rmd y
        = \mathds{1}_{ \{ a < 0 \} }\,,
        \end{split}
    \end{equation*}
    where the limit follows from the dominated convergence theorem (note that the dominating constant may still depend on $\bx'$ or $\bz$, but is independent of $y$).
    Inserting this into \eqref{eq:LocGreens:nl:Pf5} and in turn into \eqref{eq:LocGreens:nl:Pf3} we have
    \begin{equation*}
        \begin{split}
           \lim\limits_{\veps \to 0} I_\veps(a) &=
            \int_Q   \int_{B(0,1)} \lim\limits_{\veps \to 0} \left[ \int_0^1 \frac{-\veps \mathds{1}_{ \{ |\bz| > \veps \} } \mathds{1}_{U^\veps_*(\bx)} }{\eta_\delta(\bx',\varphi(\bx') + \veps x_d) |\bz| } \, \rmd x_d \right] \frac{ \rho(|\bz|) }{ |\bz|^{\beta-p} } \\
            &\quad \qquad \Phi_p'
            \left( \grad u(\bx',\varphi(\bx')) \cdot \frac{ \bz }{|\bz| } \right)
            v(\bx',\varphi(\bx')) \, \rmd \bz \, \rmd \bx' \\
            &= \int_Q  \int_{B(0,1)} \mathds{1}_{ \{ \bsnu(\bx') \cdot \bz < 0 \} } \frac{ (\bsnu(\bx') \cdot \bz) }{ |\bz| } \sqrt{ 1+|\grad \varphi(\bx')|^2 }   \frac{ \rho(|\bz|) }{ |\bz|^{\beta-p} } \\
            &\quad \qquad \Phi_p'
            \left( \grad u(\bx',\varphi(\bx')) \cdot \frac{ \bz }{|\bz| } \right)
            v(\bx',\varphi(\bx')) \, \rmd \bz \, \rmd \bx'\,.
        \end{split}
    \end{equation*}
    Changing to polar coordinates in the $\bz$-integral, and noting that
    \begin{equation*}
        \begin{split}
            \fint_{\bbS^{d-1}} \mathds{1}_{ \{ \bstheta \cdot \bsomega < 0 \} }  \Phi_p'
            \left( \ba \cdot \bsomega \right) (\bsomega \cdot \bstheta) \, \rmd \sigma(\bsomega) 
            = \fint_{\bbS^{d-1}} \mathds{1}_{ \{ \bstheta \cdot \bsomega > 0 \} }  \Phi_p'
            \left( \ba \cdot \bsomega \right) (\bsomega \cdot \bstheta) \, \rmd \sigma(\bsomega)\,,
        \end{split}
    \end{equation*}
    we obtain \eqref{eq:LocGreens:1} in the case $N > 1$.
\end{proof}
Likewise, we can derive a result similar to 
\eqref{eq:LocGreens:1} for the kernel $\rho_{\delta,\beta-p}^\veps(\by,\bx)$ in place of $\rho_{\delta,\beta-p}^\veps(\bx,\by)$. For this, we first give an upper bound to the corresponding integral.

\begin{lemma}\label{lma:LocGreens:FiniteInt}
    Under the same assumptions as \Cref{thm:LocGreens:1},
    \begin{equation*}
        \int_{U_\veps} \int_{U^\veps} \rho_{\delta,\beta-p}^\veps(\by,\bx) \Phi_p' \left( \grad u(\bx) \cdot \frac{\bx-\by}{|\bx-\by|} \right)  \frac{ v(\bx) }{|\bx-\by|} \, \rmd \by  \, \rmd \bx \leq \frac{ C \Vnorm{ \grad u}_{L^\infty}^{p-1} \Vnorm{ v }_{L^{\infty}} }{ \veps^2 }\,.
    \end{equation*}
\end{lemma}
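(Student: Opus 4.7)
The plan is to extract the $L^\infty$ norms of $\grad u$ and $v$, reducing the claim to a scalar estimate of the kernel. By \eqref{eq:assump:Phi}, $|\Phi_p'(t)| \leq C|t|^{p-1}$, so
\begin{equation*}
    \bigg| \Phi_p' \Big( \grad u(\bx) \cdot \frac{\bx-\by}{|\bx-\by|} \Big) \bigg| \leq C \Vnorm{\grad u}_{L^\infty}^{p-1},
\end{equation*}
and $|v(\bx)| \leq \Vnorm{v}_{L^\infty}$. It thus suffices to show that the scalar integral $\mathcal{J} := \int_{U_\veps}\int_{U^\veps} \rho_{\delta,\beta-p}^\veps(\by,\bx)/|\bx-\by|\,\rmd\by\,\rmd\bx$ is bounded by $C/\veps^2$.

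The support of $\rho^\veps$ forces $\veps\,\eta_\delta(\by) \leq |\bx-\by| \leq \eta_\delta(\by)$, so $1/|\bx-\by| \leq 1/(\veps\,\eta_\delta(\by))$. I will then reverse the order of integration and, for fixed $\by \in U^\veps$, apply the change of variables $\bx \mapsto \bw = (\bx-\by)/\eta_\delta(\by)$, whose Jacobian is the constant $\eta_\delta(\by)^d$. As in the proof of \Cref{thm:LocGreens:1}, the constraint $\bx \in U_\veps$ restricts $w_d$ to an interval of length $\veps/\eta_\delta(\by)$, arising from the graph condition $0 < y_d + \eta_\delta(\by)w_d - \varphi(\by'+\eta_\delta(\by)\bw') < \veps$. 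Combining this slab bound with $\rho \in L^\infty$ and the integrability of $|\bw|^{-(\beta-p)}$ on $B(0,1)$ (using $\beta < d+p$) gives
\begin{equation*}
    \int_{U_\veps} \rho_{\delta,\beta-p}^\veps(\by,\bx)\,\rmd \bx \leq \frac{C\veps}{\eta_\delta(\by)},
\end{equation*}
and folding in the factor $1/(\veps\,\eta_\delta(\by))$ yields $\mathcal{J} \leq C \int_{U^\veps} \eta_\delta(\by)^{-2}\,\rmd \by$.

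Finally, I estimate the remaining $\by$-integral using \Cref{lma:ComparabilityOfXandY}: for the integrand to be nonzero, the requirement $|\bx-\by| \leq \eta_\delta(\by)$ for some $\bx \in U_\veps$ together with comparability forces $\eta_\delta(\by) \leq C\delta q(C\veps)$, hence $\dist(\by,\p\Omega) \leq C\veps$. Combined with $\by \in U^\veps$ enforcing $\dist(\by,\p\Omega) \geq c\veps$, the effective $\by$-integration is reduced to a boundary strip of thickness $O(\veps)$, on which $\eta_\delta(\by) \sim \delta q(c\veps) \gtrsim \delta \veps^N$ by \eqref{assump:MomentsOfNonlinLoc}. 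A direct estimate then produces $\mathcal{J} \leq C/\veps^2$. The chief obstacle is the delicate bookkeeping of the $\veps$ and $\eta_\delta(\by)$ factors: the slab thickness from the change of variables and the boundary-strip thickness controlling the $\by$-integration both scale like $\veps$, and they must be coordinated with the lower bound $\eta_\delta(\by) \gtrsim \delta q(c\veps)$ so that the $\veps$-exponent closes at exactly $-2$.
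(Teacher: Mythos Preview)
Your route is considerably more elaborate than the paper's. The paper argues in two lines: for $\by\in U^\veps$ the graph-height comparison (as in \eqref{eq:ComparisonOfGraphDistance}) yields a lower bound on $\eta_\delta(\by)$, and the support of $\rho^\veps$ then forces $|\bx-\by|\geq\veps\,\eta_\delta(\by)\geq c\veps^2$ pointwise in the integrand. Pulling out this factor leaves $\int_{U_\veps}\int_{U^\veps}\rho_{\delta,\beta-p}^\veps(\by,\bx)\,\rmd\by\,\rmd\bx$, which is finite by \eqref{eq:KernelIntFunction:Bounds} and $|U_\veps|<\infty$. No change of variables, slab estimate, or $\by$-strip integration is needed.

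Your argument has two genuine gaps. First, the slab estimate $\int_{U_\veps}\rho_{\delta,\beta-p}^\veps(\by,\bx)\,\rmd\bx\leq C\veps/\eta_\delta(\by)$ does not follow from the reasons you give when $\beta\geq d+p-1$: the $w_d$-interval bound controls only one coordinate, while the $(d{-}1)$-dimensional slice $\int_{|\bw'|<1}|\bw|^{-(\beta-p)}\,\rmd\bw'$ diverges as $w_d\to 0$ in that range, so slab thickness and $d$-dimensional integrability of $|\bw|^{-(\beta-p)}$ cannot simply be multiplied. Second, your final tally does not close at $\veps^{-2}$ in general: with $\eta_\delta(\by)\gtrsim\delta\veps^N$ on a strip of thickness $O(\veps)$ you obtain $\mathcal{J}\lesssim\veps\cdot(\delta\veps^N)^{-2}=C\veps^{-(2N-1)}$, which is stronger than $\veps^{-2}$ for $N=1$ but strictly weaker for $N\geq 2$. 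Since the lemma is invoked only to justify Fubini's theorem in \Cref{thm:LocGreens:2}, any finite bound for fixed $\veps$ suffices for the application; but the direct lower bound on $|\bx-\by|$ that the paper uses avoids these bookkeeping issues entirely.
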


\begin{proof}
    Similarly to \eqref{eq:ComparisonOfGraphDistance}, we have $|y_d - \varphi(\by')| \leq \frac{2(1+M)}{\kappa_0} \eta_\delta(\by)$, so it follows that $\frac{\veps}{2(1+\Vnorm{\grad \varphi}_{L^\infty})} < \eta_\delta(\by)$, $\forall \by \in U^\veps$. Thus $|\bx-\by| \geq \veps \eta_\delta(\by) \geq C \veps^2$ in the integrand.
\end{proof}

\begin{theorem}\label{thm:LocGreens:2}
    Under the same assumptions as \Cref{thm:LocGreens:1},
    \begin{equation}\label{eq:LocGreens:2}
        \begin{split}
        \lim\limits_{\veps \to 0} 
        &\int_{U_\veps} \int_{U^\veps} \rho_{\delta,\beta-p}^\veps(\by,\bx) \Phi_p' \left( \grad u(\bx) \cdot \frac{\bx-\by}{|\bx-\by|} \right)  \frac{ v(\bx) }{|\bx-\by|} \, \rmd \by  \, \rmd \bx \\
        &= \int_{Q} BF_{p,\delta}^{N,-}( \grad u(\bx',\varphi(\bx')),\bsnu(\bx')) v(\bx,\varphi(\bx')) \sqrt{1+ |\grad \varphi(\bx')|^2} \, \rmd \bx'\,,
        \end{split}
    \end{equation}
    with $BF_{p,\delta}^{N,-} : \bbR^d \times \bbS^{d-1} \to \bbR$ given by $BF_{p,\delta}^{N,-}(\ba,\bstheta) = \frac{1}{2} BF_{p,\delta}^N(\ba,\bstheta)$ for $N > 1$, while
    \begin{equation*}
    \begin{split}
        BF_{p,\delta}^{1,-}(\ba,\bstheta) &:= 
            -\int_{B(0,1) \cap  \{ \bstheta \cdot \bz > 0 \} }
            \frac{ \ln(1-\delta q'(0) (\bstheta \cdot \bz)) }{ \delta q'(0) |\bz|} \frac{ \rho(|\bz|) }{ |\bz|^{\beta-p} } \Phi_p' \left( \ba \cdot \frac{\bz}{|\bz|} \right) \, \rmd \bz \,.
    \end{split}
    \end{equation*}
\end{theorem}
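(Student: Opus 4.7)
The plan is to mirror the proof of \Cref{thm:LocGreens:1}, exploiting the change in the kernel's normalization (from $\eta_\delta(\bx)$ to $\eta_\delta(\by)$) by making $\by$ the outer integration variable. First, I apply Fubini's theorem---justified by \Cref{lma:LocGreens:FiniteInt}---to swap the order of integration, then parameterize $\by = (\by',\varphi(\by')+\veps\bar\tau)$ with $\by' \in 2Q$ and $\bar\tau > 1$, so that $\rmd\by = \veps\,\rmd\bar\tau\,\rmd\by'$.

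For each fixed $\by$, the change of variables $\bz = (\bx-\by)/\eta_\delta(\by)$ in the inner $\bx$-integral is linear in $\bx$ (since $\eta_\delta(\by)$ is independent of $\bx$) with Jacobian $\eta_\delta(\by)^d$, and since $\frac{\bx-\by}{|\bx-\by|}=\bz/|\bz|$, no sign flip from $\Phi_p'$ arises. After this substitution, the argument proceeds in parallel with the proof of \Cref{thm:LocGreens:1}, with $\bar\tau$ taking the role of $x_d$ and $\by$ taking the role of $\bx$: the a priori bounds analogous to \eqref{eq:LocGreens:Pf1} and \eqref{eq:LocGreens:nl:Pf2} provide a uniformly integrable majorant, justifying dominated convergence. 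Taylor-expanding $\eta_\delta(\by) = \delta q'(0)\veps\bar\tau/\sqrt{1+|\grad\varphi(\by')|^2}+o(\veps)$ for $N=1$ and using \eqref{assump:Localization:NormalDeriv}, the constraint $\bx\in U_\veps$ becomes $\bar\tau < 1/(1-\delta q'(0)\bsnu(\by')\cdot\bz) + o(1)$, with non-empty intersection with $(1,\infty)$ only when $\bsnu(\by')\cdot\bz > 0$.

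Computing $\int_1^{1/(1-\delta q'(0)\bsnu\cdot\bz)}\bar\tau^{-1}\,\rmd\bar\tau = -\ln(1-\delta q'(0)\bsnu\cdot\bz)$ and assembling the remaining factors yields exactly $\int_Q BF_{p,\delta}^{1,-}(\grad u,\bsnu)\,v\,\sqrt{1+|\grad\varphi|^2}\,\rmd\by'$, matching the claim for $N=1$. For $N>1$, the analogue of the rescaling in \eqref{eq:LocGreens:nl:Pf1} with $\bar\tau = 1+\veps^{N-1}\sigma$ shows that the constraint on $\sigma$ collapses in the limit to the indicator of the half-space $\{\bsnu(\by')\cdot\bz > 0\}$; the resulting angular integral symmetrizes to $\tfrac{1}{2}\cA_p(\grad u)\cdot\bsnu = BF_{p,\delta}^{N,-}(\grad u,\bsnu)$.

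The main obstacle is identifying why the limit produces the $\ln(1-\cdot)$-based flux $BF_{p,\delta}^{1,-}$ rather than the $\ln(1+\cdot)$-based $BF_{p,\delta}^{1,+}$ from \Cref{thm:LocGreens:1}. Although the underlying geometry is the same, the kernel's scaling point has moved from the thin layer $U_\veps$ to the bulk $U^\veps$, so the natural distance parameter is now $\bar\tau$ (measuring $\by$'s distance to $\p\Omega$) rather than $x_d$ (measuring $\bx$'s). This reverses the direction of the defining inequality of the effective constraint set, producing the sign asymmetry between $BF_{p,\delta}^{1,+}$ and $BF_{p,\delta}^{1,-}$. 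Careful bookkeeping of this asymmetry---so that the two boundary contributions sum to the full nonlocal flux $BF_{p,\delta}^1$ appearing in \Cref{thm:Intro:GreensIdentity}---is the most delicate step; the remaining computations are essentially verbatim adaptations of those in the proof of \Cref{thm:LocGreens:1}.
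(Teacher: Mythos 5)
Your proposal is correct and mirrors the paper's own proof: both apply Fubini (via \Cref{lma:LocGreens:FiniteInt}), swap the integration order, parametrize $\by$ over the bulk layer $U^\veps$, change variables $\bz = (\bx-\by)/\eta_\delta(\by)$ with Jacobian $\eta_\delta(\by)^d$, and use dominated convergence and the Taylor expansion of $\eta_\delta$ along the normal to produce the constraint $1 < y_d < 1/(1-\delta q'(0)\,\bsnu\cdot\bz)$ whose log integral yields $BF^{1,-}_{p,\delta}$. Your observation that the absence of a sign flip in $\frac{\bx-\by}{|\bx-\by|} = \bz/|\bz|$ (as opposed to the minus sign appearing in \Cref{thm:LocGreens:1}) is precisely what converts the $\ln(1+\cdot)$ of the first theorem into the $-\ln(1-\cdot)$ here is an accurate account of the mechanism.
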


\begin{proof}
    First we prove \eqref{eq:LocGreens:2} under the assumption that $q'(0) > 0$. The proof is similar to that of \Cref{thm:LocGreens:1}. By \Cref{lma:LocGreens:FiniteInt} for any $\veps > 0$, we can use Fubini's theorem to interchange the $\bx$- and $\by$-integrals, and use similar coordinate changes as in the proof of \eqref{eq:LocGreens:1} and come to
    \begin{equation*}
    \begin{split}
        &\int_{U_\veps} \int_{U^\veps} \rho_{\delta,\beta-p}^\veps(\by,\bx) \Phi_p' \left( \grad u(\bx) \cdot \frac{\bx-\by}{|\bx-\by|} \right)  \frac{ v(\bx) }{|\bx-\by|} \, \rmd \by  \, \rmd \bx \\
        &\qquad = \int_{ \{ \vnorm{\by'}_\infty < 2 \varrho \} } \int_1^\infty \int_{U_\veps^*(\by)} \frac{\veps \mathds{1}_{ \{ \veps < |\bz| < 1 \} } }{\eta_\delta(\by',\varphi(\by') + \veps y_d) |\bz|} \frac{ \rho(|\bz|) }{ |\bz|^{\beta-p} } \\
            &\qquad \quad\qquad \Phi_p' \left( \grad u(\by^*_{\bz,\veps})  \cdot \frac{\bz}{|\bz|} \right) v(\by^*_{\bz,\veps}) \, \rmd \bz \, \rmd y_d \, \rmd \by'\,,
        \end{split}
    \end{equation*}
    where $\by^*_{\bz,\veps} := (\by' + \eta_\delta(\by',\varphi(\by')+\veps y_d)\bz', \varphi(\by') + \veps y_d + \eta_\delta(\by',\varphi(\by')+\veps y_d)z_d)$ and 
    \begin{equation*}
    \begin{split}
        & U_\veps^*(\by) := 
        \left\{
        \bz =(\bz',z_d)\in \bbR^d \, : \,
        \begin{gathered}
        \vnorm{ \by'+\eta_\delta(\by',\varphi(\by')+\veps x_d)\bz' }_\infty < \varrho \,, \qquad \qquad \qquad  \\
       \veps y_d + \varphi(\by') + \eta_\delta(\by',\varphi(\by')+ \veps y_d)z_d -  \qquad  \qquad \quad \\
       \qquad  \qquad \varphi(\by'+\eta_\delta(\by',\varphi(\by')+\veps y_d)\bz') \in (0, \veps)
        \end{gathered}
        \right\}.
    \end{split}
    \end{equation*}
    Since $\by^*_{\bz,\veps} \to (\by',\varphi(\by'))$ as $\veps \to 0$ uniformly in $\by$ and $\bz$, the dominated convergence theorem can used the same way as in the proof of \Cref{thm:LocGreens:1}, and so
    \begin{equation*}
        \begin{split}
            &\int_{U_\veps} \int_{U^\veps} \int_{U_\veps} \int_{U^\veps} \rho_{\delta,\beta-p}^\veps(\by,\bx) \Phi_p' \left( \grad u(\bx) \cdot \frac{\bx-\by}{|\bx-\by|} \right)  \frac{ v(\bx) }{|\bx-\by|} \, \rmd \by  \, \rmd \bx \\
            &=\int_{  \{ \vnorm{\by'}_\infty < 2 \varrho \} } \int_{B(0,1)} \int_\bbR \lim\limits_{\veps \to 0}  \left[  \frac{\veps \mathds{1}_{\{ 1 < y_d < \infty \}} \mathds{1}_{ \{ |\bz| > \veps \} }  \mathds{1}_{U_\veps^*(\by)} }{\eta_\delta(\by',\varphi(\by') + \veps y_d) |\bz| } \right] \frac{ \rho(|\bz|) }{|\bz|^{\beta-p}} \\
            &\qquad\qquad  \qquad \Phi_p' \left( \grad u(\by',\varphi(\by')) \cdot \frac{\bz}{|\bz|} \right) v(\by',\varphi(\by')) \, \rmd y_d \, \rmd \bz  \, \rmd \by' \\
            &= \int_{ \{ \vnorm{\by'}_\infty < \varrho \} } \int_{B(0,1)} \int_1^{ \max \{ 1, \frac{1}{1-\delta q'(0) \bsnu(\by') \cdot \bz} \} }  \frac{\sqrt{1+|\grad \varphi(\by')|^2}}{ \delta q'(0) y_d |\bz| } \frac{ \rho(|\bz|) }{ |\bz|^{\beta-p} } \\
            &\qquad \qquad \qquad \Phi_p' \left( \grad u(\by',\varphi(\by')) \cdot \frac{\bz}{|\bz|} \right) v(\by',\varphi(\by')) \, \rmd y_d  \, \rmd \bz \, \rmd \by' \,.
        \end{split}
    \end{equation*}
    Applying \eqref{eq:LogIntegral:1}, we obtain \eqref{eq:LocGreens:2} in the case $q'(0) > 0$.

    The proof of \eqref{eq:LocGreens:2} in the case $N > 1$ follows in a similar way to that of \eqref{eq:LocGreens:1}, again using Fubini's theorem at the beginning of the proof.
\end{proof}

\begin{corollary}\label{cor:LocGreens}
    Under the same assumptions as \Cref{thm:LocGreens:1},
    \begin{equation}\label{eq:LocGreens}
        \begin{split}
        \lim\limits_{\veps \to 0} 
        &\int_{U_\veps} \int_{U^\veps} \hat{\rho}_{\delta,\beta-p}^\veps(\bx,\by) \Phi_p' \left( \grad u(\bx) \cdot \frac{\bx-\by}{|\bx-\by|} \right)  \frac{ v(\bx) }{|\bx-\by|} \, \rmd \by  \, \rmd \bx \\
        &= \int_{Q} BF_{p,\delta}^{N}( \grad u(\bx',\varphi(\bx')),\bsnu(\bx')) v(\bx,\varphi(\bx')) \sqrt{1+ |\grad \varphi(\bx')|^2} \, \rmd \bx'\,.
        \end{split}
    \end{equation}
\end{corollary}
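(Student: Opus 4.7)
The plan is simply to split the symmetrized kernel using its definition \eqref{eq:SymmetricKerneldDef},
\[
\hat{\rho}_{\delta,\beta-p}^\veps(\bx,\by) = \rho_{\delta,\beta-p}^\veps(\bx,\by) + \rho_{\delta,\beta-p}^\veps(\by,\bx),
\]
apply \Cref{thm:LocGreens:1} to the integral containing the first summand and \Cref{thm:LocGreens:2} to the integral containing the second, and then check that
\[
BF_{p,\delta}^{N,+}(\ba,\bstheta) + BF_{p,\delta}^{N,-}(\ba,\bstheta) = BF_{p,\delta}^{N}(\ba,\bstheta)
\]
for every $\ba \in \bbR^d$ and $\bstheta \in \bbS^{d-1}$. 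Both limits supplied by the two theorems exist as finite quantities, so the limit of the sum is the sum of the limits, and the integrand on the boundary side combines directly without any further analytic issue.

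For $N > 1$ the identification is immediate because $BF_{p,\delta}^{N,+}$ and $BF_{p,\delta}^{N,-}$ are each defined as $\tfrac{1}{2} BF_{p,\delta}^{N}$ in that regime. The only place where genuine verification is required is the case $N=1$. Starting from the definitions, I would combine the two half-ball integrals over $\{\bstheta \cdot \bz > 0\}$ using
\[
\ln(1+ \delta q'(0)\bstheta\cdot\bz) - \ln(1- \delta q'(0)\bstheta\cdot\bz) = \ln\!\left( \frac{1+\delta q'(0)\bstheta\cdot\bz}{1-\delta q'(0)\bstheta\cdot\bz} \right),
\]
so that
\[
BF_{p,\delta}^{1,+}(\ba,\bstheta) + BF_{p,\delta}^{1,-}(\ba,\bstheta) = \int_{B(0,1)\cap\{\bstheta\cdot\bz>0\}} \ln\!\left(\frac{1+\delta q'(0)\bstheta\cdot\bz}{1-\delta q'(0)\bstheta\cdot\bz}\right) \frac{1}{\delta q'(0) |\bz|} \frac{\rho(|\bz|)}{|\bz|^{\beta-p}} \Phi_p'\!\left(\ba\cdot \frac{\bz}{|\bz|}\right)\, \rmd\bz.
\]
To identify this with $BF_{p,\delta}^{1}(\ba,\bstheta)$, I would apply the change of variables $\bz \mapsto -\bz$ on the complementary half-ball $\{\bstheta\cdot\bz < 0\}$: this flips the sign of $\bstheta\cdot\bz$, hence replaces the logarithm by its negative, while $\rho(|\bz|)$, $|\bz|$, and $|\bz|^{\beta-p}$ are invariant and $\Phi_p'(\ba\cdot\bz/|\bz|)$ also changes sign since $\Phi_p$ is even and thus $\Phi_p'$ is odd (see \eqref{eq:assump:Phi}). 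The two sign flips cancel, so the integrand over $\{\bstheta\cdot\bz<0\}$ coincides with the integrand over $\{\bstheta\cdot\bz>0\}$. The full $B(0,1)$-integral in the definition of $BF_{p,\delta}^{1}$ therefore equals twice the half-ball integral, and the extra factor of $\tfrac{1}{2}$ present in $BF_{p,\delta}^{1}$ matches the sum $BF_{p,\delta}^{1,+} + BF_{p,\delta}^{1,-}$ exactly.

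I do not expect any real obstacle here: the analytic work has already been done in \Cref{thm:LocGreens:1,thm:LocGreens:2}, and what remains is a short algebraic reconciliation between the one-sided boundary flux functions and their symmetric counterpart, using only the oddness of $\Phi_p'$ and the elementary logarithm identity above.
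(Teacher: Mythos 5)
Your proposal is correct and mirrors the paper's own argument: split $\hat{\rho}_{\delta,\beta-p}^\veps$ into its two summands, invoke \Cref{thm:LocGreens:1,thm:LocGreens:2}, and reconcile the one-sided flux functions via the change of variables $\bz \to -\bz$ together with oddness of $\Phi_p'$ — the paper states precisely this $\bz\to -\bz$ identity (equating the two half-ball integrals) as the only thing left to check.
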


\begin{proof}
    We only need to check that $BF_{p,\delta}^{1,+}(\ba,\bstheta) + BF_{p,\delta}^{1,-}(\ba,\bstheta) = BF_{p,\delta}^{1}(\ba,\bstheta)$,
    which follows from the identity
   \begin{equation*}
    \begin{split}
        \int_{B(0,1) \cap  \{ \bstheta \cdot \bz > 0 \} } 
       &
        \frac{ \ln \left( \frac{ 1+ \delta q'(0) (\bstheta \cdot \bz)
        }{ 1 - \delta q'(0) (\bstheta \cdot \bz) } \right) }{ \delta q'(0) |\bz| } \frac{\rho(|\bz|)}{ |\bz|^{\beta-p} } \Phi_p' \left( \ba \cdot \frac{\bz}{|\bz|} \right) 
        \, \rmd \bz \\
        &=\int_{B(0,1)\cap  \{ \bstheta \cdot \bz < 0 \} } 
        \frac{ \ln \left( \frac{ 1+ \delta q'(0) (\bstheta \cdot \bz) }{ 1 - \delta q'(0) (\bstheta \cdot \bz) } \right) }{ \delta q'(0) |\bz| 
        } \frac{\rho(|\bz|)}{ |\bz|^{\beta-p} } \Phi_p' \left( \ba \cdot \frac{\bz}{|\bz|} \right) 
        \, \rmd \bz
    \end{split}
    \end{equation*}
    obtained by the change of variables $\bz \to -\bz$.
\end{proof}

\subsection{Proof of the Nonlocal Green's identity}
\label{sec:pfGreen}
\begin{proof}[Proof of \Cref{thm:Intro:GreensIdentity}]
    Thanks to the trace property of the nonlocal function space $\mathfrak{W}^{\beta,p}[\delta;q](\Omega)$, the right-hand side of \eqref{eq:GreensIdentity} is continuous in the $\mathfrak{W}^{\beta,p}[\delta;q](\Omega)$-norm, and so by density it suffices to show \eqref{eq:GreensIdentity} for $v \in C^1(\overline{\Omega})$ (see \Cref{thm:FxnSpaceProp}).
    Let $\veps > 0$ with $\veps \ll \delta$.
    For $\veps$ small enough, cover the set $\Omega_{\veps;\lambda,q}$ with sets of the form (up to a rigid motion)
    \begin{equation*}
        U_{\veps}^i := \{ \bx = (\bx',x_d) \in \Omega \, : \, \vnorm{\bx'}_\infty \leq \varrho_i \,, \varphi_i(\bx') < x_d < \varphi_i(\bx') + \veps, \; 1 \leq i \leq N,
    \end{equation*}
    where $\varrho_i \gg \veps$ and $\varphi_i :\bbR^{d-1} \to [0,\infty)$ are Lipschitz functions.
    Define
    \begin{equation*}
        \wt{U}_{\veps}^i := \{ \bx = (\bx',x_d) \in \Omega \, : \, \vnorm{\bx'}_\infty \leq 2\varrho_i \,, \varphi_i(\bx') < x_d < \varphi_i(\bx') + \veps \},  \; \wt{U}_\veps := \cup_{i = 1}^N \wt{U}_\veps^i.
    \end{equation*}
 Since
    \begin{equation*}
    \begin{split}
        \left| \Phi_p' \left( \frac{u(\bx)-u(\by)}{|\bx-\by|} \right) \right| \frac{ |v(\bx)-v(\by)| }{|\bx-\by|} 
        \leq 
        \Vnorm{\grad u}_{L^{\infty} (\Omega)}^{p-1} \Vnorm{\grad v}_{L^{\infty}(\Omega)},
    \end{split}
    \end{equation*}
    the right-hand side quantity multiplied by $\rho_{\delta,\beta-p}^\veps(\bx,\by)$ belongs to $L^1(\Omega \times \Omega)$ by \eqref{eq:KernelIntFunction:Bounds}, and so we get by symmetry and the dominated convergence theorem
	\begin{equation*}
		\begin{split}
			&\cB_{p,\delta}(u,v) =\iintdm{\Omega}{\Omega}{ \rho_{\delta,\beta-p}(\bx,\by) \Phi_p' \left( \frac{u(\bx)-u(\by)}{|\bx-\by|} \right) \left( \frac{v(\bx)-v(\by)}{|\bx-\by|} \right) }{\by}{\bx} \\
			&\quad= \lim\limits_{\veps \to 0} \iintdm{\Omega \setminus \wt{U}_\veps }{\Omega \setminus \wt{U}_\veps }{  \frac{ \hat{\rho}_{\delta,\beta-p}^\veps(\bx,\by) }{2} \Phi_p' \left( \frac{u(\bx)-u(\by)}{|\bx-\by|} \right) \left( \frac{v(\bx)-v(\by)}{|\bx-\by|} \right)  }{\by}{\bx}\,.
		\end{split}
	\end{equation*}
	Now we can use nonlocal integration by parts on the truncated form to get
	\begin{equation*}
		\begin{split}
			&\iintdm{\Omega \setminus \wt{U}_\veps }{\Omega \setminus \wt{U}_\veps }{  \frac{ \hat{\rho}_{\delta,\beta-p}^\veps(\bx,\by) }{2} \Phi_p' \left( \frac{u(\bx)-u(\by)}{|\bx-\by|} \right) \left( \frac{v(\bx)-v(\by)}{|\bx-\by|} \right)  }{\by}{\bx} \\
			&= - \int_{\Omega \setminus \wt{U}_\veps} \int_{\wt{U}_\veps} \hat{\rho}_{\delta,\beta-p}^\veps(\bx,\by) \Phi_p' \left( \frac{u(\bx)-u(\by)}{|\bx-\by|} \right)  \frac{ v(\bx) }{|\bx-\by|}  \, \rmd \by  \, \rmd \bx \\
                &\qquad + \int_{\Omega \setminus \wt{U}_\veps } \mathds{1}_{ \Omega^{\veps;\lambda,q} }(\bx) \int_{\Omega} \hat{\rho}_{\delta,\beta-p}^\veps(\bx,\by) \Phi_p' \left( \frac{u(\bx)-u(\by)}{|\bx-\by|} \right)  \frac{ v(\bx) }{|\bx-\by|} \, \rmd \by  \, \rmd \bx \\
		  &= - \int_{\Omega \setminus \wt{U}_\veps} \int_{\wt{U}_\veps} \hat{\rho}_{\delta,\beta-p}^\veps(\bx,\by) \Phi_p' \left( \frac{u(\bx)-u(\by)}{|\bx-\by|} \right)  \frac{ v(\by) }{|\bx-\by|}  \, \rmd \by  \, \rmd \bx \\
			&\qquad - \int_{\Omega \setminus \wt{U}_\veps} \int_{\wt{U}_\veps} \hat{\rho}_{\delta,\beta-p}^\veps(\bx,\by) \Phi_p' \left( \frac{u(\bx)-u(\by)}{|\bx-\by|} \right)  \frac{ v(\bx)-v(\by) }{|\bx-\by|}  \, \rmd \by  \, \rmd \bx \\
			&\qquad + \int_{\Omega \setminus \wt{U}_\veps}\hspace{-2pt} \cL_{p,\delta}^\veps u(\bx) v(\bx) \, \rmd \bx\,.
		\end{split}
	\end{equation*}
        By the dominated convergence theorem, the second integral vanishes. 
    So it remains to show 
	\begin{equation}\label{eq:GreensIdPf:Limit1}
        \begin{split}
            \lim\limits_{\veps \to 0} & \int_{\wt{U}_\veps} \int_{\Omega \setminus \wt{U}_\veps} \hat{\rho}_{\delta,\beta-p}^\veps(\bx,\by) \Phi_p' \left( \frac{u(\bx)-u(\by)}{|\bx-\by|} \right)  \frac{ v(\bx) }{|\bx-\by|} \, \rmd \by  \, \rmd \bx \\
            &= \int_{\p \Omega} BF_{p,\delta}^N(\grad u,\bsnu) v \, \rmd \sigma\,.
        \end{split}
    \end{equation}
    Now, by \eqref{eq:KernelIntFunction:Bounds} again
    \begin{equation*}
		\lim\limits_{\veps \to 0} \int_{\wt{U}_\veps } \int_{\Omega \setminus \wt{U}_\veps } \hat{\rho}_{\delta,\beta-p}^\veps(\bx,\by) \, \rmd \by  \, \rmd \bx \leq \lim\limits_{\veps \to 0}  C |\wt{U}_\veps| = 0\,.
	\end{equation*}
    Thus, on the left-hand side integrand of \eqref{eq:GreensIdPf:Limit1}, we 
        use the Taylor expansion
        \begin{equation}\label{eq:Taylor}
             \frac{u(\bx)-u(\by)}{|\bx-\by|} = \grad u(\bx) \cdot  \frac{\bx-\by}{|\bx-\by|} - \int_0^1 \grad^2 u(\bx + t(\by-\bx))(\bx-\by) \cdot \frac{\bx-\by}{|\bx-\by|} (1-t) \, \rmd t,
        \end{equation}
        as well as the local Lipschitz condition on $\Phi_p'$ to see that \eqref{eq:GreensIdPf:Limit1} is equivalent to	
    \begin{equation*}
        \begin{split}
        \lim\limits_{\veps \to 0}  & \int_{\wt{U}_\veps} \int_{\Omega \setminus \wt{U}_\veps} \hat{\rho}_{\delta,\beta-p}^\veps(\bx,\by) \Phi_p' \left( \grad u(\bx) \cdot \frac{\bx-\by}{|\bx-\by|} \right)  \frac{ v(\bx) }{|\bx-\by|} \, \rmd \by  \, \rmd \bx \\ &
        = \int_{\p \Omega} BF_{p,\delta}^N(\grad u,\bsnu) v \, \rmd \sigma\,.
        \end{split}
    \end{equation*}

    Now we localize the problem. Let $\zeta_i^\veps$ be a $C^\infty$ partition of unity subordinate to the cover $\{ U^i_\veps \}_{i=1}^N$ (enlarging the cover slightly if necessary so that $\sum_{i} \zeta_i(\bx) \equiv 1$ for all $\bx \in \overline{\Omega}$). We can choose $\veps > 0$ small enough so that, for any $\bx \in U^i_\veps$ and 
    $i \in \{1, \ldots, N \}$,  any $\by \in \Omega \setminus \wt{U}_\veps$ with either $|\by-\bx| \leq \eta_\delta(\bx)$ or $|\by-\bx| \leq \eta_\delta(\by)$ actually satisfies (after a rigid motion) $\vnorm{\by'}_\infty < 2 \varrho_i$ and $y_d > \varphi_i(\by') + \veps$.
    Therefore, in the notation of \Cref{thm:LocGreens:1},
    \begin{equation*}
        \begin{split}
            &\int_{\wt{U}_\veps} \int_{\Omega \setminus \wt{U}_\veps} \hat{\rho}_{\delta,\beta-p}^\veps(\bx,\by) \Phi_p' \left( \grad u(\bx) \cdot \frac{\bx-\by}{|\bx-\by|} \right)  \frac{ v(\bx) }{|\bx-\by|} \, \rmd \by  \, \rmd \bx \\
            &= \sum_{i = 1}^N \int_{U_\veps^i} \int_{U^{\veps,i} } \hat{\rho}_{\delta,\beta-p}^\veps(\bx,\by) \Phi_p' \left( \grad u(\bx) \cdot \frac{\bx-\by}{|\bx-\by|} \right)  \frac{ \zeta_i(\bx) v(\bx) }{|\bx-\by|} \, \rmd \by  \, \rmd \bx\,.
        \end{split}
    \end{equation*}
    By \Cref{cor:LocGreens} each of the terms in the sum converges to
    \begin{equation*}
        \int_{Q_i} BF_{p,\delta}^N(\grad u(\bx',\varphi_i(\bx')),\bsnu_i(\bx')) (\zeta_i v)(\bx',\varphi_i(\bx')) \sqrt{ 1 + |\grad \varphi_i(\bx')|^2 } \, \rmd \bx'\,,
    \end{equation*}
    and the result follows.
\end{proof}

\section{Realizations of the nonlocal operator}\label{sec:NonlocalOperator}

In this section, we show that in special cases, $(\mathrm{d}\text{-}\cL)_{p,\delta}$ agrees with a pointwise integral operator, defined in a principal-value sense. We will also define a ``weak derivative'' form of the operator, so that the class of functions $u$ on which $\cL_{p,\delta}$ acts can be relaxed.

For this section, we continue to assume $p \geq 2$, \eqref{assump:beta}, \eqref{assump:Localization}, \eqref{assump:NonlinearLocalization}, \eqref{assump:VarProb:Kernel}, \eqref{eq:assump:Phi}, and \eqref{eq:HorizonThreshold}.
At times we will also assume \eqref{assump:FullLocalization:C11}; this condition will make all the difference on whether the operator can be extended past its distributional definition. 

We describe several configurations of $\eta$ with conditions sufficient to guarantee \eqref{assump:FullLocalization:C11}; of course there are additional possibilities.
First, if it happens that $\Omega$ is a $C^2$ domain, then then one can choose $\lambda = d_{\p \Omega}$. However, some care must be taken, as $d_{\p \Omega}$ does not belong to $C^{2}(\overline{\Omega})$, but rather there exists $\veps_\Omega > 0$ such that $d_{\p \Omega}$ is $C^2$ on the set $\{ \bx \in \overline{\Omega} : \, d_{\p \Omega}(\bx) \leq \veps_\Omega \}$; see \cite{foote1984regularity}. However, if $q$ is chosen to satisfy \eqref{assump:NonlinearLocalization} for $k_q = 2$ with $q(r)$ constant for $r \geq \veps_\Omega$, it follows that the resulting heterogeneous localization $\eta[d_{\p \Omega},q]$ satisfies \eqref{assump:FullLocalization:C11}.
Second, if it is not the case that $\Omega$ is $C^2$, then one can consider, in place of $d_{\p \Omega}$, a generalized distance $\lambda$ satisfying \eqref{assump:Localization} (and additionally \eqref{assump:Localization:NormalDeriv} if desired) for some $k_\lambda \geq 2$. Then $\eta(\bx) = q(\lambda(\bx))$ belongs to $C^2(\Omega)$, but it is not guaranteed that its derivatives remain bounded near $\p \Omega$. In that case one can choose instead any function $\tilde{q}$ that satisfies \eqref{assump:MomentsOfNonlinLoc} for $N = 2$.
Then an application of Fa\`a di Bruno's formula shows that 
$|D^\alpha \eta[\lambda,\tilde{q}](\bx)| \leq C d_{\p \Omega}(\bx)^{2-|\alpha|}$ for all $\bx \in \Omega$ and for all $|\alpha| \leq 2$, where $C$ depends only on $\tilde{q}$, $\alpha$, and $\kappa_\alpha$, i.e. $\eta[\lambda,\tilde{q}]$ satisfies \eqref{assump:FullLocalization:C11}.

\subsection{The pointwise definition}\label{subsec:PointwiseOperator}

We establish a few lemmas first:

\begin{lemma}\label{lma:PointwiseOperator:Term1}
    Let $u \in C^2(\overline{\Omega})$. For $\bx \in \Omega$ define the function
    \begin{equation}
    \begin{split}
        \cD_{p,\delta}^{1,\veps}[u](\bx)
        := \mathds{1}_{ \Omega^{\veps;\lambda,q} }(\bx)  \int_\Omega I_{p,\delta}^{1,\veps}[u](\bx,\by) \, \rmd \by\,,
    \end{split}
    \end{equation}
    where
    \begin{equation}
    \begin{split}
        &I_{p,\delta}^{1,\veps}[u](\bx,\by) \\
        &:= \rho_{\delta,\beta-p}^\veps(\by,\bx) \Phi_p' \left( \grad u(\bx) \cdot \frac{\bx-\by}{|\bx-\by|} \right) \frac{ \eta_\delta(\bx) - \eta_\delta(\by) - \grad \eta_\delta(\by) \cdot (\bx-\by) }{ |\bx-\by| \eta_\delta(\bx) }\,.
    \end{split}
    \end{equation}
    Then for every $\bx \in \Omega$ and every $\veps >0$, $|I_{p,\delta}^{1,\veps}[u](\bx,\by)| \leq \rho_{\delta,\beta-p}(\by,\bx) |\grad u(\bx)|^{p-1} \frac{ 2 \kappa_1 }{\eta(\bx)}$, and consequently $\cD_{p,\delta}^{1,\veps}[u](\bx)$ converges almost everywhere in $\Omega$ to
    \begin{equation*}
        \cD_{p,\delta}^{1}[u](\bx) := \int_{\Omega} I_{p,\delta}^{1}[u](\bx,\by) \, \rmd \by\,,
    \end{equation*}
    where
    \begin{equation}
    \begin{split}
        &I_{p,\delta}^{1}[u](\bx,\by) \\
        &:= \rho_{\delta,\beta-p}(\by,\bx) \Phi_p' \left( \grad u(\bx) \cdot \frac{\bx-\by}{|\bx-\by|} \right) \frac{ \eta_\delta(\bx) - \eta_\delta(\by) - \grad \eta_\delta(\by) \cdot (\bx-\by) }{ |\bx-\by| \eta_\delta(\bx) }\,.
    \end{split}
    \end{equation}
  The function
    $\cD_{p,\delta}^{1}[u](\bx)$ belongs to $L^1_{loc}(\Omega)$ with $|\cD_{p,\delta}^{1}[u](\bx)| \leq \frac{ C |\grad u(\bx)|^{p-1} }{\eta(\bx)}$, where $C$ depends only on $\rho$, $\beta$, $p$, and $\kappa_1$.

    If additionally \eqref{assump:FullLocalization:C11} is satisfied, then
    \begin{equation}
        |I_{p,\delta}^{1}[u](\bx,\by)| \leq \delta \frac{ \vnorm{ \grad^2 \eta }_{L^\infty(\Omega)} }{2(1-\kappa_1 \delta)} \rho_{\delta,\beta-p}(\by,\bx) |\grad u(\bx)|^{p-1}\,,
    \end{equation}
    and consequently the integral defining $\cD_{p,\delta}^{1}[u](\bx)$ is absolutely convergent, with
    \begin{equation}
        |\cD_{p,\delta}^{1}[u](\bx)| \leq C(\beta,p,\rho,q,\lambda,\kappa_1) \delta |\grad u(\bx)|^{p-1}\,, \quad \forall \, \bx \in \Omega\,.
    \end{equation}
    Moreover, if \eqref{assump:FullLocalization:C11} is satisfied, then the convergence $\cD_{p,\delta}^{1,\veps}[u](\bx) \to \cD_{p,\delta}^{1}[u](\bx)$ as $\veps \to 0$ can be strengthened to the strong $L^r(\Omega)$ topology, for any $r \in [1,\infty)$, with
    \begin{equation*}
        \int_{\Omega} 
        |\cD_{p,\delta}^{1}[u](\bx)|^r \, \rmd \bx \leq C(\beta,p,\rho,q,\lambda,\kappa_1) \delta \int_{\Omega} |\grad u(\bx)|^{r(p-1)} \, \rmd \bx\,.
    \end{equation*}
\end{lemma}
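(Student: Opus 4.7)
The plan is to obtain the pointwise bound on $I_{p,\delta}^{1,\veps}[u](\bx,\by)$ first, then deduce everything else from integrating this bound. For the baseline bound (not yet assuming \eqref{assump:FullLocalization:C11}), I would use the triangle inequality on the Taylor remainder $|\eta_\delta(\bx)-\eta_\delta(\by) - \grad \eta_\delta(\by)\cdot(\bx-\by)|$ combined with the global Lipschitz estimate $|\grad \eta_\delta| \leq \kappa_1 \delta$ (which follows from \eqref{assump:NonlinearLocalization} and \eqref{assump:Localization}), yielding $|\eta_\delta(\bx)-\eta_\delta(\by)-\grad\eta_\delta(\by)\cdot(\bx-\by)|\leq 2\kappa_1\delta|\bx-\by|$. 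Dividing by $|\bx-\by|\eta_\delta(\bx)$ gives $\tfrac{2\kappa_1}{\eta(\bx)}$, and the growth bound $|\Phi_p'(t)|\leq C|t|^{p-1}$ from \eqref{eq:assump:Phi} handles the $\Phi_p'$ factor. This yields the claimed bound by $\tfrac{2\kappa_1}{\eta(\bx)}\rho_{\delta,\beta-p}(\by,\bx)|\grad u(\bx)|^{p-1}$.

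With this dominating function in hand, pointwise a.e.\ convergence $\cD_{p,\delta}^{1,\veps}[u](\bx)\to\cD_{p,\delta}^{1}[u](\bx)$ follows from the dominated convergence theorem on the $\by$-integral: the integrand $I_{p,\delta}^{1,\veps}[u](\bx,\by)$ converges pointwise to $I_{p,\delta}^{1}[u](\bx,\by)$ as $\veps\to 0$ because $\rho^\veps\to\rho$ pointwise and $\mathds{1}_{\Omega^{\veps;\lambda,q}}(\bx)\to 1$ for each interior $\bx$. The dominating function is integrable in $\by$ because \Cref{lma:KernelIntegral} (applied with $\psi=\rho$, $\alpha=\beta-p$, using $\beta-p<d$ from \eqref{assump:beta}) gives $\int_\Omega \rho_{\delta,\beta-p}(\by,\bx)\,\rmd\by \leq C(\rho,\beta,p,\kappa_1)$ uniformly in $\bx$, hence $|\cD_{p,\delta}^{1}[u](\bx)|\leq C|\grad u(\bx)|^{p-1}/\eta(\bx)$, which is $L^1_{\mathrm{loc}}(\Omega)$ but can blow up at $\p\Omega$.

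Now suppose \eqref{assump:FullLocalization:C11} holds. Then $\grad^2\eta\in L^\infty(\Omega)$, so Taylor's theorem with integral remainder sharpens the remainder to $|\eta_\delta(\bx)-\eta_\delta(\by)-\grad\eta_\delta(\by)\cdot(\bx-\by)|\leq \tfrac{\delta}{2}\|\grad^2\eta\|_{L^\infty}|\bx-\by|^2$. On the support of $\rho_{\delta,\beta-p}(\by,\bx)$ we have $|\bx-\by|\leq\eta_\delta(\by)$, and \eqref{eq:ComparabilityOfDistanceFxn2} from \Cref{lma:ComparabilityOfXandY} upgrades this to $|\bx-\by|\leq \eta_\delta(\bx)/(1-\kappa_1\delta)$, so $|\bx-\by|/\eta(\bx)\leq \delta/(1-\kappa_1\delta)$. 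Substituting into the quadratic remainder divided by $|\bx-\by|\eta_\delta(\bx)$ produces the stated bound $|I_{p,\delta}^{1}[u](\bx,\by)|\leq \tfrac{\delta\|\grad^2\eta\|_{L^\infty}}{2(1-\kappa_1\delta)}\rho_{\delta,\beta-p}(\by,\bx)|\grad u(\bx)|^{p-1}$. Integrating in $\by$ via \Cref{lma:KernelIntegral} gives $|\cD_{p,\delta}^{1}[u](\bx)|\leq C\delta|\grad u(\bx)|^{p-1}$ uniformly in $\bx$.

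For the $L^r$ convergence under \eqref{assump:FullLocalization:C11}, the same sharper pointwise estimate applies (uniformly in $\veps$) to $I_{p,\delta}^{1,\veps}[u](\bx,\by)$, so $|\cD_{p,\delta}^{1,\veps}[u](\bx)|\leq C\delta|\grad u(\bx)|^{p-1}$ with $C$ independent of $\veps$. Since $u\in C^2(\overline{\Omega})$ and $\Omega$ is bounded, the right-hand side lies in $L^r(\Omega)$ for every $r\in[1,\infty)$, so the dominated convergence theorem in the $\bx$-variable, combined with the a.e.\ convergence from the second paragraph, yields strong $L^r$ convergence. The final integral bound is obtained by raising the pointwise bound to the $r$-th power and absorbing constants. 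The only mildly delicate point is the verification that $|\bx-\by|/\eta(\bx)$ is controlled by $\delta$ on the support of the kernel — everything else is a routine chain of estimates once this observation is in place; I do not anticipate any deeper obstacle.
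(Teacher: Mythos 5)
Your proof is correct and follows exactly the route the paper sketches: bound the integrand via the Lipschitz and $W^{2,\infty}$ properties of $\eta_\delta$ (triangle inequality for the baseline bound, Taylor's theorem with integral remainder under \eqref{assump:FullLocalization:C11}), control the $\by$-integral of the kernel via the bound \eqref{eq:KernelIntFunction:Bounds}, and pass to the limit by dominated convergence in $\by$ and then in $\bx$. The paper's own proof is a one-sentence remark naming precisely these three ingredients, so your argument is simply a careful fleshing-out of the same approach.
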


\begin{proof}
    The results can be obtained in a straightforward way using the assumptions on $\eta$, the bound  \eqref{eq:KernelIntFunction:Bounds}, and the dominated convergence theorem.
\end{proof}

To state the next lemma, we introduce, for $u \in C^2(\overline{\Omega})$ and $\ba, \bfb\in \mathbb{R}^d$, a notation for the remainder term of the Taylor expansion
    \begin{equation*}
        R[u](\ba,\bfb) := - \int_0^1  \left( \grad^2 u(\ba + t \bfb) \frac{\bfb}{|\bfb|} \right) \cdot \frac{\bfb}{|\bfb|}  (1-t) \, \rmd t\,.
    \end{equation*}

\begin{lemma}\label{lma:PointwiseOperator:Term2}
    Let $u \in C^2(\overline{\Omega})$. For $\bx \in \Omega$ define the function
    \begin{equation}
    \begin{split}
        \cD_{p,\delta}^{2,\veps}[u](\bx)
        := \mathds{1}_{ \Omega^{\veps;\lambda,q} }(\bx) \int_\Omega I_{p,\delta}^{2,\veps}[u](\bx,\by) \, \rmd \by\,,
    \end{split}
    \end{equation}
    where
    \begin{equation*}
    \begin{split}
        I_{p,\delta}^{2,\veps}[u](\bx,\by) := \frac{ \hat{\rho}_{\delta,\beta-p}^\veps(\bx,\by) }{|\bx-\by|} \int_0^{ |\by-\bx| R[u](\bx,\by-\bx) } \Phi_p'' \left( \grad u(\bx) \cdot \frac{\bx-\by}{ |\bx-\by| } + \tau \right) \, \rmd \tau\,,
    \end{split}
    \end{equation*}
    Then for every $\bx \in \Omega$ and every $\veps >0$,
    \begin{equation}\label{eq:PointwiseOperator:BddIntegrand}
        \begin{split}
          &  |I_{p,\delta}^{2,\veps}[u](\bx,\by)| \leq C(p) |\grad u(\bx)|^{p-2} \hat{\rho}_{\delta,\beta-p}(\bx,\by) |R[u](\bx,\by-\bx)| \\
            &\qquad\quad + (|\by-\bx||R[u](\bx,\by-\bx)|)^{p-2} \hat{\rho}_{\delta,\beta-p}(\bx,\by) |R[u](\bx,\by-\bx)|\,.
        \end{split}
    \end{equation}
    and consequently by the dominated convergence theorem $\cD_{p,\delta}^{2,\veps}[u](\bx)$ converges almost everywhere in $\Omega$ to
    \begin{equation*}
        \cD_{p,\delta}^{2}[u](\bx) := \int_{\Omega} I_{p,\delta}^{2}[u](\bx,\by) \, \rmd \by\,, \quad \text{a.e. } \bx \in \Omega\,,
    \end{equation*}
    where
    \begin{equation}
    \begin{split}
        I_{p,\delta}^{2}[u](\bx,\by) := \frac{ \hat{\rho}_{\delta,\beta-p}(\bx,\by) }{|\bx-\by|} \int_0^{ |\by-\bx| R[u](\bx,\by-\bx) } \Phi_p'' \left( \grad u(\bx) \cdot \frac{\bx-\by}{ |\bx-\by| } + \tau \right) \, \rmd \tau\,.
    \end{split}
    \end{equation}
    The function $I_{p,\delta}^{2}[u](\bx,\by)$ satisfies the same bound \eqref{eq:PointwiseOperator:BddIntegrand}, and hence the integral defining $\cD_{p,\delta}^{2}[u](\bx)$ is absolutely convergent. Moreover, for any $r \in [1,\infty)$ there exists a constant $C$ depending only on $\rho$, $\beta$, $p$, and $r$ such that
    \begin{equation}\label{eq:PointwiseOperator:2:LrBdd}
    \vnorm{ \cD_{p,\delta}^2[u] }_{L^{r}(\Omega)}
        \leq C \big( \vnorm{\grad u}_{L^{r(p-1)}(\Omega)}^{p-2} + \Vnorm{ \grad^2 u }_{L^{r(p-1)}(\Omega)}^{p-2} \big) \Vnorm{ \grad^2 u }_{L^{r(p-1)}(\Omega)},
    \end{equation}
    and $\cD_{p,\delta}^{2}[u](\bx) \to \cL_{p,0}u(\bx)$ as $\delta \to 0$ 
    strongly in $L^r(\Omega)$.
\end{lemma}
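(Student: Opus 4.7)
The plan is to proceed in four stages: the pointwise bound, a.e.~convergence, the $L^r$-estimate, and the local limit. For the pointwise bound \eqref{eq:PointwiseOperator:BddIntegrand}, I would invoke \eqref{eq:assump:Phi}: for $p\geq 2$ it yields $|\Phi_p''(s+\tau)| \leq C(p)(|s|^{p-2}+|\tau|^{p-2})$. Taking $s=\grad u(\bx)\cdot(\bx-\by)/|\bx-\by|$ (so $|s|\leq|\grad u(\bx)|$), integrating $\tau$ over an interval of length $|\by-\bx||R[u](\bx,\by-\bx)|$, dividing by $|\bx-\by|$, and using $\hat{\rho}_{\delta,\beta-p}^\veps\leq\hat{\rho}_{\delta,\beta-p}$ yields \eqref{eq:PointwiseOperator:BddIntegrand}, after noting $(|\by-\bx||R|)^{p-1}/|\bx-\by| = (|\by-\bx||R|)^{p-2}|R|$.

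For a.e.~convergence: since $u\in C^2(\overline{\Omega})$, $|R[u]|\leq \tfrac{1}{2}\Vnorm{\grad^2 u}_{L^\infty}$, and $\hat{\rho}_{\delta,\beta-p}(\bx,\cdot)\in L^1(\Omega)$ uniformly by \eqref{eq:KernelIntFunction:Bounds}, so \eqref{eq:PointwiseOperator:BddIntegrand} supplies a uniform (in $\veps$) integrable dominant. Combined with $\mathds{1}_{\Omega^{\veps;\lambda,q}}(\bx)\to 1$ and $\rho^\veps\to\rho$ a.e., the dominated convergence theorem yields both $\cD^{2,\veps}_{p,\delta}[u](\bx)\to\cD^2_{p,\delta}[u](\bx)$ a.e.~and the absolute convergence of the defining integral.

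The main obstacle is \eqref{eq:PointwiseOperator:2:LrBdd}. My plan is to apply Jensen's inequality with respect to the near-probability measure $\rmd\mu_\bx=\hat{\rho}_{\delta,\beta-p}(\bx,\by)\,\rmd\by/C_0$ (using \eqref{eq:KernelIntFunction:Bounds}) to push the $r$-th power inside the $\by$-integral, giving
\begin{equation*}
|\cD^2_{p,\delta}[u](\bx)|^r \leq C\int\hat{\rho}_{\delta,\beta-p}(\bx,\by)\Bigl[|\grad u(\bx)|^{(p-2)r}|R|^r + |\bx-\by|^{(p-2)r}|R|^{(p-1)r}\Bigr]\,\rmd\by.
\end{equation*}
For the first summand, after integrating in $\bx$ I would apply H\"older with exponents $(p-1)/(p-2)$ and $p-1$ to separate $\Vnorm{\grad u}_{L^{r(p-1)}(\Omega)}^{r(p-2)}$, leaving a weighted integral of $|R|^{r(p-1)}$. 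For the latter (and for the second summand, on whose integrand $|\bx-\by|^{(p-2)r}$ is bounded on $\supp\hat{\rho}_{\delta,\beta-p}$), I would expand $|R[u](\bx,\by-\bx)|\leq\int_0^1|\grad^2 u(\bx+t(\by-\bx))|(1-t)\,\rmd t$, apply Jensen once more in $t$, and for each fixed $t$ perform the change of variables $\bw=\bx+t(\by-\bx)$, whose Jacobian is controlled via \Cref{lma:CoordChange1,lma:CoordChange2} and whose distortion of $\hat{\rho}_{\delta,\beta-p}$ is governed by \eqref{eq:ComparabilityOfDistanceFxn1}--\eqref{eq:ComparabilityOfDistanceFxn2}. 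This collapses the iterated integrals to $\int|\grad^2 u(\bw)|^{r(p-1)}\,\rmd\bw$ against bounded factors, and assembling yields \eqref{eq:PointwiseOperator:2:LrBdd}. The delicate part is the exponent bookkeeping in the H\"older/Jensen chain.

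For the local limit $\cD^2_{p,\delta}[u]\to\cL_{p,0}u$ strongly in $L^r$: on $\supp\hat{\rho}_{\delta,\beta-p}(\bx,\cdot)$ we have $|\bx-\by|\leq C\delta\eta(\bx)$, so $|\bx-\by||R[u]|\to 0$ uniformly in $\by$ as $\delta\to 0$, and a Taylor expansion of $\Phi_p''$ about $s$ shows that the $\tau$-integral is asymptotic to $\Phi_p''(\grad u(\bx)\cdot(\bx-\by)/|\bx-\by|)\cdot|\bx-\by|R[u](\bx,\by-\bx)$. Substituting $R[u](\bx,\by-\bx)\to -\tfrac{1}{2}\grad^2 u(\bx)\bsomega\cdot\bsomega$ with $\bsomega=(\by-\bx)/|\by-\bx|$, passing to polar coordinates $\by=\bx+r\bsomega$, and resolving the $r$-integral via the normalization in \eqref{eq:Intro:StdKernelNormalization} yields the pointwise identification
\begin{equation*}
\cD^2_{p,\delta}[u](\bx)\to -\bar{\rho}_{p-\beta}\fint_{\bbS^{d-1}}\Phi_p''(\grad u(\bx)\cdot\bsomega)(\grad^2 u(\bx)\,\bsomega\cdot\bsomega)\,\rmd\sigma(\bsomega) = \cL_{p,0}u(\bx).
\end{equation*}
Strong $L^r$-convergence then follows by DCT, since the bound from the preceding paragraph supplies a uniform dominant.
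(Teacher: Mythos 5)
Your plan is correct and takes essentially the same route as the paper's proof: the pointwise bound from \eqref{eq:assump:Phi}, a.e.\ convergence by dominated convergence, the $L^r$ estimate by pushing the $r$-th power inside via Jensen/H\"older, isolating $\Vnorm{\grad u}_{L^{r(p-1)}}$ via H\"older with exponents $\tfrac{p-1}{p-2}$ and $p-1$, and collapsing the remainder via a Jensen-in-$t$ step and a unit-Jacobian-type change of variables anchored in \Cref{lma:CoordChange2}, followed by the local limit via dominated convergence and polar coordinates. The one phrasing to clean up is ``Taylor expansion of $\Phi_p''$'': since $\Phi_p$ is only assumed $C^2$, a Taylor expansion of $\Phi_p''$ is not available; what the paper does (and what suffices) is substitute $\tau\mapsto|\bz|\eta_\delta(\bx)\tau$ in the inner $\tau$-integral and invoke dominated convergence, using only continuity of $\Phi_p''$, to see that the integral is asymptotic to $\Phi_p''(s)\cdot|\bx-\by|R[u]$.
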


\begin{proof}
    We will only prove \eqref{eq:PointwiseOperator:2:LrBdd} and the almost everywhere / $L^r(\Omega)$ convergence, since the other conclusions follow in a straightforward way from \eqref{eq:assump:Phi}. 
    Using \eqref{eq:PointwiseOperator:BddIntegrand} and H\"older's inequality, 
    \begin{equation*}
    \begin{split}
        &|\cD_{p,\delta}^{2}[u](\bx)|^r \\
        &\leq |\grad u(\bx)|^{r(p-2)} \left( \int_{\Omega} \hat{\rho}_{\delta,\beta-p}(\bx,\by) \, \rmd \by \right)^{r-1}
        \int_{\Omega} \hat{\rho}_{\delta,\beta-p}(\bx,\by) |R[u](\bx,\by-\bx)|^r \, \rmd \by
        \\
        &\; + \left( \int_{\Omega} \hat{\rho}_{\delta,\beta-p}(\bx,\by) |\bx-\by|^{\frac{r}{r-1}} \rmd \by \right)^{r-1}
        \int_{\Omega} \hat{\rho}_{\delta,\beta-p}(\bx,\by) |R[u](\bx,\by-\bx)|^{r(p-1)} \rmd \by \\
        &\leq C \int_{\Omega} \hat{\rho}_{\delta,\beta-p}(\bx,\by) \big( |\grad u(\bx)|^{r(p-2)} |R_2(u,\bx,\by-\bx)|^r + |R_2(u,\bx,\by-\bx)|^{r(p-1)} \big) \rmd \by,
    \end{split}
    \end{equation*}
    where we additionally used \eqref{eq:KernelIntFunction:Bounds}. Again by H\"older's inequality and \eqref{eq:KernelIntFunction:Bounds},
    \begin{equation*}
        \begin{split}
            &\int_\Omega |\cD_{p,\delta}^{2}[u](\bx)|^r \, \rmd \bx 
            \leq C \left( \int_\Omega \int_\Omega \hat{\rho}_{\delta,\beta-p}(\bx,\by) |\grad u(\bx)|^{r(p-1)} \, \rmd \by \, \rmd \bx \right)^{\frac{p-2}{p-1}} \\
            &\qquad \qquad\qquad  \left( \int_{\Omega} \int_{\Omega} \hat{\rho}_{\delta,\beta-p}(\bx,\by) |R[u](\bx,\by-\bx)|^{r(p-1)} \, \rmd \by \, \rmd \bx \right)^{1/(p-1)} \\
            &\qquad\qquad\qquad + C \int_{\Omega} \int_{\Omega} \hat{\rho}_{\delta,\beta-p}(\bx,\by) |R[u](\bx,\by-\bx)|^{r(p-1)} \, \rmd \by \, \rmd \bx  \\
            &\leq C \Vnorm{ \grad u }_{L^{r(p-1)}(\Omega)}^{r(p-2)} \left( \int_{\Omega} \int_{\Omega} \hat{\rho}_{\delta,\beta-p}(\bx,\by) |R[u](\bx,\by-\bx)|^{r(p-1)} \, \rmd \by \, \rmd \bx \right)^{1/(p-1)} \\
            &\;\; + C \int_{\Omega} \int_{\Omega} \hat{\rho}_{\delta,\beta-p}(\bx,\by) |R[u](\bx,\by-\bx)|^{r(p-1)} \, \rmd \by \, \rmd \bx \,.
        \end{split}
    \end{equation*}
    Now, by Tonelli's theorem and by \Cref{lma:CoordChange2}
    \begin{equation*}
    \begin{split}
        &\int_{\Omega} \int_{\Omega} \hat{\rho}_{\delta,\beta-p}(\bx,\by) |R[u](\bx,\by-\bx)|^{r(p-1)} \, \rmd \by \, \rmd \bx \\
        &= \int_{\Omega} \int_{B(0,1)} \frac{\rho(|\bz|)}{|\bz|^{\beta-p}} \left| \int_0^1 \left( \grad^2 u(\bx + t \eta_\delta(\bx) \bz) \frac{\bz}{|\bz|} \right) \cdot \frac{\bz}{|\bz|}   (1-t) \, \rmd t \right|^{r(p-1)} \, \rmd \bz \, \rmd \bx \\
        &\quad + \int_{\Omega} \int_{ B(0,1) } \frac{\rho(|\bz|)}{|\bz|^{\beta-p}} \left| \int_0^1 \left( \grad^2 u(\by + (1-t) \eta_\delta(\by) \bz ) \frac{\bz}{|\bz|} \right) \cdot  \frac{\bz}{|\bz|}  (1-t) \, \rmd t \right|^{r(p-1)} \, \rmd \bz \, \rmd \by \\
        &\leq 2 \int_0^1 \int_{\Omega} \int_{B(0,1)} \frac{\rho(|\bz|)}{|\bz|^{\beta-p}} |\grad^2 u( \bszeta_{\bz}^{t \delta}(\bx) )|^{r(p-1)} \, \rmd \bz \, \rmd \bx \, \rmd t
        \leq 3 \bar{\rho}_{p-\beta} \int_{\Omega} |\grad^2 u|^{r(p-1)} \, \rmd \bx\,,
    \end{split}
    \end{equation*}
    and so \eqref{eq:PointwiseOperator:2:LrBdd} is proved.
    The convergence result follows similar reasoning. Since $\Phi_p''$ is even, we have
    \begin{equation*}
        \begin{split}
            &\cD_{p,\delta}^{2}[u](\bx) \\
            &= \int_{B(0,1)} \frac{\rho(|\bz|)}{|\bz|^{\beta-p}} \frac{1}{|\bz| \eta_\delta(\bx)} \int_0^{ |\bz| \eta_\delta(\bx) R[u](\bx, \eta_\delta(\bx) \bz) } \Phi_p'' \left( \grad u(\bx) \cdot \frac{\bz}{|\bz|} - \tau \right) \, \rmd \tau \, \rmd \bz \\
            &\;\; + \int_{ \{ |\bsupsilon_{\bx}^\delta(\by)| < 1 \} } \frac{\rho(|\bsupsilon_{\bx}^\delta(\by)|)}{|\bsupsilon_{\bx}^\delta(\by)|^{\beta-p}} \frac{1}{|\bsupsilon_{\bx}^\delta(\by)| \eta_\delta(\by)} \\
            &\qquad \qquad \qquad \int_0^{ |\bsupsilon_{\bx}^\delta(\by)| \eta_\delta(\by) R[u](\bx, \eta_\delta(\by) \bsupsilon_{\bx}^\delta(\by) ) } \Phi_p'' \left( \grad u(\bx) \cdot \frac{\bsupsilon_{\bx}^\delta(\by)}{|\bsupsilon_{\bx}^\delta(\by)|} - \tau \right) \, \rmd \tau \, \rmd \by\,.
        \end{split}
    \end{equation*}
    By \eqref{eq:ComparabilityOfDistanceFxn2} and \Cref{lma:CoordChange1}
    \begin{equation*}
        \eta_\delta(\by) = \eta_\delta(\bx) + O(\delta) , \quad \text{ and }\; \eta_\delta(\by)^d  \det \grad \bsupsilon_{\bx}^\delta(\by) = 1 + O(\delta)\,,
    \end{equation*}
    where the comparison constants are independent of $\bx$ and $\by$, so
    \begin{equation*}
        \begin{split}
            &\cD_{p,\delta}^{2}[u](\bx) \\
            &= \int_{B(0,1)} \frac{\rho(|\bz|)}{|\bz|^{\beta-p}} \frac{1}{|\bz| \eta_\delta(\bx)} \int_0^{ |\bz| \eta_\delta(\bx) R[u](\bx, \eta_\delta(\bx) \bz) } \Phi_p'' \left( \grad u(\bx) \cdot \frac{\bz}{|\bz|} - \tau \right) \, \rmd \tau \, \rmd \bz \\
            &\quad + O( \delta |\cD_{p,\delta}^{2}[u](\bx)| ) + \int_{B(0,1)} \frac{\rho(|\bz|)}{|\bz|^{\beta-p}} \frac{1}{|\bz| (\eta_\delta(\bx)+O(\delta))} \\
            &\qquad \quad \int_0^{  (\eta_\delta(\bx)+O(\delta)) R[u](\bx, (\eta_\delta(\bx)+O(\delta)) \bz ) } \Phi_p'' \left( \grad u(\bx) \cdot \frac{\bz}{|\bz|} - \tau \right) \, \rmd \tau \, \rmd \bz \,.
        \end{split}
    \end{equation*}
    Changing variables in the $\tau$ integrals gives
    \begin{equation*}
        \begin{split}
            &\cD_{p,\delta}^{2}[u](\bx) \\
            &= \int_{B(0,1)} \frac{\rho(|\bz|)}{|\bz|^{\beta-p}} \int_0^{ R[u](\bx, \eta_\delta(\bx) \bz) } \Phi_p'' \left( \grad u(\bx) \cdot \frac{\bz}{|\bz|} - |\bz| \eta_\delta(\bx) \tau \right) \, \rmd \tau \, \rmd \bz \\
            &\quad + O( \delta |\cD_{p,\delta}^{2}[u](\bx)| ) + \int_{B(0,1)} \frac{\rho(|\bz|)}{|\bz|^{\beta-p}} \\
            &\qquad \quad \int_0^{R[u](\bx, (\eta_\delta(\bx)+O(\delta)) \bz ) } \Phi_p'' \left( \grad u(\bx) \cdot \frac{\bz}{|\bz|} - |\bz| (\eta_\delta(\bx)+O(\delta)) \tau \right) \, \rmd \tau \, \rmd \bz \,.
        \end{split}
    \end{equation*}
    Thanks to \eqref{eq:PointwiseOperator:2:LrBdd}, it is now clear that the dominated convergence theorem can be applied to obtain
    \begin{equation*}
    \begin{split}
        &\lim\limits_{\delta \to 0} \cD_{p,\delta}^{2}[u](\bx) \\
        =& 2 \int_{B(0,1)} \frac{\rho(|\bz|)}{|\bz|^{\beta-p}} \Phi_p'' \left( \grad u(\bx) \cdot \frac{\bz}{|\bz|} \right) \lim\limits_{\delta \to 0} \int_0^{R[u](\bx,\eta_\delta(\bx) \bz) } \, \rmd \tau \, \rmd \bz \\
        =& -2 \int_{B(0,1)} \frac{\rho(|\bz|)}{|\bz|^{\beta-p}} \Phi_p'' \left( \grad u(\bx) \cdot \frac{\bz}{|\bz|} \right) \left(  \grad^2 u(\bx) \frac{\bz}{|\bz|} \right) \cdot  \frac{\bz}{|\bz|} \, \rmd \bz \int_0^1 (1-t) \, \rmd t \,,
    \end{split}
    \end{equation*}
    which after integration using polar coordinates is exactly $\cL_{p,0}u(\bx)$.
    It is also clear that the dominated convergence theorem can be applied to $\vnorm{ \cD_{p,\delta}^2[u]}_{L^r(\Omega)}$ to obtain the convergence in $L^r(\Omega)$.
\end{proof}

The previous two lemmas allow us to summarize the conditional existence and convergence of a pointwise operator obtained from $\cL_{p,\delta}^\veps$.

\begin{corollary}\label{cor:PointwiseOperator:Summary}
    Let $u \in C^2(\overline{\Omega})$ and let $r \in [1,\infty)$. Then
    \begin{equation}
        \cL_{p,\delta}^\veps u(\bx) = \cD_{p,\delta}^{1,\veps}[u](\bx) + \cD_{p,\delta}^{2,\veps}[u](\bx)
    \end{equation}
    for all $\bx \in \Omega$, and $\cL_{p,\delta}^\veps u(\bx) \to \wt{\cL}_{p,\delta} u(\bx)$ as $\veps \to 0$ almost everywhere in $\Omega$, where
    \begin{equation*}
        \wt{\cL}_{p,\delta}u(\bx) := \cD_{p,\delta}^{1}[u](\bx) + \cD_{p,\delta}^{2}[u](\bx) \in L^1_{loc}(\Omega)\,.
    \end{equation*}
    If \eqref{assump:FullLocalization:C11} is additionally satisfied, then $\cL_{p,\delta}^\veps u(\bx) \to \wt{\cL}_{p,\delta} u(\bx)$ as $\veps \to 0$ strongly in $L^r(\Omega)$, there exists a constant $C = C(\rho,\beta,p,r) > 0$ such that
    $$
    \vnorm{\wt{\cL}_{p,\delta} u}_{L^r(\Omega)} \leq C \big(\vnorm{\grad u}_{L^{r(p-1)}(\Omega)}^{p-1} 
        + \vnorm{\grad^2 u}_{L^{r(p-1)}(\Omega)}^{p-1}
        \big)\,,
    $$
    and $\wt{\cL}_{p,\delta} u \to \cL_{p,0} u$ as $\delta \to 0$ 
    strongly in $L^r(\Omega)$.
\end{corollary}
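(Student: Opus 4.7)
The plan is to assemble the corollary from the two preceding lemmas via one algebraic decomposition of $\cL_{p,\delta}^\veps u$. The starting point is a first-order Taylor expansion: using \eqref{eq:Taylor} I rewrite $\frac{u(\bx)-u(\by)}{|\bx-\by|} = \grad u(\bx)\cdot\frac{\bx-\by}{|\bx-\by|} + |\bx-\by|\,R[u](\bx,\by-\bx)$, and then apply the fundamental theorem of calculus to $\Phi_p'$ to split $\Phi_p'\!\left(\frac{u(\bx)-u(\by)}{|\bx-\by|}\right)$ into a linearized part $\Phi_p'\!\left(\grad u(\bx)\cdot\frac{\bx-\by}{|\bx-\by|}\right)$ plus an integral of $\Phi_p''$ over $\tau \in [0,|\bx-\by|R[u](\bx,\by-\bx)]$. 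Inserting this into the definition of $\cL_{p,\delta}^\veps u$ and isolating the second piece reveals it to be exactly $\cD_{p,\delta}^{2,\veps}[u](\bx)$.

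The remaining work is to identify the linearized part with $\cD_{p,\delta}^{1,\veps}[u](\bx)$. I would split $\hat{\rho}_{\delta,\beta-p}^\veps = \rho_{\delta,\beta-p}^\veps(\bx,\by) + \rho_{\delta,\beta-p}^\veps(\by,\bx)$ and treat each summand separately. For the $\rho_{\delta,\beta-p}^\veps(\bx,\by)$ summand, the change of variables $\bz = (\by-\bx)/\eta_\delta(\bx)$ pulls the integrand into a form where the only non-radial factor is $\Phi_p'(-\grad u(\bx)\cdot \bz/|\bz|)$, which is odd in $\bz$ because $\Phi_p$ is even. The integral therefore vanishes on the symmetric annulus $\{\veps < |\bz| < 1\}$; note that the corresponding physical ball is contained in $\Omega$ because \eqref{eq:HorizonThreshold} ensures $\eta_\delta(\bx) \leq \dist(\bx,\p\Omega)/3$. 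For the $\rho_{\delta,\beta-p}^\veps(\by,\bx)$ summand, the analogous change of variables is $\bz = \bsupsilon_\bx^\delta(\by) = (\by-\bx)/\eta_\delta(\by)$ from \Cref{lma:CoordChange1}, whose Jacobian equals $\frac{\eta_\delta(\by)+\grad \eta_\delta(\by)\cdot(\bx-\by)}{\eta_\delta(\by)^{d+1}}$. The same oddness argument shows that
\begin{equation*}
    \int_\Omega \rho_{\delta,\beta-p}^\veps(\by,\bx)\, \frac{\Phi_p'\bigl(\grad u(\bx)\cdot\tfrac{\bx-\by}{|\bx-\by|}\bigr)}{|\bx-\by|} \cdot \frac{\eta_\delta(\by)+\grad \eta_\delta(\by)\cdot(\bx-\by)}{\eta_\delta(\bx)} \, \rmd \by = 0\,.
\end{equation*}
Subtracting this vanishing quantity from the original linearized $\rho_{\delta,\beta-p}^\veps(\by,\bx)$-piece produces precisely the factor $\frac{\eta_\delta(\bx)-\eta_\delta(\by)-\grad \eta_\delta(\by)\cdot(\bx-\by)}{\eta_\delta(\bx)}$ appearing in $I_{p,\delta}^{1,\veps}[u](\bx,\by)$, yielding $\cD_{p,\delta}^{1,\veps}[u](\bx)$.

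With the decomposition $\cL_{p,\delta}^\veps u = \cD_{p,\delta}^{1,\veps}[u] + \cD_{p,\delta}^{2,\veps}[u]$ in hand, the a.e.\ convergence $\cL_{p,\delta}^\veps u(\bx) \to \wt{\cL}_{p,\delta}u(\bx)$ follows directly from \Cref{lma:PointwiseOperator:Term1,lma:PointwiseOperator:Term2}. Under the additional assumption \eqref{assump:FullLocalization:C11}, these same two lemmas upgrade the convergence to the strong $L^r(\Omega)$ topology. For the $L^r$ estimate on $\wt{\cL}_{p,\delta}u$, I would combine the bound $\vnorm{\cD_{p,\delta}^1[u]}_{L^r(\Omega)} \leq C\delta\vnorm{\grad u}_{L^{r(p-1)}(\Omega)}^{p-1}$ from \Cref{lma:PointwiseOperator:Term1} with \eqref{eq:PointwiseOperator:2:LrBdd}, using Young's inequality $a^{p-2}b \leq \frac{p-2}{p-1}a^{p-1} + \frac{1}{p-1}b^{p-1}$ (trivial when $p=2$) to absorb the mixed product into $\vnorm{\grad u}_{L^{r(p-1)}}^{p-1}+\vnorm{\grad^2 u}_{L^{r(p-1)}}^{p-1}$. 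Lastly, the nonlocal-to-local limit $\wt{\cL}_{p,\delta}u \to \cL_{p,0}u$ in $L^r(\Omega)$ follows because $\cD_{p,\delta}^1[u] \to 0$ in $L^r(\Omega)$, thanks to the explicit $\delta$-prefactor in its bound, while $\cD_{p,\delta}^2[u] \to \cL_{p,0}u$ in $L^r(\Omega)$ by the last statement of \Cref{lma:PointwiseOperator:Term2}.

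The main obstacle is the identification of the linearized first-order piece with $\cD_{p,\delta}^{1,\veps}$: the identity is not apparent from the formulas themselves and requires finding the right symmetry-vanishing integral to subtract. The choice of the weight $\frac{\eta_\delta(\by)+\grad \eta_\delta(\by)\cdot(\bx-\by)}{\eta_\delta(\bx)}$ is dictated by the Jacobian of $\bsupsilon_\bx^\delta$, and it is precisely this match that produces a purely radial integrand times the odd factor $\Phi_p'(-\grad u(\bx)\cdot \bz/|\bz|)$ after the change of variables.
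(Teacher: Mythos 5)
Your proof is correct and follows essentially the same approach as the paper: the identical Taylor/fundamental-theorem-of-calculus split to peel off $\cD_{p,\delta}^{2,\veps}$, the identical oddness argument under $\bz=(\by-\bx)/\eta_\delta(\bx)$ for the $\rho_{\delta,\beta-p}^\veps(\bx,\by)$ piece, and the identical addition/subtraction of the Jacobian-weighted integral (which vanishes by oddness under $\bz=\bsupsilon_\bx^\delta(\by)$) to convert the $\rho_{\delta,\beta-p}^\veps(\by,\bx)$ piece into $\cD_{p,\delta}^{1,\veps}$. Your description actually makes the paper's terse "suitable addition/subtraction and applying \Cref{lma:CoordChange1}" step more explicit, and the Young-inequality absorption of the cross term from \eqref{eq:PointwiseOperator:2:LrBdd} is the correct routine way to reach the stated $L^r$ bound.
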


\begin{proof}
    Using \eqref{eq:Taylor} and \eqref{eq:assump:Phi},
    \begin{equation*}
        \begin{split}
            &\Phi_p' \left( \frac{u(\bx)-u(\by)}{|\bx-\by|} \right) \\
            &= 
            \Phi_p' \left( \grad u(\bx) \cdot \frac{\bx-\by}{|\bx-\by|} \right) 
            + \int_{0}^{ |\bx-\by| R[u](\bx,\by-\bx)} \Phi_p'' \left(  \grad u(\bx) \cdot \frac{\bx-\by}{|\bx-\by|} + \tau \right) \, \rmd \tau\,,
        \end{split}
    \end{equation*}
    so by the conclusions of \Cref{lma:PointwiseOperator:Term1} and \Cref{lma:PointwiseOperator:Term2} we just need to show that
    \begin{equation}
        \begin{split}
            \mathds{1}_{ \Omega^{\veps;\lambda,q} }(\bx) \int_\Omega \hat{\rho}_{\delta,\beta-p}^\veps [\lambda,q](\bx,\by)  \frac{ \Phi_p' \left(  \grad u(\bx) \cdot \frac{\bx-\by}{|\bx-\by|}  \right) }{|\bx-\by|} \, \rmd \by = \cD_{p,\delta}^{1,\veps}[u](\bx)\,.
        \end{split}
    \end{equation}

    First, applying the coordinate change $\bz = \frac{\by-\bx}{\eta_\delta(\bx)}$ gives
    \begin{equation}\label{eq:PointwiseOperator:OddInt}
        \mathds{1}_{ \Omega^{\veps;\lambda,q} }(\bx) \int_\Omega \rho_{\delta,\beta-p}^\veps [\lambda,q](\bx,\by)  \frac{            \Phi_p' \left( \grad u(\bx) \cdot \frac{\bx-\by}{|\bx-\by|} \right) }{|\bx-\by|} \, \rmd \by = 0\,,
    \end{equation}
    since $\Phi_p'$ is odd. Next, using a suitable addition/subtraction and applying \Cref{lma:CoordChange1}
    \begin{equation*}
        \begin{split}
            &\mathds{1}_{ \Omega^{\veps;\lambda,q} }(\bx) \int_\Omega \rho_{\delta,\beta-p}^\veps [\lambda,q](\by,\bx)  \frac{            \Phi_p' \left( \grad u(\bx) \cdot \frac{\bx-\by}{|\bx-\by|} \right) }{|\bx-\by|} \, \rmd \by \\
            &= \frac{ - \mathds{1}_{ \Omega^{\veps;\lambda,q} }(\bx) }{ \eta_\delta(\bx) } \int_\Omega \mathds{1}_{ \{ \veps < |\bsupsilon_\bx^\delta(\by)| < 1 \} }  \frac{ \rho( |\bsupsilon_\bx^\delta(\by)| ) }{ |\bsupsilon_\bx^\delta(\by)|^{\beta-p} }  \frac{\Phi_p' \left( \grad u(\bx) \cdot \frac{\bsupsilon_\bx^\delta(\by)}{|\bsupsilon_\bx^\delta(\by)|} \right) }{|\bsupsilon_\bx^\delta(\by)|} \det \grad \bsupsilon_\bx^\delta(\by) \, \rmd \by \\
                &\quad - \frac{ \mathds{1}_{ \Omega^{\veps;\lambda,q} }(\bx) }{ \eta_\delta(\bx) } \int_\Omega \mathds{1}_{ \{ \veps < |\bsupsilon_\bx^\delta(\by)| < 1 \} }  \frac{ \rho( |\bsupsilon_\bx^\delta(\by)| ) }{ |\bsupsilon_\bx^\delta(\by)|^{\beta-p} }  \frac{\Phi_p' \left( \grad u(\bx) \cdot \frac{\bsupsilon_\bx^\delta(\by)}{|\bsupsilon_\bx^\delta(\by)|} \right) }{|\bsupsilon_\bx^\delta(\by)|} \\
                &\qquad \qquad \frac{ \eta_\delta(\bx) - \eta_\delta(\by) - \grad \eta_\delta(\by) \cdot (\bx-\by) }{ \eta_\delta(\by)^{d+1} } \, \rmd \by \\
            &= 0 - \mathds{1}_{ \Omega^{\veps;\lambda,q} }(\bx)  \int_\Omega \mathds{1}_{ \{ \veps < |\bsupsilon_\bx^\delta(\by)| < 1 \} }  \frac{ \rho( |\bsupsilon_\bx^\delta(\by)| ) }{ |\bsupsilon_\bx^\delta(\by)|^{\beta-p} }  \Phi_p' \left( \grad u(\bx) \cdot \frac{\bsupsilon_\bx^\delta(\by)}{|\bsupsilon_\bx^\delta(\by)|} \right) \\
                &\qquad \qquad \frac{ \eta_\delta(\bx) - \eta_\delta(\by) - \grad \eta_\delta(\by) \cdot (\bx-\by) }{ |\bx-\by| \eta_\delta(\bx) \eta_\delta(\by)^{d} } \, \rmd \by = \cD_{p,\delta}^{1,\veps}[u](\bx)\,,
        \end{split}
    \end{equation*}
    again using that $\Phi_p'$ is odd. 
\end{proof}

\begin{corollary}\label{cor:PointwiseOperator:AbsConv}
    Let $u \in C^2(\overline{\Omega})$. Assume \eqref{assump:FullLocalization:C11}, and assume further that $\beta < d + p - 1$. Then, for almost every $\bx \in \Omega$,
    \begin{equation*}
        \wt{\cL}_{p,\delta} u(\bx) = \cL_{p,\delta} u(\bx)\,,
    \end{equation*}
    where $\cL_{p,\delta} u(\bx) \in L^1_{loc}(\Omega)$ is defined via the absolutely convergent integral
    \begin{equation}\label{eq:PointwiseOperator:Defn}
        \cL_{p,\delta} u(\bx) := \int_{\Omega} \frac{\hat{\rho}_{\delta,\beta-p}(\bx,\by)
        }{|\bx-\by|}
        \Phi_p' \left( \frac{u(\bx)-u(\by)}{|\bx-\by|} \right)  \, \rmd \by\,, \quad \forall \, \bx \in \Omega\,.
    \end{equation}
    Moreover, if \eqref{assump:Localization:NormalDeriv} and \eqref{assump:MomentsOfNonlinLoc} are satisfied, then $\cL_{p,\delta} u = (\mathrm{d}\text{-}\cL)_{p,\delta} u$ in the space $[\mathfrak{W}^{\beta,p}[\delta;q](\Omega)]^*$, i.e.  \eqref{eq:GreensIdentity} holds for all $u \in C^2(\overline{\Omega})$ and all $v \in \mathfrak{W}^{\beta,p}[\delta;q](\Omega)$ with $(\mathrm{d}\text{-}\cL)_{p,\delta} u$ replaced with $\cL_{p,\delta} u$. 
\end{corollary}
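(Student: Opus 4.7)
The plan is to first establish that the integral in \eqref{eq:PointwiseOperator:Defn} is absolutely convergent under $\beta < d+p-1$, then identify $\cL_{p,\delta}u$ with $\wt{\cL}_{p,\delta}u$ by a dominated convergence argument, and finally promote the distributional Green's identity from \Cref{thm:Intro:GreensIdentity} to one involving the pointwise operator using the $L^r(\Omega)$ convergence from \Cref{cor:PointwiseOperator:Summary}.

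\textbf{Step 1: Absolute convergence.} Since $u \in C^2(\overline{\Omega})$, the mean value theorem gives $\bigl|\frac{u(\bx)-u(\by)}{|\bx-\by|}\bigr| \leq \Vnorm{\grad u}_{L^\infty(\Omega)}$, so by \eqref{eq:assump:Phi} the integrand of \eqref{eq:PointwiseOperator:Defn} is dominated in absolute value by $C\Vnorm{\grad u}_{L^\infty}^{p-1}\, \hat{\rho}_{\delta,\beta-p}(\bx,\by)/|\bx-\by|$. The change of variables $\bz = (\by-\bx)/\eta_\delta(\bx)$ handles the first piece $\rho_{\delta,\beta-p}(\bx,\by)/|\bx-\by|$ directly, producing $\eta_\delta(\bx)^{-1}\int_{B(0,1)} \rho(|\bz|)|\bz|^{p-\beta-1}\,\rmd \bz$; the second piece $\rho_{\delta,\beta-p}(\by,\bx)/|\bx-\by|$ is handled by the change of variables $\bsupsilon_\bx^\delta(\by)$ of \Cref{lma:CoordChange1} together with the comparability \Cref{lma:ComparabilityOfXandY}, yielding a bound of the same order. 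The resulting $\bz$-integral is finite precisely because $\beta - p + 1 < d$, which is the assumption. This shows $\cL_{p,\delta}u(\bx)$ is absolutely convergent for every $\bx \in \Omega$ and that $\cL_{p,\delta}u \in L^1_{loc}(\Omega)$.

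\textbf{Step 2: Identification with $\wt{\cL}_{p,\delta}u$.} Fix $\bx$ with $\eta(\bx)>0$. For all $\veps$ small enough (depending on $\bx$), $\mathds{1}_{\Omega^{\veps;\lambda,q}}(\bx) = 1$ and $\hat{\rho}^\veps_{\delta,\beta-p}(\bx,\by) \nearrow \hat{\rho}_{\delta,\beta-p}(\bx,\by)$ pointwise in $\by$, dominated by the integrable majorant of Step 1. By dominated convergence, $\cL_{p,\delta}^\veps u(\bx) \to \cL_{p,\delta}u(\bx)$ as $\veps \to 0$ for a.e.\ $\bx \in \Omega$. \Cref{cor:PointwiseOperator:Summary} gives the pointwise limit $\wt{\cL}_{p,\delta}u(\bx)$ as well, so uniqueness of limits forces $\cL_{p,\delta}u = \wt{\cL}_{p,\delta}u$ almost everywhere.

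\textbf{Step 3: Green's identity.} Under \eqref{assump:FullLocalization:C11}, \Cref{cor:PointwiseOperator:Summary} further provides the strong convergence $\cL_{p,\delta}^\veps u \to \wt{\cL}_{p,\delta}u = \cL_{p,\delta}u$ in $L^{p'}(\Omega)$. For any $v \in \mathfrak{W}^{\beta,p}[\delta;q](\Omega) \subset L^p(\Omega)$, H\"older's inequality combined with $|\wt{U}_\veps| \to 0$ (and hence $\mathds{1}_{\Omega\setminus\wt{U}_\veps} \to 1$ a.e.) gives
\begin{equation*}
\lim_{\veps \to 0} \int_{\Omega\setminus\wt{U}_\veps} \cL_{p,\delta}^\veps u(\bx)\, v(\bx)\, \rmd \bx = \int_\Omega \cL_{p,\delta}u(\bx)\, v(\bx)\, \rmd \bx,
\end{equation*}
while by \eqref{eq:GreensIdentity} the same limit equals $\cB_{p,\delta}(u,v) - \int_{\p\Omega} BF_{p,\delta}^N(\grad u,\bsnu) v\, \rmd \sigma$. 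Therefore $\cL_{p,\delta}u$ realizes $(\mathrm{d}\text{-}\cL)_{p,\delta}u$ in $[\mathfrak{W}^{\beta,p}[\delta;q](\Omega)]^*$, and \eqref{eq:GreensIdentity} holds with $\cL_{p,\delta}u$ in place of $(\mathrm{d}\text{-}\cL)_{p,\delta}u$.

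The main technical hurdle is Step 1, specifically the estimate for the adjoint-kernel piece $\rho_{\delta,\beta-p}(\by,\bx)$, since here the singular factor $\eta_\delta(\by)^{-d-p+\beta}$ is measured at $\by$ rather than at $\bx$; the change of variables $\bsupsilon_\bx^\delta$ (with its Jacobian bound from \Cref{lma:CoordChange1}) reduces it to the same $\bz$-integral over $B(0,1)$ modulo multiplicative constants, and the threshold $\beta < d+p-1$ is exactly what keeps that integral finite.
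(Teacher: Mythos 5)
Your proposal is correct and follows essentially the same approach as the paper's (rather terse) proof: absolute convergence under $\beta < d+p-1$, identification of the truncated limit with the pointwise integral, and promotion of the distributional Green's identity via the strong $L^{p'}(\Omega)$ convergence from \Cref{cor:PointwiseOperator:Summary}. The only stylistic difference is in Step~2: the paper states that the decomposition calculations of \Cref{cor:PointwiseOperator:Summary} can simply be redone at $\veps=0$ once the odd integral in \eqref{eq:PointwiseOperator:OddInt} is seen to be absolutely convergent, whereas you reach the same identification $\cL_{p,\delta}u = \wt{\cL}_{p,\delta}u$ a.e.\ more cleanly by dominated convergence on the $\veps$-truncation together with uniqueness of pointwise limits; your Step~3 also spells out the $[\mathfrak{W}^{\beta,p}[\delta;q](\Omega)]^*$ argument the paper leaves implicit in the phrase ``has all the same properties.''
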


\begin{proof}
    Clearly the integrand of $\cL_{p,\delta} u$ is bounded by $\frac{C}{ \eta_\delta(\bx)}\Vnorm{u}_{W^{1,\infty}(\Omega)}$.
    Now, for any fixed $\bx \in \Omega$, the integral in \eqref{eq:PointwiseOperator:OddInt} is absolutely convergent as $\veps \to 0$ since $\beta < d+p-1$; the integrand is bounded by $\frac{C}{\eta_\delta(\bx)}$. Thus all of the calculations in the previous proof can be redone after taking the limit $\veps \to 0$, showing the equality and concluding that $\cL_{p,\delta} u$ has all of the same properties as $\wt{\cL}_{p,\delta} u$.
\end{proof}

In fact, when the singularity of $\cL_{p,\delta}$ only occurs at the boundary, it is possible to show that $\cL_{p,\delta}$ maps $\mathfrak{W}^{\beta,p}[\delta;q](\Omega)$ to $L^{p'}_{loc}(\Omega) \cap [\mathfrak{W}^{\beta,p}_{0,\p \Omega}[\delta;q](\Omega)]^*$, analogous to the fact that $-\Delta : W^{1,p}(\Omega) \to [W^{1,p}_{0,\p \Omega}(\Omega)]^*$, the difference being that the nonlocal operator has no differentiability action in the interior of $\Omega$.

\begin{theorem}\label{thm:PointwiseOperator:AbsConv:Energy}
  Assume \eqref{assump:FullLocalization:C11}, and assume further that $\beta < d$. Then,     $\cL_{p,\delta} u(\bx) \in L^{p'}_{loc}(\Omega)$ is defined for  $u \in \mathfrak{W}^{\beta,p}[\delta;q](\Omega)$ via the absolutely convergent integral in \eqref{eq:PointwiseOperator:Defn}, and there exists a constant $C$ depending only on $p$, $\beta$, $\rho$, $\lambda$, $q$, $\kappa_0$ and $\kappa_1$ such that
    \begin{equation}\label{eq:PointwiseOperator:EnergyEst}
        \Vnorm{ \eta_\delta \cL_{p,\delta} u }_{L^{p'}(\Omega)} \leq C [u]_{ \mathfrak{W}^{\beta,p}[\delta;q](\Omega) }\,,\quad \forall u \in \mathfrak{W}^{\beta,p}[\delta;q](\Omega).
    \end{equation}
    Moreover, 
    if \eqref{assump:Localization:NormalDeriv} and \eqref{assump:MomentsOfNonlinLoc} are satisfied, then 
\begin{equation}\label{eq:GreensIdentity:StrongL}
        \int_{\Omega} v \cL_{p,\delta} u \, \rmd \bx = \cB_{p,\delta}(u,v)\,, \quad \forall 
        \,u \in \mathfrak{W}^{\beta,p}[\delta;q](\Omega) \text{ and } v \in \mathfrak{W}^{\beta,p}_{0,\p \Omega}[\delta;q](\Omega)\,,
    \end{equation}
    i.e. \eqref{eq:GreensIdentity} holds for all $v \in \mathfrak{W}^{\beta,p}_{0,\p \Omega}[\delta;q](\Omega)$ and all $u \in \mathfrak{W}^{\beta,p}[\delta;q](\Omega)$ with $(\mathrm{d}\text{-}\cL)_{p,\delta} u$ replaced by $\cL_{p,\delta} u$.
\end{theorem}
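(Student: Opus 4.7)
The plan is to establish the pointwise absolute convergence and the weighted $L^{p'}$ estimate first, then deduce \eqref{eq:GreensIdentity:StrongL} by density from \Cref{cor:PointwiseOperator:AbsConv}.

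First I would combine the growth bound $|\Phi_p'(t)| \leq C|t|^{p-1}$ from \eqref{eq:assump:Phi} with Hölder's inequality, splitting the integrand as
\begin{equation*}
\hat{\rho}_{\delta,\beta-p}(\bx,\by) \frac{|u(\bx)-u(\by)|^{p-1}}{|\bx-\by|^p} = \Big( \tfrac{\hat{\rho}_{\delta,\beta-p}(\bx,\by)^{1/p}}{|\bx-\by|} \Big) \cdot \Big( \hat{\rho}_{\delta,\beta-p}(\bx,\by)^{1/p'} \tfrac{|u(\bx)-u(\by)|^{p-1}}{|\bx-\by|^{p-1}} \Big),
\end{equation*}
to get the pointwise bound
\begin{equation*}
|\cL_{p,\delta} u(\bx)| \leq C \Big(\int_\Omega \tfrac{\hat{\rho}_{\delta,\beta-p}(\bx,\by)}{|\bx-\by|^p}\rmd\by\Big)^{1/p} \Big(\int_\Omega \hat{\rho}_{\delta,\beta-p}(\bx,\by)\tfrac{|u(\bx)-u(\by)|^p}{|\bx-\by|^p} \rmd\by\Big)^{1/p'}.
\end{equation*}
Using the change of variable $\bz = (\by-\bx)/\eta_\delta(\bx)$ for the first term of $\hat\rho_{\delta,\beta-p}$, and $\bz=\bsupsilon_\bx^\delta(\by)$ from \Cref{lma:CoordChange1} together with the comparability from \Cref{lma:ComparabilityOfXandY} for the second, I would show the first factor is bounded by $C/\eta_\delta(\bx)$; crucially, the assumption $\beta<d$ ensures $\int_{B(0,1)} \rho(|\bz|)/|\bz|^\beta\,\rmd\bz<\infty$. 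Raising the pointwise bound to the $p'$-th power, integrating in $\bx$, applying Tonelli, and invoking item 6) of \Cref{thm:FxnSpaceProp} to replace $[u]_{\mathfrak{V}^{\beta,p}[\delta;q;\rho,\lambda]}$ by $C[u]_{\mathfrak{W}^{\beta,p}[\delta;q]}$ yields \eqref{eq:PointwiseOperator:EnergyEst} (with the natural power $p-1$). The same Tonelli argument shows the inner $\mathfrak{V}$-integrand is finite for a.e.\ $\bx$, giving pointwise a.e.\ absolute convergence and hence $\cL_{p,\delta}u\in L^{p'}_{loc}(\Omega)$.

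For \eqref{eq:GreensIdentity:StrongL} I would begin with $u\in C^2(\overline{\Omega})$: \Cref{cor:PointwiseOperator:AbsConv} applies since $\beta<d<d+p-1$ and gives the full identity \eqref{eq:GreensIdentity} for every $v\in\mathfrak{W}^{\beta,p}[\delta;q](\Omega)$; for $v\in\mathfrak{W}^{\beta,p}_{0,\p\Omega}[\delta;q](\Omega)$, item 4) of \Cref{thm:FxnSpaceProp} gives $Tv=0$, so the boundary flux vanishes and \eqref{eq:GreensIdentity:StrongL} holds in this smooth case. Next I would extend in $u$ by density: take $u_n\in C^2(\overline{\Omega})$ with $u_n\to u$ in $\mathfrak{W}^{\beta,p}[\delta;q]$ (item 2 of \Cref{thm:FxnSpaceProp}). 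The right-hand side $\cB_{p,\delta}(u_n,v)\to\cB_{p,\delta}(u,v)$ by Hölder and the local Lipschitz bound $|\Phi_p'(t)-\Phi_p'(s)|\leq C(|t|^{p-2}+|s|^{p-2})|t-s|$ valid for $p\geq 2$. The same Lipschitz bound combined with a three-factor Hölder inequality (with exponents $p/(p-2),\,p,\,p$) and the first-step weighted $L^{p'}$ estimate shows that $\cL_{p,\delta}u_n\to\cL_{p,\delta}u$ in the weighted space $\{g:\eta_\delta g\in L^{p'}(\Omega)\}$, so the left-hand sides converge as soon as $v/\eta_\delta\in L^p(\Omega)$.

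The main obstacle is to make sense of $\int_\Omega v\,\cL_{p,\delta}u\,\rmd\bx$ for general $v\in\mathfrak{W}^{\beta,p}_{0,\p\Omega}[\delta;q]$ and not merely on the dense subclass $C^1_c(\Omega)$, where $v/\eta_\delta\in L^\infty$ is automatic. Since $\cL_{p,\delta}u$ is only controlled by $\Vnorm{\eta_\delta\cL_{p,\delta}u}_{L^{p'}}$, pairing it with $v$ requires a nonlocal Hardy-type inequality $\Vnorm{v/\eta_\delta}_{L^p(\Omega)}\leq C\Vnorm{v}_{\mathfrak{W}^{\beta,p}[\delta;q]}$ for $v$ vanishing on $\p\Omega$ in the nonlocal trace sense. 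I would establish this by first proving it for $v\in C^1_c(\Omega)$ (where the classical Hardy inequality, the dominance $\eta_\delta\leq\delta\kappa_0 d_{\p\Omega}$, and the embedding \eqref{eq:Embedding} can be combined via a boundary-layer decomposition and the change-of-variables machinery of \Cref{sec:buildingblockest}) and extending by the definition of $\mathfrak{W}^{\beta,p}_{0,\p\Omega}[\delta;q]$ as a closure. Once this Hardy bound is in hand, approximating $v$ in $\mathfrak{W}^{\beta,p}_{0,\p\Omega}[\delta;q]$ by $v_n\in C^1_c(\Omega)$ and sending $n\to\infty$ transfers the identity $\int v_n \cL_{p,\delta}u\,\rmd\bx=\cB_{p,\delta}(u,v_n)$ to the full space, completing the proof of \eqref{eq:GreensIdentity:StrongL}.
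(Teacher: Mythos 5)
Your treatment of the weighted $L^{p'}$ estimate follows the same basic route as the paper (a single H\"older split after the growth bound on $\Phi_p'$, a change of variables to evaluate the $\by$-integral, and an appeal to item 6 of \Cref{thm:FxnSpaceProp}), and you correctly observe that the natural right-hand side is $[u]_{\mathfrak{W}^{\beta,p}[\delta;q](\Omega)}^{p-1}$, matching the paper's statement exactly when $p=2$.

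The second part of your argument has a genuine gap. You propose to approximate $u$ by $u_n\in C^2(\overline{\Omega})$ while keeping $v\in\mathfrak{W}^{\beta,p}_{0,\p\Omega}[\delta;q](\Omega)$ fixed, and you correctly identify that passing to the limit in $\int v\,\cL_{p,\delta}u_n\,\rmd\bx$ then requires a weighted Hardy-type bound $\Vnorm{v/\eta_\delta}_{L^p(\Omega)}\leq C\Vnorm{v}_{\mathfrak{W}^{\beta,p}[\delta;q](\Omega)}$. But the sketch you give for this bound runs backward: since $q(r)\leq r$, the inequality $\eta_\delta(\bx)\leq\delta\kappa_0\,d_{\p\Omega}(\bx)$ gives $1/\eta_\delta(\bx)\geq C/d_{\p\Omega}(\bx)$, so the classical Hardy inequality (which controls $v/d_{\p\Omega}$) produces a bound in the \emph{wrong} direction. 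When $q$ satisfies \eqref{assump:MomentsOfNonlinLoc} with $N\geq 2$ the weight $1/\eta_\delta$ behaves like $d_{\p\Omega}^{-N}$, and the desired inequality would be a higher-order Hardy inequality, which fails for generic trace-zero functions. This is not merely a technicality to be filled in later — the inequality as stated is not a consequence of the ingredients you cite, and your proposed proof of \eqref{eq:GreensIdentity:StrongL} does not close without it.

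The paper avoids this entirely by ordering the density approximation the other way: it reduces to $v\in C^\infty_c(\Omega)$ first (using that $C^1_c(\Omega)$, and hence $C^\infty_c(\Omega)$, is dense in $\mathfrak{W}^{\beta,p}_{0,\p\Omega}[\delta;q](\Omega)$ by definition \eqref{eq:HomNonlocSpDef}), keeps $u\in\mathfrak{W}^{\beta,p}[\delta;q](\Omega)$ general, and then symmetrizes the double integral via Fubini--Tonelli directly. Because $v$ has compact support, $\eta_\delta$ is bounded below on all relevant integration sets, so no weighted Hardy control is needed at all, and no $C^2$-approximation of $u$ (with its attendant Lipschitz-of-$\Phi_p'$ bookkeeping) is required. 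The resulting identity for general $v$ is then the continuous extension of this, i.e.\ the duality pairing, which is what the statement means given that $(\mathrm{d}\text{-}\cL)_{p,\delta}u$ lives in $[\mathfrak{W}^{\beta,p}[\delta;q](\Omega)]^*$. If you reorder your two density steps you recover this argument and the unproven Hardy inequality drops out.
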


\begin{proof} 
    The integrand in $\cL_{p,\delta} u(\bx)$ is bounded by $\frac{C \hat{\gamma}_{\beta,0}[\delta;q](\bx,\by) }{\eta_\delta(\bx)^p} ( |u(\bx)|^p +|u(\by)|^p)$,
    hence the integral is absolutely convergent.
    Next, by \eqref{eq:ComparabilityOfDistanceFxn2},
    \begin{equation*}
        \begin{split}
            \Vnorm{ \eta_\delta \cL_{p,\delta} u }_{L^{p'}(\Omega)}^{p'} 
            &\leq C \int_{\Omega} \left( \int_{\Omega} \hat{\rho}_{\delta,\beta-p+1}(\bx,\by) \frac{|u(\bx)-u(\by)|^{p-1}}{ |\bx-\by|^{p-1} } \, \rmd \by \right)^{\frac{p}{p-1}} \, \rmd \bx \\
            &\leq C \int_{\Omega} \left( \int_{\Omega} \hat{\rho}_{\delta,\beta}(\bx,\by) \frac{|u(\bx)-u(\by)|^{p-1}}{ \eta_\delta(\bx)^{p-1} } \, \rmd \by \right)^{\frac{p}{p-1}} \, \rmd \bx\,.
        \end{split}
    \end{equation*}
    Then by H\"older's inequality
    \begin{equation*}
        \Vnorm{ \eta_\delta \cL_{p,\delta} u }_{L^{p'}(\Omega)}^{p'} \leq C \left( \int_{\Omega} \hat{\rho}_{\delta,\beta}(\bx,\by) \, \rmd \by \right)^{1/p} \int_{\Omega} \int_{\Omega} \hat{\rho}_{\delta,\beta}(\bx,\by) \frac{|u(\bx)-u(\by)|^{p}}{ \eta_\delta(\bx)^{p} } \, \rmd \by \, \rmd \bx\,,
    \end{equation*}
    and so \eqref{eq:PointwiseOperator:EnergyEst} follows by \eqref{thm:EnergySpaceIndepOfKernel}. 

    Last, by density we just need to establish \eqref{eq:GreensIdentity:StrongL} for $v \in C^\infty_c(\Omega)$. In this case, we can use nonlocal integration-by-parts; since $v$ is compactly supported and $\cL_{p,\delta} u \in L^1_{loc}(\Omega)$, the singularity of $\frac{1}{\eta_\delta(\bx)^p}$ is not seen in the integrals (see \Cref{lma:SupportOfConv} below for a similar result), and so the required linearity of integrals and Fubini-Tonelli theorems can be used.
\end{proof}

\subsection{The linear operator}

In the previous section we showed that the operator can be defined pointwise via \eqref{eq:Intro:Operator} for smooth functions, so long as the kernel has no singularity on the diagonal. It is desirable, in view of the Sobolev estimates of $\cL_{p,\delta}$, to extend its action via density to a wider class of measurable functions. 
We will be content to do so for the case that $\cL_{p,\delta}$ is linear, and leave the nonlinear operator for future works.
We show that the domain of the distributional form of the nonlocal operator can be extended to Sobolev functions, analogous to the definition of the classical weak derivative. This approach also has the advantage in that it works for a wider range of $\beta$, thus permitting singularity on the diagonal. Moreover, we can conclude a nonlocal Green's second identity.

For this subsection, we assume that $p = 2$ with $\Phi_2(t) = \frac{t^2}{2}$, along with the assumptions $\beta \in [0,d+1)$, \eqref{assump:Localization}, \eqref{assump:NonlinearLocalization}, \eqref{assump:VarProb:Kernel}, \eqref{assump:FullLocalization:C11}, and \eqref{eq:HorizonThreshold}.
Take $\bar{\rho}_{2-\beta} = \overline{C}_{d,2}$, and recall the abbreviations $\cL_{2,\delta}u = \cL_\delta u$ and $\cB_{2,\delta}(u,v) = \cB_{\delta}(u,v)$ from \Cref{subsec:Examples}.

Let $u \in L^1_{loc}(\Omega)$.
We say that the weak form of $\cL_{\delta} u$ exists if there exists a function $u_0 \in L^1_{loc}(\Omega)$ such that
\begin{equation*}
    \int_{\Omega} u_0 v \, \rmd \bx = \int_{\Omega} u \cL_{\delta} v \, \rmd \bx\,, \quad \forall \, v \in C^{\infty}_c(\Omega)\,,
\end{equation*}
where $\cL_{\delta}$ is defined in \eqref{eq:PointwiseOperator:Defn}. Clearly, if such a function $u_0$ exists, it is unique, and we denote it by $u_0 =
(\mathrm{w}\text{-}\cL)_{\delta}u$.

\begin{theorem}\label{thm:WeakOperator}
    Let $u \in W^{2,r}(\Omega)$ for some $r \in [1,\infty]$. Then $(\mathrm{w}\text{-}\cL)_{\delta} u$ exists and belongs to $L^r(\Omega)$, with $\vnorm{(\mathrm{w}\text{-}\cL)_{\delta} u}_{L^r(\Omega)} \leq C \vnorm{u}_{W^{2,r}(\Omega)}$ where $C = C(\rho,\beta,p,r)$.
    Moreover, if \eqref{assump:Localization:NormalDeriv} and \eqref{assump:MomentsOfNonlinLoc} are satisfied, then $(\mathrm{w}\text{-}\cL)_{\delta} u = (\mathrm{d}\text{-}\cL)_{2,\delta} u$ in the space $[\mathfrak{W}^{\beta,2}[\delta;q](\Omega)]^*$, i.e. \eqref{eq:GreensIdentity} holds for all $v \in \mathfrak{W}^{\beta,p}[\delta;q](\Omega)$ and all with $u \in W^{2,r}(\Omega)$ with  $(\mathrm{d}\text{-}\cL)_{2,\delta} u$ replaced with $(\mathrm{w}\text{-}\cL)_{\delta} u$. We further have
    \begin{equation*}
        \int_{\Omega} v (\mathrm{w}\text{-}\cL)_\delta u - u (\mathrm{w}\text{-}\cL)_{\delta} v \, \rmd \bx = A_\delta^N \int_{\p \Omega} u \frac{\p v}{\p \bsnu} - v \frac{\p u}{\p \bsnu} \, \rmd \sigma\,, \quad \forall u \in W^{2,r}(\Omega), \, v \in W^{2,r'}(\Omega).
    \end{equation*}
\end{theorem}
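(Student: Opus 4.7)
The plan has three parts: first, establish existence of $(\mathrm{w}\text{-}\cL)_\delta u$ together with its $L^r(\Omega)$ bound by a smooth approximation argument; second, derive Green's first identity by passing to the limit in \Cref{cor:PointwiseOperator:AbsConv}; third, subtract two oriented copies of the first identity to obtain Green's second identity.

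For existence, I would take $\{u_n\} \subset C^2(\overline{\Omega})$ with $u_n \to u$ in $W^{2,r}(\Omega)$, which exists because $\Omega$ is Lipschitz. Each $\cL_\delta u_n$ is defined pointwise by \Cref{cor:PointwiseOperator:AbsConv} (the condition $\beta < d+p-1 = d+1$ being automatic from the standing $\beta \in [0,d+1)$), and the linear $L^r$-bound specialized from \Cref{cor:PointwiseOperator:Summary} gives $\vnorm{\cL_\delta u_n}_{L^r(\Omega)} \leq C \vnorm{u_n}_{W^{2,r}(\Omega)}$. By linearity $\{\cL_\delta u_n\}$ is Cauchy in $L^r(\Omega)$, converging to some $u_0 \in L^r(\Omega)$ satisfying the stated bound. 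To identify $u_0 = (\mathrm{w}\text{-}\cL)_\delta u$, fix $v \in C^\infty_c(\Omega)$ and apply \Cref{cor:PointwiseOperator:AbsConv} twice: once with the pair $(u_n,v)$, whose boundary flux vanishes because $v|_{\p\Omega} = 0$, giving $\int_\Omega v \cL_\delta u_n \, \rmd \bx = \cB_\delta(u_n,v)$; and once with $(v,u_n)$, whose boundary flux vanishes because $\grad v|_{\p\Omega} = 0$ together with linearity of $BF_{2,\delta}^N(\cdot,\bsnu)$ for $p = 2$, giving $\int_\Omega u_n \cL_\delta v \, \rmd \bx = \cB_\delta(v,u_n)$. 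The immediate symmetry $\cB_\delta(u_n,v) = \cB_\delta(v,u_n)$ then yields $\int_\Omega v \cL_\delta u_n \, \rmd \bx = \int_\Omega u_n \cL_\delta v \, \rmd \bx$; passing $n \to \infty$ (the $L^r$-convergence of $\cL_\delta u_n$ against $v \in L^\infty(\Omega)$, and the $L^r$-convergence of $u_n$ against $\cL_\delta v \in L^\infty(\Omega)$ by \Cref{cor:PointwiseOperator:AbsConv} applied with $r = \infty$) produces $\int_\Omega v u_0 \, \rmd \bx = \int_\Omega u \cL_\delta v \, \rmd \bx$ for all $v \in C^\infty_c(\Omega)$, establishing $u_0 = (\mathrm{w}\text{-}\cL)_\delta u$.

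For the identification $(\mathrm{w}\text{-}\cL)_\delta u = (\mathrm{d}\text{-}\cL)_{2,\delta} u$ in $[\mathfrak{W}^{\beta,2}[\delta;q](\Omega)]^*$, I would apply the full Green's identity from \Cref{cor:PointwiseOperator:AbsConv} (which now invokes \eqref{assump:MomentsOfNonlinLoc} and \eqref{assump:Localization:NormalDeriv}) to $(u_n,v)$ for arbitrary $v \in \mathfrak{W}^{\beta,2}[\delta;q](\Omega)$,
\begin{equation*}
\int_\Omega v \cL_\delta u_n \, \rmd \bx = \cB_\delta(u_n,v) - \int_{\p\Omega} BF_{2,\delta}^N(\grad u_n,\bsnu) v \, \rmd \sigma,
\end{equation*}
and pass to the limit $n \to \infty$. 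The interior form $\cB_\delta(u_n,v) \to \cB_\delta(u,v)$ via continuity of $\cB_\delta$ and the embedding \eqref{eq:Embedding}; the boundary integral converges via linearity of $BF_{2,\delta}^N$ and trace continuity applied to $\grad u_n$. The LHS converges via $L^r$-convergence of $\cL_\delta u_n$ paired against $v$ in an appropriate $L^{r'}$-space guaranteed by the Sobolev embedding of $\mathfrak{W}^{\beta,2}[\delta;q](\Omega)$.

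Finally, Green's second identity follows by writing the first identity for the pair $(u,v)$ and again with $u,v$ exchanged, then subtracting. The interior form cancels by the symmetry $\cB_\delta(u,v) = \cB_\delta(v,u)$, while the two boundary terms combine via the identity $BF_{2,\delta}^N(\grad w,\bsnu) = A_\delta^N \frac{\p w}{\p \bsnu}$ derived in \Cref{subsec:Examples}, giving the stated expression. The main obstacle I anticipate is controlling the LHS convergence in part 2 uniformly across $r \in [1,\infty]$: when $r$ is below the critical Sobolev dual exponent, direct $L^r$-$L^{r'}$ duality is not available, and instead one must interpret the convergence as taking place in $[\mathfrak{W}^{\beta,2}[\delta;q](\Omega)]^*$ via the embedding $L^r(\Omega) \hookrightarrow [\mathfrak{W}^{\beta,2}[\delta;q](\Omega)]^*$, first establishing the identity for dense $v \in C^2(\overline{\Omega})$ (\Cref{thm:FxnSpaceProp}) and then extending.
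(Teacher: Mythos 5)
Your argument follows the paper's proof essentially step for step: approximate $u$ by $\{u_n\} \subset C^\infty(\overline\Omega)$, show $\{\cL_\delta u_n\}$ is Cauchy in $L^r(\Omega)$ via the linear bound from \Cref{cor:PointwiseOperator:Summary}, identify the limit as $(\mathrm{w}\text{-}\cL)_\delta u$ by applying the Green's identity of \Cref{cor:PointwiseOperator:AbsConv} with compactly supported test functions (where the boundary terms vanish and the symmetry of $\cB_\delta$ for $p=2$ gives $\int \cL_\delta u_n\, v = \int u_n \cL_\delta v$), and then pass to the limit in Green's first identity for general $v$. You write out the derivation of the second identity by subtracting two copies of the first and you flag the $L^r$-$L^{r'}$ duality subtlety for small $r$ more explicitly than the paper's terse proof does, but the underlying route is the same.
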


\begin{remark}
    If additionally $\beta < d$ it can be shown using the continuity estimates that $\cL_\delta u$ and $(\mathrm{w}\text{-}\cL)_\delta u$ coincide for any $u \in C^2(\overline{\Omega})$. This implies that, in this case, the domain of the operator $\cL_{\delta}$ can be extended to $\in W^{2,r}(\Omega)$ for any $r \in [1,\infty)$, and maps $W^{2,r}(\Omega)$ to $L^r(\Omega)$.
\end{remark}

\begin{proof}
    Let $\{u_n\}_n \subset C^{\infty}(\overline{\Omega})$ converge to $u$ in $W^{2,r}(\Omega)$. Then by \Cref{cor:PointwiseOperator:AbsConv} and \Cref{cor:PointwiseOperator:Summary}, 
    $        
    \Vnorm{ \cL_{\delta} u_n - \cL_{\delta} u_m }_{L^r(\Omega)} \leq C \Vnorm{ u_n - u_m }_{W^{2,r}(\Omega)} \to 0 \text{ as } n,m \to \infty\,,
    $
    since $\cL_\delta$ is linear.
    Therefore the sequence $\{ \cL_{\delta} u_n \}_n$ converges in $L^r(\Omega)$ to a function $u_0$. To see that $u_0 = (\mathrm{w}\text{-}\cL)_{\delta} u$, let $v \in C^{\infty}_c(\Omega)$ be arbitrary, and then
    \begin{equation*}
        \int_{\Omega} u_0 v  \, \rmd \bx 
        = \lim\limits_{n \to \infty} \int_\Omega \cL_{\delta} u_n \, v\, \rmd \bx 
        =\lim\limits_{n \to \infty} \int_\Omega u_n \cL_{\delta}  v \, \rmd \bx 
        = \int_{\Omega } u \cL_{\delta} v\, \rmd \bx\,,
    \end{equation*}
    where in the second equality we are guaranteed the validity of \eqref{eq:GreensIdentity} for $u_n \in C^{\infty}(\overline{\Omega})$ and $v \in C^{\infty}_c(\Omega)$ thanks to \Cref{cor:PointwiseOperator:AbsConv}.
    Finally, 
    \begin{align*}
  &  \vnorm{ (\mathrm{w}\text{-}\cL)_{\delta} u }_{L^r(\Omega)} 
    \leq \liminf_{n \to \infty} \vnorm{ (\mathrm{w}\text{-}\cL)_{\delta} u_n }_{L^r(\Omega)} \\
  &\qquad  \leq C \liminf_{n \to \infty} \vnorm{ u_n }_{W^{2,r}(\Omega)}
    = C \Vnorm{u}_{W^{2,r}(\Omega)}.
        \end{align*}
    Last, $(\mathrm{w}\text{-}\cL)_{\delta} u$ can replace $(\mathrm{d}\text{-}\cL)_{2,\delta} u$ in \eqref{eq:GreensIdentity} since the right-hand side holds for all $u_n$ in any smooth $W^{2,r}(\Omega)$-approximating sequence, and since both left-hand side terms are stable under strong $W^{2,r}(\Omega)$ convergence for any fixed $v \in \mathfrak{W}^{\beta,p}[\delta;q](\Omega)$.
\end{proof}

\subsection{The variational form}
Here, we take all of the assumptions of the previous subsection, along with \eqref{assump:Localization:NormalDeriv} and \eqref{assump:MomentsOfNonlinLoc}.
We first note that the pointwise definition of our nonlocal operator can be used to obtain a kind of {\it nonlocal}
weak convergence in the linear case.
This, along with the subsequent theorem, will be central in showing the consistency of solutions as the bulk horizon parameter $\delta$ approaches $0$.

\begin{theorem}\label{thm:BilinearFormLocalization}
    Suppose that $v \in W^{1,2}(\Omega)$, and suppose that $\{ u_{\delta} \}_\delta \subset W^{1,2}(\Omega)$ and $u \in W^{1,2}(\Omega)$ satisfy $u_{\delta} \rightharpoonup u$ weakly in $W^{1,2}(\Omega)$ as $\delta \to 0$. Then
	\begin{equation*}
		\lim\limits_{\delta \to 0} \cB_{\delta}(u_{\delta} - u,v) = 0\,.
	\end{equation*}
\end{theorem}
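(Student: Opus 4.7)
The plan is to exploit the symmetry of $\cB_\delta$ (which holds because $p=2$ makes $(u,v) \mapsto (u(\bx)-u(\by))(v(\bx)-v(\by))$ symmetric in $u,v$) together with the nonlocal Green's identity \eqref{eq:Intro:GreensIdentity:LinearForm} to transfer the $\delta$-dependent interaction away from $u_\delta - u$ onto a smooth surrogate for $v$. Since $v$ is only $W^{1,2}$, and hence $\cL_\delta v$ is not a priori a well-defined $L^2$ function, I would first approximate $v$ by a sequence $v_n \in C^2(\overline{\Omega})$ with $v_n \to v$ in $W^{1,2}(\Omega)$ and split
\begin{equation*}
    \cB_\delta(u_\delta - u, v) = \cB_\delta(u_\delta - u, v - v_n) + \cB_\delta(u_\delta - u, v_n).
\end{equation*}

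For the first (tail) piece, I would apply the Cauchy--Schwarz inequality for $\cB_\delta$ and then invoke the $\delta$-uniform embedding $[w]_{\mathfrak{W}^{\beta,2}[\delta;q](\Omega)} \leq C [w]_{W^{1,2}(\Omega)}$ from item~1 of \Cref{thm:FxnSpaceProp}, yielding
\begin{equation*}
    |\cB_\delta(u_\delta - u, v - v_n)| \leq C \, \Vnorm{u_\delta - u}_{W^{1,2}(\Omega)} \Vnorm{v - v_n}_{W^{1,2}(\Omega)}.
\end{equation*}
Because weak $W^{1,2}$-convergence bounds $\{u_\delta\}_\delta$ in $W^{1,2}$, for any prescribed tolerance I can fix $n$ large enough to make this term uniformly small in $\delta$.

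For the principal piece, symmetry of $\cB_\delta$ in its two arguments (valid for the quadratic $\Phi_2$) combined with Green's identity \eqref{eq:Intro:GreensIdentity:LinearForm} applied with $v_n$ as the smooth function gives
\begin{equation*}
    \cB_\delta(u_\delta - u, v_n) = \cB_\delta(v_n, u_\delta - u) = \int_{\Omega} \cL_\delta v_n \cdot (u_\delta - u) \, \rmd \bx + A_\delta^N \int_{\partial \Omega} \frac{\partial v_n}{\partial \bsnu} T(u_\delta - u) \, \rmd \sigma.
\end{equation*}
\Cref{cor:PointwiseOperator:Summary} guarantees that, for the fixed smooth function $v_n$, the family $\{\cL_\delta v_n\}_\delta$ converges strongly in $L^2(\Omega)$ to $-\Delta v_n$ and is therefore $L^2$-bounded uniformly in $\delta$. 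Rellich compactness then gives $u_\delta - u \to 0$ strongly in $L^2(\Omega)$, so the volume integral vanishes as $\delta \to 0$. For the boundary integral, I would note that $A_\delta^N$ is uniformly bounded in $\delta$ (equal to $1$ for $N>1$ and converging to $1$ as $\delta \to 0$ for $N=1$, being expressed via a uniformly bounded logarithmic integrand), while $\partial v_n/\partial \bsnu \in L^\infty(\partial\Omega)$ is fixed and $T(u_\delta - u) \to 0$ strongly in $L^2(\partial \Omega)$ by compactness of the trace $W^{1,2}(\Omega) \to L^2(\partial \Omega)$. Both contributions therefore vanish, and a standard $\varepsilon/2$ argument combining the two pieces finishes the proof.

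The main obstacle is that $v$ lacks the regularity needed for a direct application of Green's identity, so the whole argument hinges on two quantitative inputs being $\delta$-uniform: the continuous embedding $W^{1,2} \hookrightarrow \mathfrak{W}^{\beta,2}[\delta;q]$ from \Cref{thm:FxnSpaceProp} (which controls the tail uniformly in $\delta$) and the uniform-in-$\delta$ $L^2$-bound on $\cL_\delta v_n$ from \Cref{cor:PointwiseOperator:Summary}. Once these are in hand, nothing in the argument is delicate.
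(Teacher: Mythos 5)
Your proposal is correct and follows essentially the same route as the paper's proof: both reduce to a smooth approximant $v_n$ (the paper does the $C^2$ case first, then densifies; you fold the two steps into a single $\varepsilon/2$ argument), both use the uniform embedding $W^{1,2}\hookrightarrow\mathfrak{W}^{\beta,2}[\delta;q]$ together with Cauchy--Schwarz to control the tail, and both apply the nonlocal Green's identity to the smooth function, combined with the strong $L^2(\Omega)$ and $L^2(\partial\Omega)$ convergence furnished by Rellich/trace compactness and the uniform bound on the boundary-flux factor. The only cosmetic deviations are carrying $u_\delta-u$ rather than normalizing to $u_\delta\rightharpoonup 0$, and explicitly invoking the symmetry of $\cB_\delta$ in the quadratic case, neither of which changes the substance.
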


\begin{proof}
	Without loss of generality, we assume $u_{\delta} \rightharpoonup 0$ in $W^{1,2}(\Omega)$ as $\delta \to 0$.
    First assume that $v \in C^2(\overline{\Omega})$. Then, since $W^{1,2}(\Omega) \subset \mathfrak{W}^{\beta,p}[\delta;q](\Omega)$, we can use \Cref{cor:PointwiseOperator:AbsConv} and apply the nonlocal Green's identity \eqref{eq:GreensIdentity}:
    \begin{equation*}
         \cB_{\delta}(u_{\delta},v) = \int_{\Omega} \cL_{\delta} v(\bx) u_{\delta}(\bx) \, \rmd \bx  + \int_{\p \Omega} BF_{2,\delta}^N( \grad v, \bsnu) T u_\delta \, \rmd \sigma\,.
    \end{equation*}
    By \Cref{cor:PointwiseOperator:AbsConv} and \Cref{cor:PointwiseOperator:Summary}, and the choice of normalization constants, $\cL_{\delta} v \to -\Delta v$ strongly in $L^2(\Omega)$ as $\delta \to 0$, and by the compact embedding of $W^{1,2}(\Omega)$ into $L^2(\Omega)$ we have that $u_{\delta} \to 0$ strongly in $L^2(\Omega)$. So, 
    the first term on the right-hand side converges to $0$ as $\delta \to 0$. The second term on the right-hand side converges to $0$ as well; since $T : W^{1,2}(\Omega) \to W^{\frac{1}{2},2}(\p \Omega)$ is weakly continuous, we have that $T u_\delta \to 0 $ strongly in $L^2(\p \Omega)$ (note that the term $BF_{2,\delta}^N(\grad v,\bsnu)$ is bounded uniformly with respect to $\delta$).
    Therefore, $\cB_{\delta}(u_{\delta},v) \to 0$ as $\delta \to 0$ for all $v \in C^2(\overline{\Omega})$.
    Now let $v \in W^{1,2}(\Omega)$, and let $\{v_n \}_n \subset C^2(\overline{\Omega})$ converge to $v$ in $W^{1,2}(\Omega)$. Then by H\"older's inequality and \eqref{eq:Embedding}
    \begin{equation*}
    \begin{split}
        |\cB_{\delta}(u_{\delta},v)| &\leq |\cB_{\delta}(u_{\delta},v_n-v)| + |\cB_{\delta}(u_{\delta},v_n)| \\
        &\leq C\Vnorm{v_n -v}_{W^{1,2}(\Omega)} \Vnorm{u_{\delta}}_{W^{1,2}(\Omega)} + |\cB_{\delta}(u_{\delta},v_n)|\,.
    \end{split}
    \end{equation*}
    Since $\Vnorm{u_{\delta}}_{W^{1,2}(\Omega)}$ is bounded uniformly with respect to $\delta$, we can choose $n$ large enough so that the first term is arbitrarily small independent of $\delta$. Then we use the first part of the proof to let $\delta \to 0$ in the second term to get the conclusion.
\end{proof}

\begin{theorem}\label{thm:BilinearFormLocalization2}
    For all $u$ and $v \in W^{1,2}(\Omega)$, $\lim\limits_{\delta \to 0} \cB_{\delta}(u,v) = \cB_{0}(u,v)$.
\end{theorem}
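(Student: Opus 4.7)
The plan is to first establish, through the existing density and embedding machinery, that $\cB_\delta$ extends to a family of bilinear forms on $W^{1,2}(\Omega)\times W^{1,2}(\Omega)$ that is uniformly bounded in $\delta$, and then handle the convergence for smooth $u$ via the nonlocal Green's identity, extending to all of $W^{1,2}(\Omega)$ by density.

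First I would record the uniform estimate: by Cauchy--Schwarz applied to $\cB_\delta$ and the kernel-independence \eqref{thm:EnergySpaceIndepOfKernel} of the seminorm, followed by the embedding \eqref{eq:Embedding}, one obtains
\begin{equation*}
|\cB_\delta(u,v)| \leq C [u]_{\mathfrak{W}^{\beta,2}[\delta;q](\Omega)} [v]_{\mathfrak{W}^{\beta,2}[\delta;q](\Omega)} \leq C \Vnorm{u}_{W^{1,2}(\Omega)} \Vnorm{v}_{W^{1,2}(\Omega)},
\end{equation*}
with $C$ independent of $\delta$. The same bound (without the first inequality) holds for $\cB_0$.

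Next, for $u \in C^2(\overline{\Omega})$ and $v \in W^{1,2}(\Omega) \subset \mathfrak{W}^{\beta,2}[\delta;q](\Omega)$, I would apply the nonlocal Green's identity. Since $\beta < d+1 = d+p-1$ here, \Cref{cor:PointwiseOperator:AbsConv} realizes the operator pointwise and yields
\begin{equation*}
\cB_\delta(u,v) = \int_\Omega v\, \cL_\delta u \, \rmd \bx + A_\delta^N \int_{\p\Omega} \frac{\p u}{\p \bsnu} \, Tv \, \rmd \sigma,
\end{equation*}
where we have used the explicit form of $BF_{2,\delta}^N(\grad u,\bsnu)$ from \Cref{subsec:Examples}. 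Letting $\delta \to 0$: by \Cref{cor:PointwiseOperator:Summary} (together with the normalization $\bar{\rho}_{2-\beta}=\overline{C}_{d,2}$), $\cL_\delta u \to -\Delta u$ strongly in $L^2(\Omega)$; the constant $A_\delta^N$ converges to $1$ either trivially (for $N>1$) or by the explicit limit computation from \Cref{subsec:Examples} (for $N=1$); and the trace $Tv$ is fixed in $L^2(\p\Omega)$ with $\tfrac{\p u}{\p\bsnu}$ continuous on $\p\Omega$. Passing to the limit and then invoking the classical Green's identity recovers $\int_\Omega \grad u \cdot \grad v\, \rmd \bx = \cB_0(u,v)$.

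Finally, for arbitrary $u \in W^{1,2}(\Omega)$, choose $u_n \in C^2(\overline{\Omega})$ with $u_n \to u$ in $W^{1,2}(\Omega)$ and split
\begin{equation*}
|\cB_\delta(u,v) - \cB_0(u,v)| \leq |\cB_\delta(u - u_n, v)| + |\cB_\delta(u_n,v) - \cB_0(u_n,v)| + |\cB_0(u_n - u, v)|.
\end{equation*}
The first and third terms are bounded by $C \Vnorm{u-u_n}_{W^{1,2}} \Vnorm{v}_{W^{1,2}}$ uniformly in $\delta$, so for any $\varepsilon>0$ they can be made $<\varepsilon$ by choosing $n$ large. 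With that $n$ fixed, the middle term vanishes as $\delta\to 0$ by the previous step, so $\limsup_{\delta\to 0} |\cB_\delta(u,v)-\cB_0(u,v)| \leq 2\varepsilon$, which closes the argument.

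The main obstacle is of a bookkeeping nature rather than a conceptual one: one must keep the three formulations of the operator ($\cL_{p,\delta}^\veps$, the pointwise $\cL_\delta$, and the distributional $(\mathrm{d}\text{-}\cL)_{2,\delta}$) straight and verify that the hypothesis configuration of the current subsection ($p=2$, $\beta<d+1$, \eqref{assump:FullLocalization:C11}, \eqref{assump:Localization:NormalDeriv}, \eqref{assump:MomentsOfNonlinLoc}) fits simultaneously into \Cref{cor:PointwiseOperator:AbsConv} and \Cref{cor:PointwiseOperator:Summary}, so that Green's identity in its pointwise form together with the $L^2$-localization $\cL_\delta u \to -\Delta u$ may both be applied to the same smooth test function $u$.
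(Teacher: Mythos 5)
Your proof is correct, but it takes a genuinely different route from the paper's. The paper's own proof is a one-line citation: it says the argument ``follows exactly the same steps as [Ponce 2004, Theorem 1.1], additionally using the polarization identity'' -- that is, one first proves the quadratic-energy convergence $\cE_{2,\delta}(w) \to \cE_{2,0}(w)$ directly via a Taylor expansion of the difference quotient and concentration of the kernel on the diagonal (Ponce's argument adapted to the heterogeneously localized kernel), and then passes to the bilinear form $\cB_\delta$ by polarization, since $\cB_\delta(u,v)$ is symmetric. That route is self-contained at the variational level and does not touch the nonlocal operator $\cL_\delta$, the nonlocal Green's identity, or the boundary term at all; correspondingly, it would go through without \eqref{assump:FullLocalization:C11}, \eqref{assump:Localization:NormalDeriv}, or \eqref{assump:MomentsOfNonlinLoc}, which your argument genuinely uses.

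What you do instead is essentially the same strategy the paper already uses for \Cref{thm:BilinearFormLocalization}, specialized to a fixed $u$: pass through the pointwise operator $\cL_\delta u$, invoke the nonlocal Green's identity via \Cref{cor:PointwiseOperator:AbsConv}, use the strong $L^2$ localization $\cL_\delta u \to -\Delta u$ from \Cref{cor:PointwiseOperator:Summary}, handle the boundary term via $A_\delta^N \to 1$, invoke classical Green, and then close by density and the uniform $W^{1,2}\times W^{1,2}$ bound. That is a valid proof under the standing hypotheses of \Cref{sec:NonlocalOperator}/the ``variational form'' subsection (you correctly list exactly the configuration $p=2$, $\beta<d+1$, \eqref{assump:FullLocalization:C11}, \eqref{assump:Localization:NormalDeriv}, \eqref{assump:MomentsOfNonlinLoc}, so that both corollaries fire simultaneously), and arguably it is more in the spirit of the paper's own machinery since it reuses \Cref{thm:BilinearFormLocalization}'s technique rather than importing an external Taylor-expansion argument. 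The trade-off is that your proof is tied to the linear case and to the stronger regularity of $\eta$, whereas the Ponce route is lighter on hypotheses and readily extends to $p\neq 2$. One minor wording point: $W^{1,2}(\Omega)$ already embeds into $\mathfrak{W}^{\beta,2}[\delta;q](\Omega)$ by \eqref{eq:Embedding}, so $\cB_\delta$ is directly defined on $W^{1,2}\times W^{1,2}$ and no ``extension'' is needed, only the uniform bound you state.
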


\begin{proof} 
    The proof follows exactly the same steps as \cite[Theorem 1.1]{ponce2004new}, additionally using the polarization identity; the heterogeneous localization $\eta(\bx)$ gives no additional difficulty.
\end{proof}

\section{Boundary-localized convolutions and their adjoint operators}\label{sec:BdyLocalizedConv}
Our discussion in this section, unless indicated otherwise, is
under the assumptions \eqref{assump:beta}, 
\eqref{assump:NonlinearLocalization} for $k = k_q$, \eqref{assump:Localization} for $k = k_\lambda$,
\eqref{Assump:Kernel} for $k = k_\psi$, and \eqref{eq:HorizonThreshold}.
In addition, for any statement involving the  first order differentiation of the boundary-localized convolutions or their adjoint operators, we also assume that \eqref{Assump:Kernel} is satisfied for $k_\psi \geq 1$.

\subsection{Known results for the boundary-localized convolutions}
We first review properties of boundary-localized convolutions $K_\delta$ that were proved in \cite{scott2023nonlocal} that we will need.

\begin{theorem}[\cite{scott2023nonlocal}]\label{thm:KnownConvResults}
    The following hold:
    \begin{enumerate}
        \item If $u \in L^1_{loc}(\Omega)$, then $K_{\delta} u \in C^{k}(\Omega)$, where $k = \min \{ k_q, k_\lambda, k_\psi \}$.
        
        \item If $u \in C^0(\overline{\Omega})$,  then $K_{\delta} u \in  C^0(\overline{\Omega})$. Moreover, $K_{\delta} u(\bx) = u (\bx)$ for all $\bx \in \p \Omega$, and $K_{\delta} u \to u$ uniformly on $\overline{\Omega}$ as $\delta \to 0$.
        
        \item There exists a constant $C_0 = C_0(d,p,\psi,\kappa_1) > 0$ such that
        \begin{equation}\label{eq:ConvEst:Lp}
		\Vnorm{K_{\delta} u}_{L^p(\Omega)} \leq C_0 \Vnorm{u}_{L^p(\Omega)}\,, \quad \forall u \in L^p(\Omega) \text{ with } 1 \leq p \leq \infty
	\end{equation}
        and in fact
        \begin{equation}\label{eq:ConvergenceOfConv}
		\lim\limits_{\delta \to 0} \Vnorm{K_{\delta} u - u}_{L^p(\Omega)} = 0\,, \quad \forall u \in L^p(\Omega) \text{ with } 1 \leq p < \infty\,.
        \end{equation}

        \item Let $\wt{K}_\delta$ be the operator defined for any $\bv : \Omega \to \bbR^d$ and $\bx\in\Omega$ as
       \begin{equation}\label{eq:AuxOperatorDefn}
            \wt{K}_{\delta} \bv(\bx) := \int_{\Omega}   \psi_{\delta}(\bx,\by) \left[ \bI - \frac{(\bx-\by) \otimes \grad \eta_\delta(\bx)}{\eta_\delta(\bx)} \right] \bv(\by) \, \rmd \by\,.
        \end{equation}
        Then there exists a constant $C_1= C_1(d,p,\psi,\kappa_1) > 0$ such that
\begin{align}
\label{eq:ConvEst1:W1p:Pf1}
    &\grad K_\delta u(\bx) = \wt{K}_\delta[\grad u](\bx)\,, \quad  \forall \, u \in W^{1,p}(\Omega),\,  \bx \in \Omega\,,\\
    \label{eq:ConvEst:W1p}
    &\Vnorm{\grad K_{\delta} u}_{L^p(\Omega)} \leq C_1 \Vnorm{\grad u}_{L^{p}(\Omega)}\,, \;\; \forall u \in W^{1,p}(\Omega) \text{ with } 1 \leq p \leq \infty\,,\\
    \label{eq:ConvergenceOfConv:W1p}	  &\lim\limits_{\delta \to 0} \Vnorm{K_{\delta} u - u}_{W^{1,p}(\Omega)} = 0\,, \quad \forall u \in W^{1,p}(\Omega) \text{ with } 1 \leq p < \infty\,.
\end{align}
        \item For $1<p<\infty$, there 
        exists
        a constant $C_2
        = C_2(d,p,\beta,\psi,q,\kappa_0,\kappa_1)$
        such that   
     \begin{equation}\label{eq:Intro:ConvEst:Deriv}
		\begin{split}
			\Vnorm{ \grad K_{\delta} u }_{L^{p}(\Omega)} 
			\leq C_2 [u]_{\mathfrak{W}^{\beta,p}[\delta;q](\Omega)},
            \;\; \forall u \in \mathfrak{W}^{\beta,p}[\delta;q](\Omega).
		\end{split}
        \end{equation}
   \item     Additionally, let $1 < p < \infty$ and denote the trace operator $T : \mathfrak{W}^{\beta,p}[\delta;q](\Omega) \to W^{1-1/p,p}(\p \Omega)$. Then $T K_{\delta} u = T u$ for all $u \in W^{1,p}(\Omega)$.
    \end{enumerate}
\end{theorem}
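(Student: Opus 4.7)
The plan is to establish the identity $TK_\delta u = Tu$ first on the dense subclass $C^1(\overline{\Omega})$, where both sides can be computed pointwise via the already-proven boundary preservation property, and then extend it to all of $W^{1,p}(\Omega)$ by continuity and density.

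First I would fix $u \in C^1(\overline{\Omega})$ and invoke item 2 of the current theorem to conclude that $K_\delta u \in C^0(\overline{\Omega})$ with $(K_\delta u)(\bx) = u(\bx)$ for every $\bx \in \p \Omega$. Since $u$ and $K_\delta u$ both lie in $W^{1,p}(\Omega) \cap C^0(\overline{\Omega})$, the trace of each is just its pointwise restriction to $\p \Omega$, whence $TK_\delta u = Tu$ holds in $W^{1-1/p,p}(\p \Omega)$.

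Next I would promote this to arbitrary $u \in W^{1,p}(\Omega)$ by density. Since $C^1(\overline{\Omega})$ is dense in $W^{1,p}(\Omega)$, I choose $u_n \in C^1(\overline{\Omega})$ with $u_n \to u$ in $W^{1,p}(\Omega)$. Classical trace continuity gives $Tu_n \to Tu$ in $W^{1-1/p,p}(\p \Omega)$, while the bounds \eqref{eq:ConvEst:Lp} and \eqref{eq:ConvEst:W1p} applied to the linear operator $K_\delta$ yield $K_\delta u_n \to K_\delta u$ in $W^{1,p}(\Omega)$, hence $TK_\delta u_n \to TK_\delta u$ in $W^{1-1/p,p}(\p \Omega)$. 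Passing to the limit in the smooth identity $TK_\delta u_n = Tu_n$ gives the claim.

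One subtlety worth highlighting, rather than a genuine obstacle, is that the trace operator $T$ appearing in the statement is the one defined on $\mathfrak{W}^{\beta,p}[\delta;q](\Omega)$, not the classical trace on $W^{1,p}(\Omega)$. These agree on $W^{1,p}(\Omega)$, however, because both are defined as the unique continuous extension of the restriction map $u \mapsto u|_{\p \Omega}$ from $C^1(\overline{\Omega})$, and the embedding \eqref{eq:Embedding} ensures that $W^{1,p}$-convergence implies $\mathfrak{W}^{\beta,p}[\delta;q]$-convergence; thus the two continuous extensions coincide on $W^{1,p}(\Omega) \subset \mathfrak{W}^{\beta,p}[\delta;q](\Omega)$, making the density argument unambiguous.
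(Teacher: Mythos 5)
Your argument is correct and is the natural one: item 2 gives the pointwise identity $K_\delta u|_{\p\Omega}=u|_{\p\Omega}$ for $u\in C^1(\overline\Omega)$, and items 3--4 supply the $W^{1,p}$-to-$W^{1,p}$ boundedness of $K_\delta$ needed to pass to general $u\in W^{1,p}(\Omega)$ by density and trace continuity. The only step worth stating explicitly (which you invoke implicitly) is the standard fact that for $v\in W^{1,p}(\Omega)\cap C^0(\overline\Omega)$ the Sobolev trace coincides with the pointwise restriction; this is needed because item 1 only gives $K_\delta u\in C^k(\Omega)$, not $C^1(\overline\Omega)$, so $K_\delta u$ is not a priori in the class on which $T$ is defined by restriction. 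Your remark reconciling the two trace operators via \eqref{eq:Embedding} is the right observation and closes the argument.
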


As a consequence of the above, we
note that the boundary-localized convolution can be used to construct a generalized distance function satisfying \eqref{assump:Localization:NormalDeriv}.

\begin{lemma}\label{lma:BdyLocalizedDistFxn}
    For any Lipschitz domain $\Omega \subset \bbR^d$, there exists a generalized distance function $\bar{\lambda}$ that satisfies \eqref{assump:Localization} with $k_\lambda = \infty$ and \eqref{assump:Localization:NormalDeriv}.
\end{lemma}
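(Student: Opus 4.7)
The plan is to take $\bar{\lambda}$ to be a boundary-localized mollification of the classical distance function $d_{\p \Omega}$. By Stein's theorem \cite{Stein}, there exists a smooth generalized distance $\lambda_0 \in C^\infty(\Omega) \cap C^0(\overline{\Omega})$ satisfying \eqref{assump:Localization} with $k_\lambda = \infty$. Fix smooth radial $\psi$ satisfying \eqref{Assump:Kernel} with $k_\psi = \infty$, smooth $q$ satisfying \eqref{assump:NonlinearLocalization} with $k_q = \infty$, and $\delta \in (0, \underline{\delta}_0)$. Then I define
\begin{equation*}
    \bar{\lambda}(\bx) := K_\delta[\lambda_0, q, \psi]\, d_{\p \Omega}(\bx), \qquad \bx \in \overline{\Omega}.
\end{equation*}

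The verification of \eqref{assump:Localization} comes entirely from \Cref{thm:KnownConvResults} applied with $u = d_{\p \Omega} \in W^{1,\infty}(\Omega)$. Parts (1)--(2) give $\bar{\lambda} \in C^\infty(\Omega) \cap C^0(\overline{\Omega})$, and part (2) gives $\bar{\lambda}|_{\p \Omega} = d_{\p \Omega}|_{\p \Omega} = 0$. Since $\psi_\delta(\bx,\cdot)$ is supported in $B(\bx, \eta_\delta(\bx))$ with $\eta_\delta(\bx) \leq \delta \kappa_0 d_{\p \Omega}(\bx)$ and $d_{\p \Omega}$ is $1$-Lipschitz, the averaging formula yields the comparability $(1 - \delta \kappa_0) d_{\p \Omega} \leq \bar{\lambda} \leq (1 + \delta \kappa_0) d_{\p \Omega}$. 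The Lipschitz bound on $\bar{\lambda}$ is \eqref{eq:ConvEst:W1p}, and the higher-order derivative estimates $|D^\alpha \bar{\lambda}(\bx)| \leq \kappa_\alpha d_{\p \Omega}(\bx)^{1 - |\alpha|}$ follow by iterating \eqref{eq:KernelDerivativeEstimate}--\eqref{eq:KernelIntegralDerivativeEstimate}, each differentiation of the kernel producing a factor of $\eta_\delta(\bx)^{-1} \sim d_{\p \Omega}(\bx)^{-1}$.

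The main work is to verify \eqref{assump:Localization:NormalDeriv}. Fix $\bx_0 \in \p \Omega$ outside the $\scH^{d-1}$-null set where \Cref{lma:DiffOfDistatBdy} fails (working in a local graph chart for $\p \Omega$), together with a direction $\bz \in \bbR^d$ such that $\bx_0 + t\bz \in \overline{\Omega}$ for small $t > 0$; the opposite sign case is handled by flipping $\bz$. Writing $\bx_t := \bx_0 + t\bz$ and changing variables $\by = \bx_t + \eta_\delta(\bx_t) \bw$,
\begin{equation*}
    \frac{\bar{\lambda}(\bx_t)}{t} = \int_{B(0,1)} \psi(|\bw|)\, \frac{d_{\p \Omega}(\bx_0 + t \bar{\bz}_t(\bw))}{t}\, \rmd \bw, \qquad \bar{\bz}_t(\bw) := \bz + \frac{\eta_\delta(\bx_t)}{t}\,\bw.
\end{equation*}
Since $\lambda_0$ is Lipschitz with $\lambda_0(\bx_0) = 0$, the scalar $\eta_\delta(\bx_t)/t$ stays uniformly bounded in $t \in (0, t_0)$; extracting any subsequence $t_n \to 0^+$ along which $\eta_\delta(\bx_{t_n})/t_n \to c$, \Cref{lma:DiffOfDistatBdy}(iii) combined with the $1$-Lipschitz property of $d_{\p \Omega}$ yields, for each fixed $\bw$, the pointwise limit
\begin{equation*}
    \frac{d_{\p \Omega}(\bx_0 + t_n \bar{\bz}_{t_n}(\bw))}{t_n} \;\longrightarrow\; -\bsnu(\bx_0) \cdot (\bz + c \bw),
\end{equation*}
with integrand dominated by $|\bz| + 1$. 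By the dominated convergence theorem and the identity $\int_{B(0,1)} \psi(|\bw|)(\bsnu(\bx_0) \cdot \bw)\, \rmd \bw = 0$ from radial symmetry of $\psi$, this yields $\bar{\lambda}(\bx_{t_n})/t_n \to -\bsnu(\bx_0) \cdot \bz$; since the limit is independent of the extracted subsequence, the full limit exists and equals $-\bsnu(\bx_0) \cdot \bz$.

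The main technical obstacle is that Stein's regularized distance $\lambda_0$ is a priori not classically differentiable at boundary points, so $\eta_\delta(\bx_t)/t$ may fail to converge as $t \to 0^+$; the subsequence extraction combined with the $\bw$-moment cancellation from radial $\psi$ bypasses this issue entirely, making the final value route-independent.
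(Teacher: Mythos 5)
Your construction $\bar{\lambda} = K_\delta[\lambda_0,q,\psi]\,d_{\p\Omega}$ with $\lambda_0$ from Stein's theorem is precisely the one the paper uses (whose proof is a one-line assertion that the verification is ``straightforward''), and your moment-cancellation argument for \eqref{assump:Localization:NormalDeriv} is a sound way to fill in the details: the subsequence extraction correctly circumvents the fact that Stein's $\lambda_0$ need not be classically differentiable at $\bx_0 \in \p\Omega$, and $\int_{B(0,1)}\psi(|\bw|)\,\bw\,\rmd\bw = {\bf 0}$ by radial symmetry then makes the limit independent of the extracted value $c$.

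One gap worth closing concerns \eqref{assump:Localization}(iv). You assert that each differentiation of the kernel costs a factor $\eta_\delta(\bx)^{-1} \sim d_{\p\Omega}(\bx)^{-1}$, so that iterating \eqref{eq:KernelDerivativeEstimate} gives $|D^\alpha\bar\lambda(\bx)| \lesssim d_{\p\Omega}(\bx)^{1-|\alpha|}$. But the comparability $\eta_\delta(\bx) \sim d_{\p\Omega}(\bx)$ holds only when the mollification function $q$ you fixed satisfies $q'(0)>0$. A general $q$ obeying \eqref{assump:NonlinearLocalization} may vanish to higher order at $0$ (for example $q(r)\approx r^2$ near the origin), in which case $\eta_\delta(\bx) \sim d_{\p\Omega}(\bx)^2 \ll d_{\p\Omega}(\bx)$, and the kernel derivative bounds only give $|D^\alpha\bar\lambda| \lesssim \eta_\delta^{1-|\alpha|}$, which is much larger than $d_{\p\Omega}^{1-|\alpha|}$ for $|\alpha|\geq 2$. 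The fix is simply to \emph{specify} $q(r)=r$ (or any $q$ from \eqref{assump:NonlinearLocalization} with $q'(0)>0$) in your construction; since you are free to choose the $q$ used inside the mollifier, this is a one-word repair, but without it step (iv) does not follow.
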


\begin{proof}
    It is straightforward to verify, using the techniques in this section and in \cite{scott2023nonlocal}, that the lemma is satisfied by $\bar{\lambda}(\bx) = K_{\veps}[\lambda,q,\psi] d_{\p \Omega}(\bx)$, where $\lambda$ is a generalized distance function of the type constructed in \cite{Stein}.
\end{proof}

\subsection{Properties of adjoint operators and estimates in function spaces}

\begin{theorem}\label{thm:ConvProp:Adj:UniformCase}
    Let $u \in C^0(\Omega)$. Then $K_{\delta}^* u \in C^0(\Omega)$. Moreover, 
    $K_{\delta}^* u \to u$ in $L^\infty(\Omega)$  as $\delta \to 0$.
\end{theorem}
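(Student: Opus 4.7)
The statement naturally splits into two assertions: continuity of $K_\delta^* u$ for each fixed $\delta$, and uniform convergence $K_\delta^* u \to u$ as $\delta \to 0$. Since the $L^\infty$ convergence is only meaningful for $u$ bounded and uniformly continuous, the proof will treat it under the hypothesis $u \in C^0(\overline{\Omega})$ (consistent with item 2 of \Cref{thm:KnownConvResults}).

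For continuity at an arbitrary $\bx_0 \in \Omega$, the plan is to restrict to $\bx \in B(\bx_0, r_0)$ with $r_0 < d_{\p \Omega}(\bx_0)/2$ and invoke dominated convergence. The key input is a compactness observation: the integrand of \eqref{eq:ConvolutionOperator:Adj} vanishes unless $|\by-\bx| \leq \eta_\delta(\by)$, and combining the upper bound $\eta_\delta(\by) \leq \delta \kappa_0 d_{\p \Omega}(\by)$ (from $q(r) \leq r$ in \eqref{assump:NonlinearLocalization} and $\lambda \leq \kappa_0 d_{\p \Omega}$ in \eqref{assump:Localization}) with the triangle inequality $d_{\p \Omega}(\bx) \leq d_{\p \Omega}(\by) + |\bx-\by|$ yields $d_{\p \Omega}(\by) \geq d_{\p \Omega}(\bx)/(1+\delta\kappa_0)$. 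Together with $|\bx-\by| \leq \delta\kappa_0 \diam(\Omega)$, this confines $\by$ to a fixed compact set $K \Subset \Omega$ for all $\bx \in B(\bx_0,r_0)$, on which $\eta_\delta(\by)$ is bounded below by a positive constant (using monotonicity and positivity of $q$). The kernel $\bx \mapsto \psi(|\by-\bx|/\eta_\delta(\by))/\eta_\delta(\by)^d$ is then continuous and uniformly bounded on $B(\bx_0,r_0) \times K$, so dominated convergence gives $K_\delta^* u(\bx) \to K_\delta^* u(\bx_0)$ as $\bx \to \bx_0$.

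For the $L^\infty$ convergence, the plan is to use the decomposition
$$
K_\delta^* u(\bx) - u(\bx) = \int_\Omega \psi_\delta(\by,\bx) [u(\by) - u(\bx)] \, \rmd \by + u(\bx) [\Psi_\delta(\bx) - 1].
$$
Since $|\by-\bx| \leq \eta_\delta(\by) \leq \delta \kappa_0 \diam(\Omega)$ on the support of $\psi_\delta(\by,\bx)$, uniform continuity of $u$ on $\overline{\Omega}$ gives $|u(\by)-u(\bx)| \leq \omega_u(\delta \kappa_0 \diam(\Omega))$ with modulus $\omega_u$ vanishing at $0$. Combined with the upper bound $\Psi_\delta(\bx) \leq (1-\kappa_1\delta)^{-1}$ from \Cref{lma:KernelIntegral} applied with $\alpha = 0$ (using the normalization $\int_{B(0,1)} \psi(|\bz|) \, \rmd \bz = 1$ from \eqref{Assump:Kernel}), this produces an $O(\omega_u(\delta))$ estimate on the first term in $L^\infty$. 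The same lemma also gives $|\Psi_\delta(\bx) - 1| \leq \kappa_1\delta/(1-\kappa_1\delta)$, yielding an $O(\delta) \Vnorm{u}_{L^\infty(\Omega)}$ estimate on the second term. Both contributions vanish as $\delta \to 0$.

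The main technical obstacle is the compactness step in the continuity argument: although $\psi_\delta(\by,\bx)$ is singular as $\by \to \p \Omega$ through the factor $\eta_\delta(\by)^{-d}$, and the adjoint kernel uses $\eta_\delta$ evaluated at the ``wrong'' variable $\by$, the localization constraint $|\by-\bx| \leq \eta_\delta(\by)$ still confines $\by$ to a compact subset of $\Omega$ whenever $\bx$ is interior. Once this is secured, everything else reduces to a routine application of \Cref{lma:KernelIntegral}.
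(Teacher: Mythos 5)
Your proposal is correct and takes essentially the same route as the paper: the identical decomposition $K_\delta^* u - u = \int_\Omega \psi_\delta(\by,\bx)(u(\by)-u(\bx))\,\rmd\by + u(\bx)(\Psi_\delta(\bx)-1)$, bounding the first term via uniform continuity of $u$ over the $O(\delta)$-sized support and the second via \Cref{lma:KernelIntegral}. The only difference is that you spell out the continuity of $K_\delta^* u$ via a compactness/dominated-convergence argument, whereas the paper dismisses that part with ``clearly.''
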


\begin{proof}
	Clearly $K_\delta^* u \in C^0(\Omega)$.
	For the $L^\infty(\Omega)$-convergence, we have for any $\bx \in \Omega$
	\begin{equation*}
		\begin{split}
			|K_{\delta}^*u(\bx) - u(\bx)|
			&= \left| \int_{\Omega} \psi_{\delta}(\by,\bx) (u(\by)-u(\bx)) \, \rmd \by \right| + \left| \Psi_{\delta}(\bx) - 1 \right| |u(\bx)|\\
			&\leq \sup_{ \bz \in B(\bx, \frac{\eta_\delta(\bx)}{1-\kappa_1 \delta}  ) } |u(\bz) - u(\bx)| \int_{\Omega} \psi_{\delta}(\by,\bx) \, \rmd \by + \Vnorm{ \Psi_\delta - 1}_{L^\infty(\Omega)} |u(\bx)| \\
			&= \Psi_{\delta}(\bx) \sup_{ \bz \in B(\bx, \frac{3}{2} \eta_\delta(\bx)) } |u(\bz) - u(\bx)| + \Vnorm{ \Psi_\delta - 1}_{L^\infty(\Omega)} |u(\bx)|\,.
		\end{split}
	\end{equation*}
	Both quantities converge to $0$ as $\delta \to 0$ independently of $\bx \in \Omega$; the first since $u$ is uniformly continuous on $\overline{\Omega}$, and the second by the estimate \eqref{eq:KernelIntFunction:Bounds}.
 \end{proof}

\begin{lemma}\label{lma:SupportOfConv}
    Suppose that $v \in C^0(\overline{\Omega})$ has compact support in $\Omega$, i.e. there exists $c_v > 0$ such that
		$\supp v \subset \Omega^{c_v;\eta,q}\,.$
	Then both $K_{\delta} v$ and $K_\delta^* v$ have compact support with
    $\supp K_{\delta} v \subset \Omega^{ \frac{1}{1+ \kappa_1 \delta} c_{v}; \eta,q}$ and $\supp K_{\delta}^* v \subset \Omega^{ (1 - \kappa_1 \delta) c_{v}; \eta,q}$.
\end{lemma}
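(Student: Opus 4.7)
The strategy is to exploit the compact support of the mollifier $\psi$ together with the comparability estimates for $\eta_\delta$ between nearby points recorded in \Cref{lma:ComparabilityOfXandY}. For any $\bx \in \Omega$, the integrand defining $K_\delta v(\bx)$ is supported in the set $\{\by \in \Omega : |\by-\bx| < \eta_\delta(\bx)\}$, while the integrand defining $K_\delta^* v(\bx)$ is supported in $\{\by \in \Omega : |\by-\bx| < \eta_\delta(\by)\}$. In each case, if the integral is nonzero, then there must exist some $\by$ lying in both the relevant ball and in $\supp v$, and this $\by$ satisfies $\eta(\by) \geq c_v$ (recalling $\eta_\delta = \delta \eta$ and using $\Omega^{c_v;\eta,q} = \{\bx : \eta(\bx) \geq c_v\}$). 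Propagating this lower bound from $\by$ to $\bx$ via \Cref{lma:ComparabilityOfXandY} will yield the claimed inclusions.

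More precisely, for $K_\delta v(\bx) \neq 0$ there is $\by \in \supp v$ with $|\by-\bx| \leq \eta_\delta(\bx)$, so \eqref{eq:ComparabilityOfDistanceFxn1} gives $\eta_\delta(\by) \leq (1+\kappa_1 \delta)\eta_\delta(\bx)$, i.e.\ $\eta(\by) \leq (1+\kappa_1\delta)\eta(\bx)$. Combined with $\eta(\by) \geq c_v$, this yields $\eta(\bx) \geq c_v/(1+\kappa_1\delta)$, which is exactly the desired inclusion for $\supp K_\delta v$. For $K_\delta^* v(\bx) \neq 0$ there is $\by \in \supp v$ with $|\by-\bx| \leq \eta_\delta(\by)$, and now \eqref{eq:ComparabilityOfDistanceFxn2} gives $\eta_\delta(\bx) \geq (1-\kappa_1\delta)\eta_\delta(\by)$, i.e.\ $\eta(\bx) \geq (1-\kappa_1\delta)\eta(\by) \geq (1-\kappa_1\delta) c_v$, which is the claimed inclusion for $\supp K_\delta^* v$.

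Finally, since the bounds $\eta(\bx) \geq c_v/(1+\kappa_1\delta)$ and $\eta(\bx) \geq (1-\kappa_1\delta)c_v$ define closed subsets of $\Omega$ bounded away from $\p \Omega$, and both $K_\delta v$ and $K_\delta^* v$ are continuous on $\Omega$ (the former by item~1 of \Cref{thm:KnownConvResults}, the latter by \Cref{thm:ConvProp:Adj:UniformCase}), the supports are closed inside $\Omega$ and bounded away from the boundary, hence compact. There is no substantive obstacle here; the only care needed is to unpack the notation $\Omega^{\veps;\eta,q}$ as $\{\bx : \eta(\bx) \geq \veps\}$ and to recall that $\eta_\delta = \delta\eta$ so the factor $\delta$ cancels when comparing the values of $\eta$ at $\bx$ and $\by$.
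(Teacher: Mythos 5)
Your proof is correct and follows essentially the same route as the paper: the paper argues the contrapositive of your $K_\delta^*$ step using \eqref{eq:ComparabilityOfDistanceFxn2} and cites the prior paper for the $K_\delta$ inclusion, but the substance — propagating the lower bound on $\eta$ from $\by \in \supp v$ to $\bx$ via \Cref{lma:ComparabilityOfXandY} — is identical. Your explicit remark on why the resulting sets are actually compact is a small bonus that the paper leaves implicit.
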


\begin{proof}
    The result for $K_\delta$ is in \cite{scott2023nonlocal}. By \eqref{eq:ComparabilityOfDistanceFxn2}, whenever $h(\bx) < ( 1- \kappa_1 \delta  ) c_v $
	we have $  \{ \by \, : |\bx-\by| \leq \eta_\delta(\by) \} \subset \Omega_{c_v;\eta,q} $.
    Therefore, since the domain of integration in $K_{\delta}^*v(\bx)$ and $\supp v$ are disjoint, we have $K_\delta^* v(\bx) = 0$. 
\end{proof}

\begin{theorem}\label{thm:ConvEst1:Adj}
    There exists a constant $C_0
   =C_0(d, p, \psi,\kappa_1)>0$ such that   
\begin{eqnarray}
& \label{eq:ConvEst:Adj:Lp}
		\Vnorm{K_{\delta}^* u}_{L^p(\Omega)} \leq C_0 \Vnorm{u}_{L^p(\Omega)} &\quad \forall u \in L^p(\Omega) \text{ with } 1 \leq p \leq \infty\,.\\
\label{eq:ConvergenceOfConvAdj}
&\lim\limits_{\delta \to 0} \Vnorm{K_{\delta}^* u - u}_{L^p(\Omega)} = 0\,, &\quad \forall u \in L^p(\Omega) \text{ with } 1 \leq p < \infty\,.
\end{eqnarray}
\end{theorem}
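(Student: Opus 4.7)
The plan is to prove \eqref{eq:ConvEst:Adj:Lp} via a Schur-type estimate that exploits two one-sided integrability properties of the kernel $\psi_\delta(\by,\bx)$, and then derive \eqref{eq:ConvergenceOfConvAdj} by a standard density argument from $C_c(\Omega)$.

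For \eqref{eq:ConvEst:Adj:Lp}, I would first verify two uniform estimates. Integrating in $\by$ with $\bx$ fixed gives $\int_\Omega \psi_\delta(\by,\bx)\,\rmd \by = \Psi_\delta(\bx) \leq \frac{1}{1-\kappa_1\delta}$ by \Cref{lma:KernelIntegral} applied with $\alpha = 0$. Integrating in $\bx$ with $\by$ fixed gives, via the change of variables $\bz = (\bx-\by)/\eta_\delta(\by)$ (in which $\by$ is treated as a parameter, so the Jacobian is simply $\eta_\delta(\by)^d$),
\begin{equation*}
\int_\Omega \psi_\delta(\by,\bx)\,\rmd \bx \leq \int_{\bbR^d} \psi_\delta(\by,\bx)\,\rmd \bx = \int_{B(0,1)} \psi(|\bz|)\,\rmd \bz = 1\,.
\end{equation*}
Combining these bounds via Hölder's inequality applied to the splitting $\psi_\delta(\by,\bx) = \psi_\delta(\by,\bx)^{1/p'}\psi_\delta(\by,\bx)^{1/p}$ and then Tonelli's theorem yields
\begin{equation*}
\int_\Omega |K_\delta^* u(\bx)|^p\,\rmd \bx \leq \int_\Omega \Psi_\delta(\bx)^{p/p'} \int_\Omega \psi_\delta(\by,\bx)|u(\by)|^p\,\rmd \by\,\rmd \bx \leq C^{p/p'}\Vnorm{u}_{L^p(\Omega)}^p\,,
\end{equation*}
which gives \eqref{eq:ConvEst:Adj:Lp} for all $1<p<\infty$, with the cases $p=1$ and $p=\infty$ obtained directly from the two one-sided bounds.

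For \eqref{eq:ConvergenceOfConvAdj}, I would first establish that $K_\delta^* v \to v$ uniformly on $\Omega$ for every $v \in C_c(\Omega)$. Writing
\begin{equation*}
K_\delta^* v(\bx) - v(\bx) = \int_\Omega \psi_\delta(\by,\bx)(v(\by) - v(\bx))\,\rmd \by + v(\bx)(\Psi_\delta(\bx) - 1)\,,
\end{equation*}
the first term is bounded by $\omega_v(C\delta)\,\Psi_\delta(\bx)$, where $\omega_v$ denotes the modulus of continuity of $v$ extended by zero to $\bbR^d$, since $\psi_\delta(\by,\bx)\neq 0$ forces $|\by-\bx| \leq \eta_\delta(\by) \leq C\delta$; the second term is controlled by $|\Psi_\delta(\bx) - 1| \leq C\delta$, which follows again from \Cref{lma:KernelIntegral}. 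Both terms vanish uniformly as $\delta \to 0$, and the boundedness of $\Omega$ then yields $\Vnorm{K_\delta^* v - v}_{L^p(\Omega)} \to 0$ for every $1 \leq p < \infty$. A standard $3\epsilon$ argument combining this fact with the uniform operator bound from \eqref{eq:ConvEst:Adj:Lp} and the density of $C_c(\Omega)$ in $L^p(\Omega)$ extends the convergence to every $u \in L^p(\Omega)$.

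The main subtlety, compared with the analogous result for $K_\delta$ in \cite{scott2023nonlocal}, is that the ``mass'' of $\psi_\delta(\by,\bx)$ is no longer identically equal to $1$: the normalization factor $\eta_\delta(\by)^{-d}$ matches the variable of integration in one of the two diagonal averages but not the other. \Cref{lma:KernelIntegral} provides precisely the quantitative control needed to absorb this discrepancy into constants bounded uniformly in $\delta$, so no additional machinery beyond the two one-sided bounds and the uniform continuity of compactly supported test functions is required.
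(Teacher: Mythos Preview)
Your proposal is correct and follows essentially the same route as the paper: the $L^p$ bound is obtained via H\"older on the kernel together with the two one-sided mass bounds $\int_\Omega \psi_\delta(\by,\bx)\,\rmd\by = \Psi_\delta(\bx)\le C$ (from \Cref{lma:KernelIntegral}) and $\int_\Omega \psi_\delta(\by,\bx)\,\rmd\bx \le 1$, and the convergence is obtained by the same add--subtract decomposition (which the paper packages as \Cref{thm:ConvProp:Adj:UniformCase}) combined with density and the uniform operator bound. The only cosmetic differences are that the paper quotes the uniform convergence from a separate theorem and uses $C^\infty(\overline{\Omega})$ rather than $C_c(\Omega)$ as the dense class.
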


\begin{proof}
	First we prove \eqref{eq:ConvEst:Adj:Lp} for $1 \leq p < \infty$. 
	By H\"older's inequality, \eqref{eq:KernelIntFunction:Bounds}, and Tonelli's theorem, we have
	\begin{equation*}
		\Vnorm{ K_\delta^* u }_{L^p(\Omega)}^p \leq \int_{\Omega} \left( \int_{\Omega} \psi_\delta(\bz,\bx) \, \rmd \bz \right)^{p-1} \int_{\Omega} \psi_\delta(\by,\bx) |u(\by)|^p \, \rmd \by \, \rmd \bx \leq C \Vnorm{u}_{L^p(\Omega)}^p\,.
	\end{equation*}
    The inequality when $p = \infty$ is trivial.
    The result \eqref{eq:ConvergenceOfConvAdj} follows from the density of $C^\infty(\overline{\Omega})$ in $L^p(\Omega)$, from the $L^p(\Omega)$-continuity of $K_\delta$ contained in \eqref{eq:ConvEst:Adj:Lp}, and from the $L^\infty(\Omega)$-convergence of $K_\delta u$ to $u$ in \Cref{thm:ConvProp:Adj:UniformCase}.
\end{proof}

For $1 < p < \infty$, the action of $K_\delta^*$ on $f \in [W^{1,p}(\Omega)]^*$ is defined as
\begin{equation*}
    \vint{K_\delta^* f, v} := \vint{f, K_\delta v}\,, \quad \forall v \in W^{1,p}(\Omega)\,,
\end{equation*}
with the same definition holding for $f \in [W^{1,p}_{0,\p \Omega_D}(\Omega)]^*$ and $v \in W^{1,p}_{0,\p \Omega_D}(\Omega)$.
This definition is consistent with the identity $\int_{\Omega} K_\delta^* f v \, \rmd \bx = \int_{\Omega} f K_\delta v \, \rmd \bx$ for any $f \in L^{p'}(\Omega)$.

\begin{theorem}
    Let $1 < p < \infty$.
    There exist positive constants  $C_1(d,p,\psi,\kappa_1)$ and $C_2(d,\beta,p,\psi,q,\kappa_0,\kappa_1)$ such that
   \begin{eqnarray}
       &&\qquad
		\Vnorm{K_{\delta}^* f}_{[W^{1,p}(\Omega)]^*} \leq C_1 \Vnorm{f}_{[W^{1,p}(\Omega)]^*}, \quad \forall f \in [W^{1,p}(\Omega)]^*,\label{eq:ConvEst:Adj:WMinus1p}   \\    
   &&  \qquad   \Vnorm{K_{\delta}^* f}_{[\mathfrak{W}^{\beta,p}[\delta;q](\Omega)]^*} \leq C_2 \Vnorm{f}_{[W^{1,p}(\Omega)]^*}, 
        \quad \forall f \in [W^{1,p}(\Omega)]^*,
        \label{eq:ConvEst:Adj:Nonlocal}        \\
 &&    \qquad   \lim\limits_{\delta \to 0} \vint{K_{\delta}^* f - f, v} = 0\,, \quad \forall \, v \in W^{1,p}(\Omega),
        \; f \in [W^{1,p}(\Omega)]^*. \label{eq:ConvergenceOfConv:Adj:Wminus1p}
         \end{eqnarray}
     The same results hold for $[W^{1,p}(\Omega)]^*$ replaced with the space $[W^{1,p}_{0,\p \Omega_D}(\Omega)]^*$.
\end{theorem}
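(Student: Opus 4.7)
The plan is to obtain all three conclusions as straightforward duality consequences of the forward estimates for $K_\delta$ already collected in \Cref{thm:KnownConvResults}. Throughout, the starting point is the defining identity $\vint{K_\delta^* f, v} = \vint{f, K_\delta v}$, valid whenever the right-hand side makes sense.

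For \eqref{eq:ConvEst:Adj:WMinus1p}, I would first observe that when $v \in W^{1,p}(\Omega)$, the forward bounds \eqref{eq:ConvEst:Lp} and \eqref{eq:ConvEst:W1p} together give $\Vnorm{K_\delta v}_{W^{1,p}(\Omega)} \leq C_1 \Vnorm{v}_{W^{1,p}(\Omega)}$, with $C_1$ depending only on $d,p,\psi,\kappa_1$. Hence
\[
    |\vint{K_\delta^* f, v}| = |\vint{f, K_\delta v}| \leq \Vnorm{f}_{[W^{1,p}(\Omega)]^*} \Vnorm{K_\delta v}_{W^{1,p}(\Omega)} \leq C_1 \Vnorm{f}_{[W^{1,p}(\Omega)]^*} \Vnorm{v}_{W^{1,p}(\Omega)},
\]
and \eqref{eq:ConvEst:Adj:WMinus1p} follows by taking the supremum over $v$. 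The homogeneous-boundary variant is identical, provided one checks that $K_\delta$ preserves the admissible subspace; but this is immediate from item~6 of \Cref{thm:KnownConvResults}, which gives $T K_\delta v = Tv$, so if $Tv = 0$ on $\p \Omega_D$ then $T K_\delta v = 0$ on $\p \Omega_D$ as well.

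For \eqref{eq:ConvEst:Adj:Nonlocal}, the key input is the smoothing estimate \eqref{eq:Intro:ConvEst:Deriv}, which says that $K_\delta$ upgrades the nonlocal seminorm to a full $L^p$-gradient bound. Combining it with the $L^p$-continuity \eqref{eq:ConvEst:Lp}, for every $v \in \mathfrak{W}^{\beta,p}[\delta;q](\Omega)$ I get
\[
    \Vnorm{K_\delta v}_{W^{1,p}(\Omega)} \leq C_0 \Vnorm{v}_{L^p(\Omega)} + C_2 [v]_{\mathfrak{W}^{\beta,p}[\delta;q](\Omega)} \leq C \Vnorm{v}_{\mathfrak{W}^{\beta,p}[\delta;q](\Omega)},
\]
so $K_\delta$ in fact maps $\mathfrak{W}^{\beta,p}[\delta;q](\Omega)$ continuously into $W^{1,p}(\Omega)$. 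Pairing with $f \in [W^{1,p}(\Omega)]^*$ then yields $|\vint{K_\delta^* f, v}| \leq C_2 \Vnorm{f}_{[W^{1,p}(\Omega)]^*} \Vnorm{v}_{\mathfrak{W}^{\beta,p}[\delta;q](\Omega)}$, which is exactly \eqref{eq:ConvEst:Adj:Nonlocal}.

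Finally, for \eqref{eq:ConvergenceOfConv:Adj:Wminus1p}, I would rewrite
\[
    \vint{K_\delta^* f - f, v} = \vint{f, K_\delta v - v}, \qquad v \in W^{1,p}(\Omega),
\]
and invoke the strong convergence \eqref{eq:ConvergenceOfConv:W1p}, $K_\delta v \to v$ in $W^{1,p}(\Omega)$ as $\delta \to 0$, to conclude that the right-hand side vanishes in the limit. The homogeneous-boundary version again follows verbatim because $K_\delta$ stabilizes $W^{1,p}_{0,\p \Omega_D}(\Omega)$ and \eqref{eq:ConvergenceOfConv:W1p} holds on this subspace. There is no real obstacle here; the entire theorem is a mechanical transcription of the forward mapping properties of $K_\delta$ through the duality pairing, and the only mild point worth flagging is checking that the admissible-subspace version in the Dirichlet setting is preserved under $K_\delta$, which is handled by the trace identity $T K_\delta = T$ from \Cref{thm:KnownConvResults}.
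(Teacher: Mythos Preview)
Your proposal is correct and matches the paper's approach exactly: the paper's proof is a single sentence stating that \eqref{eq:ConvEst:Adj:WMinus1p}, \eqref{eq:ConvEst:Adj:Nonlocal}, and \eqref{eq:ConvergenceOfConv:Adj:Wminus1p} follow by duality from \eqref{eq:ConvEst:W1p}, \eqref{eq:Intro:ConvEst:Deriv}, and \eqref{eq:ConvergenceOfConv:W1p} respectively, which is precisely what you have spelled out. Your additional remark that $T K_\delta = T$ ensures $K_\delta$ preserves $W^{1,p}_{0,\p\Omega_D}(\Omega)$ is the correct way to handle the homogeneous-boundary variant, which the paper leaves implicit.
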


\begin{proof}
        \eqref{eq:ConvEst:Adj:WMinus1p} and \eqref{eq:ConvEst:Adj:Nonlocal} follow from their respective dual statements \eqref{eq:ConvEst:W1p} and \eqref{eq:Intro:ConvEst:Deriv}, and 
        \eqref{eq:ConvergenceOfConv:Adj:Wminus1p} follows from its dual statement \eqref{eq:ConvergenceOfConv:W1p}.
\end{proof}

\subsection{Improved Sobolev estimates of the adjoint operators}

We recall that any element $f \in [W^{1,p}(\Omega)]^*$ is represented uniquely by a pair $(f_0,\bff_1) \in L^{p'}(\Omega) \times L^{p'}(\Omega;\bbR^d)$ such that
\begin{equation}\label{eq:DualSpace:Rep}
    \vint{f,v} = \int_{\Omega} f_0(\bx) v(\bx) \, \rmd \bx + \int_{\Omega} \bff_1(\bx) \cdot \grad v(\bx) \, \rmd \bx\,, \quad \forall v \in W^{1,p}(\Omega)\,.
\end{equation}
See \cite[chapter 3]{A75}.

\begin{theorem} 
    There exists a constant $C_1
    =C_1(d, p, \psi, \kappa_1)
    >0$
    such that    \begin{equation}\label{eq:ConvEst2:Adj:0:0}
        \Vnorm{\eta_\delta \grad K_{\delta}^* u}_{L^p(\Omega)} \leq C_1 \Vnorm{u}_{L^{p}(\Omega)} \qquad \forall u \in L^{p}(\Omega)\,, \quad 1 \leq p \leq \infty\,.
    \end{equation}
\end{theorem}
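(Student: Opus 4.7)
\medskip

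The plan is to differentiate $K_\delta^* u$ under the integral sign, apply the pointwise derivative estimate for the adjoint kernel, and then recognize the resulting integral as a non-translation-invariant integral operator whose two marginals are both bounded, so that Schur's test (or generalized Young's inequality) handles all $1 \leq p \leq \infty$ at once.

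First I would write, for $u\in L^p(\Omega)$,
\begin{equation*}
  \grad_{\bx} K_\delta^* u(\bx)
  = \int_{\Omega} \grad_{\bx}\psi_\delta(\by,\bx)\, u(\by)\,\rmd \by,
\end{equation*}
and invoke the adjoint kernel derivative bound \eqref{eq:KernelDerivativeEstimate:Adj}, which yields the pointwise estimate
\begin{equation*}
  |\grad K_\delta^* u(\bx)|
  \;\leq\; \int_{\Omega} \frac{(|\psi'|)_\delta(\by,\bx)}{\eta_\delta(\by)}\,|u(\by)|\,\rmd\by .
\end{equation*}
On the support of $(|\psi'|)_\delta(\by,\bx)$ we have $|\by-\bx|<\eta_\delta(\by)$, so \Cref{lma:ComparabilityOfXandY} (inequality \eqref{eq:ComparabilityOfDistanceFxn2}) gives $\eta_\delta(\bx)\leq (1+\kappa_1\delta)\eta_\delta(\by)$. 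Multiplying through by $\eta_\delta(\bx)$ therefore produces
\begin{equation*}
  \eta_\delta(\bx)\,|\grad K_\delta^* u(\bx)|
  \;\leq\; \frac{1+\kappa_1\delta}{1}\int_{\Omega} (|\psi'|)_\delta(\by,\bx)\,|u(\by)|\,\rmd \by
  \;=:\; C\,Tu(\bx),
\end{equation*}
where $T$ is a positive integral operator with kernel $(|\psi'|)_\delta(\by,\bx)$.

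The remaining task is to show $\|Tu\|_{L^p(\Omega)} \leq C\|u\|_{L^p(\Omega)}$ for every $1\leq p\leq \infty$. By Schur's test it suffices to bound both marginals of the kernel uniformly. The first marginal is
\begin{equation*}
  \sup_{\bx\in\Omega}\,\int_{\Omega}(|\psi'|)_\delta(\by,\bx)\,\rmd\by \;\leq\; C(d,\psi,\kappa_1),
\end{equation*}
which is exactly \Cref{lma:KernelIntegral} (that is, \eqref{eq:KernelIntFunction:Bounds}) applied with $\alpha=0$ and the nonnegative, even, compactly-supported function $|\psi'|\in L^\infty(\bbR)$ in place of $\psi$. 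The second marginal is
\begin{equation*}
  \sup_{\by\in\Omega}\,\int_{\Omega}(|\psi'|)_\delta(\by,\bx)\,\rmd\bx
  \;\leq\; \int_{B(0,1)} |\psi'(|\bz|)|\,\rmd\bz \;\leq\; C(d,\psi),
\end{equation*}
which is immediate via the translation-invariant change of variables $\bz=(\bx-\by)/\eta_\delta(\by)$ for fixed $\by$. Combining both bounds with Schur's test yields $\|Tu\|_{L^p(\Omega)}\leq C\|u\|_{L^p(\Omega)}$ for all $1<p<\infty$, and the endpoints $p=1$ and $p=\infty$ follow directly from the first and second marginal bounds respectively by Tonelli's theorem.

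The only mild subtlety is verifying the first marginal estimate, since \Cref{lma:KernelIntegral} is stated for $\psi$ satisfying the standing hypothesis \eqref{Assump:Kernel}. Its proof, however, only uses nonnegativity, evenness, boundedness, and compact support of $\psi$ within $[-1,1]$, all of which are inherited by $|\psi'|$ whenever $k_\psi\geq 1$; no further assumption is needed. The constant $C_1$ thus depends only on $d$, $p$, $\psi$, and $\kappa_1$, as claimed.
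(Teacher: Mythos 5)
Your proof is correct and follows essentially the same route as the paper's: both apply the adjoint kernel derivative bound \eqref{eq:KernelDerivativeEstimate:Adj}, absorb the weight via \eqref{eq:ComparabilityOfDistanceFxn2}, and conclude by bounding both marginals of the resulting kernel — i.e., a Schur-test/H\"older--Fubini argument. The only difference is organizational: you factor out the $\eta_\delta(\bx)/\eta_\delta(\by)$ weight before invoking Schur's test on the clean kernel $(|\psi'|)_\delta(\by,\bx)$, while the paper carries the weight directly through H\"older's inequality and discharges it using \eqref{eq:KernelIntegralDerivativeEstimate:Adj} (whose own proof is exactly your combination of \eqref{eq:KernelDerivativeEstimate:Adj} and \eqref{eq:ComparabilityOfDistanceFxn2}).
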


\begin{proof}
We apply H\"older's inequality, and then use 
\eqref{eq:KernelIntegralDerivativeEstimate:Adj},
\eqref{eq:KernelDerivativeEstimate:Adj}, \eqref{eq:ComparabilityOfDistanceFxn2}, and Fubini's theorem:
    \begin{equation*}
    \begin{aligned}
    &\Vnorm{\eta_\delta \grad K_{\delta}^* u}_{L^p(\Omega)}^p \leq \int_{\Omega} 
    \eta_\delta(\bx)^p \left( \int_{\Omega} |\grad_{\bx}  \psi_{\delta}(\by,\bx) |\, \rmd \by \right)^{p-1}
    \int_{\Omega} |\grad_{\bx} \psi_{\delta}(\by,\bx)| \, |u(\by)|^p \, \rmd \by 
    \rmd \bx  \\
    &\qquad \leq C \int_{\Omega}
   \int_{\Omega} \frac{ \eta_\delta(\bx) (|\psi'|)_{\delta}(\by,\bx) }{ \eta_\delta(\by) } \, |u(\by)|^p \, \rmd \by 
   \rmd \bx
   \leq C \Vnorm{u}_{L^p(\Omega)}^p\,.
		\end{aligned}\
	\end{equation*}	
 
\end{proof}

Before we further estimate $K_\delta^*$, we prove some results on auxiliary convolution-type operators.

\begin{lemma}
    For $1 < p < \infty$ and $\bu \in L^p(\Omega;\bbR^d)$, define the operator $\wt{K}_\delta \bu$ as in \eqref{eq:AuxOperatorDefn}, and define its adjoint
    \begin{equation*}
        \begin{split}
            \wt{K}_{\delta}^* \bu(\bx) &:= \int_{\Omega} \psi_{\delta}(\by,\bx) \left[ \bI - \frac{(\by-\bx) \otimes \grad \eta_\delta(\by)}{\eta_\delta(\by)} \right] \bu(\by) \, \rmd \by\,.
        \end{split}
    \end{equation*}
    Then $\vint{ \wt{K}_{\delta} \bu, \bv }_{L^2(\Omega)} = \vint{ \wt{K}_{\delta}^* \bv, \bu }_{L^2(\Omega)}$ for any $\bu \in L^p(\Omega;\bbR^d)$ and $\bv \in L^{p'}(\Omega;\bbR^d)$, and there exists a constant $C_0$ depending only on $d$, $p$, $\psi$ and $\kappa_1$ such that 
    \begin{equation}\label{eq:UtilityConv:Est:Lp}
        \vnorm{ \wt{K}_{\delta} \bu  }_{L^p(\Omega)} + \vnorm{ \wt{K}_{\delta}^* \bu  }_{L^p(\Omega)} \leq C_0 \Vnorm{ \bu }_{L^p(\Omega)} \qquad \forall \bu \in L^p(\Omega;\bbR^d)\,.
    \end{equation}
    In addition, there exists a constant $C_1$ depending only on $d$, $p$, $\psi$ and $\kappa_1$ such that 
    \begin{equation}\label{eq:UtilityConv:Est:W1p}
        \vnorm{ \eta_\delta \grad \wt{K}_{\delta}^* \bu  }_{L^p(\Omega)} \leq C_0 \Vnorm{ \bu }_{L^p(\Omega)} \qquad \forall \bu \in L^p(\Omega;\bbR^d)\,.
    \end{equation}
    Moreover, if
    \eqref{assump:Localization} and \eqref{assump:NonlinearLocalization}
    are satisfied for $k \geq 2$, 
    then there exists a constant $C_1$ depending only on $d$, $p$, $\psi$ and $\kappa_1$ such that 
    \begin{equation}\label{eq:UtilityConv:Est:W1p:2}
        \vnorm{ \eta_\delta \grad \wt{K}_{\delta} \bu  }_{L^p(\Omega)} 
        \leq C_0 \Vnorm{ \bu }_{L^p(\Omega)} \qquad \forall \bu \in L^p(\Omega;\bbR^d)\,.
    \end{equation}
\end{lemma}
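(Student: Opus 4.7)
The plan is to prove the four assertions in sequence, starting with the adjoint identity and building up the Sobolev estimates by direct calculation combined with the kernel derivative estimates already collected in \Cref{thm:gradientpsi} and its adjoint counterpart.

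First, for the adjoint relation $\vint{\wt{K}_\delta \bu, \bv}_{L^2} = \vint{\wt{K}_\delta^* \bv, \bu}_{L^2}$, I would write the left-hand side as a double integral, use the identity $\bv(\bx) \cdot [(\bx-\by) \otimes \grad \eta_\delta(\bx)] \bu(\by) = \bu(\by) \cdot [\grad \eta_\delta(\bx) \otimes (\bx-\by)] \bv(\bx)$ arising from $(\ba \otimes \bfb)^T = \bfb \otimes \ba$, then swap $\bx \leftrightarrow \by$ under the double integral via Fubini. The validity of Fubini is ensured by the $L^p$ bounds \eqref{eq:UtilityConv:Est:Lp} proved immediately after. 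For \eqref{eq:UtilityConv:Est:Lp} itself, the matrices $[\bI - \frac{(\bx-\by) \otimes \grad \eta_\delta(\bx)}{\eta_\delta(\bx)}]$ and its adjoint analog have operator norm bounded by $1 + \kappa_1 \delta$ on $\supp \psi_\delta(\bx,\by)$, since $|\bx-\by| \leq \eta_\delta(\bx)$ and $|\grad \eta_\delta| \leq \kappa_1 \delta$. Hence the argument is identical to the proofs of \eqref{eq:ConvEst:Lp} and \eqref{eq:ConvEst:Adj:Lp}: apply H\"older's inequality in the inner integral with weight $\psi_\delta$, invoke \eqref{eq:KernelIntFunction:Bounds} to control $\Psi_\delta$ uniformly, and integrate using Tonelli.

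For \eqref{eq:UtilityConv:Est:W1p}, I would differentiate $\wt{K}_\delta^*\bu(\bx)$ under the integral. The $\bx$-dependence occurs only in $\psi_\delta(\by,\bx)$ and linearly in $(\by-\bx)$. The first contribution is controlled by \eqref{eq:KernelDerivativeEstimate:Adj}, giving a factor bounded by $\eta_\delta(\by)^{-1}(|\psi'|)_\delta(\by,\bx)$ times $|\bu(\by)|$ (using again the boundedness of the matrix). The second gives a term $\psi_\delta(\by,\bx) \frac{\grad \eta_\delta(\by) \cdot \bu(\by)}{\eta_\delta(\by)}$, bounded by $C \delta \eta_\delta(\by)^{-1} \psi_\delta(\by,\bx) |\bu(\by)|$. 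Multiplying by $\eta_\delta(\bx)$ and using \eqref{eq:ComparabilityOfDistanceFxn2} to exchange $\eta_\delta(\bx)$ for $\eta_\delta(\by)$ on the support of the kernel cancels the singular factor, reducing the estimate to the familiar form $C \int (\psi + |\psi'|)_\delta(\by,\bx) |\bu(\by)| \, \rmd \by$, to which the H\"older--Tonelli argument of \eqref{eq:ConvEst:Adj:Lp} applies verbatim.

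The estimate \eqref{eq:UtilityConv:Est:W1p:2} is the main obstacle, because the $\bx$-dependence of $\wt{K}_\delta \bu(\bx)$ is now spread across $\psi_\delta(\bx,\by)$, the vector $(\bx-\by)$, \emph{and} $\grad \eta_\delta(\bx)$. Differentiating the first two is handled exactly as above via \eqref{eq:KernelDerivativeEstimate}. For the $\grad_\bx \grad \eta_\delta(\bx)$ term the additional regularity $k \geq 2$ is used: Fa\`a di Bruno applied to $\eta_\delta = \delta\, q \circ \lambda$, together with the scaling bounds in item iv of \eqref{assump:Localization} and the monotonicity $q'(r) \leq 1$, yields an estimate of the form $|\grad^2 \eta_\delta(\bx)| \leq C \delta \, d_{\p \Omega}(\bx)^{-1}$; when weighted by the factor $|\bx-\by| \leq \eta_\delta(\bx) \leq C \delta \lambda(\bx)$ appearing in the differentiated matrix, one obtains a term bounded by $C \delta \psi_\delta(\bx,\by)|\bu(\by)|$, whose contribution to $\eta_\delta(\bx) \grad \wt{K}_\delta\bu(\bx)$ has the right form. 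Combining the three contributions with the H\"older--Tonelli argument then yields \eqref{eq:UtilityConv:Est:W1p:2}. The hardest bookkeeping is ensuring that the $\grad^2 \eta_\delta$ factor, which is singular like $1/\lambda(\bx)$, is absorbed by one power of $\eta_\delta(\bx)$ and the factor $|\bx-\by|$; the assumption $k \geq 2$ is what makes this possible.
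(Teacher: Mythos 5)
Your proof follows the paper's own line exactly: matrix-norm reduction to $K_\delta$ and $K_\delta^*$ for the $L^p$ bound, then differentiation under the integral combined with the adjoint kernel-derivative estimate \eqref{eq:KernelDerivativeEstimate:Adj} and the $\eta_\delta$-comparability \eqref{eq:ComparabilityOfDistanceFxn2} for the weighted gradient bound on $\wt{K}_\delta^*$, while also correctly supplying the $\wt{K}_\delta$ gradient details that the paper declines to spell out and pinpointing where $k\geq 2$ is needed to absorb the $1/d_{\p\Omega}$ singularity of $\grad^2\eta_\delta$ against the weight $\eta_\delta(\bx)$ and the factor $|\bx-\by|\lesssim\eta_\delta(\bx)$. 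One detail worth noting: carrying out your transpose-and-swap calculation for the adjoint identity from \eqref{eq:AuxOperatorDefn} as printed yields a rank-one factor $\grad\eta_\delta(\by)\otimes(\by-\bx)$, not the $(\by-\bx)\otimes\grad\eta_\delta(\by)$ stated for $\wt{K}_\delta^*$; this traces to the rank-one factor in \eqref{eq:AuxOperatorDefn} being written transposed relative to what the identity $\grad K_\delta u=\wt{K}_\delta[\grad u]$ actually produces, and once that is fixed your computation does give exactly the stated $\wt{K}_\delta^*$ (all of the estimates are insensitive to this, since transposing the rank-one matrix leaves its operator norm unchanged).
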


\begin{proof}
The estimate \eqref{eq:UtilityConv:Est:Lp} follows from \eqref{eq:ConvEst:Lp} and \eqref{eq:ConvEst:Adj:Lp}, and the bounds
\begin{equation*}
    |\wt{K}_\delta \bu(\bx)| \leq (1+\kappa_1) \left( K_\delta |\bu|(\bx) + K_\delta^* |\bu|(\bx) \right)\,.
\end{equation*}

We only prove \eqref{eq:UtilityConv:Est:W1p}; the proof for $\wt{K}_\delta$ is similar, and in any case only the estimate for $\wt{K}_\delta^*$ will be referenced elsewhere.
Directly,
\begin{equation*}
    \begin{split}
        \grad \wt{K}_{\delta}^* \bu(\bx) &= \int_{\Omega} \frac{ (\psi')_\delta(\by,\bx) }{ \eta_\delta(\by) } \frac{\bx-\by}{|\bx-\by|} \otimes \left( \bu(\by) + \frac{\grad \eta_\delta(\by) \cdot \bu(\by) }{ \eta_\delta(\by) } (\bx-\by) \right) \\
        &\qquad + \psi_\delta(\by,\bx) \frac{\grad \eta_\delta(\by) \cdot \bu(\by) }{ \eta_\delta(\by) } \bI \, \rmd \by\,,
    \end{split}
\end{equation*}
and so by \eqref{eq:ComparabilityOfDistanceFxn2}
\begin{equation*}
    \begin{split}
        \eta_\delta(\bx) |\grad \wt{K}_{\delta}^* \bu(\bx)| &\leq (1+\kappa_1 \delta)^2 \int_{\Omega}  \Big( (|\psi'|)_\delta(\by,\bx) + \psi_\delta(\by,\bx) \Big) |\bu(\by)| \, \rmd \by\,.
    \end{split}
\end{equation*}
The result then follows from the method used to prove \eqref{eq:ConvEst:Adj:Lp}.
\end{proof}

\begin{theorem}
    Let $1 < p < \infty$. 
    Then there exists a positive constant $C_0$ depending only on $d$, $p$, $\psi$ and $\kappa_1$ such that
    \begin{equation}\label{eq:ConvEst2:Adj:0:neg1}
		\Vnorm{\eta_\delta K_{\delta}^*f }_{L^{p}(\Omega)} \leq C_0 \Vnorm{f}_{[W^{1,p'}(\Omega)]^*}\,, \qquad \forall \, f \in [W^{1,p'}(\Omega)]^*\,.
    \end{equation}
    If additionally \eqref{Assump:Kernel}
    is satisfied for $k_{\psi} \geq 2$, then there exists a positive constant $C_1$ depending only on $d$, $p$, $\psi$ and $\kappa_1$ such that
    \begin{equation}\label{eq:ConvEst2:Adj:1:neg1}
        \Vnorm{\eta_\delta^2 K_{\delta}^* f}_{W^{1,p}(\Omega)} \leq C_1 \Vnorm{f}_{[W^{1,p'}(\Omega)]^*} \,, \qquad \forall \, f \in [W^{1,p'}(\Omega)]^*\,.
    \end{equation}
    The same results hold for $[W^{1,p'}(\Omega)]^*$ replaced with $[W^{1,p'}_{0,\p \Omega_D}(\Omega)]^*$
\end{theorem}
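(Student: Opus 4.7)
The plan is to prove both estimates via duality, transferring weights and derivatives onto explicitly defined auxiliary operators acting on test functions, and then invoking the known mapping properties of $K_\delta$ summarized in \Cref{thm:KnownConvResults} together with the pointwise kernel estimates in \Cref{sec:buildingblockest}. Throughout, I will assume $f$ is smooth enough that $K_\delta^* f$ is given by the integral formula in \eqref{eq:ConvolutionOperator:Adj}, obtaining the general result by density and the uniform bounds.

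For \eqref{eq:ConvEst2:Adj:0:neg1}, I will pair $\eta_\delta K_\delta^* f$ against an arbitrary $\phi \in L^{p'}(\Omega)$ and use Fubini's theorem to write
\begin{equation*}
    \int_\Omega \eta_\delta(\bx) K_\delta^* f(\bx)\phi(\bx) \, \rmd \bx = \vint{f, T\phi}\,, \qquad T\phi(\by) := \int_\Omega \psi_\delta(\by,\bx)\,\eta_\delta(\bx) \phi(\bx) \, \rmd \bx\,.
\end{equation*}
I then show $T: L^{p'}(\Omega) \to W^{1,p'}(\Omega)$ is bounded, which yields the estimate via $|\vint{f,T\phi}| \leq \Vnorm{f}_{[W^{1,p'}(\Omega)]^*}\Vnorm{T\phi}_{W^{1,p'}(\Omega)}$. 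The $L^{p'}$ bound on $T\phi$ follows by restricting to the support $|\bx-\by|\leq \eta_\delta(\by)$ and applying \eqref{eq:ComparabilityOfDistanceFxn2} to replace $\eta_\delta(\bx)$ with $\eta_\delta(\by)$, which reduces $T\phi$ to roughly $C\eta_\delta(\by)K_\delta|\phi|(\by)$ and so yields $\Vnorm{T\phi}_{L^{p'}} \leq C\delta\Vnorm{\phi}_{L^{p'}}$ by \eqref{eq:ConvEst:Lp}. The gradient bound uses \eqref{eq:KernelDerivativeEstimate} to dominate $|\grad_\by\psi_\delta(\by,\bx)|$ by $\eta_\delta(\by)^{-1}(\psi_\delta + |\psi'|_\delta)(\by,\bx)$, and the cancellation of this $\eta_\delta(\by)^{-1}$ against the $\eta_\delta(\bx) \leq 2\eta_\delta(\by)$ weight leaves a plain convolution bounded in $L^{p'}$ again by \eqref{eq:ConvEst:Lp}.

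For \eqref{eq:ConvEst2:Adj:1:neg1}, the $L^p$ norm of $\eta_\delta^2 K_\delta^* f$ is immediately controlled by \eqref{eq:ConvEst2:Adj:0:neg1} and the uniform bound $\eta_\delta \leq \delta\,\diam(\Omega)$. For the gradient I use the product rule
\begin{equation*}
    \grad(\eta_\delta^2 K_\delta^* f) = 2\eta_\delta(\grad \eta_\delta)K_\delta^* f + \eta_\delta^2 \grad K_\delta^* f\,.
\end{equation*}
The first summand is dominated by $C\delta |\eta_\delta K_\delta^* f|$ using $|\grad\eta_\delta|\leq \kappa_1\delta$ and \eqref{eq:ConvEst2:Adj:0:neg1}. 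For the second summand I again dualize against $\vec\phi \in L^{p'}(\Omega;\bbR^d)$, writing
\begin{equation*}
    \int_\Omega \eta_\delta(\bx)^2 \grad K_\delta^* f(\bx) \cdot \vec\phi(\bx) \, \rmd \bx = \vint{f, \tilde{T}\vec\phi}\,,
\end{equation*}
where the kernel of $\tilde{T}$ is $\eta_\delta(\bx)^2 \grad_\bx\psi_\delta(\by,\bx) = \frac{\eta_\delta(\bx)^2}{\eta_\delta(\by)}\psi'_\delta(\by,\bx)\frac{\bx-\by}{|\bx-\by|}$, from \eqref{eq:KernelDerivativeEstimate:Adj}. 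The $L^{p'}$ bound of $\tilde{T}\vec\phi$ proceeds exactly as above. For $\grad_\by \tilde{T}\vec\phi$, I differentiate the kernel in $\by$ explicitly, producing terms from differentiating $\eta_\delta(\by)^{-(d+1)}$, the argument of $\psi'$, and the direction $(\bx-\by)/|\bx-\by|$; assembling these, I will dominate $|\grad_\by \tilde{T}\vec\phi(\by)|$ by $C K_\delta^{\#}|\vec\phi|(\by)$ for a modified convolution kernel built from $|\psi|$, $|\psi'|$, and $|\psi''|$, then conclude by \eqref{eq:ConvEst:Lp}.

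The main obstacle is the gradient bound in the second estimate: differentiating $(\bx-\by)/|\bx-\by|$ in $\by$ produces a $1/|\bx-\by|$ singularity that is not locally integrable on its own. The standard remedy I will invoke is that $\psi$ is even and $C^{k_\psi}$, so $\psi'(0)=0$; consequently $\psi'(|\bx-\by|/\eta_\delta(\by))/|\bx-\by|$ is bounded by $C\Vnorm{\psi''}_{L^\infty}/\eta_\delta(\by)$, which absorbs the singularity into an acceptable kernel. This is precisely where the hypothesis $k_\psi \geq 2$ is used, both to make $\psi'/r$ a bounded $C^0$ kernel and to differentiate the term $\psi'(|\bx-\by|/\eta_\delta(\by))$ in $\by$, picking up $\psi''$. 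Once this bookkeeping is done and every summand is expressed in the form $C \eta_\delta(\bx)^a \eta_\delta(\by)^{b}\cdot(\text{mollifier-like kernel})$ with $a+b=0$ modulo the support comparison, the rest follows routinely from \eqref{eq:ConvEst:Lp}. The case $f\in[W^{1,p'}_{0,\p\Omega_D}(\Omega)]^*$ is identical since both $T\phi$ and $\tilde{T}\vec\phi$ are bona fide elements of the pertinent Sobolev space once $\phi$ and $\vec\phi$ are supported in $\Omega$.
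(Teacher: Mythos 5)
Your approach is correct and genuinely different from the paper's. The paper works through the explicit representation $f \leftrightarrow (f_0, \bff_1) \in L^p(\Omega)\times L^p(\Omega;\bbR^d)$ of $[W^{1,p'}(\Omega)]^*$, moves the operators onto $f_0$ and $\bff_1$ via the identity $\grad K_\delta(\eta_\delta v) = \wt{K}_\delta[\grad(\eta_\delta v)]$ and the auxiliary adjoint $\wt{K}_\delta^*$, and integrates by parts — crucially checking $\eta_\delta\wt{K}_\delta^*\bff_1 \in W^{1,p}_0(\Omega)$ so the boundary term drops out. This yields an explicit $L^p$ representative $\eta_\delta K_\delta^* f_0 - \eta_\delta\div\wt{K}_\delta^*\bff_1$. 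Your route is pure duality: you show the operator $T\phi = K_\delta(\eta_\delta\phi)$ (and $\tilde{T}\vec\phi$) maps $L^{p'}(\Omega)$ into $W^{1,p'}(\Omega)$, from which $|\vint{\eta_\delta K_\delta^* f,\phi}| = |\vint{f,T\phi}| \leq \|f\|\,\|T\phi\|_{W^{1,p'}}$ finishes immediately. This avoids the representation machinery and the integration-by-parts boundary-term check entirely. You also make explicit a point the paper glosses over in the second estimate: in the gradient of $\tilde{T}\vec\phi$, differentiating the factor $(\bx-\by)/|\bx-\by|$ produces a non-integrable $1/|\bx-\by|$ singularity, and the remedy — $\psi$ even so $\psi'(0)=0$ and hence $\psi'(r)/r$ bounded when $k_\psi\geq 2$ — is required in both approaches. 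The paper merely says taking an additional derivative ``does not complicate the estimation,'' whereas you identify exactly where $k_\psi\geq 2$ is used.

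One small imprecision: for the $[W^{1,p'}_{0,\p\Omega_D}(\Omega)]^*$ case, you write that $T\phi$ lies in the right space ``once $\phi$ is supported in $\Omega$,'' but that is not the mechanism. The generic $\phi \in L^{p'}(\Omega)$ need not have compact support, and since $K_\delta$ is boundary-recovering it does not erase boundary values. What actually forces $T\phi = K_\delta(\eta_\delta\phi)$ to have zero trace on all of $\p\Omega$ is the vanishing of the weight $\eta_\delta$ at $\p\Omega$: approximate $\phi$ by $\phi_n \in C^1(\overline\Omega)$ in $L^{p'}$, note $T\phi_n|_{\p\Omega} = (\eta_\delta\phi_n)|_{\p\Omega} = 0$ by item (2) of \Cref{thm:KnownConvResults}, use your own bound $\|T\phi_n - T\phi\|_{W^{1,p'}} \leq C\|\phi_n-\phi\|_{L^{p'}} \to 0$, and conclude $T\phi \in W^{1,p'}_0(\Omega) \subset W^{1,p'}_{0,\p\Omega_D}(\Omega)$. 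The rest of your argument stands as written.
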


\begin{proof}
    We first prove \eqref{eq:ConvEst2:Adj:0:neg1}.
    Let $f$ be identified with the pair $(f_0,\bff_1)$ as in \eqref{eq:DualSpace:Rep}; we get for any $v \in W^{1,p'}(\Omega)$
    \begin{equation*}
    \begin{split}
        \vint{\eta_\delta K_\delta^*f,v} &=  \vint{ f, K_\delta (\eta_\delta v) } 
        = \int_{\Omega} f_0(\bx) K_\delta (\eta_\delta v)(\bx) \, \rmd \bx + \int_{\Omega} \bff_1(\bx) \cdot \grad [ K_\delta (\eta_\delta v) ](\bx) \, \rmd \bx\,.
    \end{split}
    \end{equation*}
    Then $\grad[K_\delta(\eta_\delta v)] = \wt{K}_\delta [ \grad (\eta_\delta v) ]$ by the formula \eqref{eq:ConvEst1:W1p:Pf1}, so
    \begin{equation*}
    \begin{split}
        \vint{\eta_\delta K_\delta^*f,v} 
        &= \int_{\Omega} K_\delta^* f_0(\bx) \eta_\delta(\bx) v(\bx) \, \rmd \bx + \int_{\Omega} \wt{K}_\delta^* \bff_1(\bx) \cdot \grad [ \eta_\delta v ](\bx) \, \rmd \bx \\
        &= \int_{\Omega} K_\delta^* f_0(\bx) \eta_\delta(\bx) v(\bx) \, \rmd \bx + \int_{\Omega} \wt{K}_\delta^* \bff_1(\bx) \cdot \grad \eta_\delta(\bx)  v(\bx) \, \rmd \bx \\
        &\qquad + \int_{\Omega} \eta_\delta(\bx) \wt{K}_\delta^* \bff_1(\bx) \cdot \grad v(\bx) \, \rmd \bx\,.
    \end{split}
    \end{equation*}
    To integrate by parts in the last term, we note that $\eta_\delta \wt{K}_\delta^* \bff_1 \in W^{1,p}(\Omega)$ by \eqref{eq:UtilityConv:Est:W1p}, and so by the divergence theorem and trace theorem
    \begin{equation*}
        \int_{\Omega} \eta_\delta \wt{K}_\delta^* \bff_1\cdot \grad v \, \rmd \bx = -\int_{\Omega} \div[ \eta_\delta \wt{K}_\delta^* \bff_1 ] v \, \rmd \bx + \int_{\p \Omega} T v \cdot T[\eta_\delta \wt{K}_\delta^* \bff_1] \cdot \bsnu \, \rmd \sigma\,.
    \end{equation*}
    Now, suppose $\bsvarphi \in C^1(\overline{\Omega};\bbR^d)$. Then an application of \eqref{eq:UtilityConv:Est:Lp} with $p = \infty$ shows that $\eta_\delta \wt{K}_\delta^* \bsvarphi(\bx) = {\bf 0}$ for all $\bx \in \p \Omega$.
    Now let $\bsvarphi_n \in C^1(\overline{\Omega};\bbR^d)$ be a sequence such that $\bsvarphi_n \to \bff_1$ in $L^{p}(\Omega)$. Then by  \eqref{eq:UtilityConv:Est:W1p}
	\begin{equation*}
		\begin{split}
			\vnorm{ T [\eta_\delta \wt{K}_\delta^* \bff_1]  }_{W^{1-1/p,p}(\p \Omega)} 
			& \leq \vnorm{ T [\eta_\delta \wt{K}_\delta^* \bff_1] - T [\eta_\delta \wt{K}_\delta^* \bsvarphi_n]  }_{W^{1-1/p,p}(\p \Omega)} \\
            &\leq C \vnorm{  \eta_\delta \wt{K}_\delta^* \bff_1 - \eta_\delta \wt{K}_\delta^* \bsvarphi_n  }_{W^{1,p}(\Omega)}\\
            &
		  \leq C \Vnorm{ \bff_1 - \bsvarphi_n }_{L^{p}(\Omega)} \to 0 \text{ as } n \to \infty\,.
		\end{split}
	\end{equation*}
	Therefore
		$\eta_\delta \wt{K}_\delta^* \bff_1 \in W^{1,p}_0(\Omega)$, and therefore the boundary term is zero. So we can represent $\eta_\delta K_\delta^* f$ as
        \begin{equation*}
    \begin{split}
        \vint{\eta_\delta K_\delta^*f,v} 
        &= \int_{\Omega} \big( \eta_\delta K_\delta^* f_0 + \grad \eta_\delta \cdot \wt{K}_\delta^* \bff_1 - \div [ \eta_\delta \wt{K}_\delta^* \bff_1 ] \big)   v \, \rmd \bx \\
        &= \int_{\Omega} \big( \eta_\delta K_\delta^* f_0 - \eta_\delta \div \wt{K}_\delta^* \bff_1 \big)  v \, \rmd \bx\,, \quad \forall \, v \in W^{1,p'}(\Omega)\,.
    \end{split}
    \end{equation*}
    This action of $\eta_\delta K_\delta^*f$ can be extended by density to $v \in L^{p'}(\Omega)$; we have already noted throughout the proof that each term acting on $v$ in the integrand belongs to $L^p(\Omega)$. Therefore $\eta_\delta K_\delta^* f $ can be identified with this function, and \eqref{eq:ConvEst2:Adj:0:neg1} follows from \eqref{eq:UtilityConv:Est:Lp}, \eqref{eq:UtilityConv:Est:W1p}, and the equivalence $\vnorm{f}_{ [W^{1,p'}(\Omega)]^* } \approx \vnorm{f_0}_{L^p(\Omega)} + \vnorm{\bff_1}_{L^p(\Omega)}$.

    To show \eqref{eq:ConvEst2:Adj:1:neg1}, we use that
    \begin{equation*}
         (\eta_\delta K_\delta^* f)(\bx) = \eta_\delta(\bx) K_\delta^* f_0(\bx) - \eta_\delta(\bx) \div \wt{K}_\delta^* \bff_1(\bx)\,,
    \end{equation*}
    and so clearly $\eta_\delta^2  K_\delta^* f \in L^p(\Omega)$. Directly,
    \begin{equation*}
        \grad [\eta_\delta^2 K_\delta^* f] = 2 \eta_\delta \grad \eta_\delta K_\delta^* f_0 + \eta_\delta^2 \grad K_\delta^* f_0 - 2 \eta_\delta \grad \eta_\delta \div \wt{K}_\delta^* \bff_1 - \eta_\delta^2 \grad[ \div \wt{K}_\delta^* \bff_1 ]\,.
    \end{equation*}
    All of these terms except for the last can be estimated using \eqref{eq:ConvEst:Adj:Lp}, \eqref{eq:ConvEst2:Adj:0:0}, and \eqref{eq:UtilityConv:Est:W1p}. 
    Moreover, the last term can be treated with an argument similar to that used to prove \eqref{eq:UtilityConv:Est:W1p}; taking an additional partial derivative does not complicate the estimation. 
    \end{proof}

\subsection{More general boundary-localized convolutions}
We need a few more estimates on boundary-localized convolution-type operators emerging naturally from the equation $\cL_{p,\delta} u = f$.

For $0 \leq \alpha < d-1$, we define the boundary-localized convolution operator
\begin{equation*}
    J_{\delta,\alpha}[\lambda,q,\psi] u(\bx) := \frac{1}{P_{\delta,\alpha}[\lambda,q,\psi](\bx)}\int_{\Omega} 
    \left( \frac{ \psi_{\delta,\alpha}(\bx,\by) }{ \eta_\delta(\bx)^2 } + \frac{ \psi_{\delta,\alpha}(\by,\bx) }{ \eta_\delta(\by)^2 } \right) u(\by) \, \rmd \by\,,
\end{equation*}
where $\psi_{\delta,\alpha}$ is defined using the abbreviation introduced in \eqref{eq:abbrev} and $P_{\delta,\alpha}[\lambda,q,\psi](\bx)$ is the normalization
\begin{equation*}
    P_{\delta,\alpha}[\lambda,q,\psi](\bx) := \int_{\Omega} 
    \left( \frac{ \psi_{\delta,\alpha}(\bx,\by) }{ \eta_\delta(\bx)^2 } + \frac{ \psi_{\delta,\alpha}(\by,\bx) }{ \eta_\delta(\by)^2 } \right) \, \rmd \by\,.
\end{equation*}
Whenever the context is clear, we adopt the convention \eqref{eq:abbrev} for the abbreviations $J_{\delta,\alpha}[\lambda,q,\psi] = J_{\delta,\alpha}$ and $P_{\delta,\alpha}[\lambda,q,\psi] = P_{\delta,\alpha}$.
\begin{theorem} Let $1 \leq p \leq \infty$.
    There exists a constant $C_0 > 0$ depending only on $d$, $p$,  $\beta$, $\psi$, $\lambda$, $\kappa_1$ and $\alpha$ such that
    \begin{equation}\label{eq:OperatorConv:Lp}
        \vnorm{ J_{\delta,\alpha} u }_{L^p(\Omega)} \leq C_0 \Vnorm{ u }_{L^p(\Omega)}\,, \quad \forall u \in L^p(\Omega)\,.
    \end{equation}
    There additionally exists a constant $C_1 > 0$ depending only on $d$, $\beta$, $p$, $\psi$, $q$, $\alpha$, $\kappa_0$, and $\kappa_1$ such that for all $\delta$ satisfying \eqref{eq:HorizonThreshold}
    \begin{equation}\label{eq:OperatorConv:Energy}
        \vnorm{ \grad J_{\delta,\alpha} u }_{L^{p}(\Omega)} \leq C_1 [u]_{ \mathfrak{W}^{\beta,p}[\delta;q](\Omega) }\,, \quad \forall u \in  \mathfrak{W}^{\beta,p}[\delta;q](\Omega)\,.
    \end{equation}
\end{theorem}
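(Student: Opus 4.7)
The plan is to first pin down the normalization $P_{\delta,\alpha}$. A direct change of variables gives $\int_\Omega \psi_{\delta,\alpha}(\bx,\by)\,\rmd \by = \int_{B(0,1)} \psi(|\bz|)|\bz|^{-\alpha}\,\rmd \bz$, which is finite because $\alpha < d$. Combined with \Cref{lma:KernelIntegral} and \Cref{lma:ComparabilityOfXandY}, this yields the two-sided bounds $c\, \eta_\delta(\bx)^{-2} \leq P_{\delta,\alpha}(\bx) \leq C\, \eta_\delta(\bx)^{-2}$ on $\Omega$, which recast $J_{\delta,\alpha} u$ as a bounded weighted average. The estimate \eqref{eq:OperatorConv:Lp} then follows from Jensen's inequality applied pointwise in $\bx$, Tonelli's theorem, and \Cref{lma:ComparabilityOfXandY} to absorb $\eta_\delta(\bx)/\eta_\delta(\by)$ factors on the support of the kernel, following the same template as in the proofs of \eqref{eq:ConvEst:Lp} and \eqref{eq:ConvEst:Adj:Lp}.

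For \eqref{eq:OperatorConv:Energy}, the key observation is the cancellation $J_{\delta,\alpha}(1)=1$. Writing $\mathcal{K}(\bx,\by) := \psi_{\delta,\alpha}(\bx,\by)/\eta_\delta(\bx)^2 + \psi_{\delta,\alpha}(\by,\bx)/\eta_\delta(\by)^2$ and using the quotient rule together with the identity $\grad_\bx P_{\delta,\alpha}(\bx) = \int_\Omega \grad_\bx \mathcal{K}(\bx,\by)\,\rmd \by$ leads to the cancellation representation
\[
\grad J_{\delta,\alpha} u(\bx) = \frac{1}{P_{\delta,\alpha}(\bx)} \int_\Omega \grad_\bx \mathcal{K}(\bx,\by)\, \bigl(u(\by) - J_{\delta,\alpha} u(\bx)\bigr) \, \rmd \by\,.
\]
The pointwise estimate $|\grad_\bx \mathcal{K}(\bx,\by)| \leq C\, \eta_\delta(\bx)^{-3}\, \hat{\phi}_{\delta,\alpha}(\bx,\by)$ follows from \Cref{thm:gradientpsi}, the adjoint estimate \eqref{eq:KernelDerivativeEstimate:Adj}, and \Cref{lma:ComparabilityOfXandY}, where $\hat{\phi}_{\delta,\alpha}$ denotes the symmetrization of $\phi_{\delta,\alpha}$ with shape function $\phi := \psi + |\psi'|$; since $\psi(0)>0$, this $\phi$ satisfies the requirements on $\rho$ in \eqref{assump:VarProb:Kernel}. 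Combining with the lower bound on $P_{\delta,\alpha}$, the triangle bound $|u(\by) - J_{\delta,\alpha} u(\bx)| \leq |u(\by) - u(\bx)| + |J_{\delta,\alpha}u(\bx)-u(\bx)|$, and a direct estimate on the latter by $\int_\Omega \hat{\psi}_{\delta,\alpha}(\bx,\by)|u(\by)-u(\bx)|\,\rmd \by$, yields
\[
|\grad J_{\delta,\alpha} u(\bx)| \leq \frac{C}{\eta_\delta(\bx)} \int_\Omega \hat{\phi}_{\delta,\alpha}(\bx,\by)\, |u(\by) - u(\bx)| \, \rmd \by\,.
\]

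From here, Jensen's inequality (using the uniform bound $\int_\Omega \hat{\phi}_{\delta,\alpha}(\bx,\by)\,\rmd \by \leq C$ from \Cref{lma:KernelIntegral}) and Tonelli's theorem give
\[
\int_\Omega |\grad J_{\delta,\alpha} u(\bx)|^p\, \rmd \bx \leq C \int_\Omega \int_\Omega \frac{\hat{\phi}_{\delta,\alpha}(\bx,\by)}{\eta_\delta(\bx)^p}\, |u(\by) - u(\bx)|^p \, \rmd \by \, \rmd \bx\,,
\]
whose right-hand side is (up to constants) the seminorm $[u]_{\mathfrak{V}^{\alpha,p}[\delta;q;\phi,\lambda](\Omega)}^p$. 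In the regime $\beta \geq \alpha$ one concludes via the pointwise kernel comparison $(|\by-\bx|/\eta_\delta(\bx))^{\beta-\alpha} \leq 1$ on the joint support, followed by the $\mathfrak{V}$--$\mathfrak{W}$ equivalence in item~6) of \Cref{thm:FxnSpaceProp}. In the opposite regime $\beta < \alpha$ one inserts a power weight $(|\by-\bx|/\eta_\delta(\bx))^{s}$ into the Jensen step with $s = (\beta-\alpha)/(p-1)$; the constraint $\alpha < d-1$ then keeps the resulting auxiliary integral $\int_{B(0,1)} \psi(|\bz|)\, |\bz|^{s-\alpha}\,\rmd \bz$ finite within the admissible parameter ranges. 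Managing this kernel mismatch between the intrinsic exponent $\alpha$ of the operator and the exponent $\beta$ of the target seminorm is the main obstacle, and is precisely what forces $C_1$ to depend jointly on $\alpha$, $\beta$, and $p$.
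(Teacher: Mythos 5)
Your $L^p$ estimate follows essentially the paper's route, and your alternative cancellation identity $\grad J_{\delta,\alpha} u(\bx) = P_{\delta,\alpha}(\bx)^{-1}\int_\Omega \grad_\bx \mathcal{K}(\bx,\by)\,(u(\by)-J_{\delta,\alpha}u(\bx))\,\rmd\by$ is a valid (if slightly roundabout) variant of the paper's add-and-subtract of $\frac{\grad P_{\delta,\alpha}}{P_{\delta,\alpha}}u(\bx)$. However, there is a genuine gap in your kernel gradient estimate.

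You claim $|\grad_\bx \mathcal{K}(\bx,\by)| \leq C\,\eta_\delta(\bx)^{-3}\,\hat\phi_{\delta,\alpha}(\bx,\by)$ with $\phi := \psi + |\psi'|$, i.e.\ the same intrinsic exponent $\alpha$. That is false. Writing $r = |\by-\bx|/\eta_\delta(\bx)$ and differentiating the weighted profile $\psi(r)/r^\alpha$ produces $\psi'(r)/r^\alpha - \alpha\,\psi(r)/r^{\alpha+1}$; equivalently, using $\psi_{\delta,\alpha}(\bx,\by) = \eta_\delta(\bx)^\alpha|\by-\bx|^{-\alpha}\psi_\delta(\bx,\by)$, the term $\grad_\bx|\by-\bx|^{-\alpha}$ contributes $\alpha\,\eta_\delta(\bx)^{-1}\psi_{\delta,\alpha+1}(\bx,\by)$. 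So the correct bound is of the form
\begin{equation*}
\bigl|\grad_\bx \mathcal{K}(\bx,\by)\bigr| \leq \frac{C}{\eta_\delta(\bx)^3}\Bigl(\hat\psi_{\delta,\alpha+1}(\bx,\by) + \wh{(|\psi'|)}_{\delta,\alpha}(\bx,\by)\Bigr)\,,
\end{equation*}
with intrinsic exponent $\alpha+1$, not $\alpha$. This extra unit of singularity is precisely why the hypothesis $\alpha < d-1$ (rather than the weaker $\alpha < d$) appears: the H\"older step requires $\int_{B(0,1)}\psi(|\bz|)|\bz|^{-(\alpha+1)}\,\rmd\bz < \infty$. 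Your proposed bound would, if correct, make that hypothesis superfluous -- a warning sign. Relatedly, you cannot apply \Cref{thm:gradientpsi} or \eqref{eq:KernelDerivativeEstimate:Adj} as a black box here: those are stated for profiles satisfying \eqref{Assump:Kernel}, and $\psi(\cdot)/|\cdot|^\alpha$ is not even continuous at the origin for $\alpha>0$, so the computation must be done directly as the paper does. The incidental remark "since $\psi(0)>0$" is also unjustified (\eqref{Assump:Kernel} does not guarantee this), though it is not actually needed to verify that your $\phi$ satisfies \eqref{assump:VarProb:Kernel}. Once you carry the $\alpha+1$ exponent through, your final reduction to the seminorm equivalence of item 6) of \Cref{thm:FxnSpaceProp} must likewise be framed with that shifted exponent.
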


\begin{proof}
    By \eqref{eq:ComparabilityOfDistanceFxn2} and \eqref{eq:KernelIntFunction:Bounds}, 
    \begin{equation}\label{eq:OperatorConv:Pf1}
        1 + \frac{(1-\kappa_1 \delta)^2}{1+\kappa_1 \delta} \leq \frac{ \eta_\delta(\bx)^2 P_{\delta,\alpha}(\bx) }{ \int_{B(0,1)} \frac{ \psi(|\bz|)}{ |\bz|^\alpha } \, \rmd \bz } \leq 1 + \frac{ (1+\kappa_1 \delta)^2}{ 1-\kappa_1 \delta} \,,
    \end{equation}
    and so again by \eqref{eq:ComparabilityOfDistanceFxn2}
    \begin{equation*}
        |J_{\delta,\alpha} u(\bx)| \leq C K_\delta[ \lambda,q, |\cdot|^{-\alpha} \psi ](|u|)(\bx) + C   K_\delta^*[ \lambda,q, |\cdot|^{-\alpha} \psi ](|u|)(\bx)\,.
    \end{equation*}
    The estimate \eqref{eq:OperatorConv:Lp} then follows from \eqref{eq:ConvEst:Lp} and \eqref{eq:ConvEst:Adj:Lp}.
    
    Now, similarly to the proof of \eqref{eq:Intro:ConvEst:Deriv} in \cite{scott2023nonlocal}, we have
    \begin{equation*}
        \begin{split}
            \grad J_{\delta,\alpha} u(\bx) &= \frac{1}{P_{\delta,\alpha}(\bx)}\int_{\Omega} 
            \grad \left( \frac{ \psi_{\delta,\alpha}(\bx,\by) }{ \eta_\delta(\bx)^2 } + \frac{ \psi_{\delta,\alpha}(\by,\bx) }{ \eta_\delta(\by)^2 } \right) (u(\by)-u(\bx)) \, \rmd \by \\
            &\quad -\frac{\grad P_{\delta,\alpha}(\bx)}{P_{\delta,\alpha}(\bx)^2}\int_{\Omega} 
            \left( \frac{ \psi_{\delta,\alpha}(\bx,\by) }{ \eta_\delta(\bx)^2 } + \frac{ \psi_{\delta,\alpha}(\by,\bx) }{ \eta_\delta(\by)^2 } \right) (u(\by)-u(\bx)) \, \rmd \by\,,
        \end{split}
    \end{equation*}
    where we added and subtracted $\frac{\grad P_{\delta,\alpha}(\bx)}{P_{\delta,\alpha}(\bx)} u(\bx)$.
    By a direct computation, we get analogously to  \eqref{eq:KernelDerivativeEstimate} and \eqref{eq:KernelDerivativeEstimate:Adj}
    \begin{equation*}
        \begin{split}
            \left| \grad_\bx \left[ \frac{\psi_{\delta,\alpha}(\bx,\by)}{\eta_\delta(\bx)^2} + \frac{\psi_{\delta,\alpha}(\by,\bx)}{\eta_\delta(\by)^2} \right] \right| \leq C \frac{ \hat{\psi}_{\delta,\alpha+1}(\bx,\by) + \wh{(|\psi|')}_{\delta,\alpha}(\bx,\by) }{ \eta_\delta(\bx)^3 }\,,
        \end{split}
    \end{equation*}
    where we additionally used \eqref{eq:ComparabilityOfDistanceFxn2}. Therefore $|\grad P_{\delta,\alpha}(\bx)| \leq \frac{C}{\eta_\delta(\bx)^3}$, and so 
    \begin{equation*}
        \begin{split}
            |\grad J_{\delta,\alpha} u(\bx)| &\leq C \int_{\Omega}
            \frac{ \hat{\psi}_{\delta,\alpha+1}(\bx,\by) + \wh{(|\psi|')}_{\delta,\alpha}(\bx,\by) }{ \eta_\delta(\bx) } |u(\by)-u(\bx)| \, \rmd \by\,,
        \end{split}
    \end{equation*}
    repeatedly applying \eqref{eq:OperatorConv:Pf1} and \eqref{eq:ComparabilityOfDistanceFxn2} as necessary. Finally we apply H\"older's inequality (noting that $\alpha + 1 < d$) to get
    \begin{equation*}
        \int_{\Omega} 
        |\grad J_{\delta,\alpha} u(\bx)|^p \, \rmd \bx \leq C \int_\Omega \int_\Omega \frac{ \hat{\psi}_{\delta,\alpha+1}(\bx,\by) + \wh{(|\psi|')}_{\delta,\alpha}(\bx,\by) }{ \eta_\delta(\bx)^p } |u(\by)-u(\bx)|^p \, \rmd \by \, \rmd \bx\,,
    \end{equation*}
    and the result follows from the upper bound in \eqref{thm:EnergySpaceIndepOfKernel}.
\end{proof}

We have the following as a consequence of the estimates in the proof:

\begin{corollary}\label{cor:OperatorConv:NormalizedBds}
    There exists constants $C>0$ and $c>0$ depending only on $d$, $\alpha$, $\psi$, and $\kappa_1$ such that
    \begin{equation*}
        c \eta_\delta(\bx)^2 \leq \frac{1}{P_{\delta,\alpha}(\bx)} \leq C \eta_{\delta}(\bx)^2 \quad \text{ and } \quad \left| \frac{\grad P_{\delta,\alpha}(\bx)}{P_{\delta,\alpha}(\bx)^2} \right| \leq C \eta_\delta(\bx)\,, \quad \text{ for a.e. } \, \bx \in \Omega\,.
    \end{equation*}
\end{corollary}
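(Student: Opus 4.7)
Both bounds fall out of inequalities already derived inside the proof of the preceding theorem, so the plan is simply to extract and rearrange them.

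For the two-sided bound on $1/P_{\delta,\alpha}(\bx)$, I would start from the inequality
\begin{equation*}
1 + \frac{(1-\kappa_1 \delta)^2}{1+\kappa_1 \delta} \leq \frac{ \eta_\delta(\bx)^2 P_{\delta,\alpha}(\bx) }{ \int_{B(0,1)} \psi(|\bz|)/|\bz|^\alpha \, \rmd \bz } \leq 1 + \frac{ (1+\kappa_1 \delta)^2}{ 1-\kappa_1 \delta},
\end{equation*}
which was established by combining \eqref{eq:ComparabilityOfDistanceFxn2} with \Cref{lma:KernelIntegral}. Under \eqref{eq:HorizonThreshold} we have $\kappa_1\delta<1/3$, so both bounding expressions lie in a compact subinterval of $(0,\infty)$ depending only on $\kappa_1$, and the $\bz$-integral is a finite positive constant depending only on $d$, $\alpha$, $\psi$ (finiteness uses $\alpha<d-1<d$). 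Taking reciprocals therefore yields $c\,\eta_\delta(\bx)^2\le 1/P_{\delta,\alpha}(\bx)\le C\,\eta_\delta(\bx)^2$ with $c$, $C$ depending only on the stated quantities.

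For the second bound, I would combine the pointwise gradient estimate $|\grad P_{\delta,\alpha}(\bx)|\le C\,\eta_\delta(\bx)^{-3}$, which was obtained in the preceding proof from the differentiation identities analogous to \eqref{eq:KernelDerivativeEstimate} and \eqref{eq:KernelDerivativeEstimate:Adj} together with \eqref{eq:ComparabilityOfDistanceFxn2} after factoring out $\eta_\delta(\bx)^{-3}$, with the upper bound $P_{\delta,\alpha}(\bx)^{-2}\le C\,\eta_\delta(\bx)^{4}$ obtained by squaring the upper bound just derived. Multiplying the two yields
\begin{equation*}
\left|\frac{\grad P_{\delta,\alpha}(\bx)}{P_{\delta,\alpha}(\bx)^2}\right|\le C\,\eta_\delta(\bx),
\end{equation*}
which completes the proof. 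No step presents a genuine obstacle; the corollary is in effect a bookkeeping statement that packages the pointwise size and gradient estimates on the normalizing factor $P_{\delta,\alpha}$ into the form most convenient for differentiating quotients such as $1/P_{\delta,\alpha}$ in the regularity analysis that follows.
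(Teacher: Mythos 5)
Your proof is correct and follows precisely the route the paper intends: the paper offers no explicit argument but simply asserts the corollary as "a consequence of the estimates in the proof" of the preceding theorem, and you have correctly identified those two ingredients — the two-sided comparison \eqref{eq:OperatorConv:Pf1} for $\eta_\delta(\bx)^2 P_{\delta,\alpha}(\bx)$ and the pointwise bound $|\grad P_{\delta,\alpha}(\bx)| \le C\,\eta_\delta(\bx)^{-3}$ — and combined them as the authors clearly had in mind. The constant-dependence bookkeeping is also right, with the only cosmetic quibble being that the finiteness of $\int_{B(0,1)}\psi(|\bz|)/|\bz|^\alpha\,\rmd\bz$ needs only $\alpha<d$, while $\alpha<d-1$ is what is actually used for the weight $|\bz|^{-(\alpha+1)}$ appearing in the gradient estimate.
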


\section{Regularity for the nonlocal problem}\label{sec:Regularity}

For the rest of the paper, we assume that $p = 2$ and that $0 \leq \beta < d-1$, and take $\Phi_2(t) = \frac{t^2}{2}$. 
We assume that $\lambda$ satisfies \eqref{assump:Localization} and \eqref{assump:Localization:NormalDeriv},
$q$ satisfies \eqref{assump:NonlinearLocalization} and \eqref{assump:MomentsOfNonlinLoc}, the resulting function $\eta = q(\lambda)$ satisfies \eqref{assump:FullLocalization:C11}, and $\rho$ satisfies \eqref{assump:VarProb:Kernel} with the  additional conditions $\rho \in C^1(\bbR)$ and $\bar{\rho}_{2-\beta} = \overline{C}_{d,2}$. 
Furthermore, we assume that $\bar{\lambda}$ satisfies \eqref{assump:Localization} and  $\psi$ satisfies \eqref{Assump:Kernel} for $k_\psi \geq 2$, and we assume \eqref{eq:HorizonThreshold}.

We recall the convention $\cL_{2,\delta} = \cL_\delta$ introduced in \Cref{subsec:Examples}
\begin{equation*}
    \cL_{\delta} u(\bx) = \int_{\Omega} 
    \left( \frac{ \rho_{\delta,\beta}(\bx,\by) }{ \eta_\delta(\bx)^2 } + \frac{ \rho_{\delta,\beta}(\by,\bx) }{ \eta_\delta(\by)^2 } \right) (u(\bx) - u(\by)) \, \rmd \by\,,
\end{equation*}
with the kernel $\rho_{\delta,\beta}$ following the convention in \eqref{eq:abbrev}.
By the results from the previous sections, $\cL_\delta u(\bx)$ is an absolutely convergent integral for any $u \in L^1_{loc}(\Omega)$, with $|\cL_\delta u(\bx)| \leq \frac{C}{ \eta_\delta(\bx)^2 } ( |u(\bx)| + K_\delta(|u|)(\bx) +  K_\delta^*(|u|)(\bx) )$, where $K_\delta = K_\delta[\lambda,q,|\cdot|^{-\beta}\rho(\cdot)]$ and $K_\delta^* = K_\delta^*[\lambda,q,|\cdot|^{-\beta}\rho(\cdot)]$.
We also recall $\cB_{2,\delta}(u,v) = \cB_\delta(u,v)$.

\begin{theorem}
    Suppose that $u \in \mathfrak{W}^{\beta,2}[\delta;q](\Omega)$ satisfies
    \begin{equation}\label{eq:Regularity:ELeqn}
        \cB_{\delta}(u,v) = \int_{\Omega} G_\delta[u](\bx) v(\bx) \, \rmd \bx\,, \quad \forall \, v \in C^\infty_c(\Omega)\,,
    \end{equation}
    with a given operator $G_\delta[u]=G_\delta[u](\bx)  \in W^{1,2}_{loc}(\Omega)$. 
    Then $\cL_\delta u(\bx) = G_\delta[u](\bx)$ almost everywhere in $\Omega$, i.e.
    \begin{equation}\label{eq:Regularity:KeyEquality}
        u(\bx) = J_{\delta,\beta}[\lambda,q,\rho] u(\bx) + \frac{ G_\delta[u](\bx) }{ P_{\delta,\beta}[\lambda,q,\rho](\bx) } \text{ almost everywhere in } \Omega\,,
    \end{equation}
    and there exists a constant $C$ depending only on $d$, $\beta$, $\rho$, $p$, $\lambda$, $q$, and $\kappa_1$ such that
    \begin{equation}\label{eq:Regularity:Estimate}
        \Vnorm{\grad u}_{L^{2}(\Omega)} \leq C \left( [u]_{\mathfrak{W}^{\beta,2}[\delta;q](\Omega)} + \vnorm{ \eta_\delta G_\delta[u] }_{L^2(\Omega)} + \vnorm{ \eta_\delta^2 \grad G_\delta[u] }_{L^2(\Omega)} \right)\,.
    \end{equation}
    Consequently,
    $u \in W^{1,2}(\Omega)$ whenever the right-hand side of \eqref{eq:Regularity:Estimate} is finite; in particular, if $G_\delta[u](\bx)$ satisfies the appropriate weighted Sobolev estimates.
\end{theorem}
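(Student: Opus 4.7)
The plan is to establish \eqref{eq:Regularity:KeyEquality} as a pointwise identity that algebraically recovers $u$, and then to exploit the smoothing properties of each term on its right-hand side to get the Sobolev estimate.

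First, I would apply the Green's identity in the form \eqref{eq:GreensIdentity:StrongL} of \Cref{thm:PointwiseOperator:AbsConv:Energy}, which is applicable since $0 \leq \beta < d-1 < d$ and \eqref{assump:FullLocalization:C11} is in force. This gives $\cL_\delta u \in L^2_{loc}(\Omega)$ and $\int_\Omega v \cL_\delta u \, \rmd \bx = \cB_\delta(u,v)$ for every $v \in \mathfrak{W}^{\beta,2}_{0,\p \Omega}[\delta;q](\Omega)$. Since $C^\infty_c(\Omega) \subset \mathfrak{W}^{\beta,2}_{0,\p \Omega}[\delta;q](\Omega)$, combining this with hypothesis \eqref{eq:Regularity:ELeqn} yields $\int_\Omega v \big( \cL_\delta u - G_\delta[u] \big) \rmd \bx = 0$ for all $v \in C^\infty_c(\Omega)$, so $\cL_\delta u = G_\delta[u]$ a.e.\ in $\Omega$ by the du Bois-Reymond lemma.

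Second, I would split the integrand of the absolutely convergent integral defining $\cL_\delta u(\bx)$ using the definitions of $P_{\delta,\beta}$ and $J_{\delta,\beta}$ to obtain the pointwise identity $\cL_\delta u(\bx) = P_{\delta,\beta}(\bx) u(\bx) - P_{\delta,\beta}(\bx) J_{\delta,\beta} u(\bx)$ a.e., where the splitting is permissible because each of the individual pieces is integrable for a.e.\ $\bx$ (both pieces are controlled by $K_\delta|u|$ and $K^*_\delta|u|$ up to powers of $\eta_\delta$). Dividing by $P_{\delta,\beta}(\bx)$, which is strictly positive with the two-sided bound from \Cref{cor:OperatorConv:NormalizedBds}, yields \eqref{eq:Regularity:KeyEquality}.

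Third, I would estimate the $W^{1,2}$ norms of the two summands on the right-hand side of \eqref{eq:Regularity:KeyEquality}. For the first, \eqref{eq:OperatorConv:Energy} applied with $\alpha = \beta$ (which lies in $[0, d-1)$) gives $\Vnorm{\grad J_{\delta,\beta} u}_{L^2(\Omega)} \leq C [u]_{\mathfrak{W}^{\beta,2}[\delta;q](\Omega)}$. For the second, the quotient rule gives $\grad \big( G_\delta[u] / P_{\delta,\beta} \big) = \grad G_\delta[u]/P_{\delta,\beta} - G_\delta[u] \grad P_{\delta,\beta} / P_{\delta,\beta}^2$ in $W^{1,2}_{loc}(\Omega)$, and the two bounds in \Cref{cor:OperatorConv:NormalizedBds} dominate this quantity pointwise by $C \eta_\delta^2 |\grad G_\delta[u]| + C \eta_\delta |G_\delta[u]|$. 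Taking $L^2$ norms and adding to the previous estimate produces the right-hand side of \eqref{eq:Regularity:Estimate}.

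The principal obstacle is that $u$ is \emph{a priori} only in $L^2(\Omega)$ (with finite nonlocal seminorm), so one must justify promoting the pointwise identity \eqref{eq:Regularity:KeyEquality} to an identity of functions in $W^{1,2}(\Omega)$ from which $\grad u$ can be read off. This is overcome by noting that the right-hand side of \eqref{eq:Regularity:KeyEquality} belongs to $W^{1,2}(\Omega)$ whenever the quantities on the right-hand side of \eqref{eq:Regularity:Estimate} are finite, thanks to the estimates just described; therefore $u$, being a.e.\ equal to that right-hand side, inherits $W^{1,2}$-regularity, and its weak gradient coincides with the weak gradient of the right-hand side, producing \eqref{eq:Regularity:Estimate}.
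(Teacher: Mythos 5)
Your proposal is correct and takes essentially the same route as the paper: apply the Green's identity \eqref{eq:GreensIdentity:StrongL} from \Cref{thm:PointwiseOperator:AbsConv:Energy} with $C^\infty_c(\Omega)$ test functions to identify $\cL_\delta u = G_\delta[u]$ a.e., algebraically rearrange using the definitions of $P_{\delta,\beta}$ and $J_{\delta,\beta}$ to obtain \eqref{eq:Regularity:KeyEquality}, and then differentiate and estimate via \eqref{eq:OperatorConv:Energy} and \Cref{cor:OperatorConv:NormalizedBds}. The only differences are cosmetic: you spell out the du Bois-Reymond step, the absolute-convergence justification for the splitting, and the promotion of the a.e.\ identity to a $W^{1,2}$ identity, all of which the paper leaves implicit; you also write the quotient rule with the correct sign, whereas the paper's displayed gradient formula has a harmless sign typo that does not affect the estimate.
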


\begin{proof}
    By \Cref{thm:PointwiseOperator:AbsConv:Energy}, the identity \eqref{eq:GreensIdentity:StrongL} can be applied to get
    \begin{equation*}
        \int_{\Omega} v(\bx) \cL_{\delta}u(\bx) \, \rmd \bx = \int_{\Omega} v(\bx) G_\delta[u](\bx) \, \rmd \bx\,, \quad \forall \, v \in C^\infty_c(\Omega)\,.
    \end{equation*}
    Thus \eqref{eq:Regularity:KeyEquality} holds. Differentiating this equation gives
    \begin{equation*}
        \grad u(\bx) = \grad J_{\delta,\beta}u(\bx) + \frac{\grad P_{\delta,\beta}(\bx)}{P_{\delta,\beta}(\bx)^2} G_\delta[u](\bx) + \frac{\grad G_\delta[u](\bx)}{ P_{\delta,\beta}(\bx) }\,.
    \end{equation*}
    The result then follows from \Cref{cor:OperatorConv:NormalizedBds},  \eqref{eq:OperatorConv:Energy} and the assumption on $G_\delta[u]$.
\end{proof}

\section{Convergence of solutions in stronger norms}\label{sec:Convergence}

In this section, with all of the assumptions of \Cref{sec:Regularity}, we revisit the $\delta \to 0$ regime for the nonlocal problems in the linear case.
Note that while an element of $[W^{1,2}(\Omega)]^*$ can serve as the Poisson data $f$ for the local problems \eqref{eq:BVP:Dirichlet:Local}, \eqref{eq:BVP:Neumann:Local}, and \eqref{eq:BVP:Robin:Local},  a properly regularized version, denoted by $f_\delta$, has to be used for a well-posed nonlocal problem.
We first show that the sequence of solutions to the nonlocal problem with the regularized $f_\delta$ converges as $\delta \to 0$ to the unique variational solution of 
\eqref{eq:BVP:Dirichlet:Local} (resp. \eqref{eq:BVP:Neumann:Local} and \eqref{eq:BVP:Robin:Local}) with the Poisson data $f$, all subject to corresponding inhomogeneous boundary conditions. We then illustrate how the regularization can be obtained. Further, we present the consistency of the outward normal derivative on $\p \Omega$.

First, we note the behavior of the lower-order term characterized by the function $\ell$ that satisfies \eqref{eq:LowerOrderTerm}; Then for any $u$, $v$, and $w \in L^m(\Omega)$ there exists a $C$ depending only on $\ell$ such that
\begin{equation}\label{lma:LowerOrderTerm}
    \begin{split}
        \int_{\Omega} & |\ell'(u)-\ell'(v)| |w| \, \rmd \bx \\ & \leq 
        \begin{cases}
            C (\Vnorm{u}_{L^m(\Omega)}^{m-2} + \Vnorm{v}_{L^m(\Omega)}^{m-2}) \Vnorm{ u-v }_{L^m(\Omega)} \Vnorm{ w }_{L^m(\Omega)}, & \text{ if } m \geq 2\,, \\
            C \Vnorm{ u - v }_{L^m(\Omega)}^{m-1} \Vnorm{ w }_{L^m(\Omega)}\,, & \text{ if } 1 < m \leq 2\,.
        \end{cases}
    \end{split}
    \end{equation}
Indeed, the case $1 < m \leq 2$ follows from \eqref{eq:LowerOrderTerm} and H\"older's inequality with exponents $\frac{m}{m-1}$ and $m$. The case $m \geq 2$ follows from \eqref{eq:LowerOrderTerm} and the generalized H\"older's inequality with exponents $\frac{m}{m-2}$, $\frac{m}{2}$, and $\frac{m}{2}$.

\begin{theorem}
    Let $f \in [W^{1,2}(\Omega)]^*$. Suppose that for each $\delta < \underline{\delta}_0$ there exists $f_\delta \in [\mathfrak{W}^{\beta,2}[\delta;q](\Omega)]^*$ that satisfies
    \begin{equation}\label{eq:H1Convergence:RHSConvergence}
        \lim\limits_{\delta \to 0} \vint{f_\delta -f, v} = 0 \,, \quad \forall \, v \in W^{1,2}(\Omega)\,,
    \end{equation}
    and that there exists $C_0 > 0$ independent of $\delta$ such that
    \begin{equation}\label{eq:H1Convergence:RHSEstimate}
        \vnorm{f_\delta}_{ [\mathfrak{W}^{\beta,2}[\delta;q](\Omega)]^* } + \vnorm{ \eta_\delta f_\delta}_{L^2(\Omega)} + \vnorm{\eta_\delta^2 \grad f_\delta}_{L^2(\Omega)} \leq C_0 \vnorm{f}_{ [W^{1,2}(\Omega)]^* }\,.
    \end{equation}
    Let $u_\delta$ be the solution in the respective admissible set of either \eqref{eq:BVP:Dirichlet:Weak}, \eqref{eq:BVP:Neumann:Weak}, or \eqref{eq:BVP:Robin:Weak}, with Poisson data $f_\delta$ and boundary data $g$. Then there exists $C$ independent of $u_\delta$ and $\delta$ such that
    \begin{equation}\label{eq:H1BoundOnSolutions}
        \vnorm{u_\delta}_{W^{1,2}(\Omega)} \leq C (\vnorm{f}_{ [W^{1,2}(\Omega)]^* } + \vnorm{g} ) + C (\vnorm{f}_{ [W^{1,2}(\Omega)]^* } + \vnorm{g} )^{m-1} \,,
    \end{equation}
    where $\vnorm{g}$ denotes $\vnorm{g}_{W^{1/2,2}(\p \Omega_D)}$ in the case \eqref{eq:BVP:Dirichlet:Weak} and $\vnorm{g}_{[W^{1/2,2}(\p \Omega)]^*}$ in the cases \eqref{eq:BVP:Neumann:Weak} and \eqref{eq:BVP:Robin:Weak}.

    Moreover, in the case of \eqref{eq:BVP:Dirichlet:Weak} with $g = 0$, the above result additionally holds with the same assumptions, 
    but with the spaces $[W^{1,2}(\Omega)]^*$ and $[\mathfrak{W}^{\beta,2}[\delta;q](\Omega)]^*$ replaced by the spaces $[W^{1,2}_{0,\p \Omega_D}(\Omega)]^*$ and $[\mathfrak{W}^{\beta,2}_{0, \p \Omega_D} [\delta;q](\Omega)]^*$ respectively.
\end{theorem}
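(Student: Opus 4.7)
The strategy is to extract from each weak formulation a pointwise equation amenable to the regularity estimate \eqref{eq:Regularity:Estimate}. Testing \eqref{eq:BVP:Dirichlet:Weak}, \eqref{eq:BVP:Neumann:Weak}, or \eqref{eq:BVP:Robin:Weak} against an arbitrary $v \in C^\infty_c(\Omega)$ annihilates the boundary integrals and the boundary-data pairing, and using the adjoint identity $\int_\Omega \ell'(K_\delta u_\delta) K_\delta v \, \rmd \bx = \int_\Omega K_\delta^*[\ell'(K_\delta u_\delta)]\, v \, \rmd \bx$, each problem reduces to \eqref{eq:Regularity:ELeqn} with
\begin{equation*}
G_\delta[u_\delta] := f_\delta - \mu\, K_\delta^*\!\big[\ell'(K_\delta u_\delta)\big],
\end{equation*}
($\mu = 0$ in the Neumann case, where the compatibility condition $\vint{f_\delta,1}+\vint{g,1}=0$ permits extending the weak formulation to all $v \in \mathfrak{W}^{\beta,2}[\delta;q](\Omega)$, and in particular to $v \in C^\infty_c(\Omega)$). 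The hypothesis \eqref{eq:H1Convergence:RHSEstimate} forces $f_\delta \in W^{1,2}_{\mathrm{loc}}(\Omega)$, and the adjoint Sobolev estimates of \Cref{sec:BdyLocalizedConv} place $K_\delta^*[\ell'(K_\delta u_\delta)]$ in $W^{1,2}_{\mathrm{loc}}(\Omega)$ as well, so $G_\delta[u_\delta]$ satisfies the hypotheses of the regularity theorem.

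Once \eqref{eq:Regularity:Estimate} is invoked, three quantities must be bounded uniformly in $\delta$. The nonlocal seminorm is controlled by the linear-case energy bound from \Cref{thm:WellPosedness:Dirichlet}, \Cref{thm:WellPosedness:Neumann}, or \Cref{thm:WellPosedness:Robin}: this gives $\Vnorm{u_\delta}_{\mathfrak{W}^{\beta,2}[\delta;q](\Omega)} \leq C\bigl(\Vnorm{f_\delta}_{[\mathfrak{W}^{\beta,2}[\delta;q](\Omega)]^*} + \Vnorm{g}\bigr)$, which by \eqref{eq:H1Convergence:RHSEstimate} is at most $C\bigl(\Vnorm{f}_{[W^{1,2}(\Omega)]^*} + \Vnorm{g}\bigr)$. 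The weighted terms $\Vnorm{\eta_\delta f_\delta}_{L^2(\Omega)}$ and $\Vnorm{\eta_\delta^2 \grad f_\delta}_{L^2(\Omega)}$ are supplied directly by \eqref{eq:H1Convergence:RHSEstimate}.

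The main obstacle is estimating the weighted norms of $K_\delta^*[\ell'(K_\delta u_\delta)]$ without producing a circular dependence on $\Vnorm{u_\delta}_{W^{1,2}(\Omega)}$. The resolution rests on \eqref{eq:Intro:ConvEst:Deriv}, which promotes the nonlocal energy control of $u_\delta$ to a classical Sobolev bound on the mollified function: $\Vnorm{K_\delta u_\delta}_{W^{1,2}(\Omega)} \leq C\Vnorm{u_\delta}_{\mathfrak{W}^{\beta,2}[\delta;q](\Omega)}$. Since $m < 2^*$, classical Sobolev embedding gives $\Vnorm{K_\delta u_\delta}_{L^m(\Omega)} \leq C\Vnorm{u_\delta}_{\mathfrak{W}^{\beta,2}[\delta;q](\Omega)}$, and the growth estimate \eqref{eq:LowerOrderTerm} then yields $\Vnorm{\ell'(K_\delta u_\delta)}_{L^{m'}(\Omega)} \leq C\Vnorm{u_\delta}_{\mathfrak{W}^{\beta,2}[\delta;q](\Omega)}^{m-1}$. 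The continuous embedding $L^{m'}(\Omega) \hookrightarrow [W^{1,2}(\Omega)]^*$ combined with the adjoint estimate \eqref{eq:ConvEst2:Adj:0:neg1} controls $\Vnorm{\eta_\delta K_\delta^*[\ell'(K_\delta u_\delta)]}_{L^2(\Omega)}$, while the stronger bound \eqref{eq:ConvEst2:Adj:1:neg1} together with the product rule $\eta_\delta^2 \grad w = \grad(\eta_\delta^2 w) - 2\eta_\delta (\grad \eta_\delta)\, w$ (and a further application of \eqref{eq:ConvEst2:Adj:0:neg1} for the remainder, using that $\grad \eta_\delta \in L^\infty(\Omega)$ by \eqref{assump:FullLocalization:C11}) controls $\Vnorm{\eta_\delta^2 \grad K_\delta^*[\ell'(K_\delta u_\delta)]}_{L^2(\Omega)}$. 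Both are bounded by $C\bigl(\Vnorm{f}_{[W^{1,2}(\Omega)]^*} + \Vnorm{g}\bigr)^{m-1}$.

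Assembling these pieces into \eqref{eq:Regularity:Estimate} and adding the trivial bound $\Vnorm{u_\delta}_{L^2(\Omega)} \leq \Vnorm{u_\delta}_{\mathfrak{W}^{\beta,2}[\delta;q](\Omega)}$ produces \eqref{eq:H1BoundOnSolutions}. The Dirichlet case with $g = 0$ requires no structural change: test functions $v \in C^\infty_c(\Omega)$ already lie in the admissible homogeneous space $\mathfrak{W}^{\beta,2}_{0,\p\Omega_D}[\delta;q](\Omega)$, and the corresponding energy estimate from \Cref{thm:WellPosedness:Dirichlet} in the restricted setting supplies the same seminorm bound with the dual norm interpreted on $[\mathfrak{W}^{\beta,2}_{0,\p\Omega_D}[\delta;q](\Omega)]^*$, so the rest of the argument transports verbatim.
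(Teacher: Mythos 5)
Your proof is correct and follows essentially the same route as the paper: reduce each weak formulation to the interior Euler--Lagrange identity \eqref{eq:Regularity:ELeqn} with $G_\delta[u_\delta] = f_\delta - \mu K_\delta^*[\ell'(K_\delta u_\delta)]$, invoke the regularity estimate \eqref{eq:Regularity:Estimate}, bound the seminorm via the energy estimate, and control the semilinear term through the adjoint Sobolev estimates together with the $W^{1,2}\hookrightarrow L^m$ embedding and \eqref{lma:LowerOrderTerm}. Your write-up is in fact slightly more explicit than the paper in two spots — it spells out the product-rule step relating $\eta_\delta^2\grad w$ to $\grad(\eta_\delta^2 w)$, and it adds the $L^2$ piece of the full $W^{1,2}$ norm — but these are details the paper leaves implicit rather than genuine deviations.
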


\begin{proof}
    First, in the case that $u_\delta$ satisfies \eqref{eq:BVP:Dirichlet:Weak}, since $C^\infty_c(\Omega) \subset \mathfrak{W}^{\beta,2}_{0,\p \Omega_D}[\delta;q](\Omega)$ we have
    \begin{equation}\label{eq:H1Convergence:ELeqn:Pf}
        \cB_{\delta}(u_\delta,v) = \vint{ f_\delta - \mu K_\delta^* \big( \ell'(K_\delta u_\delta) \big), v}\,, \quad \forall v \in C^\infty_c(\Omega)\,.
    \end{equation}
    Therefore \eqref{eq:Regularity:ELeqn}, \eqref{eq:Regularity:KeyEquality}, and \eqref{eq:Regularity:Estimate} hold with $G_\delta[u](\bx) = f_\delta - \mu K_\delta^* \big( \ell'(K_\delta u_\delta) \big)$.
    Now, in \eqref{eq:Regularity:Estimate} we apply the energy estimate \eqref{eq:EnergyEstimate:Dirichlet}, and then apply \eqref{eq:H1Convergence:RHSEstimate}. At the same time, we apply \eqref{eq:ConvEst:Adj:Nonlocal}, \eqref{eq:ConvEst2:Adj:0:neg1}, and \eqref{eq:ConvEst2:Adj:1:neg1} to the $u$-term, and get from \eqref{eq:Regularity:Estimate} the inequality
    \begin{equation*}
        \vnorm{u_\delta}_{W^{1,2}(\Omega)} \leq C \big( \vnorm{f}_{[W^{1,2}(\Omega)]^*} + \vnorm{g}_{ W^{1/2,2}(\p \Omega_D) } +  \vnorm{\ell'(K_\delta u_\delta)}_{[W^{1,2}(\Omega)]^*} \big)\,.
    \end{equation*}
    Recalling \eqref{eq:SobolevExponent}, for any $\varphi \in W^{1,2}(\Omega)$ we apply \eqref{lma:LowerOrderTerm} and the classical Sobolev embedding $W^{1,2}(\Omega) \hookrightarrow L^m(\Omega)$ to get
    \begin{equation*}
        |\vint{ \ell'(K_\delta u_\delta), \varphi }| 
        \leq C \vnorm{ K_\delta u_\delta }_{W^{1,2}(\Omega)}^{m-1} \vnorm{\varphi}_{W^{1,2}(\Omega)}\,.
    \end{equation*}
    Thus the estimates \eqref{eq:Intro:ConvEst:Deriv} and \eqref{eq:EnergyEstimate:Dirichlet} give
    \begin{equation*}
        \vnorm{\ell'(K_\delta u_\delta)}_{[W^{1,2}(\Omega)]^*} 
        \leq C \vnorm{ K_\delta u_\delta }_{W^{1,2}(\Omega)}^{m-1} 
        \leq C ( \vnorm{f}_{[W^{1,2}(\Omega)]^*} + \vnorm{g}_{ W^{1/2,2}(\p \Omega_D) }  )^{m-1}\,.
    \end{equation*}
    So \eqref{eq:H1BoundOnSolutions} holds in the case that $u_\delta$ satisfies \eqref{eq:BVP:Dirichlet:Weak}. The proof in the case that $g =0$ on $\p \Omega_D$ and the function spaces replaced with the homogeneous spaces is exactly the same, using the corresponding estimates for those spaces.

    Next, in the case that $u_\delta$ satisfies \eqref{eq:BVP:Neumann:Weak}, we need to show suffices to show \eqref{eq:H1Convergence:ELeqn:Pf} (in this case $\mu = 0$).
    Given $v \in C^\infty_c(\Omega)$, define $w = v - (v)_\Omega$. Then from \eqref{eq:BVP:Neumann:Weak} and the compatibility condition we have
    \begin{equation*}
    \begin{split}
        \cB_{\delta}(u_\delta,v) = \cB_{\delta}(u_\delta,w) 
        &= \vint{f,w} + \vint{g,w} \\
        &= \vint{f,v} + \vint{g,v} - (\vint{f,1} + \vint{g,1}) (v)_\Omega = \vint{f,v} + \vint{g,v}\,.
    \end{split}
    \end{equation*}
    Thus \eqref{eq:H1Convergence:ELeqn:Pf} holds, and then the proof proceeds exactly the same way as in the Dirichlet case, with \eqref{eq:EnergyEstimate:Neumann} in place of \eqref{eq:EnergyEstimate:Dirichlet}.
    Finally, the case that $u_\delta$ satisfies \eqref{eq:BVP:Robin:Weak} is exactly the same as the Dirichlet case (with \eqref{eq:EnergyEstimate:Robin} in place of \eqref{eq:EnergyEstimate:Dirichlet}), since $C^\infty_c(\Omega)$ is a subset of the test functions.  
\end{proof}

\begin{theorem}
    Let $f \in [W^{1,2}(\Omega)]^*$. Suppose that $\{ f_\delta \}_{\delta} \subset [\mathfrak{W}^{\beta,2}[\delta;q](\Omega)]^*$ is a sequence that satisfies \eqref{eq:H1Convergence:RHSConvergence}-\eqref{eq:H1Convergence:RHSEstimate}. Let $\{u_\delta\}_\delta$ be the sequence of solutions in the respective admissible set to either \eqref{eq:BVP:Dirichlet:Weak}, \eqref{eq:BVP:Neumann:Weak}, or \eqref{eq:BVP:Robin:Weak} with Poisson data $f_\delta$ and boundary data $g$. Then there exists $u \in W^{1,2}(\Omega)$ such that $u_\delta \rightharpoonup u$ weakly in $W^{1,2}(\Omega)$, and $u$ is the unique admissible solution to either \eqref{eq:BVP:Dirichlet:Local}, \eqref{eq:BVP:Neumann:Local}, or \eqref{eq:BVP:Robin:Local}, respectively, with Possion data $f$ and boundary data $g$.

    The same result holds with data in the wider class when $g = 0$ in the case of \eqref{eq:BVP:Dirichlet:Weak}.
\end{theorem}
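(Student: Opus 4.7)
The strategy combines the uniform $W^{1,2}$ bound from the previous theorem with the bilinear-form convergence results of \Cref{thm:BilinearFormLocalization} and \Cref{thm:BilinearFormLocalization2} to pass to the limit in each of the weak formulations, then uses uniqueness of the local solution to upgrade subsequential convergence to full convergence.

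\emph{Compactness.} Since $\{u_\delta\}_{\delta}$ is bounded in $W^{1,2}(\Omega)$ by the previous theorem, reflexivity and Banach--Alaoglu yield a (not relabeled) subsequence and $u \in W^{1,2}(\Omega)$ with $u_\delta \rightharpoonup u$ weakly in $W^{1,2}(\Omega)$. By Rellich--Kondrachov, $u_\delta \to u$ strongly in $L^r(\Omega)$ for all $r \in [1, 2^*)$ with $2^*$ as in \eqref{eq:SobolevExponent}, and by compactness of the trace operator, $Tu_\delta \to Tu$ strongly in $L^2(\p\Omega)$.

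\emph{Passing to the limit.} The main technical obstacle is passing to the limit in the nonlocal bilinear form $\cB_\delta(u_\delta, v)$, since $u_\delta$ converges only weakly in $W^{1,2}(\Omega)$ and the kernel of $\cB_\delta$ itself depends on $\delta$. This is resolved through the splitting
\begin{equation*}
\cB_\delta(u_\delta, v) \;=\; \cB_\delta(u_\delta - u,\, v) + \cB_\delta(u, v)\,,
\end{equation*}
where $v$ lies in the admissible test space of the corresponding local problem; note that $v \in W^{1,2}(\Omega) \subset \mathfrak{W}^{\beta,2}[\delta;q](\Omega)$ by \eqref{eq:Embedding}, and the relevant boundary or mean-zero conditions on $v$ embed into the nonlocal spaces by item 4 of \Cref{thm:FxnSpaceProp}. \Cref{thm:BilinearFormLocalization}, applied to $u_\delta - u \rightharpoonup 0$, makes the first term vanish as $\delta \to 0$, and \Cref{thm:BilinearFormLocalization2} identifies the limit of the second term as $\cB_0(u, v)$. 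For the semilinear term, the estimates \eqref{eq:ConvEst:Lp} and \eqref{eq:ConvergenceOfConv} applied to $u_\delta \to u$ in $L^m(\Omega)$ give $K_\delta u_\delta \to u$ strongly in $L^m(\Omega)$; then \eqref{lma:LowerOrderTerm} yields $\ell'(K_\delta u_\delta) \to \ell'(u)$ strongly in $L^{m/(m-1)}(\Omega)$, and since $K_\delta v \to v$ in $L^m(\Omega)$, we conclude $\int_\Omega \ell'(K_\delta u_\delta) K_\delta v \, \rmd \bx \to \int_\Omega \ell'(u) v \, \rmd \bx$. The Robin boundary contribution $\int_{\p\Omega} b\, Tu_\delta\, Tv \, \rmd \sigma$ passes to the limit by the strong trace convergence (using $b \in L^\infty(\p\Omega)$), and the right-hand side converges by \eqref{eq:H1Convergence:RHSConvergence}.

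\emph{Admissibility and uniqueness.} Dirichlet admissibility $Tu = g$ on $\p\Omega_D$ follows from $Tu_\delta = g$ together with strong trace convergence, and the Neumann mean-zero constraint $(u)_\Omega = 0$ is preserved under $L^1$-convergence. Hence $u$ is a weak solution of the corresponding local problem \eqref{eq:BVP:Dirichlet:Local}, \eqref{eq:BVP:Neumann:Local}, or \eqref{eq:BVP:Robin:Local} with data $(f, g)$. Uniqueness of the local weak solution in its admissible set, established in \cite{scott2023nonlocal} via strict convexity of the associated energy, implies that every weakly convergent subsequence of $\{u_\delta\}_\delta$ has the same limit, so the entire original sequence converges weakly in $W^{1,2}(\Omega)$ to $u$. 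The same argument applies verbatim to the wider-data case of homogeneous Dirichlet conditions, with $[W^{1,2}(\Omega)]^*$ and $[\mathfrak{W}^{\beta,2}[\delta;q](\Omega)]^*$ replaced throughout by their $\p\Omega_D$-vanishing counterparts.
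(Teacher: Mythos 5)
Your proof is correct and follows essentially the same route as the paper's: extract a weak limit from the uniform $W^{1,2}$ bound, split $\cB_\delta(u_\delta,v)=\cB_\delta(u_\delta-u,v)+\cB_\delta(u,v)$ and invoke \Cref{thm:BilinearFormLocalization} and \Cref{thm:BilinearFormLocalization2}, handle the semilinear and Robin terms via compact embeddings and the continuity of $K_\delta$ and its adjoint, verify admissibility via trace continuity and preservation of the mean-zero constraint, and upgrade to full-sequence convergence by uniqueness of the local solution. The only cosmetic difference is that you spell out the subsequence argument and the strong $L^2(\p\Omega)$ trace convergence more explicitly than the paper does, but the content is identical.
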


\begin{remark}
    Note that the convergence results in \Cref{thm:WellPosedness:Dirichlet}, \Cref{thm:WellPosedness:Neumann}, and \Cref{thm:WellPosedness:Robin} hold only in the strong topology on $L^p(\Omega)$, and hold only for Poisson data $f_\delta = K_\delta^* f$.
    The convergence result here both strengthens the topology and generalizes the class of approximating Poisson data in the case of the linear problem.
\end{remark}

\begin{proof}
    Since the solutions satisfy the uniform $W^{1,2}(\Omega)$ bound \eqref{eq:H1BoundOnSolutions}, it follows that they converge weakly in $W^{1,2}(\Omega)$ to a function $u$. 
    
    In all three cases, $\vint{f_\delta, v} \to \vint{f,v}$ for all test functions $v$ by assumption. Moreover,    the class of test functions for the local problem is a subspace of the class of test functions for the nonlocal problem. 
    Therefore $\cB_{\delta}(u_\delta,v) \to \cB_0(u,v)$ for any test function $v$ for the local problem by \Cref{thm:BilinearFormLocalization} and \Cref{thm:BilinearFormLocalization2}. 
    
    Since the embedding $W^{1,2}(\Omega) \hookrightarrow L^m(\Omega)$ is compact, the estimate of \eqref{lma:LowerOrderTerm} and the continuity estimates \eqref{eq:ConvEst:Lp} and \eqref{eq:ConvEst:Adj:Lp} show that the lower-order term converges strongly as $\delta \to 0$ in both the Dirichlet and Robin cases.

    Finally, in the Dirichlet case, since $T u_\delta = g \in W^{\frac{1}{2},2}(\p \Omega)$ for all $\delta$, we have by $W^{1,2}(\Omega)$-weak continuity of the traces that $T u = g$. 
\end{proof}

The next theorem gives an explicit construction of the mollified sequence $\{ f_{\delta} \}_\delta$ that satisfies \eqref{eq:H1Convergence:RHSConvergence}-\eqref{eq:H1Convergence:RHSEstimate}.

\begin{theorem}\label{thm:DataMollification}
The distribution $f_{\delta} := K_{\delta}^*[\bar{\lambda},q,\psi] f$ satisfies \eqref{eq:H1Convergence:RHSConvergence}-\eqref{eq:H1Convergence:RHSEstimate}.
\end{theorem}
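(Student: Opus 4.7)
The plan is to read this theorem off directly from the continuity, consistency, and weighted-Sobolev estimates for $K_\delta^*$ already established in \Cref{sec:BdyLocalizedConv}; no new computation is really required, only an assembly of the existing ingredients together with a small reconciliation between the two localization functions $\lambda$ (which sits inside the weights $\eta_\delta$ appearing in \eqref{eq:H1Convergence:RHSEstimate}) and $\bar\lambda$ (which is used to build the mollifier $f_\delta$).

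For the convergence \eqref{eq:H1Convergence:RHSConvergence}, I would simply dualize: for any fixed $v \in W^{1,2}(\Omega)$,
\[
\vint{f_\delta - f,\,v} \;=\; \vint{f,\,K_\delta[\bar\lambda,q,\psi]v - v},
\]
and then invoke \eqref{eq:ConvergenceOfConv:W1p} to obtain $K_\delta[\bar\lambda,q,\psi] v \to v$ strongly in $W^{1,2}(\Omega)$, forcing the pairing to vanish. This is literally the content of \eqref{eq:ConvergenceOfConv:Adj:Wminus1p}.

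For the three norm bounds in \eqref{eq:H1Convergence:RHSEstimate}, I would handle each in turn. The first, $\Vnorm{f_\delta}_{[\mathfrak{W}^{\beta,2}[\delta;q](\Omega)]^*}$, is exactly \eqref{eq:ConvEst:Adj:Nonlocal}. The second, $\Vnorm{\eta_\delta f_\delta}_{L^2(\Omega)}$, is \eqref{eq:ConvEst2:Adj:0:neg1}. The third follows from the product-rule identity
\[
\eta_\delta^2 \grad f_\delta \;=\; \grad\!\big(\eta_\delta^2 f_\delta\big) \,-\, 2\,\eta_\delta\,\grad \eta_\delta\, f_\delta,
\]
after which the first summand is controlled in $L^2$ by \eqref{eq:ConvEst2:Adj:1:neg1}, and the second by the pointwise estimate $|\grad\eta_\delta| \le \kappa_1\delta$ from \eqref{assump:Localization}--\eqref{eq:HorizonThreshold} together with the already-obtained bound on $\Vnorm{\eta_\delta f_\delta}_{L^2(\Omega)}$.

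The only mildly technical point, and the place where a little care is needed, is reconciling $\eta_\delta[\lambda,q]$ (which appears in \eqref{eq:H1Convergence:RHSEstimate}) with $\eta_\delta[\bar\lambda,q]$ (which is what naturally features in the estimates from \Cref{sec:BdyLocalizedConv} when applied to $K_\delta^*[\bar\lambda,q,\psi]$). Since both $\lambda$ and $\bar\lambda$ are comparable to $\dist(\cdot,\p\Omega)$ by \eqref{assump:Localization}(i), and since $q$ is doubling by \eqref{assump:NonlinearLocalization}(iii), iterating the doubling inequality finitely many times yields the pointwise equivalence $\eta_\delta[\lambda,q] \asymp \eta_\delta[\bar\lambda,q]$ on $\Omega$ with constants independent of $\delta$. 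Thus the weights can be interchanged freely up to a multiplicative constant, and the theorem follows. This comparison is the only genuine, and very small, obstacle in the argument.
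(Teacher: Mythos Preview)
Your proof is correct and follows the same route as the paper, which simply cites \eqref{eq:ConvEst:Adj:Nonlocal}, \eqref{eq:ConvEst2:Adj:0:neg1}, and \eqref{eq:ConvEst2:Adj:1:neg1}. You are in fact more careful on two points the paper leaves implicit: the product-rule step extracting $\eta_\delta^2\grad f_\delta$ from the $W^{1,2}$ bound on $\eta_\delta^2 f_\delta$, and the comparability of $\eta_\delta[\lambda,q]$ with $\eta_\delta[\bar\lambda,q]$ via \eqref{assump:Localization}(i) and the doubling condition in \eqref{assump:NonlinearLocalization}(iii).
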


\begin{proof}
    The result follows from \eqref{eq:ConvEst:Adj:Nonlocal}, \eqref{eq:ConvEst2:Adj:0:neg1}, and \eqref{eq:ConvEst2:Adj:1:neg1}.
\end{proof}

\section{Convergence of normal derivatives}\label{sec:ConvOfNormals}

Remaining under the assumptions of \Cref{sec:Regularity},
we show that the ``approximate'' normal derivatives of $u_\delta$ converge to the corresponding normal form of $u$.

\begin{theorem}\label{thm:NormalDerivativeDef:Dirichlet}
    Let $f \in L^2(\Omega)$, and assume that $\{f_\delta\}_\delta \subset L^2(\Omega)$ satisfies \eqref{eq:H1Convergence:RHSConvergence}-\eqref{eq:H1Convergence:RHSEstimate}.
    Let $u_\delta$ in the admissible set solve \eqref{eq:BVP:Dirichlet:Weak} with Poisson data $f_\delta$ and boundary data $g \in W^{1/2,2}(\p \Omega)$.
    Define the distribution $Z_\delta$ as
    \begin{equation}\label{eq:zdelta}
        \vint{ Z_\delta, v } := \cB_{\delta}(u_\delta, \bar{v}) + \mu \int_{\Omega} \ell'(K_\delta u_\delta) K_\delta \bar{v} \, \rmd \bx - \int_{\Omega} f_\delta \bar{v} \, \rmd \bx\,, \quad v \in W^{1/2,2}(\p \Omega)\,,
    \end{equation}
    where $\bar{v}$ is any $W^{1,2}(\Omega)$-extension of $v$ to $\Omega$.
    Then for any $\delta$, $Z_\delta$ defined via \eqref{eq:zdelta} is independent of the choice of extension, and
    $Z_\delta \in [W^{1/2,2}(\p \Omega)]^*$, with 
    \begin{equation}\label{eq:NonlocalNormalDeriv:Estimate}
        \Vnorm{ Z_\delta }_{ [W^{1/2,2}(\p \Omega)]^* } \leq C \Vnorm{ f }_{L^2(\Omega)}\,.
    \end{equation}
\end{theorem}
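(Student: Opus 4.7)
The plan is to split the proof into two tasks: first, establish that the right-hand side of \eqref{eq:zdelta} depends only on the trace $v$ and not on the chosen extension $\bar v$; second, produce a uniform bound of $\vint{Z_\delta,v}$ by $\Vnorm{v}_{W^{1/2,2}(\p\Omega)}$, which both identifies $Z_\delta$ as an element of $[W^{1/2,2}(\p\Omega)]^*$ and yields the estimate \eqref{eq:NonlocalNormalDeriv:Estimate}. Both steps rest on the weak formulation \eqref{eq:BVP:Dirichlet:Weak} satisfied by $u_\delta$ together with the continuity estimates collected in \Cref{thm:FxnSpaceProp,sec:BdyLocalizedConv}.

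For the first step, I would pick two $W^{1,2}(\Omega)$-extensions $\bar v_1,\bar v_2$ of the same $v\in W^{1/2,2}(\p\Omega)$ and set $w := \bar v_1 - \bar v_2$. Since $Tw = 0$ on $\p\Omega \supset \p\Omega_D$, the function $w$ lies in $W^{1,2}_{0,\p\Omega_D}(\Omega)$, and by the embedding \eqref{eq:Embedding} it therefore belongs to $\mathfrak{W}^{\beta,2}_{0,\p\Omega_D}[\delta;q](\Omega)$, i.e. it is an admissible test function in \eqref{eq:BVP:Dirichlet:Weak}. Testing the weak formulation against $w$ gives
\[
    \cB_{\delta}(u_\delta,w) + \mu\int_{\Omega}\ell'(K_\delta u_\delta)\,K_\delta w\,\rmd\bx - \int_{\Omega} f_\delta\, w\,\rmd\bx = 0,
\]
which is precisely the difference between the expressions in \eqref{eq:zdelta} evaluated at $\bar v_1$ and $\bar v_2$. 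Hence $\vint{Z_\delta,v}$ is well-defined, and its linearity in $v$ is immediate.

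For the second step, for each $v\in W^{1/2,2}(\p\Omega)$ I invoke the bounded right-inverse of the trace operator on $W^{1,2}(\Omega)$ to select an extension $\bar v$ with $\Vnorm{\bar v}_{W^{1,2}(\Omega)} \le C\Vnorm{v}_{W^{1/2,2}(\p\Omega)}$. Then I estimate each of the three terms in \eqref{eq:zdelta}: the bilinear form by Cauchy--Schwarz applied to the symmetric kernel together with \eqref{eq:Embedding} yields $|\cB_\delta(u_\delta,\bar v)| \le C[u_\delta]_{\mathfrak{W}^{\beta,2}[\delta;q](\Omega)}\Vnorm{\bar v}_{W^{1,2}(\Omega)}$; the semilinear term is controlled via \eqref{lma:LowerOrderTerm}, the continuity estimates \eqref{eq:ConvEst:Lp} and \eqref{eq:Intro:ConvEst:Deriv}, and the Sobolev embedding $W^{1,2}(\Omega)\hookrightarrow L^m(\Omega)$, giving a bound polynomial in $\Vnorm{u_\delta}_{\mathfrak{W}^{\beta,2}[\delta;q](\Omega)}$ and linear in $\Vnorm{\bar v}_{W^{1,2}(\Omega)}$; and the $f_\delta$ term is handled by plain Cauchy--Schwarz, using the $L^2$-control of $f_\delta$ encoded in \eqref{eq:H1Convergence:RHSEstimate} (augmented, when needed, by \eqref{eq:ConvEst:Adj:Lp} applied to the canonical approximation $f_\delta = K_\delta^*f$ of \Cref{thm:DataMollification}). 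Assembling these ingredients with the energy estimate \eqref{eq:EnergyEstimate:Dirichlet}, which provides a uniform-in-$\delta$ bound on $\Vnorm{u_\delta}_{\mathfrak{W}^{\beta,2}[\delta;q](\Omega)}$ in terms of the data, produces the desired bound \eqref{eq:NonlocalNormalDeriv:Estimate}.

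The conceptual content is essentially the first step; once extension-independence is in hand, the second step is a routine assembly of previously proved continuity estimates. The subtle point to watch is that the trace of $\bar v_1 - \bar v_2$ must vanish on the full $\p\Omega_D$ (not only on the complementary boundary-flux part), which is why the test space contains $W^{1,2}_0(\Omega)$; this is guaranteed since the theorem specifies $g$ defined on the entire $\p\Omega$, placing us in the standard Dirichlet setting where the construction is compatible with any subset $\p\Omega_D\subset\p\Omega$.
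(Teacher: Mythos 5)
Your proposal is correct and matches the paper's own proof essentially step for step: extension-independence is shown by testing the weak formulation against the difference of two extensions (which lies in the trace-zero nonlocal space by Theorem~\ref{thm:FxnSpaceProp}), and the dual-norm bound follows by combining H\"older's inequality, the continuity estimates for $K_\delta$ and the nonlocal seminorm, the growth bound on $\ell'$, the energy estimate \eqref{eq:EnergyEstimate:Dirichlet}, and \eqref{eq:H1Convergence:RHSEstimate}, together with a norm-controlled right inverse of the trace. The one place to be slightly more careful is the sentence asserting that $w\in\mathfrak{W}^{\beta,2}_{0,\p\Omega_D}[\delta;q](\Omega)$ ``by the embedding \eqref{eq:Embedding}'': the embedding puts $w$ in $\mathfrak{W}^{\beta,2}[\delta;q](\Omega)$, and it is item~4) of Theorem~\ref{thm:FxnSpaceProp} (the trace characterization) that then places it in the trace-zero subspace once $Tw=0$; this is exactly what the paper cites.
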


\begin{proof}
    Let $\bar{v}_1$ and $\bar{v}_2$ be two extensions of $v \in W^{1/2,2}(\p \Omega)$. Then $T(\bar{v}_1 -\bar{v}_2) = 0$, and so $\bar{v}_1 - \bar{v}_2 \in \mathfrak{W}^{\beta,p}_{0,\p \Omega}[\delta;q](\Omega)$ (see \Cref{thm:FxnSpaceProp}). Thus, it can be used as a test function in \eqref{eq:BVP:Dirichlet:Weak}, and gives
    \begin{equation*}
        \cB_{\delta}(u_\delta,\bar{v}_1 - \bar{v}_2) = - \mu \int_{\Omega} \ell'(K_\delta u_\delta) K_\delta [\bar{v}_1 - \bar{v}_2] \, \rmd \bx + \int_{\Omega} f_\delta (\bar{v}_1 - \bar{v}_2) \, \rmd \bx\,.
    \end{equation*}
    Therefore $\vint{Z_\delta, T(\bar{v}_1)} = \vint{Z_\delta, T(\bar{v}_2)}$. Now, to see the estimate \eqref{eq:NonlocalNormalDeriv:Estimate}, use H\"older's inequality, \eqref{thm:EnergySpaceIndepOfKernel}, the growth condition on $\ell'$, and the Sobolev embedding theorem with \eqref{eq:Intro:ConvEst:Deriv}, 
    all on the right-hand side of \eqref{eq:zdelta} to get
    \begin{equation*}
        |\vint{Z_\delta,v}| \leq C \Vnorm{ u_\delta }_{\mathfrak{W}^{\delta,2}[\delta;q](\Omega)} \Vnorm{ \bar{v} }_{\mathfrak{W}^{\delta,2}[\delta;q](\Omega)} + \Vnorm{f_\delta}_{[\mathfrak{W}^{\delta,2}[\delta;q](\Omega)]^*} \Vnorm{\bar{v}}_{\mathfrak{W}^{\delta,2}[\delta;q](\Omega)}\,.
    \end{equation*}
    Then by the energy estimate \eqref{eq:EnergyEstimate:Dirichlet}, \eqref{eq:Embedding}, and \eqref{eq:H1Convergence:RHSEstimate},
    \begin{equation*}
        |\vint{Z_\delta,v}| \leq C \Vnorm{f_\delta}_{[\mathfrak{W}^{\delta,2}[\delta;q](\Omega)]^*} \Vnorm{\bar{v}}_{W^{1,2}(\Omega)} \leq C \Vnorm{f}_{L^2(\Omega)} \Vnorm{v}_{W^{1/2,2}(\p \Omega)}\,.
    \end{equation*}
\end{proof}

\begin{theorem}\label{thm:NormalDerivativeConv:Dirichlet}
    With all the assumptions of \Cref{thm:NormalDerivativeDef:Dirichlet}, assume additionally that the Dirichlet data $g \in W^{3/2,2}(\p \Omega)$. Let $u$ be the weak solution of \eqref{eq:BVP:Dirichlet:Local} in the admissible set, which is the $W^{1,2}(\Omega)$-weak limit of the weak solutions $\{u_\delta\}_\delta$ of \eqref{eq:BVP:Dirichlet:Weak} in the respective admissible set. Then 
    the sequence $\{ Z_\delta \}_\delta$ defined via \eqref{eq:zdelta} converges to  $\frac{\p u}{\p \bsnu}$ weakly in $[W^{1/2,2}(\p \Omega)]^*$.
\end{theorem}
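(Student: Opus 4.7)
The plan is to test $Z_\delta$ against an arbitrary $v \in W^{1/2,2}(\p \Omega)$, fix any $W^{1,2}(\Omega)$-extension $\bar{v}$ of $v$, and pass to the limit in each of the three terms on the right-hand side of \eqref{eq:zdelta}. The uniform bound \eqref{eq:NonlocalNormalDeriv:Estimate} from \Cref{thm:NormalDerivativeDef:Dirichlet} already gives weak precompactness of $\{Z_\delta\}$ in the Hilbert space $[W^{1/2,2}(\p \Omega)]^*$, so it will suffice to identify every weak accumulation point uniquely as the distribution
\[
v \mapsto \cB_0(u,\bar{v}) + \mu \int_{\Omega} \ell'(u) \bar{v} \, \rmd \bx - \int_\Omega f \bar{v} \, \rmd \bx,
\]
which is independent of the choice of extension (apply \eqref{eq:BVP:Dirichlet:Local} with a test function $\bar{v}_1 - \bar{v}_2 \in W^{1,2}_{0,\p \Omega}(\Omega) \subset W^{1,2}_{0,\p \Omega_D}(\Omega)$) and thus defines $\frac{\p u}{\p \bsnu} \in [W^{1/2,2}(\p \Omega)]^*$ via the classical Green's identity. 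Once each of the three terms is shown to converge, uniqueness of the limit promotes weak convergence of the whole net.

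The bilinear-form term $\cB_\delta(u_\delta, \bar{v})$ I handle by the decomposition
\[
\cB_\delta(u_\delta,\bar{v}) = \cB_\delta(u_\delta - u, \bar{v}) + \cB_\delta(u,\bar{v}).
\]
Since $u_\delta \rightharpoonup u$ weakly in $W^{1,2}(\Omega)$ by the convergence theorem of \Cref{sec:Convergence}, \Cref{thm:BilinearFormLocalization} yields $\cB_\delta(u_\delta - u, \bar{v}) \to 0$, and \Cref{thm:BilinearFormLocalization2} yields $\cB_\delta(u,\bar{v}) \to \cB_0(u,\bar{v})$.

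For the semilinear term I invoke compactness: $\{u_\delta\}$ is bounded in $W^{1,2}(\Omega)$, so by Rellich--Kondrachov, $u_\delta \to u$ strongly in $L^m(\Omega)$ for any $m$ with $1 < m < 2^*$. Combined with the $L^p$-continuity \eqref{eq:ConvEst:Lp} and the strong convergence \eqref{eq:ConvergenceOfConv} of the boundary-localized convolutions, this gives $K_\delta u_\delta \to u$ strongly in $L^m(\Omega)$; the growth/Lipschitz conditions on $\ell'$ in \eqref{eq:LowerOrderTerm} then upgrade to $\ell'(K_\delta u_\delta) \to \ell'(u)$ strongly in $L^{m'}(\Omega)$. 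Similarly \eqref{eq:ConvergenceOfConv:W1p} gives $K_\delta \bar{v} \to \bar{v}$ strongly in $L^m(\Omega)$, so the product converges and
\[
\mu \int_\Omega \ell'(K_\delta u_\delta) K_\delta \bar{v} \, \rmd \bx \to \mu \int_\Omega \ell'(u) \bar{v} \, \rmd \bx.
\]
The data term $\int_\Omega f_\delta \bar{v} \, \rmd \bx \to \int_\Omega f \bar{v} \, \rmd \bx$ is immediate from hypothesis \eqref{eq:H1Convergence:RHSConvergence}.

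The main obstacle is the bilinear-form convergence, since $\bar{v}$ lies only in $W^{1,2}(\Omega)$ and $\{u_\delta\}$ converges there only weakly; the ``trading'' of derivatives between the two factors is precisely what \Cref{thm:BilinearFormLocalization} is engineered to achieve, through its reliance on \Cref{cor:PointwiseOperator:AbsConv}, the nonlocal Green's identity \eqref{eq:GreensIdentity}, and the compactness of $W^{1,2}(\Omega) \Subset L^2(\Omega)$. After combining the three convergences, the limit matches $\langle \tfrac{\p u}{\p \bsnu}, v \rangle$, and since the candidate limit is independent of the subsequence extracted, the entire sequence $Z_\delta$ converges weakly in $[W^{1/2,2}(\p \Omega)]^*$ to $\tfrac{\p u}{\p \bsnu}$.
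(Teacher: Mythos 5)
Your proof is correct, and it takes a more direct route than the paper's. The paper leans more heavily on the $W^{2,2}(\Omega)$ regularity of $u$ (from $g \in W^{3/2,2}(\p\Omega)$): it applies the nonlocal Green's identity of \Cref{thm:WeakOperator} to $u$ itself, establishes $BF^N_{2,\delta}(\grad u,\bsnu) \to \frac{\p u}{\p\bsnu}$ strongly in $L^2(\p\Omega)$, deduces $(\mathrm{w}\text{-}\cL)_\delta u \rightharpoonup -\Delta u$ in $[W^{1,2}(\Omega)]^*$, and then writes $\langle Z_\delta - \frac{\p u}{\p\bsnu},v\rangle$ as a five-term remainder, each piece vanishing. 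You bypass the intermediate objects $(\mathrm{w}\text{-}\cL)_\delta u$ and $BF^N_{2,\delta}$ entirely: you pass to the limit in the three terms of \eqref{eq:zdelta} directly --- splitting $\cB_\delta(u_\delta,\bar v)=\cB_\delta(u_\delta-u,\bar v)+\cB_\delta(u,\bar v)$ and invoking \Cref{thm:BilinearFormLocalization} and \Cref{thm:BilinearFormLocalization2}, then compactness of the embedding for the lower-order term and hypothesis \eqref{eq:H1Convergence:RHSConvergence} for the data term --- and identify the resulting distribution with $\frac{\p u}{\p\bsnu}$ afterwards. Both proofs rest on the same two bilinear-form-localization theorems and the same compactness, so the essential mechanism is shared; the paper's version is more explicit about the role of the nonlocal operator, which serves the section's expository aims. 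Two small observations on your write-up. First, the precompactness remark is not needed: once each of the three terms is shown to have a limit for every fixed $v\in W^{1/2,2}(\p\Omega)$, you already have weak-$*$ convergence of $Z_\delta$, and subsequence extraction plus uniqueness is superfluous. Second, when you say the limit distribution ``defines $\frac{\p u}{\p\bsnu}$ via the classical Green's identity,'' it is worth spelling out that this identification requires $u\in W^{2,2}(\Omega)$ and the a.e.\ equation $-\Delta u+\mu\ell'(u)=f$; this is exactly where the additional hypothesis $g\in W^{3/2,2}(\p\Omega)$ enters your argument (it is otherwise unused), so the reader should see clearly that the assumption is earning its keep.
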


\begin{proof}
    Thanks to the regularity of $f$ and $g$, and since $u$ is the unique weak solution of \eqref{eq:BVP:Dirichlet:Local}, it follows that $u \in W^{2,2}(\Omega)$, with $\frac{\p u}{\p \bsnu} \in W^{1/2,2}(\p \Omega)$; see for instance \cite[Theorem 4.14]{Giaquinta}.
    Then $(\mathrm{w}\text{-}\cL)_{\delta} u$ exists, and so by \eqref{eq:GreensIdentity} in the context of \Cref{thm:WeakOperator}
    \begin{equation}\label{eq:NormalDerivConv:D:Pf1}
        \int_{\Omega} (\mathrm{w}\text{-}\cL)_\delta u \, v \, \rmd \bx = \cB_{\delta}(u,v) 
        - \int_{\p \Omega} BF_{2,\delta}^N(\grad u,\bsnu) v \, \rmd \sigma\,, \quad \forall v \in W^{1,2}(\Omega)\,.
    \end{equation}
    Observe that, in this case, $BF_{2,\delta}^N(\grad u,\bsnu) \to \frac{\p u}{\p \bsnu}
    $ strongly in $L^2(\p \Omega)$ by the dominated convergence theorem (in fact equality holds if $N > 1$).
    So by \Cref{thm:BilinearFormLocalization2} and the classical Green's identity,
    \begin{equation*}
        \begin{split}
          &  \lim\limits_{\delta \to 0} \cB_{\delta}(u,v) - \int_{\p \Omega} BF_{2,\delta}^N(\grad u,\bsnu) v \, \rmd \sigma = \cB_{0}(u,v) - 
            \int_{\p \Omega} \frac{\p u}{\p \bsnu} v \, \rmd \bx \\
            &\qquad = 
            \int_{\Omega} \grad u \cdot \grad v \, \rmd \bx - \int_{\p \Omega} \frac{\p u}{\p \bsnu} v \, \rmd \bx
            =\int_{\Omega} (- \Delta u) v \, \rmd \bx\,.
        \end{split}
    \end{equation*}
    Thus $(\mathrm{w}\text{-}\cL)_{\delta} u \rightharpoonup -\Delta u$ weakly in $[W^{1,2}(\Omega)]^*$.
    At the same time, $u \in W^{2,2}(\Omega)$ is a strong solution of \eqref{eq:BVP:Dirichlet:Local}, i.e.
    \begin{equation}\label{eq:NormalDerivConv:D:Pf2}
        -\Delta u(\bx) + \mu \ell'(u(\bx)) = f(\bx)\,, \text{ almost everywhere in } \Omega\,.
    \end{equation}
    
    Now, for any $v \in W^{1/2,2}(\p \Omega)$ and any $W^{1,2}(\Omega)$ extension $\bar{v}$ of $v$, we use \eqref{eq:NormalDerivConv:D:Pf1} 
    and \eqref{eq:NormalDerivConv:D:Pf2}
    to get
    \begin{equation*}
        \begin{split}
            \Vint{ Z_\delta - 
            \frac{\p u}{\p \bsnu} , v }
            &= \cB_{\delta}(u_\delta - u, \bar{v}) + \mu \int_{\Omega} \ell'(K_\delta u_\delta) K_\delta \bar{v} - \ell'(u) \bar{v} \, \rmd \bx \\
                &\quad + \int_{\Omega} (f-f_\delta) \bar{v} \, \rmd \bx + \int_{\Omega} \big( (\mathrm{w}\text{-}\cL)_\delta u - (-\Delta u) \big) \bar{v} \, \rmd \bx \\
                &\quad + \int_{\p \Omega} \left(  BF_{2,\delta}^N(\grad u,\bsnu) - 
                \frac{\p u}{\p \bsnu} \right) v \, \rmd \sigma\,.
        \end{split}
    \end{equation*}
    As $\delta \to 0$, the last two terms vanish on account of the discussion above, the third term vanishes by the assumption \eqref{eq:H1Convergence:RHSConvergence} on $f_\delta$ and $f$, and the first term vanishes by \Cref{thm:BilinearFormLocalization}. To see that the second term vanishes, use the compact embedding of $W^{1,2}(\Omega)$ into $L^m(\Omega)$, along with \eqref{lma:LowerOrderTerm}, \eqref{eq:ConvEst:Lp} and \eqref{eq:ConvEst:Adj:Lp}.
\end{proof}

\begin{remark}
    The assumption $g \in W^{3/2,2}(\p \Omega)$ was made in order to easily obtain the strong $L^2$ convergence of $(\mathrm{w}\text{-}\cL)_{\delta} u $ to $-\Delta u$. It would be natural to consider the convergence of normal derivatives in the weak topology of $W^{1/2,2}(\p \Omega)$, but such a result requires additional regularity beyond $W^{1,2}$ on the nonlocal solutions $u_\delta$; this will be investigated in future works.
\end{remark}

In the case of Robin boundary conditions, we can improve the convergence to $W^{1/2,2}(\Omega)$ in a special case.

\begin{theorem}\label{thm:NormalDerivative:Robin}
        Let $f \in L^2(\Omega)$, and assume that $\{f_\delta\}_\delta \subset L^2(\Omega)$ satisfies \eqref{eq:H1Convergence:RHSConvergence}-\eqref{eq:H1Convergence:RHSEstimate}.
    Let $u_\delta$ solve \eqref{eq:BVP:Robin:Weak} under the assumptions for well-posedness of \Cref{thm:WellPosedness:Robin}, with Poisson data $f_\delta$ and boundary data $g \in [W^{1/2,2}(\p \Omega)]^*$.
    Define the distribution $Z_\delta$ as in \eqref{eq:zdelta}.
    Then for any $\delta$, $Z_\delta$ defined via \eqref{eq:zdelta} is independent of the choice of extension, and
    $Z_\delta \in [W^{1/2,2}(\p \Omega)]^*$ with
    \begin{equation}\label{eq:NonlocalNormalDeriv:Estimate:Robin}
        \Vnorm{ Z_\delta }_{ [W^{1/2,2}(\p \Omega)]^* } \leq C ( \Vnorm{ f }_{L^2(\Omega)} + \Vnorm{ g }_{[W^{1/2,2}(\p \Omega)]^*} )\,.
    \end{equation}

    Under the additional assumptions that $g \in W^{1/2,2}(\p \Omega)$ and that $b \equiv b_0$ is a nonnegative constant on all of $\p \Omega$, let $u$ be the weak solution of \eqref{eq:BVP:Robin:Local}, which is the $W^{1,2}(\Omega)$-weak limit of the solutions $\{u_\delta\}_\delta$ of \eqref{eq:BVP:Robin:Weak}.
    Then the sequence $\{ Z_\delta \}_\delta$ defined via \eqref{eq:zdelta} converges to $\frac{\p u}{\p \bsnu}$ weakly in $W^{1/2,2}(\p \Omega)$.
\end{theorem}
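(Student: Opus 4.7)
The two parts of the theorem both rest on a single explicit identity. Fix $v\in W^{1/2,2}(\p\Omega)$ and let $\bar{v}\in W^{1,2}(\Omega)$ be any extension. Testing the Robin weak formulation \eqref{eq:BVP:Robin:Weak} against $\bar{v}$ and rearranging isolates the expression defining $Z_\delta$ on one side, yielding
\begin{equation}\label{eq:proposal:keyid}
    \vint{Z_\delta, v} = \vint{g, v} - \int_{\p\Omega} b\, Tu_\delta \, v\,\rmd\sigma\,.
\end{equation}
Since the right-hand side depends only on $v = T\bar{v}$, the distribution $Z_\delta$ is well-defined as an element of $[W^{1/2,2}(\p\Omega)]^*$. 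The estimate \eqref{eq:NonlocalNormalDeriv:Estimate:Robin} then follows from \eqref{eq:proposal:keyid} via the trace theorem (\Cref{thm:FxnSpaceProp}, part 3), the energy estimate \eqref{eq:EnergyEstimate:Robin}, and the bound $\Vnorm{f_\delta}_{[\mathfrak{W}^{\beta,2}[\delta;q](\Omega)]^*}\leq C\Vnorm{f}_{L^2(\Omega)}$ from \eqref{eq:H1Convergence:RHSEstimate}; more precisely, $|\vint{g,v}|\leq \Vnorm{g}_{[W^{1/2,2}(\p\Omega)]^*}\Vnorm{v}_{W^{1/2,2}(\p\Omega)}$, while $|\int_{\p\Omega} b\, Tu_\delta\, v\,\rmd\sigma| \leq C\Vnorm{b}_{L^\infty(\p\Omega)}\Vnorm{u_\delta}_{W^{1,2}(\Omega)}\Vnorm{v}_{W^{1/2,2}(\p\Omega)}$.

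For the improved conclusion, assume further that $g\in W^{1/2,2}(\p\Omega)$ and that $b\equiv b_0$ is constant. Then \eqref{eq:proposal:keyid} identifies $Z_\delta$ with the element $g - b_0\, Tu_\delta$ of $W^{1/2,2}(\p\Omega)$. From the uniform bound \eqref{eq:H1BoundOnSolutions} and the continuity of $T:W^{1,2}(\Omega)\to W^{1/2,2}(\p\Omega)$, the sequence $\{Z_\delta\}_\delta$ is bounded in $W^{1/2,2}(\p\Omega)$, while the weak convergence $u_\delta\rightharpoonup u$ in $W^{1,2}(\Omega)$ yields $Tu_\delta\rightharpoonup Tu$ weakly in $W^{1/2,2}(\p\Omega)$. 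Hence $Z_\delta\rightharpoonup g - b_0\, Tu$ weakly in $W^{1/2,2}(\p\Omega)$.

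It remains to identify $g - b_0\, Tu$ with $\frac{\p u}{\p\bsnu}$. Because $f\in L^2(\Omega)$, $g\in W^{1/2,2}(\p\Omega)$, and the Robin coefficient is constant, classical elliptic regularity applied to \eqref{eq:BVP:Robin:Local} gives $u\in W^{2,2}(\Omega)$, so that $\frac{\p u}{\p\bsnu}\in W^{1/2,2}(\p\Omega)$ is well-defined and $-\Delta u + \mu\ell'(u) = f$ holds almost everywhere. Pairing this strong form with an arbitrary $W^{1,2}(\Omega)$-extension of $v\in W^{1/2,2}(\p\Omega)$, integrating by parts via the classical Green's identity, and subtracting the local Robin weak formulation produces the trace-level equality $\frac{\p u}{\p\bsnu} + b_0\, Tu = g$ in $W^{1/2,2}(\p\Omega)$, completing the identification. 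The main subtlety is this last regularity step: without constancy of $b$, the product $b\, Tu_\delta$ need not lie in $W^{1/2,2}(\p\Omega)$, so only $[W^{1/2,2}(\p\Omega)]^*$-boundedness of $Z_\delta$ can be extracted from \eqref{eq:proposal:keyid}, and the strengthening to a $W^{1/2,2}$-weak convergence is unavailable in that generality.
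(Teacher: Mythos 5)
Your key identity
\begin{equation*}
\vint{Z_\delta, v} = \vint{g, v} - \int_{\p\Omega} b\, Tu_\delta\, v\,\rmd\sigma
\end{equation*}
is indeed the content underlying the paper's terse proof: it comes from testing \eqref{eq:BVP:Robin:Weak} against $\bar v$ and rearranging, and it gives both the extension-independence and (via the trace theorem, the energy estimate \eqref{eq:EnergyEstimate:Robin}, and \eqref{eq:H1Convergence:RHSEstimate}) the bound \eqref{eq:NonlocalNormalDeriv:Estimate:Robin}. The paper's written proof for the first part instead estimates the three terms of \eqref{eq:zdelta} directly via H\"older (as in \Cref{thm:NormalDerivativeDef:Dirichlet}), but the two routes are equivalent; yours is a bit more explicit and pinpoints exactly where the constant $b$ comes in.

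There is, however, a genuine flaw in your identification $\frac{\p u}{\p\bsnu} = g - b_0\,Tu$. You deduce it by invoking $W^{2,2}(\Omega)$ elliptic regularity and then the classical pointwise Green's identity. But $\Omega$ is only assumed Lipschitz throughout the paper, and on a Lipschitz domain $f\in L^2(\Omega)$ together with Robin data $g\in W^{1/2,2}(\p\Omega)$ does not yield $u\in W^{2,2}(\Omega)$ --- one typically gets at best $W^{3/2,2}(\Omega)$ (and in the Dirichlet case the paper correspondingly demands $g\in W^{3/2,2}(\p\Omega)$ for its $W^{2,2}$ claim in \Cref{thm:NormalDerivativeConv:Dirichlet}). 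The step is in fact unnecessary: the normal derivative should be read as the distribution in $[W^{1/2,2}(\p\Omega)]^*$ defined by
\begin{equation*}
\Vint{\tfrac{\p u}{\p\bsnu}, v} := \cB_0(u,\bar v) + \mu\int_{\Omega}\ell'(u)\bar v\,\rmd\bx - \vint{f,\bar v},
\end{equation*}
which is extension-independent because $\bar v_1-\bar v_2\in W^{1,2}_{0,\p\Omega}(\Omega)$ is an admissible test function in \eqref{eq:BVP:Robin:Local} with vanishing boundary terms; the local Robin weak form then immediately gives $\Vint{\frac{\p u}{\p\bsnu},v} = \vint{g,v}-b_0\int_{\p\Omega}(Tu)v\,\rmd\sigma$, i.e.\ $\frac{\p u}{\p\bsnu}=g-b_0\,Tu\in W^{1/2,2}(\p\Omega)$, in exact parallel with your nonlocal computation. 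This is what the paper means by ``in the same way,'' and it requires no elliptic regularity whatsoever. Once this is fixed, the rest of your argument (boundedness of $Z_\delta$ in $W^{1/2,2}(\p\Omega)$, weak continuity of the trace, and the resulting weak convergence) is correct and matches the paper.
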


\begin{proof}
    The results of the previous theorems follow using almost the same arguments, and the differences offer no additional difficulty.
    
    To see the convergence result, it can first be shown using the definition \eqref{eq:zdelta} and the equation \eqref{eq:BVP:Robin:Weak} that if $g \in W^{1/2,2}(\p \Omega)$ then $Z_\delta = g - b_0 u_\delta \in W^{1/2,2}(\p \Omega)$ for $\scH^{d-1}$-almost everywhere in $\p \Omega$. In the same way, $\frac{\p u}{\p \bsnu} = g - b_0 u \in W^{1/2,2}(\p \Omega)$. If $b_0 = 0$ then there is nothing to show, and if $b_0 > 0$ then $Z_\delta - \frac{\p u}{\p \bsnu} = b_0 T(u_\delta - u)$, and $ T u_\delta \rightharpoonup T u$ weakly in $W^{1/2,2}(\p \Omega)$ since $u_\delta \rightharpoonup u$ weakly in $W^{1,2}(\Omega)$.
\end{proof}

\section{Conclusion}

We have presented a study of nonlocal operators with heterogeneous localization and the associated boundary value problems. 
A key component is the proof of a nonlocal Green's identity, which was then used to place boundary-value problems into correspondence with weak forms,
while the well-posedness of these weak formulations and their convergence in the nonlocal-to-local limit is obtained via the analysis in \cite{scott2023nonlocal}. 
The scaling of the kernels, and the range of $\beta$, have allowed us to treat simultaneously both fractional and convolution-type problems, with the same class of boundary information.
Through the additional estimates on the adjoint operators and generalizations presented here, studies initiated in \cite{scott2023nonlocal} on boundary-localized convolutions bear more fruit in the analysis of nonlocal problems and the connections of the latter to classical problems.
In special cases we obtain regularity and improved convergence of solutions in classical Sobolev spaces.

We remark on the situation in the regime that $\beta \to d + p$. One can determine, through careful reading of the proofs in \cite{scott2023nonlocal} and this work, that many of the nonlocal-to-local convergence results remain true -- with correspondingly different statements -- in this so-called ``fractional-to-classical'' limit. An exception is the analysis from the last sections in this work, which are valid only for $\beta < d-1$.

While the analytical studies given in \cite{scott2023nonlocal} and here, as well as the earlier works \cite{tian2017trace,du2022fractional}, have started to lay a rigorous foundation for the use of local boundary conditions in nonlocal problems,
the ``nonlocal calculus'' for the operators treated in this work is, as of now, somewhat incomplete. A further understanding of the nonlocal operators in more general function spaces is needed, so that the validity of identities such as \eqref{eq:GreensIdentity} can be established systematically rather than developed case by case. 
Establishing suitable regularity properties for the models in this work would make significant progress in such a program, in addition to their importance for mathematical theory and physical consistency.

We note that operators with similar boundary-localizing properties to $K_\delta$ and $K_\delta^*$ were first -- to our knowledge -- studied in \cite{burenkov1998sobolev,burenkov1982mollifying}, and later in \cite{hintermuller2020variable}. It would be relevant to carry out studies of models similar to ours but with ``variable-step'' localization; this would, for instance, broaden the scope of implementing such models in coupled local-nonlocal problems \cite{tao2019nonlocal}.
At the same time, establishing convergence rates of various quantities of interest -- in both those models and the models in this work -- is relevant for their use in implementation.

\section*{Acknowledgements}

The authors thank Solveig Hepp and Moritz Kassmann for helpful and inspiring discussions.

\bibliographystyle{siamplain}
\bibliography{References2}

\end{document}